\documentclass[10pt,a4paper]{amsart}
\usepackage[utf8]{inputenc}
\usepackage[T1]{fontenc}
\usepackage[french]{babel}
\usepackage{amsmath,amsfonts,amssymb,amsthm}
\usepackage{stmaryrd} 
\usepackage{array}
\usepackage{enumerate}
\usepackage{hyperref}
\newcommand{\Q}{\mathbb{Q}}
\newcommand{\N}{\mathbb{N}}
\newcommand{\Z}{\mathbb{Z}}

\newcommand{\F}{\mathbb{F}}

\newcommand{\Gal}{\mathrm{Gal}}

\theoremstyle{plain}
\newtheorem{thm}{Théorème}[section]
\newtheorem{prop}[thm]{Proposition}
\newtheorem{cor}[thm]{Corollaire}
\newtheorem{lmm}[thm]{Lemme}

\newtheorem*{lmm*}{Lemme}
\newtheorem*{thm*}{Théorème}
\newtheorem*{claim*}{Fait}
\newtheorem*{hyp*}{Hypothèse}

\theoremstyle{plain}
\newtheorem{rqu}[thm]{Remarque}

\newtheorem{claim}[thm]{Fait}
\newtheorem{hyp}[thm]{Hypothèse}

\theoremstyle{plain}
\newtheorem{defn}[thm]{Définition}
\newtheorem{ex}[thm]{Exemple}
\usepackage[all]{xy}
\usepackage{tikz}
\usetikzlibrary{shadows,decorations.pathmorphing}
\usepackage{comment}
\usepackage{ifthen}

\begin{document}

\title{Calcul des groupes de ramification de certaines extensions radicales.}
\date{}
\author{Arnaud Plessis}
\address{Université de Caen Normandie}
\email{arnaud.plessis@unicaen.fr}
\maketitle

\begin{abstract}
Pour un premier $p\neq 2$, nous calculons la suite des groupes de ramification d'une extension galoisienne, radicale et finie $L/F$ où $F/\Q_p$ est une extension non-ramifiée finie.
Tout d'abord, on les calcule dans le cas où $\Gal(L/F)$ est d'exposant une puissance de $p$, par un théorème de Hecke et quelques récurrences. 
Enfin, nous traitons le cas général. 

Pour le cas $p=2$, la même méthode peut s'appliquer, à condition de faire une hypothèse supplémentaire. 
\end{abstract}

\begin{abstract}
For a rational prime $p\neq 2$, we compute the sequence of ramification groups of a Galois, radical and finite extension $L/F$ where $F/\Q_p$ is an unramified finite extension.
First, we compute it in the case where the exponent of $\Gal(L/F)$ is a power of $p$, by a Hecke's theorem and some inductions,. 
Finally, we deal with the general case. 

For the case $p=2$, the same method can work, provided that we make an additional hypothesis. 
\end{abstract}

\tableofcontents

\section{Introduction.}

Soit $p$ un nombre premier.
Tous les corps considérés dans cette introduction seront, sauf mention explicite du contraire, des extensions finies de $\Q_p$.  
Le calcul explicite des groupes de ramification d'une extension finie et galoisienne $L/K$ est en général difficile. 
Ils sont connus dans peu de cas. 
Par exemple : si $L/K$ est modérément ramifiée (cf \cite[Chapitre IV, Corollaire 3]{Corpslocaux}), si $K=\Q_p$ et $L$ est une extension cyclotomique (cf \cite[Chapitre IV,Proposition 18]{Corpslocaux}), si $L$ est le compositum de toutes les extensions de degré $p$ sur $K$ et $K$ est quelconque (cf \cite[Theorem 11]{CapuanoLauraDelCorsoAnoteonupperramificationjumpsinbelianextensionsofexponent}).
On sait également traiter le cas où $K=\Q_p$ et $L=\Q_p(\zeta_{p^r}, a^{1/p^s})$ avec $p\neq 2$, $r\geq s\geq 0$ deux entiers et $a\in\Z\subset \Z_p$ tel que $v_p(a)\in\{0,1\}$ (cf \cite[Theorem 5.8, Theorem 6.5]{viviani}).

Dans cet article, nous nous proposons de généraliser le travail de Viviani (cf \cite{viviani} en étudiant les groupes de ramification d'une extension radicale, finie et galoisienne $L/F$ où $F/\Q_p$ est une extension non-ramifiée, i.e. $L=F(\zeta_m, a_1^{1/m},\dots,a_n^{1/m})$ où $n,m\in\N^*$ et où $a_1,\dots,a_n\in F^*$.
Il s'agit des extensions galoisiennes non abéliennes les plus simples.
Ce sont donc les "premières" extensions qui ne soient pas classifiées par la théorie des corps de classes locaux.

Dans son article, Viviani s'est restreint au cas où $v_p(a)\in\{0,1\}$. 
La raison est que, sous cette hypothèse, il est possible de trouver une uniformisante de $\Q_p(\zeta_p, a^{1/p^s})/\Q_p(\zeta_p, a^{1/p^{s-1}})$. 
Cette uniformisante permet d'en déduire le saut du groupe de Galois de cette extension. 
Par récurrence, il calcule ainsi les groupes de ramifications de $\Gal(\Q_p(\zeta_{p^r}, a^{1/p^s})/\Q_p)$.

Dans notre approche, on ne cherchera pas à déterminer explicitement une uniformisante de $\Q_p(\zeta_p, a^{1/p^s})/\Q_p(\zeta_p, a^{1/p^{s-1}})$, mais à calculer directement ce saut par un théorème de Hecke (cf \cite[Théorème 10.2.9]{Advancedtopicsincomputationalnumbertheory}).
Ce théorème n'imposant pas de restriction sur $v_p(a)$, cela nous permettra de calculer explicitement le saut de $\Gal(\Q_p(\zeta_p, a^{1/p^s})/\Q_p(\zeta_p, a^{1/p^{s-1}}))$ quelque soit la valeur de $v_p(a)$.

Par un certain algorithme, on sera ainsi en mesure de calculer explicitement les groupes de ramification de $\Gal(\Q_p(\zeta_{p^r}, a^{1/p^s})/\Q_p)$ pour tout $a\in\Q_p$ et tous $r,s\in\N$ avec $r\geq s$.

Soient $t$ un entier premier à $p$ et $a\in\Q_p$. 
Alors, l'extension $\Q_p(\zeta_t, a^{1/t})/\Q_p$ est modérément ramifiée. 
Ainsi, son premier groupe de ramification est trivial. 
Cela nous permettra donc de compléter l'étude de Viviani (i.e. le cas $F=\Q_p$ et $n=1$) en calculant les groupes de ramification de $\Gal(\Q_p(\zeta_m, a^{1/m})/\Q_p)$ où $m=p^{v_p(m)}t$. 

Si $p\neq 2$, une récurrence sur $n\geq 2$ permet d'exprimer les sauts de $\Gal(K/F)$ en fonction de ceux de $\Gal(K'/F)$ où $K=F(\zeta_{p^r}, a_1^{1/p^s},\dots,a_n^{1/p^s})$ et où $K'=F(\zeta_{p^r}, a_2^{1/p^s},\dots,a_n^{1/p^s})$. 
Le cas $p=2$ nécessite plus d'attention car les mêmes arguments que le cas $p\neq 2$ peuvent s'appliquer, à condition de faire une hypothèse technique supplémentaire. 
En généralisant l'algorithme dans le cas $n=1$, on pourra alors en déduire la suite des groupes de ramification de $\Gal(K/F)$. 

Pour les mêmes raisons que le cas $n=1$, on pourra alors en déduire les groupes de ramification de $\Gal(L/F)$ où $L=F(\zeta_m, a_1^{1/m},\dots,a_n^{1/m})$ avec $m=p^{v_p(m)}t$ où $t$ est un entier premier à $p$. \\

On peut trouver au moins deux applications de nos résultats. 
La première concerne les courbes de Tate.
Soient $F/\Q_p$ une extension non-ramifiée et $E/F$ une courbe de Tate (pour la définition, cf \cite[Appendix C.14]{Thearithmeticofellipticcurves}). 
Un théorème de Tate (cf \cite[Appendix C, Theorem 14.1 (a)]{Thearithmeticofellipticcurves}) montre qu'il existe $q\in F^*$ et un morphisme (explicite) $\phi : \overline{F}^*/q^\Z \to E(\overline{F})$ tels que pour toute extension algébrique $L/F$, on ait $\phi \; : L^*/q^\Z \simeq E(L)$.

Soit $m\geq 1$ un entier. 
Fixons une racine $m$-ième de l'unité et de $q$ que l'on note respectivement $\zeta_m$ et $q^{1/m}$. 
De l'isomorphisme ci-dessus, on en déduit que le groupe $E[m]$ des points de $m$-torsion est égal à $E[m]=\langle \phi(\zeta_m), \phi(q^{1/m})\rangle$. 
Soient $P_1,\dots,P_n\in E(F)$. 
Pour tout $i$, notons $a_i\in F^*/q^\Z$ tel que $P_i=\phi(a_i)$. 
L'expression de $\phi$ montre que $F(E[m], [1/m]P_1,\dots,[1/m]P_n)=F(\zeta_m, q^{1/m}, a_1^{1/m},\dots, a_n^{1/m})$ (ici, on a confondu la classe $a_i\in F^*/q^\Z$ avec son représentant canonique $a_i\in F^*$).
Par conséquent, notre étude permet de calculer la suite des groupes de ramifications du groupe de Galois (sur $F$) du corps engendré par les points de torsion d'ordre $m$ et les racines $m$-ièmes d'un nombre fini de points $F$-rationnels d'une courbe de Tate.    

La seconde application concerne la théorie des hauteurs.
Soient $\Gamma$ un groupe de type fini et $S$ un ensemble fini de premiers rationnels. 
Notons \[\Gamma_{\mathrm{div}, S}=\left\{ g\in \overline{\Q} \mid \exists n\in\N^*, g^n\in\Gamma \;\text{et} \; (p\mid n \Rightarrow p\in S)\right\}.\]
Comme l'a remarqué Amoroso dans \cite{OnaconjofRemondtada}, la connaissance du dernier groupe de ramification des extensions considérées dans \cite{viviani} permet de minorer uniformément la hauteur de Weil de l'ensemble $\Q(\langle 2\rangle_{\mathrm{div}, \{3\}})\backslash \langle 2\rangle_{\mathrm{div},\{3\}}$.
Comme notre étude généralise les résultats de Viviani utilisé dans \cite{OnaconjofRemondtada}, il devrait être possible de minorer uniformément la hauteur de Weil des éléments de l'ensemble $K(\Gamma_{\mathrm{div}, S})\backslash \Gamma_{\mathrm{div}, S}$ où $K$ est un corps de nombres, où $\Gamma\subset K^*$ est un groupe de type fini et où $S$ est un ensemble fini de premiers impairs (cette restriction vient de notre hypothèse supplémentaire lorsque $p=2$) ne divisant pas le discriminant de $K$. 
Cela fait l'objet d'un prochain travail en cours. \\

\textbf{Remerciements :} 
Je souhaite remercier F. Amoroso qui a su simplifier ce présent travail. 
Je souhaite également remercier I. Del Corso pour nos différents échanges sur ce sujet et pour des suggestions qui m'ont permises de traiter le cas $p=2$.  
Enfin, je souhaite remercier P. Satgé pour les nombreuses discussions, fortes enrichissantes, que l'on a eues ensemble.  

\section{Résultats principaux et plan de l'article.} \label{résultats principaux}

 Dans cet article, fixons un premier $p$.
  Pour une extension galoisienne et finie $L/K$ de corps locaux contenant $\Q_p$ et de groupe de Galois $G$, on appelle \textit{$i$-ème groupe de ramification} le groupe: 
\begin{equation*}
G_i:=\{\sigma\in G\;\vert\;\forall\;x\in \mathcal{O}_L,\;\sigma(x)\equiv x \mod \mathfrak{p}_L^{i+1}\}
\end{equation*} 
où $\mathfrak{p}_L$ désigne l'idéal premier de $L$.
Il n'est pas dur de remarquer que $G_i$ est un sous-groupe normal de $G$ pour tout $i$ et que $G_j=\{1\}$ pour tout $j$ assez grand. 
On appellera saut de $G$ (ou saut de $L/K$) tout entier $t$ tel que $G_t\neq G_{t+1}$. 

Dans la suite, $F/\Q_p$ désignera une extension finie non ramifiée (mais pas fixée).
                 
\begin{defn} \label{defn de L r s}
Soient $a_1,\dots,a_n\in F$.
 Soient $r,s_1,\dots,s_n\in \N$ avec $r\geq 1$.
  On note $L_{r,s_1,\dots,s_n}$ le corps $F(\zeta_{p^r}, a_1^{1/p^{s_1}},\dots, a_n^{1/p^{s_n}})$. 
 \end{defn}

On se propose de calculer les groupes de ramification de $G:=\Gal(L_{r,s_1,\dots,s_n}/F)$ où $r\geq \max\{s_1,\dots,s_n\}$ et où $F$ et $a_1,\dots,a_n$ vérifient les conditions ci-dessous : 

\begin{hyp} \label{hypothèse 3}
  \begin{enumerate} [i)]
  \item $a_1,\dots,a_n\in\mathcal{O}_F\backslash\mathcal{O}_F^p$;
  \item $[L_{r,s_1,\dots,s_n} : F(\zeta_{p^r})]=p^{\sum_{i=1}^n s_i}$ ;
  \item $p\mid v_p(a_i)$ pour $i=1,\dots,n-1$;
  \item $L_{r,s_1,\dots,s_n}/F$ est totalement ramifiée.
  \end{enumerate}
 \end{hyp}

Même si le corps $L_{r,s_1,\dots,s_n}$ dépend du choix des $a_1,\dots,a_n$, on verra que tous les résultats que nous énoncerons sur $L_{r,s_1,\dots,s_n}$ ne dépendent pas du choix des $a_1,\dots,a_n$, mais uniquement de $v_F(a_n)$, si ces derniers vérifient l'hypothèse \ref{hypothèse 3}, ce qui justifie la notation.

Notons $\delta_{i,j}$ le symbole de Kronecker.
Soient $r\geq 1$, $s_1,\dots, s_n$ des entiers positifs et $k\in\{1,\dots,n+1\}$ tels que $s_{k-1}\geq 1$. 
Pour tout $i$, posons 
\begin{equation} \label{définition de r k}
r(k):=r-\delta_{1,k}=\begin{cases} 
r-1 \; & \text{si} \; k=1 \\
r \; & \text{sinon} 
\end{cases}
\end{equation}
et 
\begin{equation} \label{définition de s k}
 s_i(k):=s_i-\delta_{i+1,k}=\begin{cases} 
s_i-1 \; & \text{si} \; k=i+1 \\
s_i \; & \text{sinon} 
\end{cases}.
\end{equation}

Remarquons que si $r\geq 2$ ou $k\geq 2$ alors, $L_{r,s_1,\dots,s_n}/L_{r(k),s_1(k),\dots,s_n(k)}$ est de degré $p$.
Sinon, $L_{r,s_1,\dots,s_n}/L_{r(k),s_1(k),\dots,s_n(k)}$ est l'extension $L_{1,s_1,\dots,s_n}/L_{0,s_1,\dots,s_n}$ qui est de degré $p-1$.
Dans les deux cas, $L_{r,s_1,\dots,s_n}/L_{r(k),s_1(k),\dots,s_n(k)}$ a un unique saut.
 
\begin{defn} \label{définition  du saut} 
 L'unique saut de l'extension $L_{r,s_1,\dots,s_n}/L_{r(k),s_1(k),\dots,s_n(k)}$ sera noté $t_{n,k}(r,s_1,\dots,s_n)$. 
 \end{defn}
 
Dans la proposition  \ref{Théorème fondamental} $iv)$ et la proposition \ref{Théorème fondamental 2.} $iv)$, on montrera que ce saut ne dépend pas du choix des $a_1,\dots,a_n$, mais uniquement de $v_F(a_n)$, si ces derniers vérifient l'hypothèse \ref{hypothèse 3},  ce qui justifie cette notation.

Dans la section \ref{calcul des sauts}, on montrera les deux théorèmes ci-dessous : 

\begin{thm} \label{calcul des sauts, cas où la valuation est divisible par p}
Soient $F$ et $a_1,\dots,a_n$ vérifiant l'hypothèse \ref{hypothèse 3}. 
 Supposons de plus que $p\mid v_F(a_n)$ et que $s_1\leq \dots \leq s_n$.
Alors, les sauts de $G$, différents de $0$, sont les 

\begin{enumerate} [i)]
\item $t_{n,1}(s,s_1,\dots,s_n)$ avec $s\in \{s_n+1,\dots,r\}$;
\item $t_{n,1}(s_n'+1,s_1',\dots,s_n')$ où $s_n'\in\{1,\dots,s_n-1\}$ et où $s_i'=\min\{s_i,s_n'\}$;
\item $t_{n,n+1}(s_n',s_1',\dots,s_n')$ où $s_n'\in\{1,\dots,s_n\}$ et où $s_i'=\min\{s_i,s_n'\}$.
\end{enumerate}

De plus, si on note $u_j$ le $(n+1)$-uplet tel que $t_j=t_{n,k}(u_j)$ désigne le $(j+1)$-ième plus petit saut de $G$ alors, 

\begin{enumerate} [i)]
\item $G_0=G$;
\item $G_{t_j}=\Gal(L_{r,s_1,\dots,s_n}/L_{u_{j-1}})$ pour $j\geq 1$.
\end{enumerate}

\end{thm}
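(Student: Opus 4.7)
The plan is an induction on the height $\nu(L/F) := r + s_1 + \cdots + s_n$ of the tower, with base case $r = 1$ and $s_1 = \cdots = s_n = 0$, in which $L = F(\zeta_p)$ is tamely ramified over $F$ and produces no nonzero jump. The only input needed for the inductive step is Propositions~\ref{Théorème fondamental} and~\ref{Théorème fondamental 2.}, which compute the unique jump $t_{n,k}(r, s_1, \dots, s_n)$ of every degree-$p$ sub-extension $L/L_{r(k), s_1(k), \dots, s_n(k)}$, combined with the compatibility of lower-numbering ramification groups with quotients (Herbrand, in its unshifted form applicable to the normal degree-$p$ layer we peel off).

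For the inductive step, I would pick the \emph{topmost} degree-$p$ layer $L/M$ with $M = L_{r(k), s_1(k), \dots, s_n(k)}$, the integer $k$ being chosen so that $t_{n,k}(r, s_1, \dots, s_n)$ is the \emph{largest} jump of $G$. Concretely, if $r > s_n$, one takes $k = 1$ and peels off the cyclotomic step, which contributes the type~(i) jump $t_{n,1}(r, s_1, \dots, s_n)$; if $r = s_n$, one instead peels off either the cyclotomic step at level $s_n$ (type~(ii)) or the radical step on $a_n$ (type~(iii)), whichever has the larger jump according to the formulas of Propositions~\ref{Théorème fondamental} and~\ref{Théorème fondamental 2.}. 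A subtle point is that, since Hypothesis~\ref{hypothèse 3}~(iii) forces $p \mid v_F(a_i)$ for every $i < n$, many ``parallel'' degree-$p$ sub-extensions (corresponding to increments in the different $a_i$ directions at the same level $s_n'$) share the same unique jump; they must be merged into a single jump of $G$. The inductive hypothesis then provides the full filtration of $\Gal(M/F)$; Herbrand's theorem transfers this filtration to $G$ below the topmost jump, and the top layer is added on separately using the jump value just peeled off.

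The three types~(i), (ii), (iii) correspond to the three flavours of peel-off step: cyclotomic steps above the full radical level produce type~(i), intermediate cyclotomic steps at each level $s_n' < s_n$ produce type~(ii), and the successive $p$-th roots of $a_n$ at each level $s_n' \leq s_n$ produce type~(iii). The identification $G_{t_j} = \Gal(L / L_{u_{j-1}})$ then falls out of the construction: by design, the fixed field inside $L$ of the subgroup obtained after $j-1$ inductive peel-offs is exactly the intermediate field $L_{u_{j-1}}$ indexed by the $j$-th smallest jump. The main obstacle will be the combinatorial bookkeeping: one must verify that the union of the three types exhausts, \emph{without repetition}, the set of nonzero jumps of $G$ and that the natural order on these integers agrees with the chain order induced by inclusion of the fields $L_{u_j}$. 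This reduces to a direct comparison of the explicit formulas for $t_{n,k}$ given by Propositions~\ref{Théorème fondamental} and~\ref{Théorème fondamental 2.}—doable but delicate—and a secondary care has to be taken with the special bottom layer $F(\zeta_p)/F$ of degree $p - 1$, whose jump is $0$ and which therefore must not contribute to the list.
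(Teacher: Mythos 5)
Your decomposition of $L/F$ into the same three flavours of layers is the right one, but you run the induction in the opposite direction from the paper: you peel off the \emph{top} layer $L/M$ (the one with the largest jump) and transport the filtration of the quotient $\Gal(M/F)$ up to $G$ via Herbrand, whereas the paper builds an explicit tower from the bottom ($K_1=L_{1,0,\dots,0}$ upwards), proves that the successive jumps $t_1<\dots<t_{m-1}$ are strictly increasing, and invokes the corollaire \ref{utile pour déterminer la suite des groupes de rami} — whose own proof peels off the \emph{bottom} layer, so that above the peeled jump the only transfer needed is the elementary identity $H_i=G_i\cap H$ for subgroups. Your dual version is workable: below the unique jump $t$ of $H=\Gal(L/M)$ one has $\psi_{L/M}=\mathrm{id}$ and $H\subseteq G_v$, so $G_v$ is the preimage of $(G/H)_v$, while for $v>t$ one gets $G_v\simeq (G/H)_{\phi_{L/M}(v)}=\{1\}$ \emph{provided} $t$ dominates every jump of $G/H$. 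That proviso is exactly the monotonicity statement the paper has to prove anyway, so nothing is saved; and you must separately handle the case where $t$ equals the largest jump of $G/H$ (parallel layers, occurring as soon as several $s_i$ coincide with $s_n$), in which the peeled layer contributes no new jump.

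The genuine gap is in your list of inputs. The propositions \ref{Théorème fondamental} et \ref{Théorème fondamental 2.} do not ``compute'' the jumps $t_{n,k}(r,s_1,\dots,s_n)$: they only establish equalities and recurrence relations among them (together with the property $(P)$). They are therefore insufficient to decide, at each inductive step, which $k$ realises the largest jump, and insufficient to verify that the families $i)$--$iii)$ are totally ordered as claimed. For that the paper needs the explicit closed formulas of the théorème \ref{thm ou je calcule TOUT} (items $i)$, $iii)$, $v)$), the strict monotonicity of the proposition \ref{croissance des suites} along the cyclotomic and radical subtowers, and the two interleaving comparisons of the lemme \ref{dernière minoration qu'il me manquait}. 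Likewise, the merging of parallel degree-$p$ subextensions that you correctly flag is not automatic: it rests on the lemme \ref{Unicité du saut, version théorique} through the corollaires \ref{unicité du saut 3} et \ref{unicité du saut}, which themselves require the property $(P)$. With these ingredients supplied, your top-down induction closes; without them, the ordering claim at the heart of the argument is unsupported.
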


\begin{thm} \label{calcul des sauts, cas où la valuation n'est pas divisible par p}
Soient $F$ et $a_1,\dots,a_n$ vérifiant l'hypothèse \ref{hypothèse 3}.
Supposons de plus que $p\nmid v_F(a_n)$ et que $s_1\leq \dots\leq s_{n-1}$.
Alors, les sauts de $G$, différents de $0$, sont les 

\begin{enumerate} [i)]
\item $t_{n,n+1}(r,s_1,\dots,s_n) $ si $r=s_n$;
\item $t_{n,1}(s_n'+1,s_1',\dots,s_{n-1}', \min\{s_n'-1,s_n\})$ où $s_n'\in\{1,\dots,r\}$ et $s_i'=\min\{s_i,s_n'\}$;
\item $t_{n,n}(s_{n-1}',s_1',\dots,s_{n-1}',s_n)$ où $s_{n-1}'\in\{s_n+2,\dots,s_{n-1}\}$ et $s_i'=\min\{s_i,s_{n-1}'\}$;
\item $t_{n,n+1}(s_n'+1,s_1',\dots,s_n')$ où $s_n'\in\{1,\dots,s_n\}$ et $s_i'=\min\{s_i,s_n'+1\}$.
\end{enumerate}

De plus, si on note $u_j$ le $(n+1)$-uplet tel que $t_j=t_{n,k}(u_j)$ désigne le $(j+1)$-ième plus petit saut de $G$ alors,

\begin{enumerate} [i)]
\item $G_0=G$;
\item $G_{t_j}=\Gal(L_{r,s_1,\dots,s_n}/L_{u_{j-1}})$ pour $j\geq 1$.
\end{enumerate}
\end{thm}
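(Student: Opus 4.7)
Je procéderais par récurrence sur $r+s_1+\dots+s_n$, en construisant la filtration de ramification de $G$ étage par étage à partir des sauts élémentaires des sous-extensions de degré $p$ (ou $p-1$) fournis par la proposition \ref{Théorème fondamental 2.}. Plus précisément, pour chaque saut $t_j$ énuméré dans $(i)$--$(iv)$, il s'agirait d'identifier un sous-corps $L_{u_{j-1}}$ tel que $\Gal(L_{r,s_1,\dots,s_n}/L_{u_{j-1}}) = G_{t_j}$. Je commencerais par vérifier que chaque extension intermédiaire $L_{u_j}/L_{u_{j-1}}$ est cyclique de degré $p$ et que son unique saut vaut exactement $t_j$ (conséquence directe de la proposition \ref{Théorème fondamental 2.}), puis j'appliquerais le théorème de Herbrand pour remonter étage par étage, la fonctorialité en numérotation supérieure garantissant que les sauts s'accumulent correctement au fur et à mesure que l'on grimpe dans la tour.

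\medskip

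La distinction entre les quatre familles de sauts refléterait la structure combinatoire propre au cas $p\nmid v_F(a_n)$~: $(ii)$ rassemble les sauts issus de la direction cyclotomique, $(iv)$ ceux de la direction radicale $a_n$, $(iii)$ ceux de la direction $a_{n-1}$ (non triviaux seulement lorsque $s_{n-1}'>s_n+1$, seuil à partir duquel la ramification dans cette direction devient réellement distincte), et $(i)$ un saut isolé qui n'apparaît que quand $r=s_n$, traduisant l'interaction particulière au sommet de la tour entre le pan cyclotomique et le pan $a_n$. L'hypothèse $s_1\leq\dots\leq s_{n-1}$ garantirait au passage que les tuples $(s_1',\dots,s_{n-1}')$ définis via $s_i'=\min\{s_i,s_n'\}$ (ou $\min\{s_i,s_{n-1}'\}$, ou $\min\{s_i,s_n'+1\}$, selon le cas) correspondent bien à des sous-corps effectivement contenus dans $L_{r,s_1,\dots,s_n}$.

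\medskip

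Le principal obstacle attendu est double. D'une part, l'exhaustivité et la non-redondance de l'énumération $(i)$--$(iv)$~: il faut vérifier que la somme des cardinaux des quatre familles coïncide avec le nombre total de sauts non nuls de $G$, et que deux paramètres distincts ne produisent jamais la même valeur $t_{n,k}$. D'autre part, la compatibilité entre l'ordre croissant des $t_j$ et l'ordre décroissant des sous-groupes $\Gal(L_{r,s_1,\dots,s_n}/L_{u_{j-1}})$, qui nécessitera une analyse cas par cas des valeurs explicites données par la proposition \ref{Théorème fondamental 2.}, avec une attention particulière aux frontières entre familles et au seuil critique $r=s_n$ où l'item $(i)$ s'insère dans la chaîne.
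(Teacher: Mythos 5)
Votre squelette général --- construire une tour $K_1\subset\dots\subset K_m=L_{r,s_1,\dots,s_n}$ dont chaque étage admet un unique saut, vérifier que ces sauts croissent strictement, puis assembler par Herbrand via le corollaire \ref{utile pour déterminer la suite des groupes de rami} --- est bien celui du papier. Mais votre plan contient un point qui ferait échouer l'argument : vous affirmez que chaque étage $L_{u_j}/L_{u_{j-1}}$ est cyclique de degré $p$. Ce n'est pas le cas, et cela ne peut pas l'être : dès le premier étage non trivial, l'extension $L_{1,1,\dots,1,0}/L_{1,0,\dots,0}$ est élémentaire abélienne de degré $p^{n-1}$, et plus généralement plusieurs coordonnées $s_i$ doivent augmenter simultanément. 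Si vous raffiniez la tour en étages cycliques de degré $p$, deux étages consécutifs à l'intérieur d'une même couche élémentaire abélienne auraient des sauts \emph{égaux}, ce qui violerait l'hypothèse de croissance stricte indispensable au corollaire \ref{utile pour déterminer la suite des groupes de rami}. Le papier contourne cela par le Lemme \ref{Unicité du saut, version théorique} et ses corollaires \ref{unicité du saut 3} et \ref{unicité du saut} : une extension de groupe $(\Z/p\Z)^k$ dont toutes les sous-extensions de degré $p$ ont le même saut possède un unique saut. C'est l'ingrédient qui manque à votre proposition, et il n'est pas cosmétique.

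Par ailleurs, la partie que vous identifiez à juste titre comme l'obstacle principal --- la compatibilité entre l'ordre des $t_j$ et la tour --- est précisément là où réside tout le travail, et elle ne se réduit pas à une vague analyse cas par cas : le papier la mène en quatre étapes, en combinant la proposition \ref{croissance des suites} (croissance stricte des sauts dans une tour radicale ramifiée), les comparaisons explicites des Lemmes \ref{dernière minoration qu'il me manquait} et \ref{majorations utiles dans le cas où la valuation n'est pas divisaible pas p} (qui reposent sur les formules fermées du théorème \ref{thm ou je calcule TOUT}), et une discussion délicate de la quantité $S^{(i)}$ qui pilote l'alternance entre montée cyclotomique et montée radicale, y compris le cas limite $r=s_n$ où apparaît le saut isolé de la famille $(i)$. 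En revanche, votre souci d'exhaustivité et de non-redondance de l'énumération est un faux problème dans cette approche : la tour est construite de sorte que ses étages soient en bijection avec la liste $(i)$--$(iv)$, et la stricte croissance $t_1<\dots<t_{m-1}$ garantit automatiquement que les sauts sont deux à deux distincts.
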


Remarquons que si $p\mid v_F(a_n)$ (resp. $p\nmid v_F(a_n)$) alors, on peut permuter convenablement les $(a_1^{1/p^{s_1}}, \dots, a_n^{1/p^{s_n}})$ (resp. $(a_1^{1/p^{s_1}}, \dots, a_{n-1}^{1/p^{s_{n-1}}})$) tels que $s_1\leq \dots \leq s_n$ (resp. $s_1\leq \dots \leq s_{n-1}$). \\

\textit{Plan de l'article :}

Dans la section \ref{section rappels}, on calculera $G_0$ et $G_1$. 
On fera également quelques rappels et préliminaires.
 
Dans la section \ref{section avec tous les calculs chiants}, si $p\neq 2$, on montrera que le calcul des groupes de ramification de toute extension radicale, galoisienne et finie de groupe de Galois d'exposant une puissance une $p$ se ramène au calcul des groupes de ramification d'une extension $L_{r,s_1,\dots,s_n}/F$ où $r\geq \max\{s_1,\dots,s_n\}$ et où $F$ et $a_1,\dots,a_n$ vérifient les conditions de l'hypothèse \ref{hypothèse 3}. 
Dans le cas $p=2$, on montrera, à condition de faire une hypothèse technique supplémentaire, que notre problème initial se ramène également à calculer les groupes de ramification d'une extension $L_{r,s_1,\dots,s_n}/F$ où $r\geq \max\{s_1,\dots,s_n\}$ et où $F$ et $a_1,\dots,a_n$ vérifient les conditions de l'hypothèse \ref{hypothèse 3}. 

Dans la section \ref{cas n=1}, on calculera les sauts $t_{1,k}(r,s)$ avec $k\in\{1,2\}$ et $r,s\in\N^*$.

Dans la section \ref{Relations de récurrence entre les}, on établira, à partir du cas $n=1$ traité dans la section \ref{cas n=1}, certaines relations de récurrence entre les $t_{n,k}(r,s_1,\dots,s_n)$.

Dans la section \ref{calculs chiants}, on utilisera les relations de récurrence que l'on a ainsi obtenues afin de calculer explicitement les nombres $t_{n,k}(r,s_1,\dots,s_n)$.

Dans la section \ref{calcul des sauts}, le calcul des nombres $t_{n,k}(r,s_1,\dots,s_n)$ de la section précédente, couplé avec le corollaire \ref{utile pour déterminer la suite des groupes de rami}, nous permettra ainsi de montrer le théorème \ref{calcul des sauts, cas où la valuation est divisible par p} et le théorème \ref{calcul des sauts, cas où la valuation n'est pas divisible par p}.

Enfin, la section \ref{Une généralisation des Théorème} sera consacrée à une généralisation naturelle du théorème \ref{calcul des sauts, cas où la valuation est divisible par p} et du théorème \ref{calcul des sauts, cas où la valuation n'est pas divisible par p} en considérant non plus une extension radicale, galoisienne et finie de groupe de Galois d'exposant une puissance de $p$, mais d'exposant quelconque.

  \section{Rappels et préliminaires.} \label{section rappels}
Dans la suite, tous les corps considérés seront des extensions finies de $\Q_p$. 
Pour une extension galoisienne $L/K$ de groupe de Galois $G$, il est en général difficile de calculer les $G_i$ (et même les sauts). 
Cependant, il existe une expression explicite de $G_0$ (cf \cite[Chapitre IV, Corollaire]{Corpslocaux}) et de $G_1$ (cf \cite[Chapitre IV, Corollaire 1, Corollaire 3]{Corpslocaux}), à savoir que $G_0=\Gal(L/K^{nr})$ et $G_1=\Gal(L/K^{mr})$ où $K^{nr}$ (resp. $K^{mr}$) désigne l'extension maximale non-ramifiée (resp. modérément ramifiée) de $K$ contenue dans $L$.
Dans notre situation, cela donne

\begin{prop} \label{calculs de 0 et 1}
Soient $r,s_1,\dots,s_n\geq 1$ avec $r\geq \max\{s_1,\dots,s_n\}$ et $F, a_1,\dots,a_n$ vérifiant les conditions de l'hypothèse \ref{hypothèse 3}.
 On a \[G_0=G \; \text{et} \; G_1=\Gal(L_{r,s_1,\dots,s_n}/F(\zeta_p))\] où $G=\Gal(L_{r,s_1,\dots,s_n}/F)$.
\end{prop}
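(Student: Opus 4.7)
Le plan consiste à appliquer directement les deux caractérisations rappelées juste avant l'énoncé, à savoir $G_0=\Gal(L_{r,s_1,\dots,s_n}/F^{nr})$ et $G_1=\Gal(L_{r,s_1,\dots,s_n}/F^{mr})$, où $F^{nr}$ et $F^{mr}$ désignent respectivement les sous-extensions maximales non ramifiée et modérément ramifiée de $L_{r,s_1,\dots,s_n}/F$. Tout revient donc à identifier ces deux sous-corps intermédiaires.

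La première égalité est immédiate : l'hypothèse \ref{hypothèse 3} $iv)$ affirme précisément que $L_{r,s_1,\dots,s_n}/F$ est totalement ramifiée, d'où $F^{nr}=F$ et $G_0=G$.

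Pour la seconde, je montrerai que $F^{mr}=F(\zeta_p)$ en deux temps. D'abord, puisque $F/\Q_p$ est non ramifiée alors que $\Q_p(\zeta_p)/\Q_p$ est totalement modérément ramifiée de degré $p-1$, les corps $F$ et $\Q_p(\zeta_p)$ sont linéairement disjoints sur $\Q_p$, et $F(\zeta_p)/F$ est donc totalement modérément ramifiée de degré $p-1$ ; l'inclusion $F(\zeta_p)\subseteq F^{mr}$ en résulte par maximalité. Ensuite, le même argument de disjonction linéaire appliqué à $\Q_p(\zeta_{p^r})$ donne $[F(\zeta_{p^r}):F(\zeta_p)]=p^{r-1}$, puis combiné à l'hypothèse \ref{hypothèse 3} $ii)$ il vient $[L_{r,s_1,\dots,s_n}:F(\zeta_p)]=p^{r-1+\sum_i s_i}$. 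Comme $L_{r,s_1,\dots,s_n}/F(\zeta_p)$ est totalement ramifiée (étant intermédiaire dans une extension totalement ramifiée) et de degré une puissance de $p$, elle est totalement sauvagement ramifiée. Par conséquent, $F^{mr}/F(\zeta_p)$ est simultanément modérément ramifiée et de degré une puissance de $p$, donc triviale, et finalement $F^{mr}=F(\zeta_p)$.

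L'argument étant quasi-formel une fois les deux caractérisations de $G_0$ et $G_1$ en main, il n'y a pas de véritable obstacle conceptuel. Le seul point à soigner est l'emploi répété de la non-ramification de $F/\Q_p$, utilisée pour contrôler la ramification de $F(\zeta_p)/F$ ainsi que le degré $[F(\zeta_{p^r}):F(\zeta_p)]$ via la disjonction linéaire avec $\Q_p(\zeta_p)$ et $\Q_p(\zeta_{p^r})$ ; c'est exactement ce qui rend la description de $F^{mr}$ aussi simple, et cela explique structurellement pourquoi cette hypothèse sur $F$ figure dans tout l'article.
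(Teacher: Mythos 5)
Votre démonstration est correcte et suit essentiellement la même voie que celle de l'article : $G_0=G$ par la ramification totale (hypothèse \ref{hypothèse 3} $iv)$), puis identification de $F(\zeta_p)$ comme sous-extension maximale modérément ramifiée via le calcul du degré $[L_{r,s_1,\dots,s_n}:F]=(p-1)p^{r-1+\sum_i s_i}$ issu de la non-ramification de $F/\Q_p$ et de l'hypothèse $ii)$. Vous explicitez simplement davantage l'argument de disjonction linéaire et le fait que $L_{r,s_1,\dots,s_n}/F(\zeta_p)$ est totalement sauvagement ramifiée, points que l'article laisse implicites.
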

   
 \begin{proof}
Le fait que $G_0=G$ découle du $iv)$ de l'hypothèse \ref{hypothèse 3}.
Comme $F/\Q_p$ est non-ramifiée et que $\Q_p(\zeta_{p^r})/\Q_p$ est totalement ramifiée, on en déduit que $[F(\zeta_{p^r}) : F]=(p-1)p^{r-1}$.  
Ainsi, $[L_{r,s_1,\dots,s_n} : F]=(p-1)p^{r-1+\sum_{i=1}^n s_i}$ d'après le $ii)$ de l'hypothèse \ref{hypothèse 3}.
Il s'ensuit donc que $F(\zeta_p)/F$ est la sous-extension maximale modérément ramifiée de $L_{r,s_1,\dots,s_n}/F$. 
Ainsi, $G_1=\Gal(L_{r,s_1,\dots,s_n}/F(\zeta_p))$. 
   \end{proof}

En revanche, si $H$ est un sous-groupe de $G$, il est facile de calculer les $H_i$ en fonction des $G_i$. 
Plus précisément, on a que $H_i=G_i \cap H$ pour tout $i\geq 0$ (cf \cite[Chapitre IV, Proposition 2]{Corpslocaux}).   
     
Il est également possible, dans le cas où $H$ est un sous groupe normal de $G$, de calculer explicitement les quotients $(G/H)_i$.
Mais avant d'expliciter la formule, étendons de manière naturelle la notion de groupes de ramification en posant, pour tout réel $u\geq -1$, $G_u=G_i$ où $i=\lceil u\rceil$ (la partie entière supérieure de $u$). On peut ainsi définir la fonction $\phi_{L/K}$ de Herbrand, valable pour tout $u\geq -1$, par: 
\begin{equation*}  
  \phi_{L/K}(u)= \begin{cases}
      \int_0^u [G_0 : G_u]^{-1}\mathrm{d} t\; \text{si}\;u\geq 0 \\ \\[0.3 mm]
      u\;\text{si}\; -1\leq u<0
     \end{cases}.
  \end{equation*}
 C'est une bijection de $[-1,+\infty [$ dans lui-même (cf \cite[Chapitre IV, Proposition 12]{Corpslocaux}).
 On notera $\psi_{L/K}$ sa bijection réciproque. On peut maintenant expliciter les quotients $(G/H)_i$.

  \begin{thm}[Herbrand, Lemme 5, \cite{Corpslocaux}] \label{Théorème de Herbrand}
  Soit $K\subset L\subset M$ une tour d'extensions de corps telles que $L/K$ et $M/K$ soient galoisiennes. Notons $G$ le groupe de Galois de $M/K$ et $H$ celui de $M/L$. Alors, pour tout $v\geq 0$, on a $\Gal(L/K)_u\simeq (G/H)_v=G_uH/H$ où $u=\psi_{M/L}(v)$.
 \end{thm}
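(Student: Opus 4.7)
Il s'agit du théorème classique de Herbrand (Serre, \emph{Corps Locaux}, Chapitre IV), dont la démonstration est bien connue. Je suivrais l'approche standard : on introduit sur $G$ la fonction $i_G : G \to \N \cup \{+\infty\}$ définie par $i_G(\sigma) = v_M(\sigma(x) - x)$, où $x$ est un générateur de $\mathcal{O}_M$ comme $\mathcal{O}_K$-algèbre (un tel $x$ existe car les corps résiduels sont finis). Cette fonction caractérise la filtration de ramification par $\sigma \in G_i \iff i_G(\sigma) \geq i + 1$. On dispose des fonctions analogues $i_H$ pour $M/L$ et $i_{G/H}$ pour $L/K$.

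Le cœur de la démonstration est le lemme d'addition suivant : pour tout $\bar\sigma \in G/H$ avec $\bar\sigma \neq 1$,
\[ e_{M/L}\bigl(i_{G/H}(\bar\sigma) - 1\bigr) = \sum_{s \in \bar\sigma}\bigl(i_G(s) - 1\bigr), \]
la somme portant sur les relèvements de $\bar\sigma$ dans $G$. Sa démonstration constitue le principal obstacle technique : elle repose sur une comparaison explicite, au niveau des valuations, entre un générateur de $\mathcal{O}_L$ sur $\mathcal{O}_K$ et les racines du polynôme minimal d'un générateur de $\mathcal{O}_M$ sur $\mathcal{O}_L$. Il peut être commode d'établir d'abord le lemme dans le cas particulier où $\mathcal{O}_M$ est monogène sur $\mathcal{O}_L$, puis de s'y ramener dans le cas général.

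De ce lemme, on déduit par un calcul direct la transitivité des fonctions de Herbrand, à savoir $\phi_{M/K} = \phi_{L/K} \circ \phi_{M/L}$ (et donc $\psi_{M/K} = \psi_{M/L} \circ \psi_{L/K}$). Pour conclure, on fixe $\bar\sigma \in G/H$ non trivial et on choisit un relèvement $\sigma_0$ de $\bar\sigma$ maximisant $i_G$ ; le lemme d'addition, combiné à l'expression explicite des $i_G(\sigma_0 \tau)$ pour $\tau \in H$ via l'inégalité ultramétrique, entraîne alors la relation $i_{G/H}(\bar\sigma) - 1 = \phi_{M/L}(i_G(\sigma_0) - 1)$. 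Il en résulte que $\bar\sigma \in (G/H)_v$ si et seulement si $i_G(\sigma_0) \geq u + 1$ avec $u = \psi_{M/L}(v)$, c'est-à-dire si et seulement si $\sigma_0 \in G_u$, soit encore $\bar\sigma \in G_u H/H$. Ceci établit l'identité voulue.
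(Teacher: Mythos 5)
Le papier ne démontre pas cet énoncé : c'est le théorème de Herbrand, importé tel quel de Serre (\emph{Corps locaux}, chapitre IV, Lemme 5 et Proposition 14), et votre esquisse reproduit précisément l'argument classique de cette référence (fonction $i_G$, lemme d'addition, relation $i_{G/H}(\bar\sigma)-1=\phi_{M/L}(j(\bar\sigma)-1)$, puis traduction en termes des $G_u$) ; il n'y a donc pas d'approche du papier à laquelle la comparer, et votre plan est correct, même s'il renvoie l'essentiel du travail technique à la preuve (non rédigée) du lemme d'addition. Deux remarques : la forme que vous donnez à ce lemme, $e_{M/L}\bigl(i_{G/H}(\bar\sigma)-1\bigr)=\sum_{s\in\bar\sigma}\bigl(i_G(s)-1\bigr)$, ne coïncide avec celle de Serre, $e_{M/L}\, i_{G/H}(\bar\sigma)=\sum_{s\in\bar\sigma} i_G(s)$, que lorsque $M/L$ est totalement ramifiée, puisque la classe $\bar\sigma$ compte $e_{M/L}f_{M/L}$ éléments ; pour l'énoncé général il faut donc la version de Serre. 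Par ailleurs, la réduction au cas où $\mathcal{O}_M$ est monogène sur $\mathcal{O}_L$ est superflue ici, cette monogénéité étant automatique pour des corps locaux à corps résiduels finis.
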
  
   
   Ce théorème va nous permettre de calculer les sauts d'une tour d'extensions en fonction des sauts de chaque étage, dès lors que chaque étage possède un unique saut et que ces sauts forment une suite strictement croissante. 
   
\begin{lmm} \label{Calcul des sauts pour une extension de trois corps, toutes galoisiennes}
Soit $K\subset L\subset M$ une tour d'extensions finies. Supposons que $M/K$ et $L/K$ soient galoisiennes. Notons $t_1<\dots<t_n$ les sauts de $M/L$. Supposons que $L/K$ possède un unique saut égal à $t$ et que $t<t_1$. Alors, les sauts de $M/K$ sont $t,t_1,\dots,t_n$. De plus, pour tout $j\geq t+1$, on a $\Gal(M/K)_j=\Gal(M/L)_j$.
\end{lmm}

\begin{proof}
Posons $G=\Gal(M/K)$ et $H=\Gal(M/L)$.
Comme $t<t_1$, il s'ensuit que $\psi_{M/L}(x)=x$ pour tout $x\leq t+1$ (car $\phi_{M/L}(x)=x$ si $x\leq t_1$).
Ainsi, par le théorème \ref{Théorème de Herbrand}, on en déduit que $G/H=(G/H)_t=G_tH/H$. 
D'après le second théorème d'isomorphisme, il s'ensuit que $G/H\simeq G_t/(G_t\cap H)$. 
Or, $H\cap G_t=H_t$ et $H_t=H$ puisque $t<t_1$.
Ainsi, $G/H\simeq G_t/H$ et donc $G=G_t$ puisque $\#G=\#G_t$ et $G_t\subset G$.

  Comme $L/K$ possède un unique saut, on a, toujours d'après le théorème \ref{Théorème de Herbrand} :
\begin{equation*}
\{1\}=\Gal(L/K)_{t+1}\simeq G_{t+1}H/H.
\end{equation*}
  Ceci montre que $G_{t+1}\subset H\neq G$. 
  On en conclut que $t$ est le premier saut de $G$.
  De plus, pour tout $i\geq t+1$, on a $H_i=G_i \cap H=G_i$, ce qui permet de montrer que les autres sauts de $G$ sont $t_1,\dots,t_n$ et la seconde partie du lemme. 
\end{proof}

\begin{cor} \label{utile pour déterminer la suite des groupes de rami}
Soient $n\geq 2$ et $K_1\subset \dots\subset K_n$ une tour d'extensions finies. On suppose que pour tout $i$, $K_i/K_1$ est galoisienne et $K_{i+1}/K_i$ admet un unique saut égal à $t_i$. 
Si $t_1< \dots < t_{n-1}$ alors, les sauts de $K_n/K_1$ sont $t_1,\dots,t_{n-1}$ et $\Gal(K_n/K_1)_{t_i}=\Gal(K_n/K_i)$ pour tout $i$ .
\end{cor}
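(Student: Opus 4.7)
Le plan est de procéder par récurrence sur $n\geq 2$. Le cas $n=2$ est immédiat car il reformule simplement l'hypothèse que $t_1$ est l'unique saut de $K_2/K_1$ (on a alors $\Gal(K_2/K_1)_{t_1}=\Gal(K_2/K_1)$ par définition d'un saut).

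Pour le pas de récurrence, ma stratégie est d'appliquer le lemme \ref{Calcul des sauts pour une extension de trois corps, toutes galoisiennes} à la tour à trois étages $K_1\subset K_2\subset K_n$. Avant cela, je commencerais par appliquer l'hypothèse de récurrence à la sous-tour $K_2\subset \dots\subset K_n$, ce qui est licite puisque ses hypothèses se vérifient immédiatement : pour $i\geq 2$, $K_i/K_2$ est galoisienne comme sous-extension de l'extension galoisienne $K_i/K_1$, chaque $K_{i+1}/K_i$ possède le saut unique $t_i$ par hypothèse, et $t_2<\dots<t_{n-1}$. Cette application de l'hypothèse de récurrence fournirait alors que les sauts de $K_n/K_2$ sont $t_2,\dots,t_{n-1}$ et que $\Gal(K_n/K_2)_{t_i}=\Gal(K_n/K_i)$ pour tout $i\in\{2,\dots,n-1\}$.

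Il resterait ensuite à appliquer le lemme \ref{Calcul des sauts pour une extension de trois corps, toutes galoisiennes} avec $K=K_1$, $L=K_2$ et $M=K_n$ : le saut unique $t_1$ de $L/K$ est bien strictement plus petit que le premier saut $t_2$ de $M/L$, donc les hypothèses du lemme sont satisfaites. Le lemme entraînerait directement que les sauts de $K_n/K_1$ sont $t_1,t_2,\dots,t_{n-1}$, que $\Gal(K_n/K_1)_{t_1}=\Gal(K_n/K_1)$ (via la première partie de la preuve du lemme, qui établit $G=G_t$), et que $\Gal(K_n/K_1)_j=\Gal(K_n/K_2)_j$ pour tout $j\geq t_1+1$. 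En particulier, pour $i\geq 2$, comme $t_i\geq t_2>t_1$, on aurait $\Gal(K_n/K_1)_{t_i}=\Gal(K_n/K_2)_{t_i}=\Gal(K_n/K_i)$ par hypothèse de récurrence, ce qui achèverait la démonstration.

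L'unique point qui demande un peu d'attention est purement organisationnel : il faut bien vérifier que les hypothèses se transmettent à la sous-tour $K_2\subset\dots\subset K_n$ avant d'invoquer la récurrence, puis recoller avec l'étage du bas via le lemme. Aucune idée nouvelle au-delà de ce lemme n'est nécessaire : tout le travail technique (Herbrand, second théorème d'isomorphisme) y a déjà été effectué.
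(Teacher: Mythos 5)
Votre démonstration est correcte et suit essentiellement la même voie que celle du papier : les deux reposent sur le Lemme \ref{Calcul des sauts pour une extension de trois corps, toutes galoisiennes} appliqué aux triplets $(K_{l-1},K_l,K_n)$, la seule différence étant que le papier formule cela comme une récurrence descendante sur l'indice $l$ tandis que vous récurez sur la longueur $n$ de la tour en détachant l'étage du bas, ce qui, une fois déroulé, donne exactement la même suite d'applications du lemme.
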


\begin{proof}
Supposons que pour un entier $l\leq n-1$, les sauts de $K_n/K_l$ soient $t_l,\dots,t_{n-1}$ et que pour tout $j\geq l$, on ait $\Gal(K_n/K_l)_{t_j}=\Gal(K_n/K_j)$ (c'est notre hypothèse si $l=n-1$). 
Alors, en appliquant le Lemme \ref{Calcul des sauts pour une extension de trois corps, toutes galoisiennes} à $M=K_n$, à $L=K_l$ et à $K=K_{l-1}$, on en déduit, car  $t_{l-1}<t_l$, que les sauts de $K_n/K_{l-1}$ sont les $t_{l-1},\dots,t_{n-1}$ et que 
\begin{equation*}
\Gal(K_n/K_{l-1})_{t_j} =\Gal(K_n/K_l)_{t_j}=\Gal(K_n/K_j)
\end{equation*}
 pour tout $j\geq l$.
De plus, il est clair que $\Gal(K_n/K_{l-1})_{t_{l-1}}=\Gal(K_n/K_{l-1})$.
Par une récurrence descendante, on en déduit le corollaire.
\end{proof} 

\`A partir de maintenant, on utilisera les notations suivantes. 
Pour un corps $K$, on notera $v_K$ la valuation normalisée sur $K$, $\mathfrak{p}_K$ son idéal premier et $K\{d\}$ l'unique extension non ramifiée de $K$ de degré $d$.
Pour une extension finie $L/K$ (non nécessairement galoisienne), on notera $\mathcal{D}_{L/K}$ la différente de cette extension. 
Si $L/K$ est galoisienne, il existe un lien entre sa différente et le cardinal de ses groupes de ramification. 
Ce lien est résumé dans la proposition ci-dessous. 

\begin{prop} (\cite[Chapitre IV,Proposition 4]{Corpslocaux}) \label{Formule de la valuation}
Pour toute extension galoisienne finie $L/K$ de groupe de Galois $G$, on a: 
\begin{equation*}
v_L(\mathcal{D}_{L/K})=\sum_{i=0}^{\infty} (\# G_i -1).
\end{equation*}
\end{prop}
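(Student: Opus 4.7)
The plan is to follow the standard argument, which rests on picking a single generator of $\mathcal{O}_L$ as an $\mathcal{O}_K$-algebra and translating the definition of the $G_i$ into a valuation condition on the conjugates of that generator. Because $L/K$ is a finite extension of local fields with (necessarily perfect) residue fields, one can find $\alpha\in\mathcal{O}_L$ such that $\mathcal{O}_L=\mathcal{O}_K[\alpha]$; writing $g\in\mathcal{O}_K[X]$ for its minimal polynomial, one has the classical identification $\mathcal{D}_{L/K}=(g'(\alpha))$. The first step is therefore to reduce the computation of $v_L(\mathcal{D}_{L/K})$ to that of $v_L(g'(\alpha))$.

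Next, I would expand $g'(\alpha)$ as a product over conjugates. Since $g(X)=\prod_{\sigma\in G}(X-\sigma(\alpha))$, differentiation and evaluation at $\alpha$ give
\begin{equation*}
g'(\alpha)=\prod_{\sigma\in G,\,\sigma\neq 1}(\alpha-\sigma(\alpha)),
\end{equation*}
so that
\begin{equation*}
v_L(\mathcal{D}_{L/K})=\sum_{\sigma\neq 1} v_L(\alpha-\sigma(\alpha)).
\end{equation*}
Introducing the auxiliary function $i_G(\sigma):=v_L(\sigma(\alpha)-\alpha)$, it remains to show that, for $\sigma\neq 1$, one has $\sigma\in G_i\iff i_G(\sigma)\geq i+1$. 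The direct implication is immediate from the definition of $G_i$ applied to $x=\alpha$. The converse follows from the fact that $\mathcal{O}_L=\mathcal{O}_K[\alpha]$: any $x\in\mathcal{O}_L$ can be written as a polynomial in $\alpha$ with coefficients in $\mathcal{O}_K$, so $\sigma(x)-x$ is a multiple of $\sigma(\alpha)-\alpha$ in $\mathcal{O}_L$, whence $v_L(\sigma(x)-x)\geq i_G(\sigma)\geq i+1$.

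The conclusion is then a routine swap of summation:
\begin{equation*}
\sum_{\sigma\neq 1} i_G(\sigma)=\sum_{\sigma\neq 1}\sum_{i=0}^{i_G(\sigma)-1} 1=\sum_{i\geq 0}\#\{\sigma\neq 1\mid \sigma\in G_i\}=\sum_{i\geq 0}(\#G_i-1),
\end{equation*}
where the last sum is finite since $G_i=\{1\}$ for $i$ large enough. The main obstacle, and really the only nontrivial input, is the existence of a single $\alpha$ generating $\mathcal{O}_L$ over $\mathcal{O}_K$ together with the identification $\mathcal{D}_{L/K}=(g'(\alpha))$; once these are granted, the combinatorial reshuffling above yields the formula. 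Since the result is a verbatim quotation of \cite[Chapitre IV, Proposition 4]{Corpslocaux}, I would simply cite Serre rather than reproduce the argument in the body of the article.
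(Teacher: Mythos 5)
Votre démonstration est correcte et coïncide avec l'argument classique de Serre (\cite[Chapitre IV, Proposition 4]{Corpslocaux}) : le papier ne donne d'ailleurs aucune preuve et se contente de cette référence, exactement comme vous le proposez en conclusion. Les deux points non triviaux (existence d'un générateur $\alpha$ avec $\mathcal{O}_L=\mathcal{O}_K[\alpha]$ et l'identité $\mathcal{D}_{L/K}=(g'(\alpha))$) sont correctement identifiés, et l'équivalence $\sigma\in G_i\iff v_L(\sigma(\alpha)-\alpha)\geq i+1$ ainsi que l'interversion des sommes sont justes.
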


En particulier, si $L/K$ est une extension galoisienne, totalement ramifiée de degré $d$ et qu'elle possède un unique saut $t$, alors
\begin{equation} \label{calcul de la différente 2}
v_L(\mathcal{D}_{L/K})=(d-1)(t+1).
\end{equation}

On va maintenant rappeler quelques résultats bien connus sur les extensions radicales et principalement sur les extensions de Kummer d'exposant $p$.
Commençons par étudier le saut d'une extension cyclique.

\begin{defn}
Soient $K$ un corps local et $\alpha\in K\backslash K^p$. Notons

\begin{enumerate}[i)]
 \item Pour tout entier $i\geq 0$, $U_K^{(i)}=\{x\in \mathcal{O}_K\vert\; x\equiv 1 \mod \mathfrak{p}_K^i\}$;
\item $c_{K}(\alpha)=\sup\{i\in\N^*\vert\; \exists x\in K^*, \alpha x^{-p}\in U_K^{(i)}\}$.
\end{enumerate}
\end{defn}
Avec cette définition, il est immédiat que $c_K(\alpha x^{-p})=c_K(\alpha)$, pour tout $x\in K^*$.

\begin{thm} [Hecke, Theorem 10.2.9,                  \cite{Advancedtopicsincomputationalnumbertheory}] \label{Hecke}
Soient $K$ un corps local tel que $\zeta_p\in K$ et $\alpha\in K\backslash K^p$. Alors,
\begin{equation*}
v_{K(\sqrt[p]{\alpha})}(\mathcal{D}_{K(\sqrt[p]{\alpha})/K})=(p-1)\left(\frac{pe(K\vert \Q_p)}{p-1}-c_K(\alpha)+1\right).
\end{equation*}

\end{thm}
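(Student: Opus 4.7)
The plan is to give an explicit analysis of the Kummer extension $L = K(\sqrt[p]{\alpha})$ and to compute the different either via an Eisenstein polynomial or via its unique ramification break. Both sides of the claimed equality are invariant under the substitution $\alpha \mapsto \alpha x^{-p}$ for $x \in K^\ast$: the left-hand side because $L$ is unchanged, and the right-hand side by the very definition of $c_K$. One may therefore normalize $\alpha$ within its class in $K^\ast/(K^\ast)^p$. Similarly, since $K(\sqrt[p]{\alpha^m}) = K(\sqrt[p]{\alpha})$ and $c_K(\alpha^m) = c_K(\alpha)$ for $\gcd(m,p) = 1$, the residue of $v_K(\alpha)$ modulo $p$ can also be adjusted.

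I would first handle the Eisenstein case $p \nmid v_K(\alpha)$. After normalization one may assume $v_K(\alpha) = 1$; then $X^p - \alpha$ is Eisenstein, so $\sqrt[p]{\alpha}$ is a uniformizer of $L$ and generates $\mathcal{O}_L$ over $\mathcal{O}_K$. The different is computed directly from the derivative of the minimal polynomial:
\[
v_L(\mathcal{D}_{L/K}) = v_L\bigl(p\sqrt[p]{\alpha}^{p-1}\bigr) = p e(K\vert \Q_p) + (p-1),
\]
which agrees with the formula, $c_K(\alpha)$ being $0$ here (the defining set is empty).

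In the wildly ramified case $p \mid v_K(\alpha)$ one may assume $\alpha \in \mathcal{O}_K^\ast$ after another normalization. Let $c := c_K(\alpha) \ge 1$. A preliminary observation is that if $c < pe(K\vert\Q_p)/(p-1)$ then $p \nmid c$: otherwise the perfectness of the residue field would let one extract a $p$-th root of the leading term of $\alpha - 1$ and absorb it into a $p$-th power, contradicting the maximality of $c$. Writing $\alpha = 1 + u\pi_K^c$ with $u \in \mathcal{O}_K^\ast$ and setting $\beta = \sqrt[p]{\alpha}$, $\gamma = \beta - 1$, the Newton polygon of $(1+\gamma)^p = \alpha$ yields $v_L(\gamma) = c$ in the totally ramified extension $L/K$ of degree $p$. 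For any non-trivial $\sigma \in \Gal(L/K)$,
\[
\sigma(\gamma) - \gamma = (\zeta_p - 1)\beta, \qquad v_L\bigl(\sigma(\gamma) - \gamma\bigr) = \tfrac{p e(K\vert\Q_p)}{p-1}.
\]
A Bezout relation $ac + bp = 1$ produces a uniformizer $\pi_L = \gamma^a \pi_K^b$ of $L$, and expanding $\sigma(\pi_L)/\pi_L$ to first order gives $v_L(\sigma(\pi_L) - \pi_L) = 1 + \tfrac{pe(K\vert\Q_p)}{p-1} - c$. Consequently the unique ramification break of $L/K$ is $t = \tfrac{pe(K\vert\Q_p)}{p-1} - c$, and Proposition \ref{Formule de la valuation} (through the relation \eqref{calcul de la différente 2}) yields the announced value of $v_L(\mathcal{D}_{L/K})$.

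The main technical obstacle is the boundary case $c_K(\alpha) = pe(K\vert\Q_p)/(p-1)$, where the Newton polygon of $(1+\gamma)^p - \alpha$ has two coinciding slopes and the extension may degenerate into the unique unramified extension of degree $p$. This case has to be treated separately by a direct comparison of $\alpha$ with $\zeta_p$ modulo a high power of $\pi_K$, noting that by the $p$-adic logarithm any unit with $c_K$ strictly larger than $pe(K\vert\Q_p)/(p-1)$ would automatically be a $p$-th power and is thus excluded by the hypothesis $\alpha \notin K^p$.
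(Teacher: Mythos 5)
The paper gives no proof of this statement: it is quoted directly from Cohen \cite{Advancedtopicsincomputationalnumbertheory}, so there is nothing internal to compare your argument with; I can only assess it on its own terms. Your treatment of the two main cases is the standard one and is correct. For $p\nmid v_K(\alpha)$, the normalisation to $v_K(\alpha)=1$, the Eisenstein generator and $v_L(\mathcal{D}_{L/K})=v_L(p\,\sqrt[p]{\alpha}^{\,p-1})=pe(K\vert\Q_p)+p-1$ do match the formula with $c_K(\alpha)=0$. For $p\mid v_K(\alpha)$ with $c:=c_K(\alpha)<pe(K\vert\Q_p)/(p-1)$, your observation that $p\nmid c$ (perfectness of the residue field plus maximality of $c$), the Newton-polygon computation $v_L(\gamma)=c$, and the evaluation of $v_L(\sigma\pi_L-\pi_L)=1+pe(K\vert\Q_p)/(p-1)-c$ on the uniformizer $\gamma^a\pi_K^b$ (note $p\nmid a$, so the first-order term dominates) correctly give the break $t=pe(K\vert\Q_p)/(p-1)-c$ and hence the different via \eqref{calcul de la différente 2}.

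The genuine gap is the boundary case $c_K(\alpha)=pe(K\vert\Q_p)/(p-1)$, which you defer to a "separate treatment"; no such treatment can yield the stated identity, because the identity fails there. Since $U_K^{(i)}\subset (K^{*})^p$ for $i>pe(K\vert\Q_p)/(p-1)$, one always has $c_K(\alpha)\leq pe(K\vert\Q_p)/(p-1)$ for $\alpha\notin K^p$, and when equality holds the extension $K(\sqrt[p]{\alpha})/K$ is the unramified extension of degree $p$: writing $\alpha=1+u\pi_K^{c_0}$ and $\sqrt[p]{\alpha}=1+\delta\pi_K^{e/(p-1)}$, the residual equation for $\delta$ is separable of Artin--Schreier type, so it either has a root in the residue field (forcing $\alpha\in K^p$ by Hensel, excluded) or cuts out the degree-$p$ residue extension. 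Then $v_L(\mathcal{D}_{L/K})=0$ while the right-hand side equals $p-1$; abstractly, a break at $t=0$ would make a totally ramified degree-$p$ extension tame, which is impossible. Your closing remark only excludes $c_K(\alpha)>pe(K\vert\Q_p)/(p-1)$, which is not where the difficulty lies. The statement therefore needs the implicit hypothesis that $K(\sqrt[p]{\alpha})/K$ be ramified, equivalently $c_K(\alpha)<pe(K\vert\Q_p)/(p-1)$ — harmless for the paper, whose extensions are totally ramified by the point $iv)$ de l'hypothèse \ref{hypothèse 3}, and consistent with Cohen's original formulation, which separates the unramified case — but your proof as written cannot close this case and should instead add that hypothesis and show the boundary value is thereby excluded.
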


En combinant \eqref{calcul de la différente 2} avec le théorème \ref{Hecke}, on en déduit que

\begin{cor} \label{Ma version du Théorème de Hecke}
Soient $K$ un corps local tel que $\zeta_p\in K$ et $\alpha\in K\backslash K^p$. Alors, le (unique) saut de $K(\sqrt[p]{\alpha})/K$ vaut 
\begin{equation*}
\frac{pe(K\vert \Q_p)}{p-1}-c_K(\alpha).
\end{equation*}
\end{cor}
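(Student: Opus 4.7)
Le plan est de déduire ce corollaire en égalisant deux expressions distinctes de $v_L(\mathcal{D}_{L/K})$ où l'on pose $L=K(\sqrt[p]{\alpha})$. D'abord, comme $\zeta_p\in K$ et $\alpha\notin K^p$, l'extension $L/K$ est cyclique de degré $p$ (extension de Kummer); son groupe de Galois étant d'ordre premier $p$, la filtration de ramification ne peut posséder qu'un seul saut, que l'on note $t$.

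On se place dans le cas totalement ramifié (le cas non ramifié étant écarté par l'hypothèse implicite d'existence d'un saut, ou bien traité en observant que la différente est alors triviale et que la formule annoncée donne $t=-1$, valeur conforme à la convention usuelle signalant l'absence effective de saut en degré positif). Dans ce cadre, la formule \eqref{calcul de la différente 2} appliquée à $d=p$ fournit directement $v_L(\mathcal{D}_{L/K})=(p-1)(t+1)$.

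Il suffit alors de confronter cette relation à l'expression donnée par le théorème \ref{Hecke}, à savoir $v_L(\mathcal{D}_{L/K})=(p-1)\bigl(\tfrac{pe(K|\Q_p)}{p-1}-c_K(\alpha)+1\bigr)$. En divisant par $p-1$ (qui est non nul) puis en soustrayant $1$ des deux membres, on obtient sans autre effort la valeur annoncée $t=\tfrac{pe(K|\Q_p)}{p-1}-c_K(\alpha)$.

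Aucune difficulté réelle n'est à anticiper: ce corollaire est une pure manipulation algébrique combinant deux résultats déjà établis, le seul point méritant vérification étant la compatibilité des deux formules dans le cas non ramifié, ce qui se fait par simple inspection.
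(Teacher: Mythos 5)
Votre démonstration est correcte et suit exactement la même voie que l'article, qui déduit le corollaire en une ligne en combinant la formule \eqref{calcul de la différente 2} (appliquée à $d=p$) avec le théorème \ref{Hecke}. Votre discussion du cas non ramifié est un complément raisonnable mais non nécessaire, l'énoncé présupposant l'existence d'un saut.
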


\begin{prop} \label{croissance des suites}
Soient $K/\Q_p$ une extension finie telle que $\zeta_p\in K$ et $\alpha\in K\backslash K^p$.
Pour tout entier $s\geq 1$, fixons une racine $p^s$-ième de $\alpha$ que l'on note $\alpha^{1/p^s}$.
Notons également $K_s=K(\alpha^{1/p^s})$ et $t_s$ le saut de $K_s/K_{s-1}$. 
Si $K_s/K_{s-1}$ est ramifiée pour tout $s\geq 1$ alors, la suite $(t_s)_{s\geq 1}$ est strictement croissante.
\end{prop}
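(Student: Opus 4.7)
La stratégie est d'utiliser le théorème de Hecke (corollaire \ref{Ma version du Théorème de Hecke}) pour écrire $t_s = \frac{pe(K_{s-1}|\Q_p)}{p-1} - c_s$, où l'on pose $c_s := c_{K_{s-1}}(\alpha^{1/p^{s-1}})$. Puisque chaque $K_s/K_{s-1}$ est supposée ramifiée et est cyclique de degré $p$, elle est totalement ramifiée, d'où $e(K_s|\Q_p) = p\cdot e(K_{s-1}|\Q_p)$. Un calcul immédiat donne alors
\[ t_{s+1}-t_s \;=\; p\cdot e(K_{s-1}|\Q_p) \;-\; (c_{s+1}-c_s), \]
et la stricte croissance de la suite $(t_s)$ se ramène à l'inégalité $c_{s+1} < c_s + p\cdot e(K_{s-1}|\Q_p)$.

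Pour majorer $c_{s+1}$, j'écrirai $\alpha^{1/p^s} = y^p(1+w)$ avec $y\in K_s^*$ et $v_{K_s}(w) = c_{s+1}$, puis j'appliquerai la norme $N := N_{K_s/K_{s-1}}$. Comme les conjugués de $\alpha^{1/p^s}$ sous $\Gal(K_s/K_{s-1})$ sont les $\zeta_p^i \alpha^{1/p^s}$ ($i=0,\dots,p-1$), on a $N(\alpha^{1/p^s}) = \alpha^{1/p^{s-1}}$ pour $p$ impair (un signe supplémentaire apparaîtra pour $p=2$). Il en résulte $\alpha^{1/p^{s-1}} = N(y)^p\cdot N(1+w)$, et donc par définition de $c_s$,
\[ c_s \;\geq\; v_{K_{s-1}}\bigl(N(1+w) - 1\bigr). \]
Je développerai alors $N(1+w) = 1 + e_1 + \dots + e_p$ en fonctions symétriques élémentaires des conjugués de $w$ : toutes ayant la même valuation $c_{s+1}$ dans $K_s$, on obtient $v_{K_{s-1}}(e_i) \geq ic_{s+1}/p$ pour $i\geq 2$, tandis que la description usuelle de la différente inverse, jointe à \eqref{calcul de la différente 2}, fournit la borne améliorée $v_{K_{s-1}}(\mathrm{Tr}(w)) \geq \lfloor (c_{s+1}+d)/p\rfloor$ avec $d := v_{K_s}(\mathcal{D}_{K_s/K_{s-1}}) = (p-1)(t_s+1)$.

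En combinant ces estimations, j'aboutirai à $c_s \geq \min\bigl((c_{s+1}+d)/p,\; 2c_{s+1}/p\bigr)$, ce qui conduit à deux cas selon la comparaison de $c_{s+1}$ avec $d$ : soit $c_{s+1} \leq pc_s/2$, soit $c_{s+1} \leq pc_s - d$. Dans chacun d'eux, l'inégalité cherchée $c_{s+1} < c_s + pe(K_{s-1}|\Q_p)$ se déduira d'un calcul élémentaire en utilisant la contrainte $c_s \leq \frac{pe(K_{s-1}|\Q_p)}{p-1} - 1$, conséquence de $t_s\geq 1$ et du fait que $\frac{pe(K_{s-1}|\Q_p)}{p-1}$ est entier (car $\zeta_p\in K_{s-1}$ implique $(p-1)\mid e(K_{s-1}|\Q_p)$). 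Le point le plus délicat sera l'obtention rigoureuse de la borne de trace via la différente inverse ; pour $p=2$, il faudra en outre adapter l'argument au signe issu de $N(\alpha^{1/2^s}) = -\alpha^{1/2^{s-1}}$, ce que l'on fait en observant que $v_{K_{s-1}}(N(1+w) + 1) \geq \min\bigl(v_{K_{s-1}}(N(1+w)-1),\; e(K_{s-1}|\Q_p)\bigr)$.
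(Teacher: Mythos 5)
Votre démarche est pour l'essentiel celle du texte : le corollaire \ref{Ma version du Théorème de Hecke} traduit les sauts en les invariants $c$, puis on compare $c_{K_s}(\alpha^{1/p^s})$ et $c_{K_{s-1}}(\alpha^{1/p^{s-1}})$ en passant à la norme. Là où le texte cite directement \cite[Chapitre V, Proposition 4]{Corpslocaux} (à savoir $N(U_{K_s}^{(n)})\subset U_{K_{s-1}}^{(\lfloor \phi_{K_s/K_{s-1}}(n)\rfloor)}$), vous redémontrez ce cas particulier à la main : le développement $N(1+w)=1+\mathrm{Tr}(w)+e_2+\dots+e_p$, la borne $v_{K_{s-1}}(e_i)\geq ic_{s+1}/p$ pour $i\geq 2$ et la majoration de la trace via la différente reconstituent exactement la fonction de Herbrand (avec $d=(p-1)(t_s+1)$ donné par \eqref{calcul de la différente 2}, $\lfloor (c+d)/p\rfloor$ coïncide à une constante bornée près avec $\phi(c)$), et vos deux cas correspondent aux deux branches $c\leq t_s$ et $c>t_s$ de la preuve du texte. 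Pour $p$ impair, vos inégalités finales se vérifient : dans le cas $c_{s+1}\leq pc_s/2$ on conclut par $c_s(p-2)<pe(K_{s-1}\vert\Q_p)$, et dans le cas $c_{s+1}\leq p(c_s+1)-d$ par $(p-1)c_s=pe(K_{s-1}\vert\Q_p)-(p-1)t_s$ et $t_s\geq 1$. Pensez à justifier explicitement $t_s\geq 1$ (l'extension est totalement et sauvagement ramifiée, donc $G_1=G_0\neq\{1\}$).

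Le point qui ne passe pas est le cas $p=2$. Vous identifiez correctement le signe $N(\alpha^{1/2^s})=-\alpha^{1/2^{s-1}}$, mais le remède proposé, $v(N(1+w)+1)\geq \min\bigl(v(N(1+w)-1),\,e\bigr)$ avec $e=e(K_{s-1}\vert\Q_2)$, ne permet pas de conclure : lorsque le minimum est atteint en $e$, vous n'obtenez que $c_s\geq e$, ce qui ne majore pas $c_{s+1}$. Or c'est précisément ce qui se produit dans la situation à exclure : si $t_{s+1}\leq t_s$, alors $c_{s+1}=4e-t_{s+1}=2e+(t_s-t_{s+1})+c_s\geq 2e$, de sorte que $v(N(w))=c_{s+1}$ et la borne sur $v(\mathrm{Tr}(w))$ dépassent toutes deux $e$ ; le terme $2$ domine, votre minoration se réduit à $c_s\geq e$ et l'argument devient circulaire. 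Il faut donc un traitement spécifique du signe pour $p=2$ (par exemple comparer $c_{K_{s-1}}(-\beta)$ à $c_{K_{s-1}}(\beta)$ plutôt que d'absorber le $-1$ dans le groupe d'unités). Notons que la preuve du texte, qui écrit sans commentaire $\alpha^{1/p^{s-1}}N_s(y)^{-p}\in N_s(U_{K_s}^{(c_s)})$, passe elle aussi ce signe sous silence ; votre rédaction a au moins le mérite de le faire apparaître.
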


\begin{proof}
Notons $\phi_s=\phi_{K_s/K_{s-1}}$ et $c_s=c_{K_s}(\alpha^{1/p^s})$.
Soit $y\in K_s$ tel que 
\begin{equation*}
\alpha^{1/p^s}y^{-p}\in U_{K_s}^{(c_s)}.
\end{equation*}
 En passant à la norme $N_s:=N_{K_s/K_{s-1}}$, il s'ensuit que 
\begin{equation*} 
 \alpha^{1/p^{s-1}}N_s(y)^{-p}\in N_s(U_{K_s}^{c_s}).
 \end{equation*}
D'après \cite[Proposition 4, Chapitre V]{Corpslocaux}, on a 
\begin{equation*}
\alpha^{1/p^{s-1}}N_s(y)^{-p}\in U_{K_{s-1}}^{(\lfloor\phi_s(c_s)\rfloor)}
\end{equation*}
 et donc $\lfloor\phi_s(c_s)\rfloor\leq c_{s-1}$.
Par le corollaire \ref{Ma version du Théorème de Hecke}, 
\begin{align} \label{première version du théorème de Hecke}
 t_s & =\frac{p}{p-1}e(K_{s-1}\vert\Q_p)-c_{s-1} \\ \label{seconde version du théorème de Hecke}
 t_{s+1}& =\frac{p}{p-1}e(K_s\vert\Q_p)-c_s. 
\end{align} 

 Si $c_s\leq t_s$ alors, $\phi_s(c_s)=c_s$ et donc $c_s\leq c_{s-1}$. 
 Comme $e(K_s\vert\Q_p)\geq e(K_{s-1}\vert\Q_p)$, il s'ensuit de \eqref{première version du théorème de Hecke} et \eqref{seconde version du théorème de Hecke} que $t_{s+1}> t_s$. 
 
 Si $c_s> t_s$ alors, $\phi_s(c_s)=t_s+(c_s-t_s)/p$.
 D'où $t_s+(c_s-t_s)/p< c_{s-1}+1$ ou encore 
\begin{equation*}
c_s< t_s+p(c_{s-1}+1-t_s).
\end{equation*}

En utilisant \eqref{première version du théorème de Hecke} et \eqref{seconde version du théorème de Hecke}, on a, après quelques calculs, que 
\begin{equation*}
t_{s+1}+p\left( \frac{pe(K_{s-1}\vert\Q_p)}{p-1}-\frac{e(K_s\vert\Q_p)}{p-1}\right) > (2p-1)t_s-p .
\end{equation*}
Or, $e(K_s\vert\Q_p)=e(K_s\vert K_{s-1})e(K_{s-1}\vert\Q_p)$ et $e(K_s\vert K_{s-1})=p$ par hypothèse.
 Comme $t_s\geq 1$, on en déduit que $(2p-1)t_s-p>t_s$, ce qui prouve la proposition.

\end{proof}

Si $p\neq 2$, il n'est pas nécessaire de supposer que $K_{s+1}/K_s$ soit ramifiée pour tout $s\geq 0$. 
Grâce au lemme ci-dessous, il suffit de supposer que $K_1/K_0$ soit ramifiée.

\begin{lmm} \label{Si c'est ramifié au 1er étage alors c'est tout le temps ramifié}
Soient $K$ un corps local et $\alpha\in K\backslash K^p$. Si  $p\neq 2$ et si $K(\alpha^{1/p})/K$ est ramifiée alors, l'extension $K(\alpha^{1/p^s})/K(\alpha^{1/p^{s-1}})$ est ramifiée pour tout $s\geq 1$.
\end{lmm}

\begin{proof}
Supposons qu'il existe $s\geq 1$ tel que $K(\alpha^{1/p^s})/K(\alpha^{1/p^{s-1}})$ ne soit pas ramifiée. 
L'extension $K(\alpha^{1/p^s})/K$ n'est donc pas totalement ramifiée.
Comme $p\neq 2$, les sous-corps de cette extension sont les $K(\alpha^{1/p^f})$ avec $f=0,\dots,s$ (cf \cite[Théorème 2.1]{Thelatticeofsubfieldsofaradicalextension}).
Il s'ensuit que $K(\alpha^{1/p})/K$ est non-ramifiée, ce qui est absurde.
\end{proof}

Nous allons maintenant étudier les sauts d'une extension galoisienne de groupe de Galois $(\Z/p\Z)^n$ avec $n\geq 2$. 

\begin{prop} \label{calcul de la différente.}
Soient $K_1$ et $K_2$ deux corps locaux et $K=K_1\cap K_2$. On suppose que $K_1/K$ et $K_2/K$ soient galoisiennes, que $K_1K_2/K$ soit totalement ramifiée et que $K_1/K,\; K_2/K,\; K_1K_2/K_2$ et $K_1K_2/K_1$ possèdent toutes un unique saut que l'on notera respectivement $t_1,\;t_2,\; t'_1$ et $t'_2$. Notons $d_i=[K_i : K]$. Alors, 

\begin{enumerate} [i)]
\item $d_2 (d_1-1)t_1+(d_2-1)t'_2 =d_1(d_2-1)t_2+(d_1-1)t'_1$. 
\item De plus, si $d_1=d_2=p$ et $t'_1< t'_2$ alors, 
\begin{equation*}
\begin{cases}
t'_1=t_1 \\
t'_2=p t_2+(1-p)t_1.
\end{cases}
\end{equation*}

\end{enumerate}

\end{prop}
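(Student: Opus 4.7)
Mon approche commencerait par $(i)$, qui résulte de la multiplicativité de la différente dans les tours $K\subset K_i \subset K_1K_2$ pour $i\in\{1,2\}$. Précisément,
\[v_{K_1K_2}(\mathcal{D}_{K_1K_2/K}) = v_{K_1K_2}(\mathcal{D}_{K_1K_2/K_i}) + e(K_1K_2/K_i)\, v_{K_i}(\mathcal{D}_{K_i/K}).\]
Comme $K_1/K$ et $K_2/K$ sont galoisiennes et que $K_1\cap K_2 = K$, elles sont linéairement disjointes, donc $[K_1K_2:K_i]=d_{3-i}$ ; l'hypothèse de ramification totale donne alors $e(K_1K_2/K_i)=d_{3-i}$ et la ramification totale de chaque étage. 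Chaque extension impliquée possédant un unique saut, il suffit d'appliquer la formule \eqref{calcul de la différente 2} à chacun des quatre morceaux, puis d'égaler les deux expressions obtenues pour $v_{K_1K_2}(\mathcal{D}_{K_1K_2/K})$ et de simplifier. Les termes constants se compensent et on obtient exactement $(i)$.

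Pour $(ii)$, avec $d_1=d_2=p$, la relation $(i)$ se réduit à $pt_1+t_2'=pt_2+t_1'$, soit
\[t_2'-t_1' = p(t_2-t_1).\]
Il suffit donc d'établir que $t_1=t_1'$ ; la seconde identité $t_2'=pt_2+(1-p)t_1$ en découlera automatiquement.

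Pour cela, je considérerais le groupe $G=\Gal(K_1K_2/K)\simeq(\Z/p\Z)^2$ et ses sous-groupes $H_i=\Gal(K_1K_2/K_i)$. L'hypothèse $t_1'<t_2'$ interdit à $G$ de n'avoir qu'un unique saut (sinon les quatre sauts coïncideraient), donc la filtration basse de $G$ admet exactement deux sauts $u<u'$ : $G_j=G$ pour $j\leq u$, $G_j=H$ pour $u<j\leq u'$ où $H$ est un certain sous-groupe d'ordre $p$, et $G_j=\{1\}$ pour $j>u'$. Puisque $(H_i)_j=G_j\cap H_i$, le saut de $H_i$ vaut $u$ si $H\neq H_i$ et $u'$ si $H=H_i$. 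Comme $t_1'$ (resp. $t_2'$) est le saut de $H_2$ (resp. $H_1$) et que $t_1'<t_2'$, on est contraint à $H=H_1$, $u=t_1'$ et $u'=t_2'$.

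Je conclurais alors par le théorème de Herbrand en numérotation supérieure appliqué aux quotients $G/H_i\simeq\Gal(K_i/K)$. Les sauts supérieurs de $G$ sont $\phi_G(u)=u$ et $\phi_G(u')=u+(u'-u)/p$. Pour $G/H_1$, la formule $(G/H_1)^v=G^vH_1/H_1$ passe de $G/H_1$ à $\{1\}$ en $v=u$ (car $G^{v}=H_1$ pour $v$ juste au-dessus de $u$), donc l'unique saut de $\Gal(K_1/K)$ vaut $u=t_1'$, c'est-à-dire $t_1=t_1'$. Pour $G/H_2$, comme $H_1H_2=G$, la classe ne change qu'au second saut supérieur, d'où $t_2=t_1'+(t_2'-t_1')/p$, qui se réécrit $t_2'=pt_2+(1-p)t_1$. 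La principale difficulté du plan est l'identification correcte de $H$ dans la filtration basse de $G$, étape qui relie précisément les sauts « externes » (des quotients) aux sauts « internes » (des sous-groupes) via la restriction $(H_i)_j=G_j\cap H_i$.
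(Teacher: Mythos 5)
Votre preuve est correcte. Pour le $i)$, votre argument est exactement celui du texte : multiplicativité de la différente dans les deux tours $K\subset K_i\subset K_1K_2$, puis quatre applications de \eqref{calcul de la différente 2} et comparaison. Pour le $ii)$, les deux preuves commencent par identifier de la même façon la filtration inférieure de $G=\Gal(K_1K_2/K)$ (via $(H_i)_j=G_j\cap H_i$ et $G=H_1H_2$, ce qui force $G_{t'_1}=G$, $G_{t'_1+1}=H_1$, $G_{t'_2+1}=\{1\}$), mais elles divergent ensuite. Le texte calcule une troisième fois $v_{K_1K_2}(\mathcal{D}_{K_1K_2/K})$ directement à partir de cette filtration par la proposition \ref{Formule de la valuation}, puis compare les trois expressions de $v$ pour en tirer $pt_1+t'_2=pt_2+t'_1=pt'_1+t'_2$, d'où $t_1=t'_1$. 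Vous, au contraire, redescendez la filtration vers les quotients $G/H_i\simeq\Gal(K_i/K)$ par le théorème de Herbrand (théorème \ref{Théorème de Herbrand}) en numérotation supérieure, ce qui donne directement $t_1=t'_1$ et $t_2=t'_1+(t'_2-t'_1)/p$ — cette dernière égalité redonnant d'ailleurs la relation du $i)$ comme sous-produit, sans avoir à la réutiliser. Votre variante est un peu plus structurelle (elle n'exige qu'un seul calcul de différente, celui du $i)$, et exploite pleinement la dualité sous-groupes/quotients de la ramification), tandis que celle du texte reste un pur jeu d'écritures sur les valuations de différentes ; les deux aboutissent au même résultat par des chemins de longueur comparable.
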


\begin{proof}

$i)$ Comme $K_1K_2/K$ est totalement ramifiée, on déduit de la formule sur la composée de deux différentes (cf \cite[Chapitre III, Proposition 8]{Corpslocaux}) que
\begin{align*} 
v:= v_{K_1K_2}(\mathcal{D}_{K_1K_2/K}) & = & v_{K_1K_2}(\mathcal{D}_{K_1K_2/K_1})+[K_1K_2 : K_1]v_{K_1}(\mathcal{D}_{K_1/K})\\ 
& = & v_{K_1K_2}(\mathcal{D}_{K_1K_2/K_2})+[K_1K_2 : K_2]v_{K_2}(\mathcal{D}_{K_2/K}).
\end{align*} 
Comme $[K_1K_2: K_1]=d_2$ et $[K_1K_2 : K_2]=d_1$, on a, en utilisant quatre fois \eqref{calcul de la différente 2}, que: 
\begin{equation} \label{je sais pas quoi mettre} 
\begin{aligned}
v & = (d_2-1)(t'_2+1)+d_2(d_1-1)(t_1+1) \\
 & =(d_1-1)(t'_1+1)+d_1(d_2-1)(t_2+1).
 \end{aligned}
\end{equation}
Le $i)$ s'ensuit en comparant ces égalités.  \\

$ii)$   Supposons maintenant que $d_1=d_2=p$ et que $t'_1<t'_2$.
   Comme $\Gal(K_1K_2/K)$ est d'ordre $p^2$, il ne peut avoir que deux sauts au plus. 
   De plus, 
\begin{equation*}   
   G=\Gal(K_1K_2/K)=\Gal(K_1K_2/K_1)\Gal(K_1K_2/K_2).
   \end{equation*}
    Ainsi, on obtient que $G_{t'_1}=G$, $G_{t'_1+1}=\Gal(K_1K_2/K_1)$ et que $G_{t'_2+1}=\{id\}$. 
    Les sauts de $K_1K_2/K$ sont donc $t'_1$ et $t'_2$ puisqu'ils sont distincts par hypothèse. 
   En utilisant la proposition \ref{Formule de la valuation} avec $L=K_1K_2$, on en déduit que 
\begin{equation*}
\begin{aligned}
v & =(p^2-1)(t'_1+1)+(p-1)(t'_2-t'_1)\\
& = (p-1)(p t'_1+p+1+t'_2).
\end{aligned}
\end{equation*}
Par ailleurs, \eqref{je sais pas quoi mettre} s'écrit (en factorisant par $d_1-1=d_2-1=p-1$) 
\begin{align*}
v & = (p-1)(t'_2+1+p(t_1+1)) \\
 & = (p-1)(t'_1+1)+p(t_2+1)).
\end{align*} 
 
   Il en résulte, en comparant ces trois relations, que $p t_1+t'_2 = p t_2+t'_1 = p t'_1+t'_2$. 
   Il s'ensuit que $t_1=t'_1$.
  De plus, $t'_2=pt_2+t'_1-p t'_1=pt_2+(1-p)t_1$, ce qui montre le $ii)$ et donc la proposition.
\end{proof}

Le résultat ci-dessous n'est qu'un cas particulier de la proposition \ref{calcul de la différente.}, mais qui se révèle être le point clé pour prouver le théorème \ref{calcul des sauts, cas où la valuation est divisible par p} et le théorème \ref{calcul des sauts, cas où la valuation n'est pas divisible par p}.
 Pour la proposition suivante, on considère $F$ et $a_1,\dots,a_n$ vérifiant les conditions de l'hypothèse \ref{hypothèse 3}. 
  
  \begin{prop} \label{calcul de la différente version appliquée.}
 Soient $r,s_1,\dots,s_n$ des entiers positifs avec $r\geq 1$. 
 Soient $l,k\in\{1,\dots,n+1\}$ distincts.
 Supposons $r\geq 2$ si $l=1$ ou $k=1$.
 Supposons également que $s_{l-1},s_{k-1}\geq 1$ si $l,k\geq 2$. Alors,
  \begin{enumerate} [i)]
  \item on a   
  \begin{multline*}
  t_{n,l}(r,s_1,\dots,s_n)-t_{n,k}(r,s_1,\dots,s_n) \\
  = p \big(t_{n,l}(r(k),s_1(k),\dots,s_n(k)) -t_{n,k}(r(l),s_1(l),\dots,s_n(l))\big).
  \end{multline*}
  
 \item  
  De plus, si $ t_{n,l}(r,s_1,\dots,s_n)< t_{n,k}(r,s_1,\dots,s_n)$ alors, 
 \begin{equation*}  
  t_{n,l}(r,s_1,\dots,s_n)=t_{n,l}(r(k),s_1(k),\dots,s_n(k)). 
  \end{equation*}
  
  \end{enumerate}
  \end{prop}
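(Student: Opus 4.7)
La stratégie consiste à appliquer la proposition \ref{calcul de la différente.} à deux sous-corps bien choisis de $L_{r,s_1,\dots,s_n}$. Je pose $K_1 := L_{r(l),s_1(l),\dots,s_n(l)}$ et $K_2 := L_{r(k),s_1(k),\dots,s_n(k)}$, et je désigne par $M$ le corps $L_{r',s_1',\dots,s_n'}$ obtenu en appliquant successivement les deux décrements, c'est-à-dire avec $r' = r - \delta_{1,k} - \delta_{1,l}$ et $s_i' = s_i - \delta_{i+1,k} - \delta_{i+1,l}$. L'objectif est alors d'identifier $M = K_1 \cap K_2$ et $L_{r,s_1,\dots,s_n} = K_1 K_2$, puis d'appliquer la proposition susmentionnée avec $d_1 = d_2 = p$.

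La première étape consistera à établir ces identifications. L'égalité $K_1 K_2 = L_{r,s_1,\dots,s_n}$ est immédiate puisque le compositum contient, par construction, tous les générateurs de $L_{r,s_1,\dots,s_n}$ sur $F$. Pour $K_1 \cap K_2 = M$, on procède par comptage de degrés : l'hypothèse \ref{hypothèse 3} $ii)$ combinée avec les hypothèses sur $r$ et les $s_{k-1},s_{l-1}$ assure que $[K_i : M] = p$ pour $i=1,2$ et que $[K_1K_2 : M] = p^2$, ce qui force $K_1$ et $K_2$ à être linéairement disjoints sur $M$, donc $K_1 \cap K_2 = M$.

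Il faudra ensuite vérifier les hypothèses de la proposition \ref{calcul de la différente.}. Les extensions $K_1/M$ et $K_2/M$ sont obtenues en adjoignant soit $\zeta_{p^r}$ (cas $k=1$ ou $l=1$, où l'on exploite $r \geq 2$), soit une racine $p$-ième d'un élément déjà présent dans $M$ (cas $k,l \geq 2$). Grâce à la présence de $\zeta_p$ dans $M$, ce sont dans tous les cas des extensions de Kummer cycliques de degré $p$, donc galoisiennes, et chacune possède automatiquement un unique saut. Enfin, $L_{r,s_1,\dots,s_n}/F$ étant totalement ramifiée par hypothèse \ref{hypothèse 3} $iv)$, la sous-extension $K_1K_2/M$ l'est aussi.

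Enfin, on applique la proposition \ref{calcul de la différente.}. En identifiant $t'_1 = t_{n,l}(r,s_1,\dots,s_n)$, $t'_2 = t_{n,k}(r,s_1,\dots,s_n)$, $t_1 = t_{n,k}(r(l),s_1(l),\dots,s_n(l))$ et $t_2 = t_{n,l}(r(k),s_1(k),\dots,s_n(k))$, la relation $pt_1 + t'_2 = pt_2 + t'_1$ déduite de la partie $i)$ de la proposition \ref{calcul de la différente.} se réorganise en la formule $i)$ du présent énoncé, et la partie $ii)$ de la proposition, sous l'hypothèse $t'_1 < t'_2$, fournit directement la partie $ii)$ demandée. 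L'étape la plus délicate sera probablement la vérification soigneuse de $K_1 \cap K_2 = M$ : bien que l'hypothèse \ref{hypothèse 3} $ii)$ garantisse le bon degré, le fait que le groupe $\Gal(L_{r,s_1,\dots,s_n}/F)$ ne soit pas abélien en général demande une attention particulière à la manipulation du treillis de ses sous-corps.
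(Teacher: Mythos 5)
Votre stratégie est exactement celle du texte : appliquer la proposition \ref{calcul de la différente.} aux deux sous-corps obtenus en décrémentant respectivement l'indice $l$ et l'indice $k$, avec $d_1=d_2=p$. Les vérifications préliminaires (compositum, intersection identifiée à $M$ par comptage de degrés via l'hypothèse \ref{hypothèse 3} $ii)$, galoisianité des étages par la théorie de Kummer, ramification totale via l'hypothèse \ref{hypothèse 3} $iv)$) sont correctes et correspondent à ce que fait l'article, qui est simplement plus elliptique sur l'égalité $K_1\cap K_2=M$.

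Il y a cependant une erreur d'étiquetage qui fausse la conclusion telle que vous l'écrivez. Dans la proposition \ref{calcul de la différente.}, $t'_1$ désigne le saut de $K_1K_2/K_2$ et $t'_2$ celui de $K_1K_2/K_1$ (c'est ce couplage qui rend cohérente la conclusion $t'_1=t_1$ du $ii)$, les deux sauts étant attachés au même sous-groupe $\Gal(K_1/K)\simeq\Gal(K_1K_2/K_2)$). Avec votre choix $K_1=L_{r(l),s_1(l),\dots,s_n(l)}$ et $K_2=L_{r(k),s_1(k),\dots,s_n(k)}$, on a donc $t'_1=t_{n,k}(r,s_1,\dots,s_n)$ et $t'_2=t_{n,l}(r,s_1,\dots,s_n)$, et non l'inverse comme vous l'écrivez. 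Avec vos identifications, la relation $pt_1+t'_2=pt_2+t'_1$ donne $t_{n,l}(r,s_1,\dots,s_n)-t_{n,k}(r,s_1,\dots,s_n)=p\big(t_{n,k}(r(l),\dots)-t_{n,l}(r(k),\dots)\big)$, c'est-à-dire la formule $i)$ avec le signe opposé au membre de droite ; et la conclusion $t'_1=t_1$ du $ii)$ devient $t_{n,l}(r,s_1,\dots,s_n)=t_{n,k}(r(l),\dots)$ au lieu de $t_{n,l}(r,s_1,\dots,s_n)=t_{n,l}(r(k),\dots)$, ce qui n'est pas l'énoncé. Il suffit d'échanger vos $t'_1$ et $t'_2$ (ou, de façon équivalente, de poser comme dans l'article $K_1=L_{r(k),\dots}$ et $K_2=L_{r(l),\dots}$ en conservant vos identifications des sauts primés) pour que tout rentre dans l'ordre ; l'argument coïncide alors avec celui du texte.
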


\begin{proof}
$i)$ Appliquons la proposition \ref{calcul de la différente.} à $K_1=L_{r(k),s_1(k),\dots,s_n(k)}$ et à $K_2=L_{r(l),s_1(l),\dots,s_n(l)}$.
Il est clair que $K_1K_2/K_1 \cap K_2$ est totalement ramifiée d'après le $iv)$ de l'hypothèse \ref{hypothèse 3}.
Comme $l\neq k$ et $r\geq 2$ si $l=1$ ou $k=1$, il en résulte que $[K_i : K_1\cap K_2]=p$ pour $i=1,2$.
Avec les notations de la proposition \ref{calcul de la différente.}, on a $d_1=d_2=p$.
On a également 
\begin{equation*}
\begin{cases}
t_1=t_{n,l}(r(k),s_1(k),\dots,s_n(k)),\; t_2=t_{n,k}(r(l),s_1(l),\dots,s_n(l)) \\
t'_1=t_{n,l}(r,s_1,\dots,s_n), \; t'_2=t_{n,k}(r,s_1,\dots,s_n).
\end{cases}
\end{equation*} 
D'après le $i)$ de la proposition \ref{calcul de la différente.}, $pt_1+t'_2=pt_2+t'_1$ ou encore $t'_2-t'_1=p(t_2-t_1)$, qui correspond à la première formule souhaitée. \\

$ii)$ Supposons maintenant que $t_{n,l}(r,s_1,\dots,s_n)< t_{n,k}(r,s_1,\dots,s_n)$, i.e. $t'_1<t'_2$.
Comme $d_1=d_2=p$, on déduit du $ii)$ de la proposition \ref{calcul de la différente.} que $t'_1=t_1$, qui correspond à la seconde formule souhaitée. 
\end{proof}

Remarquons que le $i)$ de la proposition \ref{calcul de la différente.} appliqué à $d_1=d_2=p$ montre que $t'_1=t'_2$ si et seulement si $t_1=t_2$. 
Le $ii)$ de la proposition \ref{calcul de la différente.} permet donc de calculer les sauts d'une extension galoisienne $L/K$ de groupe de Galois $(\Z/p\Z)^2$ dès lors que l'on puisse trouver deux sous-extensions de degré $p$ sur $K$ ayant chacune un saut distinct.

Le lemme suivant permet de calculer les sauts d'une extension galoisienne $L/K$ de groupe de Galois $(\Z/p\Z)^n$ dès lors que toutes les sous-extensions de degré $p$ sur $K$ ont le même saut. 
On a 

\begin{lmm} \label{Unicité du saut, version théorique}
Soient $n\geq 2$ et $L/K$ une extension abélienne totalement ramifiée de groupe de Galois $\left(\Z/p\Z\right)^n$. 
On suppose que toute sous-extension de degré $p$ sur $K$ ait le même saut $t_0$.
Alors, $L/K$ admet un unique saut égal à $t_0$.
\end{lmm}

\begin{proof}
Dans cette preuve, $K_i$ et $K'_i$ désigneront des sous-corps de $L$ de degré $p^i$ sur $K$. 
Montrons par récurrence sur $l\geq 1$ l'hypothèse $(H)$ suivante : pour tout corps $M$ inclus dans $L$ et de degré $p^l$ sur $K$, l'extension $M/K$ possède un unique saut égal à $t_0$. 

Par hypothèse, $(H)$ est vraie pour $l=1$.
Supposons maintenant que $(H)$ soit vraie jusqu'à un rang $l-1$ (avec $l\geq 2$) et montrons qu'elle l'est encore au rang $l$. 

Soient $K_l$ un corps et $K_{l-1}\subset K_l$. 
Notons $t$ le saut de $K_l/K_{l-1}$.
On souhaite montrer que $t=t_0$.
Soit $K_{l-2}\subset K_{l-1}$.
Comme  
\begin{equation*}
G:=\Gal(K_l/K_{l-2})\simeq \Z/p\Z\times\Z/p\Z,
\end{equation*}
 on en déduit qu'il existe $K'_{l-1}$ tel que $K_l=K_{l-1}K'_{l-1}$ et $K_{l-2}=K_{l-1}\cap K'_{l-1}$. 
Notons $t'$ le saut de $K_l/K'_{l-1}$.
Par hypothèse de récurrence, $t_0$ est à la fois le saut de $K_{l-1}/K_{l-2}$ et de $K'_{l-1}/K_{l-2}$. 
D'après le $i)$ de la proposition \ref{calcul de la différente.} appliqué à $K_1=K_{l-1}$ et à $K_2=K'_{l-1}$ (on a donc $d_1=d_2=p$ et $t_1=t_2=t_0$), on en déduit que $t=t'$.
Ainsi, $t$ est le saut de toute extension de la forme $K_l/K'_{l-1}$ avec $K'_{l-1}\supsetneq K_{l-2}$.
Cela signifie que $t$ est le plus grand saut de $G$.
Enfin, comme $\Gal(K_l/K_{l-2})=\Gal(K_l/K_{l-1})\Gal(K_l/K'_{l-1})$, on en déduit que $t$ est le plus petit saut de $G$.
Par conséquent, $t$ est l'unique saut de $G$.

Si $t_0< t$ alors, les sauts de $G$ sont $t_0$ et $t$ d'après le Lemme \ref{Calcul des sauts pour une extension de trois corps, toutes galoisiennes}, ce qui contredit l'unicité du saut.
  Ainsi, $t\leq t_0$. 
Le théorème \ref{Théorème de Herbrand} appliqué à $M=K_l$, à $L=K_{l-1}$ et à $K=K_{l-2}$ montre que $t=\psi_{K_l/K_{l-1}}(t_0)$.
 Ainsi, $\psi_{K_l/K_{l-1}}(t_0)\leq t_0$.
  Or, $\psi_{K_l/K_{l-1}}(x)\geq x$ pour tout $x\geq 0$. 
  Par conséquent, $\psi_{K_l/K_{l-1}}(t_0)= t_0$ et donc $t=t_0$.
  On a ainsi montré que pour tout $K_{l-1}\subset K_l$, le saut de $K_l/K_{l-1}$ est $t_0$.  
   
  Comme le dernier groupe de ramification non trivial de $\Gal(K_l/K)$ contient un groupe d'ordre $p$, il s'ensuit que $t_0$ est le dernier saut de $K_l/K$.
   Le théorème \ref{Théorème de Herbrand} appliqué à $M=K_l$, à $L=K_{l-1}$ et à $K=K$ montre que le plus petit saut de $M/K$ est au moins égal à $\psi_{M/L}(t_0)\geq t_0$ (par hypothèse de récurrence, $t_0$ est l'unique saut de $K_{l-1}/K$). 
   Ainsi, $t_0$ est l'unique saut de $K_l/K$, ce qui prouve le lemme.
    
    \end{proof}

Le lemme suivant donne la liste des sous-extensions de degré $p$ d'une extension de Kummer. Pour un corps $K$, la classe d'un élément $x\in K$ dans $K^\times/(K^\times)^p$ sera notée $[x]_K$.

\begin{lmm} \label{sous-corps d'une extension kumérienne.}
Soient $r\geq s\geq 1$ des entiers, $K$ un corps local contenant $\zeta_{p^r}$ et $a_1,\dots,a_n\in\mathcal{O}_K\backslash\mathcal{O}_K^p$ tels que 
\begin{equation} \label{indépendance des classes}
\forall i\geq 2, \; [a_i]_K \notin \langle [a_1]_K,\dots,[a_{i-1}]_K\rangle.
\end{equation}

Alors, les sous-corps intermédiaires de degré $p$ sur $K$ de l'extension abélienne $L:=K(a_1^{1/p^s},\dots,a_n^{1/p^s})/K$ sont de la forme $K(\gamma^{1/p})$ où $\gamma=c^p\prod_{j=1}^n a_j^{x_j}$ avec $c\in K$ et $x_j\in\{0,\dots,p-1\}$ des entiers non tous nuls.
\end{lmm}

\begin{proof}
Notons $\tilde{x}$ la classe de $x\in K^\times$ dans $K^\times/(K^\times)^{p^s}$.
D'après \cite[Theorem 10.2.5]{Advancedtopicsincomputationalnumbertheory}, on a 
\begin{equation*}
\Gal(L/K)\simeq \langle \widetilde{a_1},\dots,\widetilde{a_n}\rangle.
\end{equation*}
Soit $M$ une sous-extension de $L/K$ de degré $p$ sur $K$.
Alors, $M/K$ est une extension cyclique.
Toujours d'après \cite[Theorem 10.2.5]{Advancedtopicsincomputationalnumbertheory}, $M=K(\gamma^{1/p})$ pour un certain $\gamma\in K$.
En particulier, $\gamma\in L^p$ ou encore $d:=\gamma^{p^{s-1}}\in L^{p^s}\cap K$.
Ceci montre que $L=K(a_1^{1/p^s},\dots,a_n^{1/p^s}, d^{1/p^s})$.
En utilisant de nouveau \cite[Theorem 10.2.5]{Advancedtopicsincomputationalnumbertheory}, on a 
\begin{equation*}
\Gal(L/K)\simeq \langle \widetilde{a_1},\dots,\widetilde{a_n}, \widetilde{d}\rangle.
\end{equation*}
Par conséquent, $\langle \widetilde{a_1},\dots,\widetilde{a_n}\rangle=\langle \widetilde{a_1},\dots,\widetilde{a_n}, \widetilde{d}\rangle$.
Il existe donc des entiers $z_1,\dots,z_n\in\{0,\dots, p^s-1\}$ non tous nuls tels que $\widetilde{d}=\prod_{i=1}^n \widetilde{a_i}^{z_i}$.
Il existe donc $c\in K$ tel que $\gamma^{p^{s-1}}=c^{p^s} \prod_{i=1}^n a_i^{z_i}$.
On remarque que le cas $s=1$ donne ce que l'on souhaite.
Supposons donc $s>1$.
Pour tout entier $i$, notons $z_i=x_i p^{s-1}+r_i$ la division euclidienne de $z_i$ par $p^{s-1}$.
Remarquons que $x_i<p$.
Ainsi, $\gamma=  \zeta_{p^{s-1}}c^p \prod_{i=1}^n a_i^{x_i} \prod_{i=1}^n a_i^{r_i/p^{s-1}}$ pour une certaine racine $p^{s-1}$-ième de l'unité $\zeta_{p^{s-1}}$.
Comme $\zeta_{p^{s-1}},\gamma\in K$, on en déduit que $\prod_{i=1}^n a_i^{r_i/p^{s-1}}\in K$ et donc $\prod_{i=1}^n a_i^{r_i}\in K^p$ (car $s\geq 2$).
Il s'ensuit que $\prod_{i=1}^n [a_i]_K^{r_i}=[1]_K$. 
De \eqref{indépendance des classes} et du fait que $a_i\notin \mathcal{O}_K^p$, on en déduit que $r_i=0$ pour tout $i$, ce qui montre le lemme puisque $\zeta_{p^{s-1}}c^p=(\zeta_{p^s}c)^p$.
\end{proof}
 
 Reprenons les notations du Lemme \ref{sous-corps d'une extension kumérienne.}. 
 Il est clair que l'on peut supposer $c=1$. 
 Ainsi, les sous-extensions de $L/K$ de degré $p$ sur $K$ sont de la forme $K(\gamma^{1/p})$ avec $\gamma=\prod_{j=1}^n a_j^{x_j}$ où $x_j\in\{0,\dots,p-1\}$ sont des entiers non tous nuls.
 
Le dernier lemme de cette section donne une condition suffisante afin que la condition \eqref{indépendance des classes} soit vérifiée. 

\begin{lmm} \label{lmm que j'ai rajouter sur la condition suffisante}
 Soient $s_1,\dots,s_n\geq 1$ des entiers et $K$ un corps contenant $\zeta_{p^s}$ avec $s=\max\{s_1,\dots,s_n\}$. 
 Soient $\alpha_1,\dots,\alpha_n \in K$ tels que \[[K(\alpha_1^{1/p^{s_1}}, \dots, \alpha_n^{1/p^{s_n}}) : K]=p^{\sum_{i=1}^n s_i}.\]
 Alors, $[\alpha_i]_K\notin \langle [\alpha_1]_K,\dots,[\alpha_{i-1}]_K\rangle$ pour tout $i\geq 2$, 
\end{lmm}
 
\begin{proof}
Supposons par l'absurde que $[\alpha_i]_K\in \langle [\alpha_1]_K,\dots,[\alpha_{i-1}]_K\rangle$ pour un certain entier $i$. 
Alors, $\alpha_i=c^p \prod_{j=1}^{i-1} \alpha_j^{x_j}$ pour un certain $c\in K$ et certains $x_1,\dots,x_{i-1}\in\N$.
Notons $k=\min\{s_1,\dots,s_i\}$.
Alors, $\alpha_k^{x_k}=c^{-p}\alpha_i\prod_{\underset{j\neq k}{j=1}}^{i-1} \alpha_j^{-x_j}$.
De plus, il est clair que \[K\left(\alpha_1^{1/p^{s_1}}, \dots, \alpha_n^{1/p^{s_n}}\right)=K\left(\alpha_1^{1/p^{s_1}},\dots,\alpha_{k-1}^{1/p^{s_{k-1}}},c^{1/p^{s_k-1}}, \alpha_{k+1}^{1/p^{s_{k+1}}},\dots,\alpha_n^{1/p^{s_n}}\right).\]
Par conséquent, $[K(\alpha_1^{1/p^{s_1}}, \dots, \alpha_n^{1/p^{s_n}}) : K]\leq  p^{-1+\sum_{i=1}^n s_i}$, ce qui est absurde par hypothèse. 
\end{proof}
 
\section{Réduction.} \label{section avec tous les calculs chiants}

On souhaite calculer les groupes de ramification d'une extension radicale, galoisienne et finie $L/F$ de groupe de Galois d'exposant une puissance de $p$. 
Cela signifie que $L=L_{r,s_1,\dots,s_n}$ pour certains entiers positifs $r,s_1,\dots,s_n$ non nuls tels que $r\geq\max\{s_1,\dots,s_n\}$ et pour certains $a_1,\dots,a_n\in F$. 
 Si $p=2$, et uniquement dans ce cas, on supposera que 
\begin{equation} \label{hypothèse de réduction} 
 [L_{r,r,\dots,r} : F(\zeta_{2^r})]=2^{nr}.
 \end{equation}

Dans cette section, on se propose de montrer que l'on peut se réduire au cas où $F$ et les $a_1,\dots,a_n$ vérifient les conditions de l'hypothèse \ref{hypothèse 3}, que l'on rappelle pour la commodité du lecteur.

\begin{hyp*} 
  \begin{enumerate} [i)]
  \item $a_1,\dots,a_n\in\mathcal{O}_F\backslash\mathcal{O}_F^p$;
  \item $[L_{r,s_1,\dots,s_n} : F(\zeta_{p^r})]=p^{\sum_{i=1}^n s_i}$;
  \item $p\mid v_p(a_i)$ pour $i=1,\dots,n-1$;
  \item $L_{r,s_1,\dots,s_n}/F$ est totalement ramifiée.
  \end{enumerate}
 \end{hyp*}

Tout d'abord, il est clair que l'on peut se réduire au cas où $a_i\in \mathcal{O}_F\backslash\mathcal{O}_F^p$ pour tout $i\geq 1$, ce qui correspond au $i)$ de l'hypothèse \ref{hypothèse 3}.
Par la théorie de Kummer, on peut également se réduire au cas où $[a_i]_F\notin \langle [a_1]_F, \dots,[a_{i-1}]_F\rangle$ pour tout $i\geq 2$.

Donnons une nouvelle formulation de l'hypothèse $[a_i]_F\notin \langle [a_1]_F, \dots,[a_{i-1}]_F\rangle$.
Pour cela, on aura besoin du lemme ci-dessous : 

\begin{lmm}  [Capelli, Chapitre 6, Theorem 9.1 \cite{LangAlgebra}] \label{Capelli}
Soient $K$ un corps et $n\geq 2$. Soit $a\in K$.
Supposons que $a\notin K^p$ pour tout nombre premier $p$ divisant $n$.
De plus, supposons que $a\notin -4 K^4$ si $4\mid n$. 
Alors, $x^n-a$ est irréductible dans $K[x]$. 
\end{lmm}

Ce lemme va nous permettre de montrer le fait suivant : 

\begin{claim} \label{fait}
Si $p\neq 2$ alors, $[F(\zeta_{p^r}, a_1^{1/p^{s_1}},\dots,a_n^{1/p^{s_n}}) : F(\zeta_{p^r})]=p^{\sum_{i=1}^n s_i}$ si et seulement si $[a_i]_F \notin \langle [a_1]_F, \dots, [a_{i-1}]_F\rangle$.
\end{claim}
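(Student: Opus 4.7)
Posons $K := F(\zeta_{p^r})$. L'hypothèse $r \geq \max\{s_1,\dots,s_n\}$ assure que $K$ contient $\zeta_{p^s}$ où $s=\max\{s_i\}$, ce qui permet d'appliquer la théorie de Kummer sur $K$. Le lemme \ref{lmm que j'ai rajouter sur la condition suffisante} fournit une implication : si $[K(a_1^{1/p^{s_1}},\dots,a_n^{1/p^{s_n}}) : K] = p^{\sum_{i=1}^n s_i}$, alors $[a_i]_K \notin \langle [a_1]_K,\dots,[a_{i-1}]_K\rangle$ pour tout $i\geq 2$. La réciproque sur $K$ est un résultat classique de la théorie de Kummer, et peut aussi s'obtenir en itérant le lemme de Capelli (lemme \ref{Capelli}) le long de la tour $F_0 := K$, $F_i := F_{i-1}(a_i^{1/p^{s_i}})$ : comme $p\neq 2$, on a $[F_i : F_{i-1}] = p^{s_i}$ si et seulement si $a_i\notin F_{i-1}^p$, ce qui après récurrence équivaut à l'indépendance voulue.

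Il reste à transférer cette équivalence de $K$ à $F$. Comme le morphisme naturel $\iota : F^*/(F^*)^p \to K^*/(K^*)^p$ est $\F_p$-linéaire, il préserve automatiquement les relations de dépendance; il suffit donc de démontrer son injectivité pour passer de l'indépendance sur $K$ à l'indépendance sur $F$. Soient $a\in F^*$ avec $a\in K^p$. Si $a\notin F^p$, le lemme \ref{Capelli} fournit $[F(a^{1/p}) : F] = p$. L'inclusion $F(a^{1/p})\subset K$ combinée au caractère abélien de $K/F$ force $F(a^{1/p})/F$ à être galoisienne, donc à contenir $\zeta_p$. On en déduit $F(\zeta_p)\subset F(a^{1/p})$, d'où $(p-1)\mid p$, ce qui est absurde puisque $p\neq 2$. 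Ainsi $a\in F^p$ et $\iota$ est injectif.

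L'obstacle principal est précisément la vérification de cette injectivité. Elle exploite de manière essentielle les deux hypothèses $p\neq 2$ et $F/\Q_p$ non ramifiée, qui conjointement assurent $\zeta_p\notin F$, rendant ainsi $F(\zeta_p)/F$ non triviale (de degré $p-1\geq 2$) et permettant d'aboutir à la contradiction $(p-1)\mid p$. Une fois cette injectivité acquise, les $[a_i]_F$ sont indépendants dans $F^*/(F^*)^p$ si et seulement si les $[a_i]_K$ le sont dans $K^*/(K^*)^p$, ce qui par le premier paragraphe équivaut à $[L_{r,s_1,\dots,s_n} : K] = p^{\sum_{i=1}^n s_i}$, d'où le fait annoncé.
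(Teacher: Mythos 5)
Votre démonstration est correcte et repose sur le même squelette que celle de l'article : tout se ramène à la théorie de Kummer sur $K=F(\zeta_{p^r})$, plus un contrôle du noyau de $\iota : F^*/(F^*)^p\to K^*/(K^*)^p$. La vraie différence porte sur ce second point. L'article obtient $F^*\cap (K^*)^p=(F^*)^p$ en citant un théorème de Schinzel, énoncé général de théorie des corps ; vous le redémontrez de façon élémentaire : si $a\in F\setminus F^p$ admettait une racine $p$-ième dans $K$, alors $F(a^{1/p})$ serait, par le Lemme \ref{Capelli}, une sous-extension de degré $p$ de l'extension abélienne $K/F$, donc normale sur $F$, donc contiendrait $\zeta_p$, d'où $(p-1)\mid p$, ce qui est absurde puisque $[F(\zeta_p):F]=p-1\geq 2$ ($F/\Q_p$ non ramifiée et $p\neq 2$). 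Cet argument n'utilise que $\zeta_p\notin F$ et est donc plus économique et auto-contenu dans ce cadre local, là où Schinzel vaut sur un corps quelconque. En contrepartie, vous expédiez la réciproque sur $K$ (indépendance des $[a_i]_K$ $\Rightarrow$ degré $p^{\sum_i s_i}$) comme un fait classique : c'est exact, mais c'est précisément ce que l'article établit en combinant le Lemme \ref{Capelli}, le Lemme \ref{sous-corps d'une extension kumérienne.} et un argument d'intersection ; votre itération de Capelli le long de la tour requiert elle aussi ce dernier lemme pour identifier $K^*\cap F_{i-1}^p$ modulo $(K^*)^p$ au sous-groupe engendré par les $[a_j]_K$, et il conviendrait de le citer. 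Signalons enfin une inversion bénigne dans votre phrase sur $\iota$ : c'est le passage de l'indépendance sur $F$ à celle sur $K$ qui requiert l'injectivité, l'autre sens découlant de la seule linéarité ; comme vous démontrez effectivement l'injectivité, l'équivalence est acquise dans les deux sens et la preuve est complète.
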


\begin{proof}
$ \underline{\Rightarrow}$ : C'est précisément le Lemme \ref{lmm que j'ai rajouter sur la condition suffisante} appliqué à $K=F$.
 
 $\underline{\Leftarrow}$ :
Supposons par l'absurde qu'il existe $c\in F(\zeta_{p^r})$ tel que $a_i=c^p\prod_{j=1}^{i-1} a_j^{x_j}$.
Alors, $c^p\in F\cap F(\zeta_{p^r})^p$.
 Par un théorème de Schinzel (cf \cite[Theorem 2]{abelianbinomialspowerresiduesandexponentialcongruences}), si $\alpha\in F$ et $p\neq 2$, alors $\alpha\in F^p$ si et seulement si $\alpha\in F(\zeta_{p^r})^p$.
Ainsi, $c^p\in F^p$.
Donc, $[a_i]_F\in \langle [a_1]_F,\dots,[a_{i-1}]_F\rangle$, ce qui est absurde par hypothèse. 
En conclusion, $[a_i]_{F(\zeta_{p^r})}\notin \langle [a_1]_{F(\zeta_{p^r})},\dots,[a_{i-1}]_{F(\zeta_{p^r})}\rangle$ pour tout $i\geq 2$.
    
De plus, comme $a_i\in F\backslash F^p$, le théorème de Schinzel cité juste avant montre que $a_i\notin F(\zeta_{p^r})^p$ pour tout $i$.
Du Lemme \ref{Capelli}, on a $[F(\zeta_{p^r}, a_i^{1/p^{s_i}}) : F(\zeta_{p^r})]=p^{s_i}$ pour tout $i$.   
 Montrons maintenant que 
\begin{equation} \label{le label du fait}
F(\zeta_{p^r}, a_1^{1/p^{s_1}},\dots,a_{i-1}^{1/p^{s_{i-1}}})\cap F(\zeta_{p^r}, a_i^{1/p^{s_i}})=F(\zeta_{p^r}).
\end{equation} 
 
Comme $\Gal(F(\zeta_{p^r}, a_i^{1/p^{s_i}})/F(\zeta_{p^r}))\simeq \Z/p^{s_i}\Z$, les sous corps de $F(\zeta_{p^r}, a_i^{1/p^{s_i}})$ contenant $F(\zeta_{p^r})$ sont les $F(\zeta_{p^r}, a_i^{1/p^g})$ avec $g\in\{0,\dots,s_i\}$.
Ainsi, l'intersection du \eqref{le label du fait} est égale à $F(\zeta_{p^r}, a_i^{1/p^g})$, pour un certain $g$.
Supposons par l'absurde que $g\neq 0$. 
Notons $s=\max\{s_1,\dots,s_n\}$.
Alors, \[a_i\in F(\zeta_{p^r}, a_1^{1/p^{s_1}},\dots,a_{i-1}^{1/p^{s_{i-1}}})^p\subset F\left(\zeta_{p^r}, a_1^{1/p^s}, \dots,a_{i-1}^{1/p^s}\right)^p.\] 
D'après le Lemme \ref{sous-corps d'une extension kumérienne.}, il existe des entiers $x_1,\dots,x_{i-1}$ non tous nuls et $c\in F(\zeta_{p^r})$ tels que $a_i=c^p\prod_{j=1}^{i-1} a_j^{x_j}$, ce qui est absurde d'après le premier paragraphe de cette preuve.
On a donc $g=0$, ce qui montre \eqref{le label du fait}.

De \eqref{le label du fait}, on en déduit que
\begin{equation*}
[F(\zeta_{p^r}, a_1^{1/p^{s_1}},\dots,a_i^{1/p^{s_n}}) : F(\zeta_{p^r})] = p^{s_n}[F(\zeta_{p^r}, a_1^{1/p^{s_1}},\dots,a_{n-1}^{1/p^{s_{n-1}}}) : F(\zeta_{p^r})].
\end{equation*}
Par une récurrence immédiate, $[F(\zeta_{p^r}, a_1^{1/p^{s_1}},\dots,a_n^{1/p^{s_n}}) : F(\zeta_{p^r})]= p^{\sum_{i=1}^n s_i}$.
On a ainsi montré le fait.
\end{proof}
Si $p\neq 2$, on s'est ainsi réduit au $ii)$ de l'hypothèse \ref{hypothèse 3}.
Si $p=2$, alors, d'après \eqref{hypothèse de réduction}, $[L_{r,r,\dots,r} : F(\zeta_{2^r})]=2^{nr}$.
Par la multiplicativité des degrés, on en déduit que 

\begin{align*}
2^{nr}=[L_{r,r,\dots,r} : F(\zeta_{2^r})] & = [L_{r,r,\dots,r} : L_{r,s_1,\dots,s_n}][L_{r,s_1,\dots,s_n} : F(\zeta_{2^r})] \\
& \leq 2^{\sum_{i=1}^n (r-s_i)} 2^{\sum_{i=1}^n s_i}=2^{nr}.
\end{align*}
 Par conséquent, $[L_{r,s_1,\dots,s_n} : F(\zeta_{2^r})]=2^{\sum_{i=1}^n s_i}$, ce qui correspond au $ii)$ de l'hypothèse \ref{hypothèse 3}.

Montrons maintenant que l'on peut se réduire au $iv)$ de l'hypothèse \ref{hypothèse 3}.
Pour cela, on va calculer l'indice de ramification de $L_{r,s_1,\dots,s_n}/F$ en distinguant le cas $p\neq 2$ du cas $p=2$.
Bien qu'il suffise de le calculer pour $r\geq \max\{s_1,\dots,s_n\}$, on le calculera même pour $r<\max\{s_1,\dots,s_n\}$. 
Cela aura son importance dans la suite (pour montrer le Lemme \ref{calcul de $t(1,1)$.} par exemple).
Commençons tout d'abord par calculer le degré de cette extension.
 
\begin{lmm} \label{degré de l'extension}
Pour tous $r,s_1,\dots,s_n$, on a $[L_{r,s_1,\dots,s_n} : F]= (p-1)p^{r-1+\sum_{k=1}^n s_k}$.
\end{lmm}

\begin{proof}

Comme $F/\Q_p$ est non ramifiée et que $\Q_p(\zeta_p)/\Q_p$ l'est totalement, 
\begin{equation*}
[F(\zeta_{p^r}) : F]=[\Q_p(\zeta_{p^r}): \Q_p]=(p-1)p^{r-1}.
\end{equation*}

Si $r\geq s:=\max\{s_1,\dots,s_n\}$ alors, la conclusion du lemme est claire d'après le $ii)$ de l'hypothèse \ref{hypothèse 3}.
Si $r<s$ alors, d'après la multiplicativité des degrés, 
\begin{equation*}
[L_{s,s_1,\dots,s_n} : F]=[L_{s,s_1,\dots,s_n} : L_{r,s_1,\dots,s_n}][L_{r,s_1,\dots,s_n} : F].
\end{equation*}

De plus, 
\begin{equation*}
 \begin{cases}
 [L_{s,s_1,\dots,s_n} : L_{r,s_1,\dots,s_n}]\leq p^{s-r} \\
 [L_{r,s_1,\dots,s_n} : F]\leq [F(\zeta_p) : F]p^{r-1+\sum_{k=1}^n s_k} \\
 [L_{s,s_1,\dots,s_n} : F]= [F(\zeta_p) : F]p^{s-1+\sum_{k=1}^n s_k}
 \end{cases}.
\end{equation*} 
Ainsi, 
\begin{align*}
(p-1)p^{r-1+\sum_{k=1}^n s_k} & \geq [L_{r,s_1,\dots,s_n} : F]  = \frac{[L_{s,s_1,\dots,s_n} : F]}{[L_{s,s_1,\dots,s_n} : L_{r,s_1,\dots,s_n}]} \\
& \geq  [F(\zeta_p) : F]p^{r-1+\sum_{k=1}^n s_k} = (p-1)p^{r-1+\sum_{k=1}^n s_k},
\end{align*}
 ce qui prouve le lemme.

\end{proof}

Nous allons maintenant étudier l'indice de ramification de $L_{r,s_1,\dots,s_n}/F$. 
Commençons par le cas où $p\neq 2$.
La proposition suivante est une généralisation naturelle de \cite[Theorem 5.5]{viviani}; elle montre que dans ce cas, l'extension $L_{r,s_1,\dots,s_n}/F$ est totalement ramifiée.

\begin{prop} \label{extension totalement ramifiée}
Supposons $p\neq 2$. Alors, pour tous entiers $r,s_1,\dots,s_n$, on a $L_{r,s_1,\dots,s_n}/F$ est totalement ramifiée, i.e. $e(L_{r,s_1,\dots,s_n}\vert F)=(p-1) p^{r-1+\sum_{i=1}^n s_i}$ . 
\end{prop}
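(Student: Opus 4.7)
The plan is to combine the degree formula of Lemma \ref{degré de l'extension} with a step-by-step analysis of a suitable tower refining $L_{r,s_1,\dots,s_n}/F$. Since $F/\Q_p$ is unramified and $\Q_p(\zeta_{p^r})/\Q_p$ is totally ramified of degree $(p-1)p^{r-1}$, the extension $F(\zeta_{p^r})/F$ is itself totally ramified of this same degree. In view of Lemma \ref{degré de l'extension}, it therefore suffices to show that $L_{r,s_1,\dots,s_n}/F(\zeta_{p^r})$ is totally ramified of degree $p^{\sum_{i=1}^n s_i}$.

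For this, I would build a tower
\[F(\zeta_{p^r}) = K_0 \subset K_1 \subset \cdots \subset K_N = L_{r,s_1,\dots,s_n}\]
by successively adjoining one $p$-th root at a time: first $a_1^{1/p}, a_1^{1/p^2}, \dots, a_1^{1/p^{s_1}}$, then the analogous roots of $a_2, \dots, a_n$. Each step is of the form $K_j \subset K_j(\alpha^{1/p})$, where $\alpha$ is an already adjoined $p^k$-th root of some $a_i$ and hence lies in $K_j$. By multiplicativity of degrees and hypothesis \ref{hypothèse 3} $ii)$, each such step has degree exactly $p$, which forces $\alpha \notin K_j^p$ at every stage.

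The whole argument then reduces to the following key lemma: if $K$ is a finite extension of $\Q_p$ containing $\zeta_p$ and $\alpha \in K \setminus K^p$, then for $p \neq 2$ the extension $K(\alpha^{1/p})/K$ is totally ramified of degree $p$. Degree $p$ follows from Capelli's theorem (Lemma \ref{Capelli}), the exceptional case $\alpha \in -4K^4$ not arising when $p \neq 2$. For total ramification, I would distinguish two cases. If $p \nmid v_K(\alpha)$, an Eisenstein-type argument forces $e(K(\alpha^{1/p})|K) = p$. If $p \mid v_K(\alpha)$, one first reduces to $\alpha \in \mathcal{O}_K^*$ by multiplying by a $p$-th power; assuming by contradiction that $K(\alpha^{1/p})/K$ is unramified, the Galois group (cyclic of order $p$) acts on $\alpha^{1/p}$ by $\alpha^{1/p} \mapsto \zeta_p \alpha^{1/p}$. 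Since $\zeta_p \equiv 1 \mod \mathfrak{p}_K$, this action reduces to the identity modulo $\mathfrak{p}_K$, contradicting the non-triviality of the induced action on the degree-$p$ residue field extension.

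Applying the key lemma inductively at every step of the tower then shows that each $K_{j+1}/K_j$ is totally ramified, and hence so is $L_{r,s_1,\dots,s_n}/F(\zeta_{p^r})$. The main obstacle is the case $p \mid v_K(\alpha)$ in the key lemma, where the residue-field computation must be executed carefully; this is also precisely where the hypothesis $p \neq 2$ enters, via Capelli's exception. Combining this with the totally ramified cyclotomic layer $F(\zeta_{p^r})/F$ yields the claimed equality $e(L_{r,s_1,\dots,s_n}|F) = (p-1)p^{r-1+\sum_{i=1}^n s_i}$.
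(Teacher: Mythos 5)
Your reduction to the cyclotomic layer plus a tower of degree-$p$ steps is reasonable, and the degree count via Lemma \ref{degré de l'extension} and Capelli is fine. The problem is that your key lemma is false: for a local field $K$ containing $\zeta_p$ and $\alpha\in K\setminus K^p$, the extension $K(\alpha^{1/p})/K$ need \emph{not} be totally ramified. Indeed, the unramified extension of degree $p$ of $K$ is cyclic, so by Kummer theory it equals $K(\alpha^{1/p})$ for some unit $\alpha\in\mathcal{O}_K^{\times}\setminus (K^{\times})^p$. Your residue-field argument for the case $p\mid v_K(\alpha)$ does not detect this: the residue field of $K$ is finite, hence perfect, so the residue of $\alpha$ is already a $p$-th power there, and uniqueness of $p$-th roots in characteristic $p$ forces the residue of $\alpha^{1/p}$ to lie in the residue field of $K$. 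The Galois action is therefore trivial on $\overline{\alpha^{1/p}}$ for trivial reasons, and this yields no contradiction with the nontriviality of the action on the full residue extension, which is generated by other elements.

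What actually makes the proposition true is that the element whose root is adjoined comes from the unramified base $F$, not merely from some field containing $\zeta_p$; this extra arithmetic input has to be used somewhere. The paper's proof does exactly this: assuming $L(a_k^{1/p^{s_k}})/L$ is not totally ramified, it uses Lemma \ref{Si c'est ramifié au 1er étage alors c'est tout le temps ramifié} to place $a_k^{1/p}$ inside an unramified extension of $L_{0,s_1,\dots,s_{k-1}}$ (which is abelian over that field, being generated by roots of unity), and then invokes Schinzel's theorem on abelian binomials to conclude $a_k\in L_{0,s_1,\dots,s_{k-1}}^p$, contradicting the reduction to $a_k\notin F^p$. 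Your proposal contains no substitute for this step, so the gap is essential rather than a matter of detail.
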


\begin{proof}

 Il suffit de le montrer pour $r\geq\max\{s_1,\dots,s_n\}$. 
 Comme $F(\zeta_{p^r})/F$ est totalement ramifiée, il résulte du Lemme \ref{degré de l'extension} que $e(F(\zeta_{p^r})\vert F)=p^{r-1} (p-1)$.

Il suffit donc de montrer que $L_{r,s_1,\dots,s_k}/L_{r,s_1,\dots,s_{k-1}}$ est totalement ramifiée pour tout $k\geq 1$.
 Afin d'alléger les notations, notons $L=L_{r,s_1,\dots,s_{k-1}}$.
 Remarquons que $L=L_{0,s_1,\dots,s_{k-1}}(\zeta_{p^r})$.
 
 Supposons par l'absurde que $L(a_k^{1/p^{s_k}})/L$ ne soit pas totalement ramifiée et notons $p^h$ son degré d'inertie. 
 Comme $L\{p^h\}$ est obtenu en adjoignant à $L$ certaines racines de l'unité (cf \cite[Chapitre 1, section 7]{Algebraicnumbertheory}), il s'ensuit que $L\{p^h\}/L_{0,s_1,\dots,s_{k-1}}$ est une extension abélienne. 
 D'après la contraposée du Lemme \ref{Si c'est ramifié au 1er étage alors c'est tout le temps ramifié} (car $p\neq 2$), on en déduit que $L(a_k^{1/p})\subset L\{p^h\}$ et donc que $a_k\in L\{p^h\}^p$.
 Par conséquent, le corps de décomposition du polynôme $x^p-a_k\in L_{0,s_1,\dots,s_{k-1}}[x]$ est abélien sur $L_{0,s_1,\dots,s_{k-1}}$.
Par un théorème de Schinzel (cf \cite[Theorem 2]{abelianbinomialspowerresiduesandexponentialcongruences}), il s'ensuit, car $p\neq 2$, que $a_k\in L_{0,s_1,\dots,s_{k-1}}^p$, ce qui est absurde. 
  Ainsi, l'extension $L(a_k^{1/p^{s_k}})/L$ est totalement ramifiée pour tout $k\geq 1$, ce qui montre le lemme.
\end{proof}

Traitons maintenant le cas $p=2$. 
Nous allons écrire deux lemmes qui généralisent de manière naturelle \cite[Lemme 20-Lemme 23]{DelCorsoRoosiNormalIntegralbasesandtamenessconditions}.

\begin{lmm} \label{extension non normale}
Soient $b\in F\backslash \pm F^2$ et $\beta\in\overline{F}$ tels que $\beta^4=b$. Alors, $F(\beta)/F$ n'est pas galoisienne.
\end{lmm}

\begin{proof}
Supposons par l'absurde que $F(\beta)/F$ soit galoisienne. 
La condition $b\in F\backslash \pm F^2$ permet d'assurer l'irréductibilité de $x^4-b$ d'après le Lemme \ref{Capelli}. 
Son corps de décomposition est donc $F(\beta,i)$.
Comme $F(\beta)/F$ est galoisienne, cela signifie que $i\in F(\beta)$.
Comme $i\in F(\beta)\backslash F$, on déduit de \cite[Theorem 2.1]{Thelatticeofsubfieldsofaradicalextension} que $F(\beta^2)$ est l'unique corps quadratique sur $F$ contenu dans $F(\beta)$. 
Comme $i$ l'est également, il s'ensuit que $F(\beta^2)=F(i)$ ou encore que $b\in F(i)^2$.

  Posons $b=(\alpha+i\gamma)^2\in F(i)^2$. 
 Alors, $b=\alpha^2-\gamma^2+2i\alpha\gamma$. 
 Comme $i\notin F$, cela signifie que $\alpha\gamma=0$ et donc que $b\in \pm F^2$, ce qui est absurde.
 Ceci montre le lemme.
 
\end{proof}

\`A partir de maintenant, pour une extension $L/K$, on notera $f(L\vert K)$ son degré d'inertie. 

\begin{lmm}
Pour tous $r,s_1,\dots,s_n$, on a $f(L_{r,s_1,\dots,s_n} \vert F) \leq 2$.
\end{lmm}

\begin{proof}
Il suffit de le prouver pour $r\geq s:=\max\{s_1,\dots,s_n\}$. 
Remarquons que le lemme est vérifié si $s=1$ car d'après \cite[Theorem 12]{CapuanoLauraDelCorsoAnoteonupperramificationjumpsinbelianextensionsofexponent}, le degré d'inertie de $L_{r,1,\dots,1}/F(\zeta_{2^r})$ vaut au plus $2$ et que $f(F(\zeta_{2^r})\vert F)=1$.

Supposons donc $s\geq 2$ et supposons par l'absurde que $f(L_{r,s_1,\dots,s_n}\vert F)>2$.
Comme $[L_{r,s_1,\dots,s_n} : F]=2^m$ (pour un entier $m\geq 2$), on en déduit qu'il existe un entier $t\geq 2$ tel que $f(L_{r,s_1,\dots,s_n} \vert F)=2^t$.
Comme $F\{2^t\}/F$ est une extension abélienne, on en conclut que $F\{4\}\subset F\{2^t\}$.
Soit $\alpha\in\overline{F}$ tel que $F\{4\}=F(\alpha)$. 
Intéressons nous maintenant au corps $L_\alpha:=F(\zeta_{2^r}, \alpha)$.
Comme $F(\zeta_{2^r})/F$ est une extension totalement ramifiée et que $F(\alpha)/F$ est non-ramifiée, on en déduit que $L_\alpha/F(\zeta_{2^r})$ est une extension non ramifiée de groupe de Galois $\Z/4\Z$.

Soit $L$ l'unique sous-corps de $L_\alpha$ de degré $2$ sur $F(\zeta_{2^r})$. 
Comme $L_\alpha\subset L_{r,s,\dots,s}$, on déduit du Lemme \ref{sous-corps d'une extension kumérienne.} que $L=F(\zeta_{2^r}, \delta)$ où $\delta=\prod_{j=1}^n a_j^{x_j/2}$ avec $x_j\in\{0,1\}$ des entiers non tous nuls. 
Quitte à permuter les $a_i^{1/p^{s_i}}$, on peut supposer $x_1=1$.

Remarquons que $\delta,a_i\notin L^2$ pour tout $i\geq 1$ (c'est clair pour $\delta$).
En effet, si $a_i\in L^2$, on en déduirait, grâce au Lemme \ref{sous-corps d'une extension kumérienne.}, qu'il existe $c\in F(\zeta_{2^r})$ tel que $a_i=c^2 \delta^2$.
Ainsi, $[a_i]_{F(\zeta_{2^r})}=\prod_{j=1}^n [a_j]_{F(\zeta_{2^r})}^{x_j}$.
Comme $x_1\neq 0$, le Lemme \ref{lmm que j'ai rajouter sur la condition suffisante} appliqué à $K=F(\zeta_{2^r})$ contredit le $ii)$ de l'hypothèse \ref{hypothèse 3}.

Comme $L_{r,s,\dots,s}=F(\delta^{1/2^{s-1}}, a_2^{1/2^s},\dots,a_n^{1/2^s})$, il s'ensuit que $[L_{r,s,\dots,s} : L]=p^{2s-1}$.
D'après le Lemme \ref{lmm que j'ai rajouter sur la condition suffisante}, cela signifie que $[a_i]_L\notin \langle [\delta]_L,[a_2]_L,\dots, [a_{i-1}]_L\rangle$ pour tout $i\geq 2$.
Ainsi, on peut de nouveau appliquer le Lemme \ref{sous-corps d'une extension kumérienne.} à $K=L$.
Comme $L_\alpha/L$ est une extension quadratique et que 
\begin{equation*}
L_\alpha\subset L(\delta^{1/2^{s-1}}, a_2^{1/2^s},\dots,a_n^{1/2^s})\subset L(\delta^{1/2^s}, a_2^{1/2^s},\dots,a_n^{1/2^s}),
\end{equation*}
on en déduit que $L_\alpha=L(\delta^{y/2}\prod_{i=2}^n a_i^{y_i/2})$ pour certains entiers $y,y_i\in\{0,1\}$. 
Remarquons que $y\neq 0$ (sinon $\Gal(L_\alpha/F(\zeta_{2^r}))\simeq \Z/2\Z\times \Z/2\Z$).
Par conséquent, $y=1$ et donc, $L_\alpha=F(\zeta_{2^r}, \beta)$ où $\beta=\delta^{1/2}\prod_{i=2}^n a_i^{y_i/2}$. 
Comme $L_\alpha/F$ est abélienne, on en déduit que $F(\beta)/F$ est abélienne. 
De plus, $\beta^4=\prod_{i=1}^n a_i^{x_i+2y_i}\in F$.
D'après le Lemme \ref{extension non normale}, il suffit de montrer que $\beta^4\notin \pm F^2$ pour obtenir une contradiction.

Clairement, $\beta^4\in \pm F^2$ si et seulement si $\delta^2 \in \pm F^2$.
De plus, si $\delta^2\in\pm F^2$ alors, $\delta\in F(i)\subset F(\zeta_{2^r})$ puisque $r\geq 2$.
Cela contredit le fait que $F(\zeta_{2^r}, \delta)/F(\zeta_{2^r})$ est une extension quadratique.
Ceci prouve le lemme.

\end{proof}

Ainsi, si $f(L_{r,s_1,\dots,s_n}\vert F)=1$ alors, $L_{r,s_1,\dots,s_n}/F$ est totalement ramifié.
Supposons maintenant que $f(L_{r,s_1,\dots,s_n}\vert F)=2$. 
Alors, $f(L_{r,s_1,\dots,s_n}\vert F(\zeta_{2^r}))=2$. 
Le Lemme \ref{sous-corps d'une extension kumérienne.} montre que $F(\zeta_{2^r})\{2\}=F(\zeta_{2^r}, \alpha)$ où $\alpha=\prod_{i=1}^n a_i^{x_i/2}$ pour certains entiers $x_i\in\{0,1\}$ non tous nuls. 
Comme $[F(\alpha) : F]=2$, il s'ensuit que $F\{2\}=F(\alpha)$. 
Quitte à permuter les $a_i^{1/p^{s_i}}$, on peut supposer que $s_1$ est le plus petit $s_i$ tel que $x_i\neq 0$. 
Ainsi, il devient clair que

\begin{equation*}
L_{r,s_1,\dots,s_n}=F\{2\}(\zeta_{2^r},\alpha^{1/2^{s_1-1}},a_2^{1/2^{s_2}},\dots,a_n^{1/2^{s_n}}).
\end{equation*}
De plus, $F\{2\}(\zeta_{2^r},\alpha^{1/2^{s_1-1}},a_2^{1/2^{s_2}},\dots,a_n^{1/2^{s_n}})/F\{2\}$ est totalement ramifiée.
Il est clair que $\alpha\in \mathcal{O}_{F\{2\}}\backslash \mathcal{O}_{F\{2\}}^2$ et que \[F\{2\}(\zeta_{2^r},\alpha^{1/2^{s_1-1}},a_2^{1/2^{s_2}},\dots,a_n^{1/2^{s_n}}) : F\{2\}(\zeta_{2^r})]=2^{s_1-1+\sum_{i=2}^n s_i}.\]
En résumé, quitte à remplacer $F$ par $F\{2\}$ (qui est encore une extension non-ramifiée) et $a_1^{1/2^{s_1}}$ par $\alpha^{1/2^{s_1-1}}$, on a montré que dans tous les cas, on pouvait se réduire au fait que $L_{r,s_1,\dots,s_n}/F$ soit totalement ramifiée, ce qui correspond à la condition $iv)$ de l'hypothèse \ref{hypothèse 3}.

Nous terminons cette section en montrant que l'on peut aussi se réduire à la condition $iii)$ de l'hypothèse \ref{hypothèse 3}. 
Supposons qu'il existe $i$ et $j$ tels que $p\nmid v_F(a_i)$ et $p\nmid v_F(a_j)$.
  Alors, pour tous $s_i,s_j \in\N^*$, 
\begin{equation*}
\begin{cases}
F(a_i^{1/p^{s_i}}, a_j^{1/p^{s_j}})= F(\alpha^{1/p^{s_i}}, a_j^{1/p^{s_j}}) \; \text{si} \; s_i\leq s_j \\
F(a_i^{1/p^{s_i}}, a_j^{1/p^{s_j}})= F(a_i^{1/p^{s_i}}, \alpha^{1/p^{s_j}}) \; \text{si} \; s_j\leq s_i 
\end{cases}
\end{equation*}  
  où $\alpha=a_i^{v_F(a_j)}a_j^{-v_F(a_i)}$.
  En remarquant que $v_F(\alpha)=0$, cela montre que l'on peut se ramener au cas où soit $p\mid v_F(a_i)$, soit $p\mid v_F(a_j)$. 
  Ainsi, sans perte de généralité, on peut supposer que $p\mid v_F(a_1),\dots,v_F(a_{n-1})$, ce qui correspond à la condition $iii)$ de l'hypothèse \ref{hypothèse 3}.

\section{Calcul des $t_{1,k}(r,s)$ avec $k\in\{1,2\}$.} \label{cas n=1}

 Pour un corps local $K$, on notera $\pi_K$ une uniformisante de $K$. 
 Notons également $f$ le degré $[F : \Q_p]$.
 Pour tous $r\geq 1$ et $s\geq 1$, on se propose de calculer $t_{1,1}(r,s)$ et $t_{1,2}(r,s)$, respectivement l'unique saut de $F(\zeta_{p^r},a^{1/p^s})/F(\zeta_{p^{r-1}}, a^{1/p^s})$ et de $F(\zeta_{p^r},a^{1/p^s})/F(\zeta_{p^r}, a^{1/p^{s-1}})$.
Les formules de cette section serviront d'initialisation lorsque l'on fera des récurrences sur le nombre de $a_i$ et permettrons donc de traiter le cas général. 
Rappelons qu'avec nos réductions, $L_{1,s}/F$ est une extension totalement ramifiée de degré $p^s(p-1)$.
Ainsi, d'après la proposition \ref{Ma version du Théorème de Hecke}, on en déduit que pour tout $s\geq 0$,
\begin{equation} \label{première utilisation du thm de Hecke}
t_{1,2}(1,s+1)=p^{s+1}-c_{L_{1,s}}(a^{1/p^s}).
\end{equation}
Posons $c_s=c_{L_{1,s}}(a^{1/p^s})$.
Comme $(a^{1/p^s})^{1-p^f}=a^{1/p^s} (a^{1/p^{s-f+1}})^{-p}$, on a (par définition de $c_K(\alpha)$), 
\begin{equation*}
c_s=c_{L_{1,s}}((a^{1/p^s})^{1-p^f}).
\end{equation*}

Enfin, le cas $F=\Q_p$ a été traité par Viviani dans \cite{viviani}.
Néanmoins, son approche est basée sur le fait que l'on connaisse une uniformisante de $L_{1,s+1}/L_{1,s}$ (cf \cite[Lemme 5.7, Lemme 6.4]{viviani}). 
Cette méthode ne semble pas possible dès lors que l'on remplace $\Q_p$ par $F$, ce qui nous a conduit à utiliser le théorème de Hecke.

\begin{lmm} \label{calcul de $t(1,1)$.}
Soit $s\geq 0$, on a :

\begin{enumerate} [i)]
\item Si $p\mid v_F(a)$ alors, $t_{1,2}(1,s+1)= p^{s+1}-p+1$; 
\item Si $p\nmid v_F(a)$ alors, $t_{1,2}(1,s+1)=p^{s+1}$.
\end{enumerate}

\end{lmm}

\begin{proof}
 $i)$ Supposons d'abord $v_F(a)=0$. 
 Dans ce cas, $a^{1/p^s}\in\mathcal{O}_{L_{1,s}}^\times$.
 Comme $\mathcal{O}_{F(a^{1/p^s})}/\mathfrak{p}_{F(a^{1/p^s})}\simeq \F_{p^f}$, on en déduit que $(a^{1/p^s})^{1-p^f}\equiv 1 \mod \mathfrak{p}_{F(a^{1/p^s})}$.
 Or, $\mathfrak{p}_{F(a^{1/p^s})}=\mathfrak{p}_{L_{1,s}}^{p-1}$ car $L_{1,s}/F(a^{1/p^s})$ est totalement ramifiée de degré $p-1$.
 Par définition de $c_K(\alpha)$, on en déduit que $c_s\geq p-1$ pour tout $s\geq 0$.
 
   Montrons par récurrence sur $s$ que $c_s\leq p-1$.  
Comme $c_0\geq p-1$, on déduit de la proposition \ref{Ma version du Théorème de Hecke} que $t_{1,2}(1,1)\leq 1$.
Or, $t_{1,2}(1,1)\neq 0$ car $L_{1,1}/L_{1,0}$ est totalement ramifiée de degré $p$.
 Il s'ensuit que $t_{1,2}(1,1)=1$ et donc $c_0=p-1$.
 Ceci initialise la récurrence.
 
  Supposons maintenant que $c_s\leq p-1$ et montrons que $c_{s+1}\leq p-1$.
  Notons 
\begin{equation*} 
  x=a^{1/p^{s+1}}.
  \end{equation*}
  Remarquons que $x^{1-p^f} \equiv 1 \mod \mathfrak{p}_{L_{1,s+1}}^{p-1}$.
  Soit $y\in L_{1,s+1}^*$ tel que 
\begin{equation} \label{conditions pour obtenir l'absurdité}  
  z=x^{1-p^f}y^{-p}\in\mathcal{O}_{L_{1,s+1}} \; \text{et} \; z \equiv 1 \mod \mathfrak{p}_{L_{1,s+1}}^{c_{s+1}}.
  \end{equation} 
   Remarquons que $y\in \mathcal{O}_{L_{1,s+1}}^\times$ du fait que $x,\;z\in \mathcal{O}_{L_{1,s+1}}^\times$.
   
   Supposons par l'absurde que $c_{s+1}\geq p$. 
Comme $\mathcal{O}_{L_{1,s+1}}=\Z_p[\pi_{L_{1,s+1}}]$, on peut alors écrire $y=\sum_{i\geq 0} a_i \pi_{L_{1,s+1}}^i$ avec $a_i\in\Z_p$ et $a_0\in\Z_p^\times$ car $y\in\mathcal{O}_{L_{1,s+1}}^\times$.
 Il s'ensuit donc que $y^p\equiv a_0^p \mod \mathfrak{p}_{L_{1,s+1}}^p$.
   Comme $a_0\in\Z_p$, il s'ensuit que $a_0^p\equiv a_0 \mod p$.
   Par conséquent, $y^p\equiv a_0 \mod \mathfrak{p}_{L_{1,s+1}}^p$.
   Comme $z\equiv 1 \mod \mathfrak{p}_{L_{1,s+1}}^p$, on déduit de \eqref{conditions pour obtenir l'absurdité} que $x^{1-p^f}\equiv a_0 \mod \mathfrak{p}_{L_{1,s+1}}^p$. 
   En élevant à la puissance $p$, il s'ensuit que 
   \begin{equation*}
   (x^p)^{1-p^f}a_0^{-p}\equiv 1 \mod \mathfrak{p}_{L_{1,s}}^p.
   \end{equation*}   
   Comme $x^p=a^{1/p^s}$ et $a_0\in\Z_p^\times$, il s'ensuit que $c_s\geq p$, ce qui contredit l'hypothèse de récurrence.    
   Ainsi, $c_{s+1}\leq p-1$.
   On a donc montré que $c_s=p-1$ pour tout $s\geq 0$.
   De \eqref{première utilisation du thm de Hecke}, il en résulte que $t_{1,2}(1,s+1)=p^{s+1}-p+1$.
   Cela termine la preuve de la première affirmation du lemme dans le cas où $v_F(a)=0$.

 Supposons maintenant que $p\mid v_F(a)\neq 0$. 
 Posons $a=p^{v_F(a)}\gamma=\pi_{L_{1,s}}^{(p-1)p^s v_F(a)}\gamma$ avec $v_F(\gamma)=0$ et $y=\pi_{L_{1,s}}^{(p-1) v_F(a)p^{-1}} \beta$. 
 Il est clair que $a^{1/p^s}y^{-p}=\gamma^{1/p^s}\beta^{-p}$ et donc $c_{L_{1,s}}(a^{1/p^s})=c_{L_{1,s}}(\gamma^{1/p^s})=p-1$ puisque $v_F(\gamma)=0$. 
On conclut grâce à \eqref{première utilisation du thm de Hecke}. \\

 $ii)$  Supposons par l'absurde qu'il existe $y\in L_{1,s}^*$ tel que $z=a^{1/p^s}y^{-p}\in\mathcal{O}_{L_{1,s}}$ et $z\equiv 1 \mod \mathfrak{p}_{L_{1,s}}$.
 En particulier, $v_{L_{1,s}}(z)=0$. 
  Comme $z=a^{1/p^s}y^{-p}$, on en déduit que
\begin{equation*}  
  v_{L_{1,s}}(y)=v_{L_{1,s}}(a^{1/p^s}) p^{-1}=(p-1)v_F(a)p^{-1}\notin\Z
\end{equation*}  
 car $p\nmid v_F(a)$, ce qui est absurde. 
  Ceci prouve donc que $c_s=0$. 
  De \eqref{première utilisation du thm de Hecke}, on en déduit que $t_{1,2}(1,s+1)=p^{s+1}$.  
\end{proof}

On peut maintenant prouver les résultats principaux de cette section, à savoir les théorème \ref{cas où n=1 et v p a vaut 0} et \ref{Cas n=1 et v p a vaut 1}. 

\begin{thm} \label{cas où n=1 et v p a vaut 0}
Si $p \mid v_F(a)$ alors, pour tous $r\geq s\geq 0$ et $r\neq 0$, on a: 

\begin{enumerate} [i)]
\item $t_{1,2}(s,r)=p^{r+s-1}-\frac{p-1}{p+1}(p^{2s-1}+1)$ si $s\geq 1$;
\item $t_{1,1}(s,r)=t_{1,1}(s,s-1)$ si $s\geq 1$;
\item $t_{1,2}(r,s)=t_{1,2}(s,s)$ si $s\geq 1$;
\item Si de plus $r>s$ alors, $t_{1,1}(r,s)=p^{r+s-1}-p^{2s}+\frac{p-1}{p+1}(p^{2s}-1)$.
\end{enumerate}

\end{thm}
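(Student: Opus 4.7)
Le plan est de démontrer (i)--(iv) simultanément par récurrence sur le niveau cyclotomique, noté $c$ ci-dessous (dans la Définition \ref{définition  du saut}, $t_{1,2}(c, r)$ est le saut de $L_{c, r}/L_{c, r-1}$ et $t_{1,1}(c, r)$ celui de $L_{c, r}/L_{c-1, r}$). Les assertions (i)--(ii) traitent le régime $c \leq r$ et (iii)--(iv) le régime $c \geq r$. Le cas de base $c = 1$ résulte du Lemme \ref{calcul de $t(1,1)$.}: l'égalité $t_{1,2}(1, r) = p^r - p + 1$ coïncide avec (i) pour $c = 1$ après simplification de $\frac{p-1}{p+1}(p+1) = p - 1$; et $L_{1, r}/L_{0, r}$ étant modérément ramifiée de degré $p - 1$, on a $t_{1,1}(1, r) = 0$, d'où (ii). Les assertions (iii) et (iv) sont triviales à ce niveau.

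Pour l'étape inductive à $c \geq 2$, supposant (i)--(iv) acquises aux niveaux cyclotomiques strictement inférieurs, on calcule d'abord $t_{1,1}(c, 0)$, saut de $F(\zeta_{p^c})/F(\zeta_{p^{c-1}})$, par le Théorème \ref{Hecke} appliqué à $\alpha = \zeta_{p^{c-1}}$: on vérifie que $c_K(\alpha) = 1$, d'où $t_{1,1}(c, 0) = p^{c-1} - 1$. On procède ensuite par sous-récurrence sur $r \geq 1$, l'outil essentiel étant la Proposition \ref{calcul de la différente version appliquée.} appliquée avec $n = 1$ et $(l, k) = (2, 1)$:
\[t_{1,2}(c, r) - t_{1,1}(c, r) = p\bigl(t_{1,2}(c-1, r) - t_{1,1}(c, r-1)\bigr),\]
couplée au fait (partie ii de la même proposition) que le plus petit des deux sauts de gauche coïncide avec son homologue au sous-niveau.

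L'obstacle principal est un calcul algébrique déterminant le signe de $\Delta := t_{1,2}(c-1, r) - t_{1,1}(c, r-1)$. Par substitution des formules (i), (iii), (iv) connues par récurrence (la valeur initiale $t_{1,1}(c, 0) = p^{c-1} - 1$ s'intégrant à la formule (iv) étendue formellement à $r = 0$), un calcul direct montre, sous l'hypothèse $p \neq 2$, que $\Delta < 0$ si $r \leq c - 1$ et $\Delta > 0$ si $r \geq c$. Dans le premier cas, il suit $t_{1,2}(c, r) = t_{1,2}(c-1, r) = t_{1,2}(r, r)$ par récurrence, ce qui prouve (iii), et la relation ci-dessus fournit $t_{1,1}(c, r)$ conforme à (iv) après simplification. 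Dans le second cas, symétriquement, $t_{1,1}(c, r) = t_{1,1}(c, r-1)$, qui itérativement vaut $t_{1,1}(c, c-1)$, ce qui prouve (ii); puis la relation donne $t_{1,2}(c, r)$ conforme à (i). La cohérence à la diagonale $r = c$ assure le raccord entre les deux régimes.
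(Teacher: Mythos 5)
Votre démonstration est correcte et suit pour l'essentiel la même voie que celle de l'article : mêmes ingrédients (le Lemme \ref{calcul de $t(1,1)$.} et le saut cyclotomique pour l'initialisation, puis la Proposition \ref{calcul de la différente version appliquée.}, son $i)$ fournissant la relation de récurrence et son $ii)$ l'identification du plus petit saut avec son homologue au sous-niveau) et mêmes calculs de signe ($\Delta=p^{r+c-2}-(p-1)p^{2c-3}>0$ dans un régime, $\Delta=2p^{2r-2}-p^{c+r-2}<0$ dans l'autre), seule l'organisation différant (récurrence emboîtée sur le niveau cyclotomique puis sur le niveau radical, au lieu d'un contre-exemple minimal pour l'ordre lexicographique sur les couples $(r,s)$). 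Votre réserve explicite $p\neq 2$ pour le signe de $\Delta$ à la diagonale $r=c-1$ est d'ailleurs plus prudente que le texte de l'article, dont l'inégalité stricte $p(2p^{2s-2}-p^{r+s-2})<0$ est en défaut pour $p=2$ et $r=s+1$.
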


\begin{proof}
Montrons le théorème pour $s=0$.
 Dans ce cas, $i)$, $ii)$ et $iii)$ sont vides.
 Comme $L_{r,0}=F(\zeta_{p^r})$, alors $t_{1,1}(r,0)$ est le saut de l'extension $F(\zeta_{p^r})/F(\zeta_{p^{r-1}})$ qui vaut $p^{r-1}-1$ (cf \cite[Chapitre 4, §4]{Corpslocaux}), ce qui montre $iv)$.

  Montrons également le théorème pour $s=1$. 
   Tout d'abord, $i)$ découle du Lemme \ref{calcul de $t(1,1)$.} et $ii)$ du fait que $L_{1,r}/L_{0,r}$ soit une extension totalement ramifiée de degré $p-1$.
 Montrons $iii)$ et $iv)$ par récurrence sur $r$. 
 Le cas $r=1$ est trivialement vérifié.
 Supposons que $r$ vérifie $iii)$ et $iv)$ et montrons que $r+1$ aussi.
Par hypothèse de récurrence, on déduit du $iii)$ que $t_{1,2}(r,1)=t_{1,2}(1,1)$. 
 D'après le Lemme \ref{calcul de $t(1,1)$.}, $t_{1,2}(1,1)=1$.
 Ainsi, $t_{1,2}(r,1)=1$.
 De plus, $t_{1,1}(r+1,0)=p^r-1$. 
 D'après le $i)$ de la proposition \ref{calcul de la différente version appliquée.}, 
\begin{align*}
t_{1,2}(r+1,1)-t_{1,1}(r+1,1) & =p(t_{1,2}(r,1)-t_{1,1}(r+1,0)) \\
& = p(2-p^r)<0.
\end{align*} 
 
Le $ii)$ de la proposition \ref{calcul de la différente version appliquée.} montre que $t_{1,2}(r+1,1)=t_{1,2}(r,1)$.
Il s'ensuit donc que $t_{1,2}(r+1,1)=1$. 
Ainsi,
\begin{align*}
t_{1,1}(r+1,1) & =p(p^r-2)+t_{1,2}(r+1,1) \\
& =p^{r+1}-2p+1,
\end{align*}
ce qui montre $iii)$ et $iv)$. 
Cela prouve que le théorème est vrai pour $s=1$.

Notons $\Lambda$ l'ensemble des couples $(r,s)\neq (0,0)$ de $\N^2$ tels que $r\geq s\geq 0$ et vérifiant les affirmations $i)$ à $iv)$ du théorème.
Supposons par l'absurde qu'il existe $(r,s)\notin\Lambda$ avec $r\geq s\geq 0$ et choisissons le minimal pour l'ordre lexicographique.
Par ce qui précède, $s\geq 2$. 
 
Montrons dans un premier temps que $(r,s)$ vérifie $i)$ et $ii)$.
Par minimalité de $(r,s)$, on a que $(r,s-1)\in\Lambda$.
Ainsi, en utilisant $i)$,
\begin{equation*}
t_{1,2}(s-1,r)=p^{r+s-2}-\frac{p-1}{p+1}(p^{2s-3}+1).
\end{equation*}
  Comme $(s,s-1)\in\Lambda$, on a, d'après $iv)$, 
\begin{equation*}  
  t_{1,1}(s,s-1)=\frac{p-1}{p+1}(p^{2s-2}-1).
  \end{equation*}
  De plus, on a $t_{1,1}(s,r-1)=t_{1,1}(s,s-1)$.
En effet, cette égalité est trivialement vérifiée si $r=s$.
 Si $r\geq s+1$, alors $(r-1,s)\in\Lambda$. 
 Ainsi, $t_{1,1}(s,r-1)=t_{1,1}(s,s-1)$ d'après le $ii)$.

   Il s'ensuit donc, d'après le $i)$ de la proposition \ref{calcul de la différente version appliquée.}, que  
\begin{align*}
t_{1,1}(s,r)-t_{1,2}(s,r)& =p(t_{1,1}(s,r-1)-t_{1,2}(s-1,r)) \\
& = p\left(\frac{p-1}{p+1}p^{2s-2}-p^{r+s-2}+\frac{p-1}{p+1}p^{2s-3}\right) \\
& = p\left((p-1)p^{2s-3}-p^{r+s-2}\right)<0
\end{align*}
Le $ii)$ de la proposition \ref{calcul de la différente version appliquée.} montre que $t_{1,1}(s,r)= t_{1,2}(s,r-1)$.
D'où $t_{1,1}(s,r)=\frac{p-1}{p+1}(p^{2s-2}-1)$ et
\begin{align*}
t_{1,2}(s,r) & = p( p^{r+s-2}-(p-1)p^{2s-3})+\frac{p-1}{p+1}(p^{2s-2}-1) \\
& = p^{r+s-1}-\frac{p-1}{p+1}(p^{2s-1}+1),
\end{align*}
ce qui montre que $(r,s)$ vérifie $i)$ et $ii)$. 
En particulier, $(s,s)$ vérifie $i)$ et $ii)$.
Par ailleurs, $(s,s)$ vérifie trivialement $iii)$ et $iv)$.
Ainsi, $(s,s)\in\Lambda$.
Comme $(r,s)\notin\Lambda$, il s'ensuit que $r\geq s+1$.

Montrons maintenant que $(r,s)$ vérifie $iii)$ et $iv)$ si $r\geq s+1$. 
Par minimalité, $(r-1,s),(r,s-1)\in\Lambda$.
 En utilisant respectivement $iii)$ et $iv)$, 
\begin{equation*} 
 t_{1,2}(r-1,s)=t_{1,2}(s,s)
\end{equation*}
   et    
   \begin{equation*}   
   t_{1,1}(r,s-1)=p^{r+s-2}-p^{2(s-1)}+\frac{p-1}{p+1}(p^{2(s-1)}-1).
   \end{equation*}
  Comme $(s,s)\in\Lambda$, on a, d'après le $i)$,  
\begin{equation*}  
  t_{1,2}(s,s)=p^{2s-1}-\frac{p-1}{p+1}(p^{2s-1}-1).
  \end{equation*}
   Il s'ensuit donc, d'après le $i)$ de la proposition \ref{calcul de la différente version appliquée.}, que   
\begin{align*}
t_{1,2}(r,s)-t_{1,1}(r,s) & =p(t_{1,2}(r-1,s)-t_{1,2}(r,s-1)) \\
& = p\left(p^{2s-1}-\frac{p-1}{p+1}(p^{2s-2}+p^{2s-1})+p^{2s-2}-p^{r+s-2}\right) \\
& = p(2p^{2s-2}-p^{r+s-2})<0.
\end{align*}

Le $ii)$ de la proposition \ref{calcul de la différente version appliquée.} montre que $t_{1,2}(r,s)= t_{1,2}(r-1,s)$.
 D'où  $t_{1,2}(r,s)= p^{2s-1}-\frac{p-1}{p+1}(p^{2s-1}-1)$ et
\begin{align*}
t_{1,1}(r,s) & = p(p^{r+s-2}-2p^{2s-2})+p^{2s-1}-\frac{p-1}{p+1}(p^{2s-1}+1)\\
& = p^{r+s-1}-\frac{p-1}{p+1}(p^{2s-1}+1)-p^{2s-1}\\
& = p^{r+s-1}-p^{2s}+\frac{p-1}{p+1}(p^{2s}-1).
\end{align*}

On vient de montrer que $(r,s)$ vérifie $iii)$ et $iv)$, ce qui contredit le fait que $(r,s)\notin\Lambda$.
Le théorème s'ensuit.

\end{proof}

\begin{thm} \label{Cas n=1 et v p a vaut 1}
Si $p\nmid v_F(a)$ alors, pour tous $r>s\geq 0$, on a: 

\begin{enumerate} [i)]
\item $t_{1,2}(s,s)= \frac{p^{2s}+p^{2s-2}+p-1}{p+1}$ si $s\geq 1$; 
\item $t_{1,2}(s,r)=p^{r+s-1}-\frac{p^{2s-1}-p^{2s-2}-p+1}{p+1}$ si $s\geq 1$;
\item $t_{1,2}(r,s)=t_{1,2}(s+1,s)=\frac{2p^{2s}+p-1}{p+1}$ si $s\geq 1$;
\item $t_{1,1}(s,r)= t_{1,1}(s,s)= t_{1,1}(s,s-1)$ si $s\geq 1$;
\item $t_{1,1}(r,s)=p^{r+s-1}-\frac{2p^{2s+1}-p+1}{p+1}$ si $r>s+1$;
\item $t_{1,1}(s+1,s)=t_{1,1}(s+1,s-1)$ si $s\geq 1$.
\end{enumerate}
\end{thm}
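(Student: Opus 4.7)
The overall strategy is to mimic the proof of Theorem \ref{cas où n=1 et v p a vaut 0}: establish the formulas for small values of $s$, then run a minimal-counterexample argument in the lexicographic order on $(r,s)$, using Proposition \ref{calcul de la différente version appliquée.} at each induction step. The base case $s=0$ is trivial, being covered by the classical cyclotomic jump $t_{1,1}(r,0)=p^{r-1}-1$ stated in assertion (v). For $s=1$, Lemma \ref{calcul de $t(1,1)$.} ii) yields $t_{1,2}(1,r)=p^r$ for every $r\geq 1$, which is precisely (i) and (ii) at $s=1$; the remaining formulas (iii)--(vi) at $s=1$ are then obtained by a secondary induction on $r$, where at each stage Proposition \ref{calcul de la différente version appliquée.} i) is used to compute the difference $t_{1,2}(r,1)-t_{1,1}(r,1)$, one verifies that it has the expected sign, and Proposition \ref{calcul de la différente version appliquée.} ii) concludes.

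For the induction, let $\Lambda$ denote the set of pairs $(r,s)$ with $r>s\geq 0$ for which the corresponding assertions of the theorem are valid, and suppose for contradiction that there is a pair not in $\Lambda$. Choose such a pair $(r,s)$ minimal for the lexicographic order; the base cases force $s\geq 2$. I would first confirm (i), (ii), (iv) at $(r,s)$: by minimality, both $(s,r-1)$ and $(s-1,r)$ lie in $\Lambda$, so the values $t_{1,1}(s,r-1)$ and $t_{1,2}(s-1,r)$ are explicitly known; applying Proposition \ref{calcul de la différente version appliquée.} i) with $(l,k)=(1,2)$ to the pair $(s,r)$ gives
\[
t_{1,1}(s,r)-t_{1,2}(s,r) = p\bigl(t_{1,1}(s,r-1)-t_{1,2}(s-1,r)\bigr).
\]
A direct arithmetic check using the induction hypothesis shows the right-hand side is negative, so $t_{1,1}(s,r)<t_{1,2}(s,r)$; Proposition \ref{calcul de la différente version appliquée.} ii) then yields $t_{1,1}(s,r)=t_{1,1}(s,r-1)$, and inserting this into the difference formula produces the closed form for $t_{1,2}(s,r)$. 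The formulas (i), (ii), (iv) at $(s,r)$ follow after simplification; the diagonal case $(s,s)\in\Lambda$ is immediate.

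For $r\geq s+1$, the remaining assertions (iii), (v), (vi) are handled symmetrically: one applies Proposition \ref{calcul de la différente version appliquée.} i) to the pair $(r,s)$, with the known values $t_{1,2}(r-1,s)$ and $t_{1,1}(r,s-1)$ coming from the minimality of $(r,s)$, verifies that the difference $t_{1,2}(r,s)-t_{1,1}(r,s)$ is negative, applies Proposition \ref{calcul de la différente version appliquée.} ii) to conclude $t_{1,2}(r,s)=t_{1,2}(r-1,s)$, and extracts the explicit value of $t_{1,1}(r,s)$. This covers (iii) and distinguishes (v) from (vi) according to whether $r>s+1$ or $r=s+1$, contradicting $(r,s)\notin\Lambda$ in either case.

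The main obstacle is purely arithmetical. At each step I must verify that the postulated sign of the difference $t_{1,l}-t_{1,k}$ is the correct one, for otherwise Proposition \ref{calcul de la différente version appliquée.} ii) cannot be applied in the chosen direction; the closed forms carry a denominator $p+1$ and small extra terms like $\frac{2p^{2s}+p-1}{p+1}$ that reflect the asymmetry $p\nmid v_F(a)$, and the computations are more delicate than in Theorem \ref{cas où n=1 et v p a vaut 0}. Particular care is needed at the boundary $r=s+1$, where (v) degenerates and must be replaced by (vi), and also in ensuring that the $s=2$ step of the induction genuinely agrees with the $s=1$ base, so that the recursion is seeded correctly.
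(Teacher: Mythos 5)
Your overall strategy is exactly the paper's: the two base cases $s=0$ and $s=1$, then a minimal counterexample for the lexicographic order, with Proposition \ref{calcul de la différente version appliquée.}~$i)$ expressing a difference of two jumps and $ii)$ converting a sign check into an equality of jumps. The inputs you invoke are also the right ones (up to the slip that the pairs supplied by minimality are $(r-1,s)$ and $(r,s-1)$, not $(s,r-1)$ and $(s-1,r)$).

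The concrete gap is your treatment of the diagonal. You write that ``the diagonal case $(s,s)\in\Lambda$ is immediate'', which is the correct reflex for Theorem \ref{cas où n=1 et v p a vaut 0}, where $(s,s)$ is a legitimate pair and the diagonal assertions are tautologies; here it is not. The theorem only quantifies over $r>s$, and the diagonal assertion $i)$ is a nontrivial closed formula for $t_{1,2}(s,s)$ that your off-diagonal computation of $t_{1,1}(s,r)-t_{1,2}(s,r)$ never produces. Worse, these diagonal and near-diagonal values are needed as \emph{inputs} to the sign checks you do describe: for $r=s+1$ your first computation requires $t_{1,1}(s,s)$, which minimality alone does not supply; and your second computation, which for $r=s+1$ must deliver the explicit value of $t_{1,2}(s+1,s)$ in assertion $iii)$ and the identity $vi)$, rests on the paper's estimate $t_{1,1}(s+1,s)-t_{1,2}(s+1,s)=p\bigl(t_{1,1}(s+1,s-1)-t_{1,2}(s,s)\bigr)$, whose sign cannot be checked without the formula for $t_{1,2}(s,s)$. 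The paper therefore devotes the first half of the inductive step to two preliminary applications of Proposition \ref{calcul de la différente version appliquée.}: one to get $t_{1,1}(s,s)=t_{1,1}(s,s-1)$ and the formula for $t_{1,2}(s,s)$ from $(s,s-1)\in\Lambda$, and one to get $t_{1,1}(s+1,s)=t_{1,1}(s+1,s-1)$ and the formula for $t_{1,2}(s+1,s)$ from $(s+1,s-1)\in\Lambda$; only then do the two computations you outline go through, and only then can one conclude $(s+1,s)\in\Lambda$ and hence $r\geq s+2$. The repair uses no new idea — it is two more instances of your own mechanism — but it must be inserted first and in that order, otherwise the recursion is not seeded.
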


\begin{proof}
Montrons le théorème pour $s=0$. 
Remarquons que toutes les affirmations, sauf le $v)$, sont vides. 
L'extension cyclotomique $\Q_p(\zeta_{p^r})/\Q_p(\zeta_{p^{r-1}})$ ayant un saut égal à $p^{r-1}-1$, on en déduit que $t_{1,1}(r,0)=p^{r-1}-1$ et $v)$ s'ensuit.

 Montrons le théorème pour $s=1$.
  Tout d'abord, $i)$ et $ii)$ découlent du $ii)$ du Lemme \ref{calcul de $t(1,1)$.} et $iv)$ du fait que l'extension $L_{1,r}/L_{0,r}$ soit totalement ramifiée de degré $p-1$.
  D'après le $i)$ de la proposition \ref{calcul de la différente version appliquée.},   
  \begin{align*}
  t_{1,1}(2,1)-t_{1,2}(2,1) & =p(t_{1,1}(2,0)-t_{1,2}(1,1)) \\
   & = p( p-1-p)=-p< 0 \; (\text{par le Lemme \ref{calcul de $t(1,1)$.}}).
  \end{align*} 
 
Le $ii)$ de la proposition \ref{calcul de la différente version appliquée.} montre que $t_{1,1}(2,1)=t_{1,1}(2,0)=p-1$.
D'où $vi)$.
On en déduit donc que 
\begin{align*}
t_{1,2}(2,1) & = p(t_{1,2}(1,1)-t_{1,1}(2,0))+t_{1,1}(2,0) \\
& = p+p-1=2p-1,
\end{align*}
 ce qui montre $iii)$ dans le cas $r=2$.
 De plus, $v)$ est vide pour $r=2$.
 On a ainsi montré le théorème pour $r=2$ et $s=1$.

Montrons $iii)$ et $v)$ (toujours pour $s=1$) par récurrence sur $r\geq 2$. 
On a déjà traité le cas $r=2$.
Supposons que $r$ vérifie $iii)$ et $v)$ et montrons que $r+1$ aussi.
On sait que $t_{1,2}(r+1,0)=p^r-1$.
De plus, d'après le $iii)$, $t_{1,2}(r,1)= 2p-1$.
 D'après le $i)$ de la proposition \ref{calcul de la différente version appliquée.},  
\begin{align*}
t_{1,2}(r+1,1)-t_{1,1}(r+1,1) & =p(t_{1,2}(r,1)-t_{1,1}(r+1,0)) \\
& = p(2p-1-p^r+1)=p(2p-p^r)<0.
\end{align*} 
 
 Le $ii)$ de la proposition \ref{calcul de la différente version appliquée.} montre que $t_{1,2}(r+1,1)=t_{1,2}(r,1)=2p-1$. 
 Ainsi, 
\begin{align*} 
 t_{1,1}(r+1,1) & = p(p^r-2p)+t_{1,2}(2,1) \\
 & =p^{r+1}-2p^2+2p-1,
 \end{align*}
  ce qui montre l'hérédité du $iii)$ et du $v)$.
  Le théorème est donc vrai pour $s=1$.
  
  Notons $\Lambda$ l'ensemble des couples $(r,s)$ de $\N^2$ tels que $r> s\geq 0$ et vérifiant les affirmations $i)$ à $vi)$ du théorème. 
Supposons par l'absurde qu'il existe $(r,s)\notin\Lambda$ avec $r> s\geq 0$ et choisissons le minimal pour l'ordre lexicographique. 
Par ce qui précède, $s\geq 2$. 
 
Par minimalité de $(r,s)$, on en déduit que $(s,s-1)\in\Lambda$. 
Ainsi, en utilisant respectivement le $vi)$ et le $v)$, on en déduit que 
\begin{equation} \label{égalité du t 1 1 s s-1}
t_{1,1}(s,s-1)=t_{1,1}(s,s-2)
\end{equation}
 et 
\begin{equation*} 
 t_{1,1}(s,s-2)=p^{2s-3}-\frac{2p^{2s-3}-p+1}{p+1}.
 \end{equation*}
En utilisant le $ii)$,
\begin{equation*} 
 t_{1,2}(s-1,s)=p^{2s-2}-\frac{p^{2s-3}-p^{2s-4}-p+1}{p+1}.
 \end{equation*}
D'après le $i)$ de la proposition \ref{calcul de la différente version appliquée.}, 
\begin{align*}
t_{1,1}(s,s)-t_{1,2}(s,s)& =p(t_{1,1}(s,s-1)-t_{1,2}(s-1,s)) \\
& = p\left(p^{2s-3}-p^{2s-2}-\frac{p^{2s-3}+p^{2s-4}}{p+1}\right) \\
& = p^{2s-3}(-p^2+p-1)<0.
\end{align*}
Le $ii)$ de la proposition \ref{calcul de la différente version appliquée.} montre que 
\begin{equation} \label{égalité du t 1 1 s s}
t_{1,1}(s,s)=t_{1,1}(s,s-1).
\end{equation}
 D'où  
\begin{equation*}
t_{1,1}(s,s)=p^{2s-3}-\frac{2p^{2s-3}-p+1}{p+1}
\end{equation*}
 et 
\begin{align*}
t_{1,2}(s,s) & = p^{2s-3}(p^2-p+1)+p^{2s-3}-\frac{2p^{2s-3}-p+1}{p+1} \\
& = \frac{p^{2s-3}(p^2-p+2)(p+1)-2p^{2s-3}+p-1}{p+1}=\frac{p^{2s}+p^{2s-2}+p-1}{p+1},
\end{align*}
ce qui montre $i)$ et la seconde égalité du $iv)$. 

Par minimalité, $(s+1,s-1)\in\Lambda$.
Ainsi, en utilisant le $v)$, 
\begin{equation*}
t_{1,1}(s+1,s-1)=p^{2s-1}-\frac{2p^{2s-1}-p+1}{p+1}.
\end{equation*}
Par conséquent, en utilisant le $i)$ de la proposition \ref{calcul de la différente version appliquée.},  
\begin{align*}
t_{1,1}(s+1,s)-t_{1,2}(s+1,s) & = p(t_{1,1}(s+1,s-1)-t_{1,2}(s,s)) \\
& = p\left(p^{2s-1}-\frac{2p^{2s-1}+p^{2s}+p^{2s-2}}{p+1}\right) \\
& = p(p^{2s-1}-p^{2s-2}(p+1))=-p^{2s-1}<0.
\end{align*}
Le $ii)$ de la proposition \ref{calcul de la différente version appliquée.} montre que $t_{1,1}(s+1,s)=t_{1,1}(s+1,s-1)$.
D'où 
\begin{equation*}
t_{1,1}(s+1,s)=p^{2s-1}-\frac{p^{2s-1}-p+1}{p+1}
\end{equation*}
 et 
\begin{align*}
t_{1,2}(s+1,s)& =2p^{2s-1}-\frac{2p^{2s-1}-p+1}{p+1}= \frac{2p^{2s}+p-1}{p+1},
  \end{align*}
 ce qui prouve la seconde égalité du $iii)$ et $vi)$. 
 
 Nous allons maintenant prouver que $(r,s)$ vérifie $ii)$ et la seconde égalité du $iv)$.
Si $r\geq s+2$, alors $(r-1,s)\in \Lambda$.
 En utilisant $iv)$, on en déduit que 
\begin{equation*} 
t_{1,1}(s,r-1)=t_{1,1}(s,s-1)=t_{1,1}(s,s-2) \; \text{d'après \eqref{égalité du t 1 1 s s-1}}
\end{equation*}
De \eqref{égalité du t 1 1 s s}, il s'ensuit que $t_{1,1}(s,r-1)=t_{1,1}(s,s-2)$ pour tout $r\geq s+1$. 
 Or, d'après le $v)$, 
\begin{equation*}
t_{1,1}(s,s-2)=p^{2s-3}-\frac{2p^{2s-3}-p+1}{p+1}. 
\end{equation*} 
 et d'après le $ii)$,
\begin{equation*} 
 t_{1,2}(s-1,r)=p^{r+s-2}-\frac{p^{2s-3}-p^{2s-4}-p+1}{p+1}.
 \end{equation*}
 En utilisant le $i)$ de la proposition \ref{calcul de la différente version appliquée.}, 
\begin{align*}
t_{1,1}(s,r)-t_{1,2}(s,r)& =p(t_{1,1}(s,r-1)-t_{1,2}(s-1,r)) \\
& = p\left(p^{2s-3}-\frac{p^{2s-3}+p^{2s-4}}{p+1}-p^{r+s-2}\right) \\
& = p(p^{2s-3}-p^{2s-4}-p^{r+s-2})<0.
\end{align*}
Le $ii)$ de la proposition \ref{calcul de la différente version appliquée.} montre que  $t_{1,1}(s,r)=t_{1,1}(s,r-1)=t_{1,1}(s,s-1)$. 
D'où 
\begin{equation*}
t_{1,1}(s,r)=p^{2s-3}-\frac{2p^{2s-3}-p+1}{p+1}
\end{equation*}
 et
\begin{align*}
t_{1,2}(s,r) & = p(p^{r+s-2}+p^{2s-4}-p^{2s-3})+p^{2s-3}-\frac{2p^{2s-3}-p+1}{p+1} \\
& = p^{r+s-1}-\frac{p^{2s-1}-p^{2s-2}-p+1}{p+1},
\end{align*}
ce qui montre $ii)$ et la première égalité du $iv)$.  
 
 Il reste à montrer que $(r,s)$ vérifie $iii)$ et $v)$.
 Remarquons que dans le cas $r=s+1$, le $iii)$ est trivialement vérifié et $v)$ est vide.
 Ainsi, $(s+1,s)\in\Lambda$.
 Comme $(r,s)\notin\Lambda$, on en déduit que $r\geq s+2$.
 Par minimalité, $(r-1,s),(r,s-1)\in\Lambda$. 
En utilisant respectivement le $iii)$ et le $v)$,
\begin{equation*}
t_{1,2}(r-1,s)=\frac{2p^{2s}+p-1}{p+1}
\end{equation*}
 et 
\begin{equation*} 
 t_{1,1}(r,s-1)=p^{r+s-2}-\frac{2p^{2s-1}-p+1}{p+1}.
 \end{equation*}
En utilisant le $i)$ de la proposition \ref{calcul de la différente version appliquée.},
\begin{align*}
t_{1,2}(r,s)-t_{1,1}(r,s) & = p(t_{1,2}(r-1,s)-t_{1,1}(r,s-1)) \\
& = p\left(\frac{2p^{2s-1}+2p^{2s}}{p+1}-p^{r+s-2}\right) \\
& = -p(p^{r+s-2}-2p^{2s-1}) <0.
\end{align*}
Le $ii)$ de la proposition \ref{calcul de la différente version appliquée.} montre que $t_{1,2}(r,s) = t_{1,2}(r-1,s)$.
D'où 
\begin{equation*}
t_{1,2}(r-1,s)=\frac{2p^{2s}+p-1}{p+1}
\end{equation*}
 et
\begin{align*}
t_{1,1}(r,s) & = p(p^{r+s-2}-2p^{2s-1})+\frac{2p^{2s}+p-1}{p+1} \\
& = p^{r+s-1}+\frac{-2p^{2s}(p+1)+2p^{2s}+p-1}{p+1}\\
& = p^{r+s-1}-\frac{2p^{2s+1}-p+1}{p+1},
\end{align*}
ce qui prouve  $iii)$ et $v)$ et contredit le fait que $(r,s)\notin\Lambda$.
 Le théorème s'ensuit.

\end{proof}

\section{Relations de récurrence entre les $t_{n,k}(r,s_1,\dots,s_n)$.} \label{Relations de récurrence entre les}

Dans cette section, fixons un entier $r\geq 1$.
Soient $F$ et $a_1,\dots,a_n$ vérifiant les conditions de l'hypothèse \ref{hypothèse 3}.
On dira que $t_{n,k}(r,s_1,\dots,s_n)$ a la propriété $(P)$ si la (éventuelle) dépendance en $a_1,\dots,a_n$ est une (éventuelle) dépendance en $v_F(a_n)$. 
Par exemple, le théorème \ref{cas où n=1 et v p a vaut 0} et le théorème \ref{Cas n=1 et v p a vaut 1} montrent que $t_{1,k}(r,s)$ a la propriété $(P)$ pour tous $k\in\{1,2\}$, $r\geq1$ et $s\geq 0$.

Les résultats principaux de cette section se feront par récurrence sur $n$. 
On va donc commencer par le cas $n=2$. 
Cela aura pour but d'alléger les preuves des résultats principaux de cette section. 
De plus, ce cas particulier permettra de mieux comprendre notre approche pour traiter le cas général.

\subsection{Un premier résultat dans le cas $n=2$.}

\begin{lmm} \label{2unicité du saut ?}
Soit $s\geq 1$ un entier. 
Supposons que $p\nmid v_F(a_2)$.
Supposons également que $t:=t_{2,2}(r,s+1,s-1)=t_{2,3}(r,s,s)$ et que ce nombre ait la propriété $(P)$.
Alors, 
\begin{enumerate}[i)]
\item $L_{r,s+1,s}/L_{r,s,s-1}$ possède un unique saut égal à $t$;
\item pour tout $k\in \{1,2\}$, on a $t_{n,k+1}(r,s+1,s)=t$.
\end{enumerate}
\end{lmm}

\begin{proof}
Notons $K=K_{r,s,s-1}$ et $K'=K_{r,s+1,s}$. 
Alors, $[K' : K]=p^2$.  
En utilisant le Lemme \ref{lmm que j'ai rajouter sur la condition suffisante} (appliqué à $\alpha_1=a_1^{1/p^s}$ et à $\alpha_2=a_2^{1/p^{s-1}}$) et le Lemme \ref{sous-corps d'une extension kumérienne.}, un sous-corps intermédiaires de $K'/K$ de degré $p$ sur $K$ est de la forme $K(\gamma^{1/p})$ où $\gamma= \alpha_1^{x_1}\alpha_2^{x_2}$ avec $x_1,x_2\in\{0,\dots,p-1\}$ non tous nuls.

Si  $x_1=0$ alors, $x_2\neq 0$ et $\gamma=\alpha_2^{x_2}$.
Dans ce cas, le saut de $K(\gamma^{1/p})/L$ vaut $t_{2,3}(r,s,s)$, i.e. $t$.

  Supposons maintenant que $x_1\neq 0$. 
  Notons $\gamma_1=a_1^{x_1}a_2^{px_2}$.
Alors, $p\mid v_F(\gamma_1)$ puisque $p\mid v_F(a_1)$.
De plus, il est clair que 
\begin{equation*}
K=F(\zeta_{p^r}, \gamma_1^{1/p^s},a_2^{1/p^{s-1}}).
\end{equation*}
Comme $t_{2,2}(r,s,s-1)$ a la propriété $(P)$, on en déduit, en remplaçant $a_1$ par $\gamma_1$, que le saut de $K(\gamma^{1/p})/L$ vaut $t_{2,2}(r,s+1,s-1)$, i.e. $t$.

Ainsi, le saut de toute sous-extension de $K'/K$ de degré $p$ sur $K$ vaut $t$.
Le Lemme \ref{Unicité du saut, version théorique} permet donc de montrer $i)$ et $ii)$ s'ensuit car les $t_{2,k+1}(r,s+1,s)$ avec $k\in\{0,1\}$ sont des sauts de $K'/K$.
\end{proof}

\begin{prop} \label{2Théorème fondamental 2.}
Supposons que $p\nmid v_F(a_2)$. 
Alors, pour tous $s_1,s_2$ tels que $1\leq s_1\leq s_2$, on a  

\begin{enumerate} [i)]
\item \begin{enumerate} [a)]
\item $t_{2,2}(r,s_1,s_2)=t_{2,2}(r,s_1,s_1-1)$;
\item $t_{2,3}(r,s_2,s_1)=t_{2,3}(r,s_1,s_1)$ ;
\end{enumerate} 

\item \begin{enumerate} [a)]
\item $t_{2,3}(r,s_1,s_2)=p t_{2,3}(r,s_1-1,s_2) + (1-p)t_{2,2}(r,s_1,s_2)$;
\item $t_{2,2}(r,s_2,s_1)=p t_{2,2}(r,s_2,s_1-1) +(1-p)t_{2,3}(r,s_2,s_1)$ sauf si $s_1=s_2$;
\end{enumerate} 
\item $t_{2,3}(r,s_1,s_2)=t_{2,2}(r,s_2+1,s_1-1)$;
\item Les sauts $t_{2,2}(r,s_1,s_2)$ et $t_{2,3}(r,s_1,s_2)$ ont la propriété $(P)$.
\end{enumerate}

\end{prop}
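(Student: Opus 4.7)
The plan is to prove the four statements (i)--(iv) simultaneously by induction on $s_2$ (with the constraint $s_1 \leq s_2$), using Proposition~\ref{calcul de la différente version appliquée.} as the computational engine and Lemme~\ref{2unicité du saut ?} to handle the diagonal case $s_1 = s_2$. Property $(P)$, i.e.\ assertion (iv), will be carried along throughout the induction: it holds at the base because, via Hecke's theorem (Corollaire~\ref{Ma version du Théorème de Hecke}), the relevant quantities $c_K(\alpha)$ depend only on $v_F(a_2)$; and each of the recurrences below expresses a jump at index $(s_1, s_2)$ as a linear combination of jumps of strictly smaller total index, so property $(P)$ propagates automatically.

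The recurrences (ii)(a) and (ii)(b) will be the easy part. Applying Proposition~\ref{calcul de la différente version appliquée.}(i) with $l = 3, k = 2$ yields
\begin{equation*}
t_{2,3}(r,s_1,s_2) - t_{2,2}(r,s_1,s_2) = p\bigl(t_{2,3}(r,s_1-1,s_2) - t_{2,2}(r,s_1,s_2-1)\bigr),
\end{equation*}
and the inductive version of (i)(a) at the smaller index $(s_1, s_2-1)$, valid since $s_1 \leq s_2 - 1$, gives $t_{2,2}(r,s_1,s_2-1) = t_{2,2}(r,s_1,s_1-1) = t_{2,2}(r,s_1,s_2)$. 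Rearranging then produces (ii)(a). Statement (ii)(b), which concerns the pair $(s_2, s_1)$, follows by the symmetric argument: apply Proposition~\ref{calcul de la différente version appliquée.}(i) with $l = 2, k = 3$ and invoke (i)(b) at $(s_2, s_1-1)$ to eliminate a $t_{2,3}$ term. The hypothesis $s_1 < s_2$ is precisely what makes this substitution meaningful, matching the exclusion in the proposition.

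The delicate step is the diagonal case $s_1 = s_2 = s$ of (i)(a), namely $t_{2,2}(r,s,s) = t_{2,2}(r,s,s-1)$, and this is where Lemme~\ref{2unicité du saut ?} intervenes. To apply that lemma at level $s-1$, we need the equality $t_{2,2}(r,s,s-2) = t_{2,3}(r,s-1,s-1)$ together with property $(P)$; this is exactly statement (iii) at the smaller index $s_1 = s_2 = s-1$, available by induction. The lemma then forces $L_{r,s,s-1}/L_{r,s-1,s-2}$ to admit a unique jump, and a further application of Proposition~\ref{calcul de la différente version appliquée.} at $(s, s)$ combined with this unique-jump information yields (i)(a) and (i)(b) at the diagonal. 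The off-diagonal cases of (i) then follow by a descending induction on $s_2 - s_1$ using the recurrences (ii)(a) and (ii)(b) already established at the current level.

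The main obstacle will be the duality identity (iii), $t_{2,3}(r,s_1,s_2) = t_{2,2}(r,s_2+1,s_1-1)$, which equates jumps coming from two structurally different towers. The plan is to iterate the recurrences: apply (ii)(a) repeatedly on the left-hand side until $s_1$ is brought down to $1$, and apply (ii)(b) on the right-hand side until $s_1 - 1$ is brought down to $0$, reducing both expressions to explicit linear combinations of the $n = 1$ jumps $t_{1,2}$ computed in Théorème~\ref{cas où n=1 et v p a vaut 0} and Théorème~\ref{Cas n=1 et v p a vaut 1}. A term-by-term comparison of the two resulting closed forms then yields (iii). The subtlety lies in verifying, at every stage of the iteration, the strict-inequality condition demanded by Proposition~\ref{calcul de la différente version appliquée.}(ii) so that the substitutions are legitimate, and in arranging the ordering so that (iii) at level $s-1$ is genuinely available before we need Lemme~\ref{2unicité du saut ?} at level $s$; this bookkeeping of dependencies among (i), (ii), (iii), and (iv) is the heart of the argument.
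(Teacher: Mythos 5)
Your overall architecture (simultaneous induction on the pair $(s_1,s_2)$, Proposition~\ref{calcul de la différente version appliquée.} as the engine, Lemme~\ref{2unicité du saut ?} for the diagonal, reduction of (iii) to the explicit $n=1$ formulas) is essentially the paper's, but there is a genuine gap in the step you call ``the easy part''. Proposition~\ref{calcul de la différente version appliquée.}(i) only gives you the \emph{difference}
$t_{2,3}(r,s_1,s_2)-t_{2,2}(r,s_1,s_2)=p\bigl(t_{2,3}(r,s_1-1,s_2)-t_{2,2}(r,s_1,s_2-1)\bigr)$;
to conclude anything about the individual jumps you must first determine the \emph{sign} of this difference, i.e.\ prove the strict inequality $t_{2,2}(r,s_1,s_2-1)<t_{2,3}(r,s_1-1,s_2)$, so that part (ii) of that proposition applies and yields $t_{2,2}(r,s_1,s_2)=t_{2,2}(r,s_1,s_2-1)$. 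That inequality is the real content of (i)(a), and your chain $t_{2,2}(r,s_1,s_2-1)=t_{2,2}(r,s_1,s_1-1)=t_{2,2}(r,s_1,s_2)$ is circular: the second equality is (i)(a) at the current index, which is precisely what is being proved. In the paper this inequality is the hardest part of the argument: for $s_1>1$ it needs the identity $t_{2,2}(r,s_1,s_1-1)=t_{2,3}(r,s_1-1,s_1-1)$ (obtained from (iii) at $(s_1-1,s_1-1)$ together with the unique-jump Lemme~\ref{2unicité du saut ?}) combined with the strict monotonicity of Proposition~\ref{croissance des suites}; for $s_1=1$ it needs the explicit values $t_{2,2}(r,1,0)=1$ and $t_{2,3}(r,0,1)>1$ from the $n=1$ theorems. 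You only mention the sign issue as a ``subtlety'' in the context of (iii), whereas it is indispensable already for (i) and (ii).

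A secondary concern is your treatment of (iii): expanding $t_{2,3}(r,s_1,s_2)$ via (ii)(a) and $t_{2,2}(r,s_2+1,s_1-1)$ via (ii)(b) produces linear combinations of \emph{different} lower jumps ($t_{2,2}(r,j,s_2)$ on one side, $t_{2,3}(r,s_2+1,j)$ on the other), so a term-by-term comparison already presupposes (iii) at lower indices or a further reduction; the paper instead reduces (iii) at $(s_1,s_2)$ to (iii) at $(s_1-1,s_2)$ through the auxiliary lemma and only resorts to the explicit $n=1$ closed forms in the base case $s_1=1$ (its fourth step, which also requires splitting $r<s+1$ from $r\geq s+1$). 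Your plan is salvageable along those lines, but as written the dependency ordering and the sign verifications are not in place.
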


\begin{proof}
Notons $\Lambda$ l'ensemble des $(s_1,s_2)$ tel que $s_1\leq s_2$ et vérifiant les affirmations $i)$ à $iv)$ de la proposition.
Supposons par l'absurde qu'il existe $s_1\leq s_2$ tel que $(s_1,s_2)\notin\Lambda$.
On le choisit minimal pour l'ordre lexicographique.

La preuve se fera en quatre étapes (décrites ci-dessous) très largement indépendantes (seul le lemme ci-dessous sera utilisé dans la première et la troisième étape). 
 
\textit{Première étape} : $ib)$ et $ii b)$ sont vérifiés.

\textit{Deuxième étape} : $ia),\; ii a)$ et $iv)$ sont vérifiés.

\textit{Troisième étape} :  $iii)$ est vérifié si $s_1>1$.

\textit{Quatrième étape} : $iii)$ est vérifié si $s_1=1$. \\

\textit{Première étape} :
 
Si $s_2=s_1$, alors $ib)$ est trivialement vérifié et $ii b)$ est vide.
Supposons $s_2=s_1+1$.
Par minimalité, $(s_1,s_1)\in\Lambda$ et on déduit du $iii)$ que 
\begin{equation*}
t_{2,2}(r,s_1+1,s_1-1)=t_{2,3}(r,s_1,s_1).
\end{equation*}
Les conditions du Lemme \ref{2unicité du saut ?} appliquées à $s=s_1$ et à $E=\{1\}$ sont ainsi vérifiées.
Par conséquent, le $ii)$ du Lemme \ref{2unicité du saut ?} permet d'en déduire que
\begin{equation} \label{2blabla}
t_{2,2}(r,s_1+1,s_1)=t_{2,3}(r,s_1+1,s_1)=t_{2,3}(r,s_1,s_1),
\end{equation}
ce qui montre $i b)$ et $ii b)$.
 
 Supposons maintenant $s_2\geq s_1+2$.  
 Ainsi, $(s_1,s_2-1)\in\Lambda$ par minimalité. 
 Du $ib)$, il s'ensuit que $t_{2,3}(r,s_2-1,s_1)=t_{2,3}(r,s_1,s_1)$.
 Par conséquent, le lemme ci-dessous, appliqué à $m=s_1$ et à $l=s_2$, montre $i b)$ et $ii b)$.

\begin{lmm*} \label{2Lemme dont je dois retirer le numéro.}
Soient $m\in\{1,\dots,s_1\}$ et $ l\in\{m+2,\dots ,s_2+1-\delta_{m,s_1}\}$. 
Alors,

\begin{enumerate} [i)]
\item $t_{2,3}(r,l,m)=t_{2,3}(r,l-1,m)$;
\item $t_{2,2}(r,l,m)=p t_{2,2}(r,l,m-1)+(1-p)t_{2,3}(r,l,m)$.
\end{enumerate}

\end{lmm*}

\begin{proof}
La proposition  \ref{croissance des suites} appliquée à $K=L_{r,0,m-1}$ et à $\alpha=a_1$ montre (car $m+1 < l$) que 
\begin{equation*}
t_{2,2}(r,l,m-1)>t_{2,2}(r,m+1,m-1).
\end{equation*} 
Par minimalité, $(m,m)\in\Lambda$ (car $m\leq s_1< s_2$).
 On déduit alors du $iii)$ que 
\begin{equation*}
t_{2,2}(r,m+1,m-1)=t_{2,3}(r,m,m).
\end{equation*}
Par minimalité, $(m,l-1)\in\Lambda$ puisque $l-1>m$ par hypothèse. 
Ainsi, d'après le $i b)$, 
\begin{equation*}
t_{2,3}(r,l-1,m)=t_{2,3}(r,m,m).
\end{equation*}
On en déduit donc que $t_{2,3}(r,l-1,m)< t_{2,2}(r,l,m-1)$.
D'après le $i)$ de la proposition \ref{calcul de la différente version appliquée.},
\begin{equation} \label{2le multline du lemme}
t_{2,3}(r,l,m)-t_{2,2}(r,l,m)= p(t_{2,3}(r,l-1,m)-t_{2,2}(r,l,m-1))<0.
\end{equation}
 Le $ii)$ de la proposition \ref{calcul de la différente version appliquée.} montre que $t_{2,3}(r,l,m)=t_{2,3}(r,l-1,m)$ (i.e.  le $i)$ de ce lemme) et le $ii)$ se déduit du \eqref{2le multline du lemme} car 
\begin{equation*} 
t_{2,2}(r,l,m)= t_{2,3}(r,l,m)-p(t_{2,3}(r,l-1,m)-t_{2,2}(r,l,m-1)). 
\end{equation*}

\end{proof} 

\vspace{0.1cm}

\textit{Deuxième étape} :

D'après le $i)$ de la proposition \ref{calcul de la différente version appliquée.}, 
\begin{multline} \label{2equation utile une fois}
t_{2,2}(r,s_1,s_2)-t_{2,3}(r,s_1,s_2) = p(t_{2,2}(r,s_1,s_2-1)-t_{2,3}(r,s_1-1,s_2)).
\end{multline}
Si $s_1<s_2$ alors, $(s_1,s_2-1)\in\Lambda$ et on déduit du $i a)$ que 
\begin{equation} \label{2égalité servant pour l'inégalité}
t_{2,2}(r,s_1,s_2-1)=t_{2,2}(r,s_1,s_1-1).
\end{equation}
Ainsi, \eqref{2égalité servant pour l'inégalité} est valable pour tout $s_1\leq s_2$.
 Montrons maintenant que
\begin{equation} \label{2D}
t_{2,2}(r,s_1,s_2-1)<t_{2,3}(r,s_1-1,s_2)
\end{equation}
en distinguant le cas $s_1>1$ du cas $s_1=1$.

Si $s_1>1$ alors, $( s_1-1,s_1-1)\in\Lambda$ par minimalité. En utilisant le $iii)$,
\begin{equation} \label{2E}
t_{2,3}(r,s_1-1,s_1-1)=t_{2,2}(r,s_1,s_1-2).
\end{equation}
Les hypothèses du Lemme \ref{2unicité du saut ?} appliquées à $s=s_1-1$ et à $E=\{1\}$ sont donc vérifiées d'après \eqref{2E} et le $iv)$.
Le $ii)$ du Lemme \ref{2unicité du saut ?} montre donc que
\begin{equation} \label{2AA}
t_{2,2}(r,s_1,s_1-2)=t_{2,2}(r,s_1, s_1-1).
\end{equation}
En combinant \eqref{2E} et \eqref{2AA}, on en déduit que
\begin{equation} \label{2c'est le * de mon brouillon}
t_{2,2}(r,s_1,s_1-1)=t_{2,3}(r,s_1-1,s_1-1).
\end{equation}
 La proposition  \ref{croissance des suites} appliquée à $K=L_{r,s-1,0}$ et à $\alpha=a_n$ prouve (car $s_1-1 <s_2$) que $t_{2,3}(r,s_1-1,s_1-1)< t_{2,3}(r,s_1-1,s_2)$.
 En utilisant \eqref{2égalité servant pour l'inégalité}, \eqref{2c'est le * de mon brouillon} et cette inégalité, on obtient 
\begin{equation*} 
 t_{2,2}(r,s_1,s_2-1)<t_{2,3}(r,s_1-1,s_2),
 \end{equation*}
qui est bien l'inégalité \eqref{2D} dans le cas où $s_1>1$. 
 
 Supposons maintenant que $s_1=1$.
 D'après le théorème \ref{cas où n=1 et v p a vaut 0},
\begin{equation*}
t_{2,2}(r,s_1,s_1-1)=t_{2,2}(r,1,0)=1.
\end{equation*}
De \eqref{2égalité servant pour l'inégalité}, il suffit de montrer que $t_{2,3}(r,0,s_2)>1$ pour en déduire \eqref{2D}.
La proposition \ref{croissance des suites} appliquée à $K=L_{r,0,0}$ et à $\alpha=a_n$ donne (car $s_2 \geq 1$) $t_{2,3}(r,0,s_2)\geq t_{2,3}(r,0,1)$. 
De plus, $t_{2,3}(r,0,1)>1$ d'après le théorème \ref{Cas n=1 et v p a vaut 1}.
Ceci montre \eqref{2D} dans le cas $s_1=1$.

De \eqref{2equation utile une fois} et \eqref{2D}, on obtient que $t_{2,2}(r,s_1,s_2)<t_{2,3}(r,s_1,s_2)$.
 Le $ii)$ de la proposition \ref{calcul de la différente version appliquée.} montre que $t_{2,2}(r,s_1,s_2)=t_{2,2}(r,s_1,s_2-1)$.
 En utilisant \eqref{2égalité servant pour l'inégalité}, on en déduit $ia)$.
En utilisant de nouveau \eqref{2equation utile une fois}, on obtient que
\begin{equation} \label{2G} 
 t_{2,3}(r,s_1,s_2)=p t_{2,3}(r,s_1-1,s_2)+(1-p)t_{2,2}(r,s_1,s_2),
\end{equation}
ce qui montre $ii a)$.

Montrons maintenant que $( s_1,s_2)$ vérifie $iv)$.
Tout d'abord, le $ia)$ montre que $t_{2,2}(r,s_1,s_2)=t_{2,2}(r,s_1,s_1-1)$.

Si $s_1=1$ alors, $t_{2,2}(r,s_1,s_2)=t_{2,2}(r,1,0)=1$ et vérifie donc $iv)$.

Si $s_1>1$ alors, par minimalité, $(s_1,s_1-1)\in\Lambda$. 
Ainsi, $t_{2,2}(r,s_1,s_1-1)$, et donc $t_{2,2}(r,s_1,s_2)$, vérifie $iv)$. 

Il ne nous reste plus qu'à montrer que $t_{2,3}(r,s_1,s_2)$ vérifie $iv)$. 
De \eqref{2G}, il suffit de montrer que  $t_{2,3}(r,s_1-1,s_2)$ vérifie $iv)$ puisque $t_{2,2}(r,s_1,s_2)$ vérifie $iv)$.

Si $s_1>1$, cela découle du fait que $(s_1-1,s_2)\in\Lambda$, et donc que $t_{2,3}(r,s_1-1,s_2)$ vérifie $iv)$.
Enfin, si $s_1=1$, cela découle du théorème \ref{Cas n=1 et v p a vaut 1}.
Ainsi, $( s_1,s_2)$ vérifie $iv)$. \\

\textit{Troisième étape} : 
Supposons $s_1>1$.
Alors, d'après le $ii a)$, 
\begin{equation} \label{2H} 
 t_{2,3}(r,s_1,s_2) =p t_{2,3}(r,s_1-1,s_2)+(1-p)t_{2,2}(r,s_1,s_2).
 \end{equation}
Par minimalité, $(s_1-1,s_2)\in\Lambda$ et on déduit respectivement du $iii)$ et du $i a)$ que 
\begin{equation} \label{2I}
t_{2,3}(r,s_1-1,s_2)=t_{2,2}(r,s_2+1,s_1-2) \; \text{et} \; t_{2,2}(r,s_1,s_2)=t_{2,2}(r,s_1,s_1-1).
 \end{equation}
Du \eqref{2c'est le * de mon brouillon}, on en déduit que 
\begin{equation} \label{2II} 
t_{2,2}(r,s_1,s_2)=t_{2,3}(r,s_1-1,s_1-1).
\end{equation}
De plus, le $i b)$ donne 
\begin{equation} \label{2J}
t_{2,3}(r,s_2,s_1-1)=t_{2,3}(r,s_1-1,s_1-1).
\end{equation}
Ainsi, de \eqref{2H}, \eqref{2I}, \eqref{2II} et \eqref{2J}, il s'ensuit que 
\begin{multline} \label{2première formule de l'égalité du iii)} 
 t_{2,3}(r,s_1,s_2) =p t_{2,2}(r,s_2+1,s_1-2)+(1-p)t_{2,3}(r,s_1-1,s_1-1).
 \end{multline}
 
Le $ii)$ du lemme dans la preuve appliqué à $m=s_1-1$ et à $l=s_2+1$ donne :
\begin{multline} \label{2K}
t_{2,2}(r,s_2+1,s_1-1)= pt_{2,2}(r,s_2+1,s_1-2)+(1-p)t_{2,3}(r,s_2+1,s_1-1).
\end{multline}
 
Comme $(s_1-1, s_2+1)\in\Lambda$ par minimalité, on a, en utilisant le $i b)$,
\begin{equation} \label{2L}
t_{2,3}(r,s_2+1,s_1-1)=t_{2,3}(r,s_1-1,s_1-1).
\end{equation}
 En injectant \eqref{2L} dans \eqref{2L}, que l'on compare ensuite à \eqref{2première formule de l'égalité du iii)}, on en déduit $iii)$, ce qui conclut la troisième étape. \\

Supposons maintenant $s_1=1$. 
Posons $s=s_2$.
Alors, d'après le $iia)$, on en déduit que $t_{2,3}(r,1,s_2)=p t_{2,3}(r,0,s_2)+(1-p) t_{2,2}(r,1,s_2)$.
Ainsi, pour montrer $iii)$, il suffit de montrer que 
\begin{equation} \label{2equation pour initialiser le iii)}
t_{2,2}(r,s+1,0)=p t_{2,3}(r,0,s)+(1-p)t_{2,2}(r,1,s).
\end{equation} 
 
 \textit{Quatrième étape} :
 En utilisant respectivement le $i a)$ et le théorème \ref{cas où n=1 et v p a vaut 0},  
\begin{equation*} 
 t_{2,2}(r,1,s)=t_{2,2}(r,1,0)=1.
 \end{equation*}

  Si $r<s+1$ alors, en utilisant respectivement le $i)$ du théorème \ref{cas où n=1 et v p a vaut 0} et le $ii)$ du théorème \ref{Cas n=1 et v p a vaut 1}, on obtient : 
\begin{equation*}  
  t_{2,2}(r,s+1,0)=p^{r+s}-\frac{p-1}{p+1}(p^{2r-1}+1)
  \end{equation*}
 et
\begin{equation*} 
 t_{2,3}(r,0,s)=p^{r+s-1}-\frac{p^{2r-1}-p^{2r-2}-p+1}{p+1}.
  \end{equation*} 
 L'égalité \eqref{2equation pour initialiser le iii)} est ainsi vérifiée.
 
 Si $r\geq s+1$ alors, en utilisant respectivement le $iii)$ et le $i)$ du théorème \ref{cas où n=1 et v p a vaut 0} ainsi que le $iii)$ du théorème \ref{Cas n=1 et v p a vaut 1}, 
\begin{equation*} 
 t_{2,2}(r,s+1,0)=p^{2s-1}-\frac{p-1}{p+1}(p^{2s-1}+1) \; \text{et} \; t_{2,3}(r,0,s)=\frac{2p^{2s}+p-1}{p+1}.
 \end{equation*} 
 L'égalité \eqref{2equation pour initialiser le iii)} est ainsi vérifiée, ce qui conclut la quatrième étape. 
 Ainsi, $(s_1,s_2)\in\Lambda$, ce qui est absurde. 
 Ceci montre la proposition. 

\end{proof}

\subsection{Le cas général.}

\begin{lmm} \label{unicité du saut 2}
Soient $n\geq 2$, $s_1,\dots,s_n\geq 1$ et $E\subset \{1,\dots,n\}$ un ensemble non vide tels  que pour tout $i\in E$, on ait $p\mid v_F(a_i)$ et $s:=s_i\geq 0$. 
Supposons aussi que les $(t_{n,k+1}(r,s_1+\delta_{1,k},\dots,s_n+\delta_{n,k}))_{k\in E}$ aient la propriété $(P)$ et soient tous égaux à un certain $t$. 
Posons 
\begin{equation*}
s_i'=\begin{cases} 
s_i+1 \; \text{si} \; i\in E \\
s_i \; \text{sinon}
\end{cases}.
\end{equation*}
Alors,
\begin{enumerate}[i)]
\item $L_{r,s'_1,\dots,s_n'}/L_{r,s_1,\dots,s_n}$ possède un unique saut égal à $t$;
\item pour tout $i\in E$, on a $t_{n,i+1}(r,s'_1,\dots,s_n')=t$.
\end{enumerate}

\end{lmm}

\begin{proof}
Notons $K=L_{r,s_1,\dots,s_n}$ et $K'=L_{r,s'_1,\dots,s_n'}$.
On sait que $[K' : K]=p^{\sum_{i=1}^n s'_i-s_i}$.
D'après le Lemme \ref{lmm que j'ai rajouter sur la condition suffisante} (appliqué à $\alpha_i=a_i^{1/p^s}$ où $i\in E$) et le Lemme  \ref{sous-corps d'une extension kumérienne.}, un sous-corps intermédiaire de degré $p$ de l'extension $K'/K$ est de la forme $K(\gamma^{1/p})$ avec $\gamma=\prod_{i\in E} \alpha_i^{x_i}$ où les $x_i\in\{0,\dots,p-1\}$ sont des entiers non tous nuls.
Remarquons que $p \mid v_F(\gamma^{p^s})$ puisque $\gamma^{p^s}=\prod_{i\in E} a_i^{x_i}$ et que $p\mid v_F(a_i)$ pour tout $i\in E $.
Fixons $i\in E$ tel que $x_i\neq 0$.
Il est clair que 
\begin{equation*}
K=F(\zeta_{p^r}, a_1^{1/p^{s_1}},\dots,a_{i-1}^{1/p^{s_{i-1}}}, (\gamma^{p^s})^{1/p^s},a_{i+1}^{1/p^{s_{i+1}}},\dots,a_n^{1/p^{s_n}}).
\end{equation*}
Comme $t_{n,i+1}(r,s_1+\delta_{1,i},\dots,s_n+\delta_{n,i})$ a la propriété $(P)$, on en déduit, en remplaçant $a_i$ par $\gamma^{p^s}$, que le saut de $K(\gamma^{1/p})/K$ vaut $t_{n,i+1}(r,s_1+\delta_{1,i},\dots,s_n+\delta_{n,i})$, i.e. $t$.
Ainsi, le saut de toute sous-extension intermédiaire de $K'/K$ de degré $p$ sur $K$ vaut $t$.
Le Lemme \ref{Unicité du saut, version théorique} prouve $i)$.
 Le $ii)$ se déduit du $i)$ car les $t_{n,k+1}(r,s'_1,\dots,s_n')$, avec $k\in E$, sont des sauts de $K'/K$. 

\end{proof}

Les exemples suivants serviront pour l'initialisation de la prochaine proposition.
De plus, le premier exemple ci-dessous est une première application du Lemme \ref{unicité du saut 2}.

\begin{ex} \label{exemple sur le calcul de t 1 1 avec n=2}
\rm{ Si $n=2$ et $p\mid v_F(a_2)$, alors $t_{2,2}(r,1,0)=t_{2,3}(r,0,1)=1$ d'après le théorème \ref{cas où n=1 et v p a vaut 0}.
Ainsi, le Lemme \ref{unicité du saut 2} appliqué à $s=0$ et à $E=\{1,2\}$ montre que $1$ est l'unique saut de $L_{r,1,1}/L_{r,0,0}$ et donc que $t_{1,2}(r,1,1)=t_{2,3}(r,1,1)=1$.}
\end{ex}

\begin{ex} \label{calcul de t 1,1 qui sert qu'une fois}

\rm{Supposons $n=2$ et $p\nmid v_F(a_2)$.
D'après le $i)$ de la proposition \ref{calcul de la différente version appliquée.},
\begin{equation} \label{exemple 2 18}
t_{2,2}(r,1,1)-t_{2,3}(r,1,1) =p (t_{2,2}(r,1,0)-t_{2,3}(r,0,1)).
\end{equation}
Or, $t_{2,2}(r,1,0)=1$ d'après le théorème \ref{cas où n=1 et v p a vaut 0}. 
De plus, $t_{2,3}(1,0,1)=p$ d'après le Lemme \ref{calcul de $t(1,1)$.} et $t_{2,3}(r,0,1)=2p-1$ si $r\geq 2$ d'après le théorème \ref{Cas n=1 et v p a vaut 1}.
De \eqref{exemple 2 18}, il s'ensuit que $t_{2,2}(r,1,1)< t_{2,3}(r,1,1)$.
 D'après le $ii)$ de la proposition \ref{calcul de la différente version appliquée.}, on en conclut que $t_{2,2}(r,1,1)=t_{2,2}(r,1,0)=1$.
}
\end{ex}

\begin{prop} \label{Théorème fondamental}
 Soient $n\geq 2$ et $s_1,\dots,s_n\geq 0$. Si $n=2$, on supposera que $p\mid v_F(a_n)$.
 Soient  $l,k\in\{1,\dots,n\}$ tels que $l<k$ et $p\mid v_F(a_l),v_F(a_k)$. Notons $\sigma\in\mathfrak{S}_n$ la transposition $(l,k)$ ou l'identité. 
 Si $1\leq s_l\leq s_k$, alors

\begin{enumerate} [i)]
\item $t_{n,k+1}(r,s_1,\dots,s_n)=t_{n,\sigma(k)+1}(r,s_{\sigma(1)},\dots,s_{\sigma(n)})$;
\item $t_{n,\sigma(l)+1}(r,s_{\sigma(1)},\dots,s_{\sigma(n)})=t_{n,\sigma(l)+1}(r,s'_1,\dots,s_n')$ où  $s_i'=s_i$ si $i\neq k,l$ et $s_i'=s_l$ sinon ;
\item $t_{n,\sigma(k)+1}(r,s_{\sigma(1)},\dots,s_{\sigma(n)})=(1-p)t_{n,\sigma(l)+1}(r,s_{\sigma(1)},\dots,s_{\sigma(n)}) \\
+ p t_{n,\sigma(k)+1}(r,s_{\sigma(1)}-\delta_{\sigma(1),l},\dots,s_{\sigma(n)}-\delta_{\sigma(n),l})$;
\item Pour $i\in\{l,k\}$, le saut $t_{n,i+1}(r,s_1,\dots,s_n)$ a la propriété $(P)$.
\end{enumerate}

\end{prop}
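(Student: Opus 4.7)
The plan is to induct on $(s_1,\ldots,s_n)$ in lexicographic order, closely mirroring the strategy of Proposition \ref{2Théorème fondamental 2.} (which handles the analogous result for $n=2$ with $p\nmid v_F(a_n)$). Let $\Lambda$ denote the set of tuples with $1\leq s_l\leq s_k$ for which all four assertions (i)--(iv) hold for every admissible choice of $l<k$ and $\sigma\in\{(l,k),\mathrm{id}\}$. Assuming a minimal counterexample $(s_1,\ldots,s_n)\notin\Lambda$, I will derive a contradiction. The base cases reduce, via Lemma \ref{unicité du saut 2}, to the $n=1$ formulas of Section \ref{cas n=1} and to the $n=2$, $p\mid v_F(a_n)$ case, itself handled by Example \ref{exemple sur le calcul de t 1 1 avec n=2} and a short computation in the spirit of Example \ref{calcul de t 1,1 qui sert qu'une fois}.

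Part (i) is essentially a relabeling statement. The hypothesis $p\mid v_F(a_l),v_F(a_k)$ ensures that the permuted tuple $(a_{\sigma(1)},\ldots,a_{\sigma(n)})$ still satisfies Hypothesis \ref{hypothèse 3}, and, as subfields of $\overline{F}$, the extensions $L_{r,s_1,\ldots,s_n}$ and $L_{r,s_{\sigma(1)},\ldots,s_{\sigma(n)}}$ (computed with the correspondingly permuted radicals) are literally equal, as are the sub-extensions obtained by dropping one $p$ from the $k$-th, respectively $\sigma(k)$-th, exponent. The unique jumps of these coincident sub-extensions must therefore agree.

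Parts (ii) and (iv) I would treat jointly. Applying $i)$ of Proposition \ref{calcul de la différente version appliquée.} to the two degree-$p$ sub-extensions indexed by $\sigma(l)+1$ and $\sigma(k)+1$ writes $t_{n,\sigma(k)+1}-t_{n,\sigma(l)+1}$ as $p$ times the difference of two jumps attached to strictly smaller tuples; by minimality those tuples lie in $\Lambda$ and already satisfy (i)--(iv). A size comparison --- paralleling the inequalities \eqref{2D} and \eqref{2II} of the $n=2$ proof, and calling upon the strict monotonicity from Proposition \ref{croissance des suites} together with the explicit formulas of Theorems \ref{cas où n=1 et v p a vaut 0} and \ref{Cas n=1 et v p a vaut 1} --- then forces $t_{n,\sigma(l)+1}<t_{n,\sigma(k)+1}$, so that $ii)$ of Proposition \ref{calcul de la différente version appliquée.} yields (ii); the property $(P)$ of (iv) is inherited from the smaller tuples via the same recurrence. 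Finally, (iii) drops out of the same $i)$ of Proposition \ref{calcul de la différente version appliquée.} combined with the equality just obtained, exactly as in the third step of Proposition \ref{2Théorème fondamental 2.}.

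The main technical hurdle, as in the $n=2$ case, is the bookkeeping needed to certify at each stage that the auxiliary tuples appearing on the right-hand side of the recurrences actually lie in $\Lambda$: this requires rechecking $s_{l'}\leq s_{k'}$ after each reduction, possibly composing with another transposition to restore the ordering, and handling the coincidence $t_{n,\sigma(l)+1}=t_{n,\sigma(k)+1}$ through Lemma \ref{unicité du saut 2} (playing here the role of Lemma \ref{2unicité du saut ?} in the $n=2$ argument). The argument is routine in flavour but delicate to write down explicitly because of the many index permutations involved.
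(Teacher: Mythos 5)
Your overall architecture --- minimal counterexample for a lexicographic order, the two halves of Proposition \ref{calcul de la différente version appliquée.}, the strict monotonicity of Proposition \ref{croissance des suites}, and Lemma \ref{unicité du saut 2} to absorb coincident jumps --- is indeed the paper's. The genuine gap is in your treatment of assertion $i)$. Under Définition \ref{defn de L r s} the radicals $a_1,\dots,a_n$ are fixed once and for all, so $L_{r,s_{\sigma(1)},\dots,s_{\sigma(n)}}=F(\zeta_{p^r},a_1^{1/p^{s_{\sigma(1)}}},\dots,a_n^{1/p^{s_{\sigma(n)}}})$ is \emph{not} the same field as $L_{r,s_1,\dots,s_n}$ when $s_l\neq s_k$, and the two degree-$p$ extensions whose jumps are compared in $i)$ (lower the exponent of $a_k$ in the original configuration, versus lower the exponent of $a_l$ in the configuration where the exponents of $a_l$ and $a_k$ have been exchanged) are genuinely distinct; $i)$ is a substantive identity, not a relabeling. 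If instead you permute the radicals together with the exponents, as your phrase « computed with the correspondingly permuted radicals » indicates, the fields do coincide, but the identity you obtain is a tautology about a different quantity: to recover $i)$ as stated you must know that $t_{n,\sigma(k)+1}(r,s_{\sigma(1)},\dots,s_{\sigma(n)})$ is unchanged when the underlying radicals $(a_1,\dots,a_n)$ are replaced by $(a_{\sigma(1)},\dots,a_{\sigma(n)})$, i.e. property $(P)$ for the swapped configuration. That configuration carries $s_k$ in position $l$ and $s_l$ in position $k$, hence violates the ordering $s_l\le s_k$ demanded by the statement and is not below the current tuple for the lexicographic order, so it is not covered by your inductive hypothesis; deriving $iv)$ « from the smaller tuples » and $i)$ from $iv)$ is circular. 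The paper instead proves $i)$ by writing out the recurrence $iii)$ for both $\sigma=\mathrm{id}$ and $\sigma=(l,k)$, subtracting, and reducing everything to the auxiliary identity $t_{n,k+1}(r,\dots,s_l-1,\dots,s_k,\dots)=t_{n,l+1}(r,\dots,s_k,\dots,s_l-1,\dots)$, whose verification occupies the second half of its argument.

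A second, related omission: the paper's minimal counterexample ranges over the pairs $(n;s_1,\dots,s_n)$, so the induction also descends in $n$, not only in the exponents. This is forced precisely at the base of the exponent descent: when $s_l=1$ the auxiliary identity above cannot be pushed further down in the exponents and is obtained instead by deleting the $l$-th radical and invoking the proposition for $n-1$ radicals (falling back on Théorème \ref{cas où n=1 et v p a vaut 0} and on Proposition \ref{2Théorème fondamental 2.} for the smallest values of $n$). An induction on $(s_1,\dots,s_n)$ alone, with base cases only at $n\in\{1,2\}$, does not close this step; you would need to build the descent in $n$ into the well-ordering from the start.
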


\begin{proof}
 
  Quitte à permuter $a_1^{1/p^{s_1}}$ avec $a_l^{1/p^{s_l}}$ et $a_2^{1/p^{s_2}}$ avec $a_k^{1/p^{s_k}}$, on peut supposer, sans perte de généralité, que $l=1$ et $k=2$. 
Notons $\Lambda$ l'ensemble des $(n;s_1,\dots,s_n)$ tel que $n\geq 2$, que $p\mid v_F(a_n)$ si $n=2$ et tel que le $n$-uplet $(s_1,\dots,s_n)$ vérifie $s_1\leq s_2$ et les affirmations $i)$ à $iv)$ de la proposition.

  Montrons que $(2;1,1)\in\Lambda$ (on a alors $p\mid v_F(a_2)$ puisque $n=2$).
Remarquons que $ii)$ est trivialement vérifié. 
Le $i)$, $iii)$ et $iv)$ découlent des égalités ci-dessous (cf Exemple \ref{exemple sur le calcul de t 1 1 avec n=2})
\begin{equation*}
t_{2,2}(r,1,1)=t_{r,2,3}(r,1,1)=t_{2,2}(r,1,0)=t_{2,3}(r,0,1)=1.
\end{equation*}
  
  Montrons également que $(3;1,1,1)\in\Lambda$ si $p\nmid v_F(a_3)$.
Dans ce cas, $ii)$ est trivialement vérifié.
De plus, 
\begin{equation*}
t_{3,2}(r,1,0,1)=t_{3,3}(r,0,1,1)=t_{2,2}(r,1,1)=1
\end{equation*}
 d'après l'Exemple \ref{calcul de t 1,1 qui sert qu'une fois}.
En appliquant le Lemme \ref{unicité du saut 2} à $s=0$ et à $E=\{1,2\}$, 
\begin{equation*}
t_{3,2}(r,1,1,1)=t_{3,3}(r,1,1,1)=1,
\end{equation*}
 ce qui montre $i)$, $iii)$ et $iv)$. \\

Supposons par l'absurde qu'il existe $(n; s_1,\dots,s_n)\notin\Lambda$ tel que $n\geq 2$, que $p\mid v_F(a_n)$ si $n=2$ et tel que $s_1\leq s_2$. 
 Choisissons le minimal pour l'ordre lexicographique.
Remarquons que le $i)$ est trivialement vérifié si $\sigma=id$.
Afin d'alléger les notations, notons $\overline{s}=(s_3,\dots,s_n)$.

 Tout d'abord, supposons $s_1=s_2=s$. 
  Alors, $ii)$ est trivialement vérifié.
  Dans un premier temps, on souhaite montrer que 
  \begin{equation} \label{égalité dans le cas s 1 égal s 2}   
   t_{n,2}(r,s,s-1,\overline{s})=t_{n,3}(r,s-1,s,\overline{s}) \; \text{et vérifie} \; iv).
   \end{equation}
 
   Si $s>1$ alors, $(n;s-1,s,\overline{s})\in\Lambda$ par minimalité. 
On en déduit alors \eqref{égalité dans le cas s 1 égal s 2} en utilisant respectivement le $i)$ et le $iv)$.
  Supposons maintenant $s=1$. 
  Alors, $n\geq 3$ si $p\mid v_F(a_n)$ puisque $(2;1,1)\in\Lambda$ et $n\geq 4$ si $p\nmid v_F(a_n)$ puisque $(3;1,1,1)\in\Lambda$.
  Ainsi, $(n-1;1,\overline{s})\in\Lambda$.
Il s'ensuit donc que, $t_{n-1,2}(r,1,\overline{s})$ vérifie $iv)$. 
Par conséquent,   
\begin{equation*}  
 \begin{cases}
  t_{n,2}(r,1,0,\overline{s})=t_{n-1,2}(r,1,\overline{s}) \\
  t_{n,3}(r,0,1,\overline{s})=t_{n-1,2}(r,1,\overline{s})
  \end{cases},
  \end{equation*}
ce qui montre \eqref{égalité dans le cas s 1 égal s 2}.
   
Les hypothèses du Lemme \ref{unicité du saut 2} où l'on remplace $s$ par $s-1$ et où $E=\{1,2\}$ sont donc vérifiées.
Le $ii)$ du Lemme \ref{unicité du saut 2} montre donc les égalités suivantes :
\begin{equation} \label{unicité du saut dans la preuve du thm fondamental}
t_{n,2}(r,s,s,\overline{s})=t_{n,3}(r,s,s,\overline{s})=t_{n,2}(r,s,s-1,\overline{s})=t_{n,3}(r,s-1,s,\overline{s}).
\end{equation}   
Ceci prouve $i)$ et $iv)$.    
Remarquons que ces égalités montrent également $iii)$. 
 En effet, si $\sigma=id$, l'égalité du $iii)$ devient 
\begin{equation*} 
 t_{n,3}(r,s,s,\overline{s})=(1-p)t_{n,2}(r,s,s,\overline{s})+p t_{n,3}(r,s-1,s,\overline{s}),
\end{equation*} 
 qui est clairement vérifiée d'après \eqref{unicité du saut dans la preuve du thm fondamental}. 
 Dans le cas où $\sigma=(1,2)$, 
\begin{equation*} 
 t_{n,2}(r,s,s,\overline{s})=(1-p)t_{n,3}(r,s,s,\overline{s})+p t_{n,2}(r,s,s,\overline{s}),
 \end{equation*}
 qui est également clairement vérifiée d'après \eqref{unicité du saut dans la preuve du thm fondamental}.
 Ainsi, $(n;s,s,\overline{s})\in\Lambda$, ce qui est absurde. \\
 
Supposons maintenant $1\leq s_1<s_2$.
 Comme $(n;s_1,s_2-1,\overline{s})\in\Lambda$ par minimalité, on déduit du $ii)$ que
\begin{equation} \label{1)} 
 t_{n,\sigma(1)+1}(r,s_{\sigma(1)}-\delta_{\sigma(1),2},s_{\sigma(2)}-\delta_{\sigma(2),2},\overline{s})=t_{n,\sigma(1)+1}(r,s_1,s_1,\overline{s}).
\end{equation} 
Comme $(n;s_1,s_1,\overline{s})\in\Lambda$ par minimalité, on a, d'après le $i)$,
\begin{equation} \label{je peux permuter 1 et 2}   
   t_{n,\sigma(1)+1}(r,s_1,s_1,\overline{s})=t_{n,\sigma(2)+1}(r,s_1,s_1,\overline{s}).
\end{equation}   
Du \eqref{unicité du saut dans la preuve du thm fondamental}, on en déduit que 
\begin{equation} \label{je peux retirer 1 au plus grand}   
   t_{n,\sigma(2)+1}(r,s_1,s_1,\overline{s})=t_{n,\sigma(2)+1}(r,s_1-\delta_{\sigma(1),1},s_1-\delta_{\sigma(2),1},\overline{s}).
\end{equation}
En combinant les formules \eqref{1)}, \eqref{je peux permuter 1 et 2} et \eqref{je peux retirer 1 au plus grand}, il s'ensuit que   
\begin{multline} \label{2)}  
   t_{n,\sigma(1)+1}(r,s_{\sigma(1)}-\delta_{\sigma(1),2},s_{\sigma(2)}-\delta_{\sigma(2),2},\overline{s}) \\
   =t_{n,\sigma(2)+1}(r,s_1-\delta_{\sigma(1),1},s_1-\delta_{\sigma(2),1},\overline{s}).
\end{multline}   
Enfin, en utilisant la proposition \ref{croissance des suites} à $\alpha=a_2$ et à $K=L_{r,s_1(1-\delta_{\sigma(1),2}),s_2(1-\delta_{\sigma(2),2}),\overline{s}}$, on en déduit (car $s_1<s_2$) que  
\begin{multline} \label{3)} 
 t_{n,\sigma(2)+1}(r,s_1-\delta_{\sigma(1),1},s_1-\delta_{\sigma(2),1},\overline{s})<t_{n,\sigma(2)+1}(r,s_{\sigma(1)}-\delta_{\sigma(1),1},s_{\sigma(2)}-\delta_{\sigma(2),1},\overline{s}).
\end{multline} 
 En utilisant le $i)$ de la proposition \ref{calcul de la différente version appliquée.}, on a 
\begin{multline*} 
 t_{n,\sigma(1)+1}(r,s_{\sigma(1)},s_{\sigma(2)},\overline{s})-t_{n,\sigma(2)+1}(r,s_{\sigma(1)},s_{\sigma(2)},\overline{s})= \\
p t_{n,\sigma(1)+1}(r,s_{\sigma(1)}- \delta_{\sigma(1),2},s_{\sigma(2)}-\delta_{\sigma(2),2},\overline{s}) \\ 
-pt_{n,\sigma(2)+1}(r,s_{\sigma(1)}-\delta_{\sigma(1),1},s_{\sigma(2)}-\delta_{\sigma(2),1},\overline{s})).
\end{multline*}
De \eqref{2)} et \eqref{3)}, on en déduit que    
\begin{equation*}   
   t_{n,\sigma(1)+1}(r,s_{\sigma(1)},s_{\sigma(2)},\overline{s})<t_{n,\sigma(2)+1}(r,s_{\sigma(1)},s_{\sigma(2)},\overline{s}).
\end{equation*}   
   En utilisant le $ii)$ de la proposition \ref{calcul de la différente version appliquée.}, il s'ensuit, d'après \eqref{2)},  
  \begin{equation} \label{c'est le 20}
\begin{aligned}
t_{n,\sigma(1)+1}(r,s_{\sigma(1)},s_{\sigma(2)},\overline{s}) & =t_{n,\sigma(1)+1}(r,s_{\sigma(1)}-\delta_{\sigma(1),2},s_{\sigma(2)}-\delta_{\sigma(2),2},\overline{s}) \\
& = t_{n,\sigma(1)+1}(r,s_1,s_1,\overline{s}) \; (\text{d'après \eqref{1)}}), 
\end{aligned}
\end{equation}
ce qui prouve $ii)$ et 
\begin{equation} \label{preuve du iii)}
\begin{aligned}
t_{n,\sigma(2)+1}(r,s_{\sigma(1)},s_{\sigma(2)},\overline{s}) & =p t_{n,\sigma(2)+1}(r,s_{\sigma(1)}-\delta_{\sigma(1),1},s_{\sigma(2)}-\delta_{\sigma(2),1},\overline{s}) \\
& +(1-p)t_{n,\sigma(1)+1}(r,s_{\sigma(1)},s_{\sigma(2)},\overline{s}),
\end{aligned}
\end{equation}  
 ce qui montre $iii)$. 
 
Par minimalité, $(n; s_1,s_1,\overline{s})\in\Lambda$. 
Ainsi, $t_{n,2}(r,s_1,s_1,\overline{s})$ vérifie $iv)$. 
En appliquant \eqref{c'est le 20} à $\sigma=id$, il en résulte que $t_{n,2}(r,s_1,s_2,\overline{s})$ vérifie $iv)$.  
 
Il ne nous reste plus qu'à montrer que $(n;s_1,\dots,s_n)$ vérifie $i)$ et $iv)$ pour $i=2$. 
 En appliquant \eqref{preuve du iii)} respectivement à $\sigma=id$ et à $\sigma=(1,2)$, on en déduit que 
\begin{equation} \label{A} 
 t_{n,3}(r,s_1,s_2,\overline{s})= p t_{n,3}(r,s_1-1,s_2,\overline{s})+(1-p)t_{n,2}(r,s_1,s_2,\overline{s})
 \end{equation}
 et 
 \begin{equation} \label{B}
t_{n,2}(r,s_2,s_1,\overline{s}) =p t_{n,2}(r,s_2,s_1-1,\overline{s})+(1-p)t_{n,3}(r,s_2,s_1,\overline{s}).
\end{equation}  
Par minimalité, $(n; s_1,s_1,\overline{s})\in\Lambda$. 
Ainsi, $t_{n,2}(r,s_1,s_1,\overline{s})=t_{n,3}(r,s_1,s_1,\overline{s})$ d'après le $i)$.
De plus, $t_{n,2}(r,s_1,s_2,\overline{s})=t_{n,2} (r, s_1,s_1,\overline{s})$ et $t_{n,3}(r,s_2,s_1,\overline{s})=t_{n,3}(r,s_1,s_1,\overline{s})$ d'après le $ii)$.
Par conséquent, $t_{n,2}(r,s_1,s_2,\overline{s})=t_{n,3}(r,s_2,s_1,\overline{s})$.
En injectant cette égalité dans \eqref{A}, que l'on compare ensuite à \eqref{B}, on en déduit que pour montrer $i)$ et $iv)$, il suffit de prouver que 
\begin{equation} \label{C}
t_{n,3}(r,s_1-1,s_2,\overline{s})=t_{n,2}(r,s_2,s_1-1,\overline{s}) \; \text{et vérifie} \; iv).
\end{equation}

Supposons $s_1>1$. 
Alors, $(n; s_1-1,s_2,\overline{s})\in\Lambda$ par minimalité. 
On s'aperçoit que dans ce cas, $\eqref{C}$ découle du $i)$ et du $iv)$. 

Supposons maintenant $s_1=1$.
Dans ce cas, \eqref{C} équivaut à 
\begin{equation} \label{le 28}
t_{n,3}(r,0,s_2,\overline{s})=t_{n,2}(r,s_2,0,\overline{s}) \; \text{et vérifie} \; iv).
\end{equation}

Le cas $n=2$ découle immédiatement du théorème \ref{cas où n=1 et v p a vaut 0}.
Le cas $n=3$ et $p\nmid v_F(a_n)$ correspond au $iv)$ du théorème \ref{2Théorème fondamental 2.}. 
Enfin, supposons $n\geq 4$ ou $n=3$ si $p\mid v_F(a_n)$.
Alors, $(n-1; s_2,\overline{s})\in\Lambda$ et le même argument que pour montrer \eqref{égalité dans le cas s 1 égal s 2} dans le cas où $s=1$ s'applique pour en déduire \eqref{le 28}.
  
  Ainsi, $(n,s_1,s_2,\overline{s})\in\Lambda$, ce qui est absurde. La proposition s'ensuit.
 
 \end{proof}
 
 Soient $n\geq 2$, $s_1,\dots,s_n\geq 1$ et $E\subset \{1,\dots,n\}$ un ensemble non vide tel que pour tout $i\in E$, on ait $p\mid v_F(a_i)$ et $s:=s_i\geq 0$.
 Fixons $k\in E$.
 En appliquant le $i)$ et le $iv)$ de la proposition \ref{Théorème fondamental} à tout couple $(l,k)\in E^2$, on en déduit que les $(t_{n,l+1}(r,s_1+\delta_{1,l},\dots,s_n+\delta_{n,l}))_{l\in E}$ sont tous égaux à $t_{n,k+1}(r,s_1+\delta_{1,k},\dots,s_n+\delta_{n,k})$ et ont la propriété $(P)$.
 Ceci montre donc que les hypothèses nécessaires à l'application du Lemme \ref{unicité du saut 2} sont en réalité vérifiées.
La proposition \ref{Théorème fondamental} permet donc de simplifier l'énoncé du Lemme \ref{unicité du saut 2} qui devient le corollaire suivant.

\begin{cor} \label{unicité du saut 3}
Soient $n\geq 2$, $s_1,\dots,s_n\geq 1$ et $E\subset\{1,\dots,n\}$ un ensemble non vide tel que pour tout $i\in E,$ on ait $p\mid v_F(a_i)$ et $s:=s_i\geq 0$. Fixons $k\in E$.
Alors, $L_{r,s'_1,\dots,s_n'}/L_{r,s_1,\dots,s_n}$ où 
\begin{equation*}
s_i'=\begin{cases}
s_i+1 \; \text{si} \; i\in E \\
s_i \; \text{sinon}
\end{cases}
\end{equation*}
 possède un unique saut égal à $t_{n,k+1}(r,s'_1,\dots,s_n')$. 
 De plus, pour tout $i\in E$, on a  
\begin{equation*} 
 t_{n,k+1}(r,s'_1,\dots,s_n')=t_{n,i+1}(r,s_1+\delta_{1,i},\dots,s_n+\delta_{n,i}).
 \end{equation*}
\end{cor}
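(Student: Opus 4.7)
Le plan est de déduire ce corollaire d'une application directe du Lemme \ref{unicité du saut 2}, en vérifiant ses hypothèses à l'aide de la proposition \ref{Théorème fondamental}. Le paragraphe précédant l'énoncé du corollaire dans l'article esquisse déjà cette démarche, et il s'agit maintenant de la rédiger proprement.

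Concrètement, je commencerai par fixer $k\in E$ (comme dans l'énoncé) puis, pour chaque $l\in E$, j'appliquerai la proposition \ref{Théorème fondamental} au couple $(l,k)$. Les hypothèses de cette proposition sont remplies : on a $p\mid v_F(a_l),v_F(a_k)$ par définition de $E$, et la valeur commune $s=s_l=s_k$ satisfait trivialement $s_l\leq s_k$ (quitte à échanger les rôles de $l$ et $k$). Le $i)$ de la proposition, appliqué à la transposition $\sigma=(l,k)$, fournira l'égalité
\[t_{n,l+1}(r,s_1+\delta_{1,l},\dots,s_n+\delta_{n,l})=t_{n,k+1}(r,s_1+\delta_{1,k},\dots,s_n+\delta_{n,k}),\]
tandis que le $iv)$ assurera la propriété $(P)$ pour ces quantités. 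En notant $t$ leur valeur commune, on récupère précisément les deux hypothèses du Lemme \ref{unicité du saut 2}.

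Il ne restera plus qu'à invoquer ce lemme pour conclure : son $i)$ donne l'unicité du saut de $L_{r,s'_1,\dots,s'_n}/L_{r,s_1,\dots,s_n}$ (égal à $t$), et son $ii)$ fournit $t_{n,i+1}(r,s'_1,\dots,s'_n)=t$ pour tout $i\in E$. En particulier $t_{n,k+1}(r,s'_1,\dots,s'_n)=t$, et combiné avec les égalités établies ci-dessus, on obtient $t_{n,k+1}(r,s'_1,\dots,s'_n)=t_{n,i+1}(r,s_1+\delta_{1,i},\dots,s_n+\delta_{n,i})$ pour tout $i\in E$, soit exactement la seconde assertion du corollaire. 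L'unique point méritant d'être explicité (et qui n'est pas véritablement un obstacle) est la simultanéité de la vérification des deux hypothèses du Lemme \ref{unicité du saut 2} via l'application paire-par-paire de la proposition \ref{Théorème fondamental} ; le reste est une substitution purement formelle, et la preuve sera donc très courte.
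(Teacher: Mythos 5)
Votre démonstration est correcte et suit exactement la démarche du papier, qui justifie ce corollaire dans le paragraphe précédant son énoncé : vérifier les hypothèses du Lemme \ref{unicité du saut 2} en appliquant les points $i)$ et $iv)$ de la proposition \ref{Théorème fondamental} à chaque couple $(l,k)\in E^2$, puis conclure par ce lemme. La seule précision utile est que, pour obtenir l'égalité $t_{n,l+1}(r,s_1+\delta_{1,l},\dots,s_n+\delta_{n,l})=t_{n,k+1}(r,s_1+\delta_{1,k},\dots,s_n+\delta_{n,k})$, il faut appliquer le $i)$ de la proposition au $n$-uplet déjà incrémenté en position $k$ (où l'on a bien $s\leq s+1$ aux positions $l$ et $k$), et non au $n$-uplet $(s_1,\dots,s_n)$ lui-même.
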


En particulier, si $E=\{1,2\}$, $k=1$ et $p\mid v_F(a_1),v_F(a_2)$, alors pour tout $s\geq 0$,
\begin{equation} \label{application de l'unicité du saut dans le cas où p divise a_n}
t_{n,2}(r,s+1,s+1,s_3,\dots,s_n)=t_{n,2}(r,s+1,s,s_3,\dots,s_n) .
\end{equation}

\begin{rqu} \label{Remarque sur le calcul de r,1,1}
 Soit $n\geq 2$. Si $p\mid v_F(a_n)$, le corollaire \ref{unicité du saut 3} appliqué à $k=1$, $s=0$ et $E=\{1,\dots,n\}$ montre que $L_{r,1,\dots,1}/L_{r,0,\dots,0}$ possède un unique saut égal à $t_{n,2}(r,1,0,\dots,0)$.
 D'après le $iii)$ du théorème \ref{cas où n=1 et v p a vaut 0}, ce saut vaut $1$.
\end{rqu}

\begin{cor} \label{Corollaire où s_1 est le minimum des s_i}
Soit $n\geq 2$ et $s_1,\dots,s_n\geq 1$. Si $s_1=\min\{s_1,\dots,s_{n-1}\}$ alors,
\begin{equation*}
t_{n,2}(r,s_1,\dots,s_n)=t_{n,2}(r,s_1,\dots,s_1,s_n).
\end{equation*}
De plus, si $p\mid v_F(a_n)$ et si $s_1\leq s_n$ alors, $t_{n,2}(r,s_1,\dots,s_n)=t_{n,2}(r,s_1,\dots,s_1)$.
\end{cor}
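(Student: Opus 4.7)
Mon plan est d'appliquer la proposition \ref{Théorème fondamental} $ii)$ de façon itérative, en remplaçant une à une les coordonnées $s_k$ par $s_1$. Rappelons qu'avec $\sigma = \mathrm{id}$ et $l = 1$, cette proposition affirme que
\[
t_{n,2}(r, s_1, \dots, s_n) = t_{n,2}(r, s_1, \dots, s_{k-1}, s_1, s_{k+1}, \dots, s_n),
\]
dès lors que $k \geq 2$, $p \mid v_F(a_1), v_F(a_k)$, et $1 \leq s_1 \leq s_k$.

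Pour la première identité, j'appliquerai cette substitution successivement pour $k = 2, 3, \dots, n-1$. À chaque itération, les hypothèses sont satisfaites : la divisibilité $p \mid v_F(a_1), v_F(a_k)$ provient de l'hypothèse \ref{hypothèse 3} $iii)$ (valide pour $k \leq n-1$), et l'inégalité $s_1 \leq s_k$ résulte de la minimalité $s_1 = \min\{s_1, \dots, s_{n-1}\}$. Le remplacement $s_k \mapsto s_1$ n'affecte ni la première coordonnée ni les coordonnées d'indice strictement supérieur à $k$ encore à traiter, donc ces conditions se préservent tout au long de l'itération. J'obtiendrai ainsi
\[
t_{n,2}(r, s_1, \dots, s_n) = t_{n,2}(r, s_1, \dots, s_1, s_n).
\]
Pour $n = 2$, l'itération est vide et cette identité est triviale.

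Pour la seconde identité, sous les hypothèses additionnelles $p \mid v_F(a_n)$ et $s_1 \leq s_n$, j'appliquerai une dernière fois la proposition \ref{Théorème fondamental} $ii)$ avec $l = 1$ et $k = n$ au $n$-uplet $(s_1, \dots, s_1, s_n)$ obtenu ci-dessus. Les conditions $p \mid v_F(a_1), v_F(a_n)$ et $1 \leq s_1 \leq s_n$ étant vérifiées (dans le cas $n=2$, l'hypothèse supplémentaire $p\mid v_F(a_n)$ permet aussi d'invoquer la proposition), cette application remplace $s_n$ par $s_1$ et fournit l'identité voulue. La preuve ne présente pas d'obstacle véritable : toute la difficulté a été traitée dans la proposition \ref{Théorème fondamental}, et il ne reste ici qu'à vérifier soigneusement la préservation des hypothèses au fil des itérations.
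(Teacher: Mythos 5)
Ta preuve est correcte et suit essentiellement le même chemin que celle de l'article : application itérée du $ii)$ de la proposition \ref{Théorème fondamental} avec $l=1$, $\sigma=id$ et $k=2,\dots,n-1$ pour la première identité, puis une dernière application avec $k=n$ (légitimée par $p\mid v_F(a_n)$ et $s_1\leq s_n$) pour la seconde. Ta vérification explicite des hypothèses à chaque itération et le traitement du cas $n=2$ sont même un peu plus soignés que dans le texte original.
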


\begin{proof}

En appliquant le $ii)$ de la proposition \ref{Théorème fondamental} à $l=1$, $k=2$ et à $\sigma=id$, on en déduit que 
\begin{equation*}
t_{n,2}(r,s_1,s_2,\dots,s_n)=t_{n,2}(r,s_1,s_1,s_3,\dots,s_n).
\end{equation*} 
En appliquant de nouveau le $ii)$ de la proposition \ref{Théorème fondamental} à $l=1$, $k=3$ et à $\sigma=id$, on en déduit que  
\begin{equation*}
t_{n,2}(r,s_1,s_1,s_3,\dots,s_n)=t_{n,2}(r,s_1,s_1,s_1, s_4,\dots,s_n).
\end{equation*}
 Par une récurrence immédiate, on en déduit que 
\begin{equation*} 
 t_{n,2}(r,s_1,s_2,\dots,s_n)=t_{n,2}(r,s_1,\dots,s_1,s_n).
 \end{equation*}
 La seconde partie du corollaire devient claire puisque dans ce cas, on peut de nouveau appliquer la récurrence.
\end{proof}

Supposons $p\mid v_F(a_n)$. 
En unissant la Remarque \ref{Remarque sur le calcul de r,1,1} et le corollaire \ref{Corollaire où s_1 est le minimum des s_i}, on en déduit que pour tous $s_2,\dots,s_n\geq 0$,
\begin{equation} \label{unification de la rq et du cor}
t_{n,2}(r,1,s_2,\dots,s_n)=t_{n,2}(r,1,\dots,1)=1.
\end{equation} 

Remarquons que la proposition \ref{Théorème fondamental} ne traite pas le cas où $k=n$ et $p\nmid v_F(a_n)$. 
On conclura donc cette sous-section en donnant un analogue de la proposition \ref{Théorème fondamental} dans ce cas.

\begin{lmm} \label{unicité du saut ?}
Soient $n\geq 2$, $s_1,\dots,s_n\geq 0$ et $E\subset \{1,\dots,n-1\}$ un ensemble non vide tels que pour tout $i\in E$, on ait $s:=s_i=s_n+1$. 
Supposons que les nombres $(t_{n,k+1}(r,s_1+\delta_{1,k},\dots,s_n+\delta_{n,k}))_{k\in E\cup\{n\}}$ aient la propriété $(P)$ et soient tous égaux à un certain $t$. 
Posons 
\begin{equation*}
s_i'=\begin{cases} 
s_i+1 \; \text{si} \; i\in E\cup \{n\} \\
s_i \; \text{sinon}.
\end{cases}. 
\end{equation*}
Alors, 
\begin{enumerate}[i)]
\item $L_{r,s'_1,\dots,s_n'}/L_{r,s_1,\dots,s_n}$ possède un unique saut égal à $t$;
\item pour tout $k\in E\cup \{n\}$, on a $t_{n,k+1}(r,s'_1,\dots,s_n')=t$;
\end{enumerate}
\end{lmm}

\begin{proof}
Notons $K=L_{r,s_1,...,s_n}$ et $K'=L_{r,s'_1,\dots,s_n'}$.
Alors, $[K' : K]=p^{\sum_{i=1}^n s'_i-s_i}$. 
D'après le Lemme \ref{lmm que j'ai rajouter sur la condition suffisante} (appliqué à $\alpha_i=a_i^{1/p^s}$ (pour $i\in E$) et $\alpha_n= a_n^{1/p^{s-1}}$) et le Lemme \ref{sous-corps d'une extension kumérienne.}, un sous-corps intermédiaires de $K'/K$ de degré $p$ sur $K$ est de la forme $K(\gamma^{1/p})$ où $\gamma= \alpha_n^{x_n} \prod_{i\in E} \alpha_i^{x_i}$ avec les $x_i,x_n\in\{0,\dots,p-1\}$ non tous nuls.

Si  $x_i=0$ pour tout $i\in E$, alors $x_n\neq 0$ et $\gamma=\alpha_n^{x_n}$.
Dans ce cas, le saut de $K(\gamma^{1/p})/L$ vaut $t_{n,n+1}(r,s_1,\dots,s_{n-1},s_n+1)$, i.e. $t$.

  Soit $k\in E$ tel que $x_k\neq 0$. 
  Notons $\gamma_1=a_n^{p x_n} \prod_{i\in E} a_i^{x_i}$.
On constate que $p\mid v_F(\gamma_1)$ puisque $p\mid v_F(a_i)$ pour tout $i\in E $.
De plus, il est clair que 
\begin{equation*}
K=F(\zeta_{p^r}, a_1^{1/p^{s_1}},\dots,a_{k-1}^{1/p^{s_{k-1}}}, \gamma_1^{1/p^s},a_{k+1}^{1/p^{s_{k+1}}},\dots,a_n^{1/p^{s_n}}).
\end{equation*}
Remarquons que $K(\gamma^{1/p})=K(\gamma_1^{1/p^{s+1}})$. 
Comme $t_{n,k+1}(r,s_1+\delta_{1,k},\dots,s_n+\delta_{n,k})$ a la propriété $(P)$, on en déduit, en remplaçant $a_k$ par $\gamma_1$, que le saut de $K(\gamma^{1/p})/L$ est égal à $t_{n,k+1}(r,s_1+\delta_{1,k},\dots,s_n+\delta_{n,k})$, i.e. $t$.

Ainsi, le saut de toute sous-extension intermédiaire de $L'/L$ de degré $p$ sur $K$ vaut $t$.
Le Lemme \ref{Unicité du saut, version théorique} permet de montrer $i)$ et $ii)$ s'ensuit car les $t_{n,i+1}(r,s'_1,\dots,s_n')$ avec $i\in E$ sont des sauts de $K'/K$.
\end{proof}

\begin{prop} \label{Théorème fondamental 2.}
Soient $n\geq 2$ et $s_1,\dots,s_n \geq 0$. 
Soit $k\in\{1,\dots,n-1\}$ tel que $1\leq s_k\leq s_n$.
Alors,  

\begin{enumerate} [i)]
\item \begin{enumerate} [a)]
\item $t_{n,k+1}(r,s_1,\dots,s_n)=t_{n,k+1}(r,s_1,\dots,s_{n-1},s_k-1)$;
\item $t_{n,n+1}(r,s_1,\dots,s_n,\dots,s_{n-1},s_k)=t_{n,n+1}(r,s_1,\dots,s_{n-1},s_k)$ (où l'on a noté, ici et après, $s_1,\dots,s_n,\dots,s_{n-1},s_k$ à la place de 
\begin{equation*}
s_1,\dots,s_{k-1},s_n,s_{k+1},\dots,s_{n-1},s_k;
\end{equation*}

\end{enumerate} 

\item \begin{enumerate} [a)]
\item $t_{n,n+1}(r,s_1,\dots,s_n)=p t_{n,n+1}(r,s_1,\dots,s_k-1,\dots,s_n) + \\
(1-p)t_{n,k+1}(r,s_1,\dots,s_n)$;
\item $t_{n,k+1}(r,s_1,\dots,s_n,\dots,s_k)=p t_{n,k+1}(r,s_1,\dots,s_n,\dots,s_k-1) \\
+ (1-p)t_{n,n+1}(r,s_1,\dots,s_n,\dots,s_k)$ sauf si $s_n=s_k$;
\end{enumerate} 
\item $t_{n,n+1}(r,s_1,\dots,s_n)=t_{n,k+1}(r,s_1,\dots,s_n+1,\dots, s_k-1)$;
\item pour $i\in\{k,n\}$, le saut $t_{n,i+1}(r,s_1,\dots,s_n)$ a la propriété $(P)$.
\end{enumerate}

\end{prop}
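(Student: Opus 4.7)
The plan is to mirror very closely the proof of Proposition \ref{Théorème fondamental}, with the roles adapted to the distinguished index $n$ (where $p \nmid v_F(a_n)$). I would proceed by strong induction on $(n; s_1, \dots, s_n)$ in lexicographic order, letting $\Lambda$ denote the set of tuples satisfying i) to iv). For the base case, when $n = 2$ the statement is exactly Proposition \ref{2Théorème fondamental 2.} after appropriate relabelling (with $k = 1$). Additional low-dimensional base cases arise when $s_k = 1$, which I would handle by removing that coordinate and applying either Proposition \ref{Théorème fondamental} (for pairs of indices avoiding $n$) or the $n=2$ instance, analogous to the ``quatrième étape'' in the proof of Proposition \ref{2Théorème fondamental 2.}.

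Given a minimal counterexample $(n; s_1, \dots, s_n)$, I would split according to whether $s_k = s_n$ or $s_k < s_n$. If $s_k = s_n$, then i)b) is trivial and ii)b) is vacuous. By minimality, $(n; s_1, \dots, s_k - 1, \dots, s_n - 1)$ lies in $\Lambda$, and its i)a) together with iii) yield equality of $t_{n,k+1}(r,\dots, s_k-1, \dots, s_n-1)$ and $t_{n,n+1}(r,\dots)$ at the level below, along with property $(P)$. I would then invoke Lemma \ref{unicité du saut ?} with $E = \{k\}$ and value $s = s_k - 1$, concluding that $L_{r,s_1,\dots,s_k+1,\dots,s_n+1}/L_{r,s_1,\dots,s_n}$ has a single jump. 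This unique jump forces i)a), iii), iv), and collapses ii)a) into a trivial identity.

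If $s_k < s_n$, I would apply Proposition \ref{calcul de la différente version appliquée.} to the compositum of $L_{r,\dots,s_n-1}$ and $L_{r,\dots,s_k-1,\dots,s_n}$, yielding
\begin{equation*}
t_{n,k+1}(r,s_1,\dots,s_n) - t_{n,n+1}(r,s_1,\dots,s_n) = p\bigl(t_{n,k+1}(r,\dots,s_n-1) - t_{n,n+1}(r,\dots,s_k-1,\dots,s_n)\bigr).
\end{equation*}
Both jumps on the right are accessible by induction: the first via i)a) applied to $(n; s_1, \dots, s_n - 1)$ (which lowers $s_n$ down towards $s_k$ and forces a controlled value), and the second via iii) applied to $(n; s_1, \dots, s_k - 1, \dots, s_n)$. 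Combining these with the strict monotonicity supplied by Proposition \ref{croissance des suites} (applied to the tower of $p^{s_n}$-th roots of $a_n$), the difference is strictly negative. Then part ii) of Proposition \ref{calcul de la différente version appliquée.} delivers i)a), and substituting back into the difference relation yields ii)a) and iii). Property $(P)$ iv) then propagates through these recursions since each jump on the right is known to have $(P)$.

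The principal obstacle will be to verify the technical hypotheses of Lemma \ref{unicité du saut ?}: namely that the jumps $t_{n,k+1}(r,s_1+\delta_{1,k},\dots,s_n+\delta_{n,k})$ and $t_{n,n+1}(r,s_1,\dots,s_n+1)$ both have property $(P)$ and are equal. This requires threading together inductive instances of i), iii) and iv), supplemented by Proposition \ref{Théorème fondamental} to deal with interactions between indices $l, k' \in \{1,\dots,n-1\}$. A secondary difficulty is the bookkeeping in the $s_k = 1$ reduction, where one coordinate vanishes and the induction must pass to a smaller $n$; here the argument mirrors the ``quatrième étape'' of Proposition \ref{2Théorème fondamental 2.} almost verbatim. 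Once these verifications are in place, the remaining manipulations parallel those of Proposition \ref{Théorème fondamental} step for step.
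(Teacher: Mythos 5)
Your overall framework (lexicographic induction on a minimal counterexample, the difference identity of Proposition \ref{calcul de la différente version appliquée.}, a unicity lemma for the equal-exponent situation, and the base case $n=2$ given by Proposition \ref{2Théorème fondamental 2.}) is indeed the paper's. But your treatment of the case $s_k=s_n$ does not work. Lemme \ref{unicité du saut ?} requires the base tuple to satisfy $s_i=s_n+1$ for $i\in E$; it therefore only ever produces a unique jump for an extension whose top tuple has its $k$-th coordinate exceeding its $n$-th by exactly one, and it can never be applied to $L_{r,\dots,s_k+1,\dots,s_n+1}/L_{r,\dots,s_k,\dots,s_n}$ when $s_k=s_n$. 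Worse, the conclusion you want to extract from it --- that $t_{n,k+1}(r,\dots,s,\dots,s)=t_{n,n+1}(r,\dots,s,\dots,s)$, so that i)a), ii)a) and iii) collapse as in the case $s_1=s_2$ of Proposition \ref{Théorème fondamental} --- is simply false here: the paper proves the strict inequality $t_{n,k+1}(r,s_1,\dots,s_n)<t_{n,n+1}(r,s_1,\dots,s_n)$ for all $1\leq s_k\leq s_n$ (see already Exemple \ref{calcul de t 1,1 qui sert qu'une fois} for $n=2$ and $s_1=s_2=1$). The offset $s_k=s_n+1$ in Lemme \ref{unicité du saut ?}, like the shifts $s_n+1$ and $s_k-1$ in assertion iii), encodes precisely the asymmetry between the index $n$ (where $p\nmid v_F(a_n)$) and the others; treating $k$ and $n$ symmetrically, as in Proposition \ref{Théorème fondamental}, is exactly what cannot be done, and the case $s_k=s_n$ must instead go through the same difference-formula argument as the case $s_k<s_n$.

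The second gap concerns iii). You claim it follows by substituting back into the difference relation, but that substitution only yields ii)a). In the paper, iii) is the hardest part of the proof: it occupies three of the five steps, uses an auxiliary lemma proved inside the first step, rests on the identity $t_{n,2}(r,s_1,\overline{s},s_1-1)=t_{n,n+1}(r,s_1-1,\overline{s},s_1-1)$ (itself obtained from an application of Lemme \ref{unicité du saut ?} one level down), and, when $s_1=1$, requires a separate descent to $(n-1)$-tuples together with repeated applications of Proposition \ref{Théorème fondamental} to the pairs $(1,2)$ and $(1,n)$. None of this is present in your sketch. Relatedly, the strict inequality needed to invoke part ii) of Proposition \ref{calcul de la différente version appliquée.} is not a direct consequence of Proposition \ref{croissance des suites}: one must first identify $t_{n,k+1}(r,s_1,\dots,s_{n-1},s_n-1)$ with $t_{n,n+1}(r,s_1-1,\dots,s_1-1)$ before monotonicity in the $n$-th coordinate applies, and the case $s_k=1$ needs the separate input $t_{n,2}(r,1,\dots,0)=1$.
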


\begin{proof}
 Remarquons que le cas $n=2$ est précisément la proposition \ref{2Théorème fondamental 2.}.
 Ainsi, on supposera $n\geq 3$.
 Quitte à permuter $a_1^{1/p^{s_1}}$ avec $a_k^{1/p^{s_k}}$, on peut supposer, sans perte de généralité, $k=1$. 
Notons $\Lambda$ l'ensemble des $(n;s_1,\dots,s_n)$ vérifiant les hypothèses de la proposition et les affirmations $i)$ à $iv)$ de la proposition.
Supposons par l'absurde qu'il existe $(n; s_1,\dots,s_n)\notin\Lambda$ vérifiant les hypothèses de la proposition. 
On le choisit minimal pour l'ordre lexicographique.
Afin d'alléger les notations, notons 
\begin{equation*}
\overline{s}=(s_2,\dots,s_{n-1}) \; \text{et} \;  \overline{S}=(s_3,\dots,s_{n-1}).
\end{equation*}

 La preuve se fera en cinq étapes (décrites ci-dessous) très largement indépendantes (seul le lemme ci-dessous sera utilisé dans la première et la troisième étape). 
 
\textit{Première étape} : $ib)$ et $ii b)$ sont vérifiés.

\textit{Deuxième étape} : $ia),\; ii a)$ et $iv)$ sont vérifiés.

\textit{Troisième étape} :  $iii)$ est vérifié si $s_1>1$.

\textit{Quatrième étape} : $iii)$ est vérifié si $\min\{s_2,s_n\}=1$.

\textit{Cinquième étape} : $iii)$ est vérifié si $\min\{s_2,s_n\}>1$. \\

\textit{Première étape} :
 
Si $s_n=s_1$, alors $ib)$ est trivialement vérifié et $ii b)$ est vide.
Supposons $s_n=s_1+1$.
Par minimalité, $(n;s_1,\overline{s},s_1)\in\Lambda$ et on déduit du $iii)$ que 
\begin{equation*}
t_{n,2}(r,s_1+1,\overline{s},s_1-1)=t_{n,n+1}(r,s_1,\overline{s},s_1).
\end{equation*}
Les conditions du Lemme \ref{unicité du saut ?} appliquées à $s=s_1$ et à $E=\{1\}$ sont ainsi vérifiées.
Par conséquent, le $ii)$ du Lemme \ref{unicité du saut ?} permet d'en déduire que
\begin{equation} \label{blabla}
t_{n,2}(r,s_1+1,\overline{s},s_1)=t_{n,n+1}(r,s_1+1,\overline{s},s_1)=t_{n,n+1}(r,s_1,\overline{s},s_1),
\end{equation}
ce qui montre $i b)$ et $ii b)$.
 
 Supposons maintenant $s_n\geq s_1+2$.  
  Ainsi, $(s_1,\overline{s},s_n-1)\in\Lambda$ par minimalité. 
 Du $ib)$, il s'ensuit que $t_{n,n+1}(r,s_n-1,\overline{s},s_1)=t_{n,n+1}(r,s_1,\overline{s},s_1)$.
 Par conséquent, le lemme ci-dessous, appliqué à $m=s_1$ et à $l=s_n$, montre $i b)$ et $ii b)$. 

\begin{lmm*} \label{Lemme dont je dois retirer le numéro.}
Soient $m\in\{1,\dots,s_1\}$ et $ l\in\{m+2, \dots,s_n+1-\delta_{m,s_1}\}$. 
Alors,

\begin{enumerate} [i)]
\item $t_{n,n+1}(r,l,\overline{s},m)=t_{n,n+1}(r,l-1,\overline{s},m)$;
\item $t_{n,2}(r,l,\overline{s},m)=p t_{n,2}(r,l,\overline{s},m-1)+(1-p)t_{n,n+1}(r,l,\overline{s},m)$.
\end{enumerate}

\end{lmm*}

\begin{proof}
La proposition  \ref{croissance des suites} appliquée à $K=L_{r,0,\overline{s},m-1}$ et à $\alpha=a_1$ montre (car $m+1 < l$) que 
\begin{equation*}
t_{n,2}(r,l,\overline{s},m-1)>t_{n,2}(r,m+1,\overline{s},m-1).
\end{equation*} 
Par minimalité, $(n;m,\overline{s},m)\in\Lambda$ (car $m\leq s_1< s_n$).
 On déduit alors du $iii)$ que 
\begin{equation} \label{le label du lemme dans le théorème fondamental}
t_{n,2}(r,m+1,\overline{s},m-1)=t_{n,n+1}(r,m,\overline{s},m).
\end{equation}
Si $m=s_1$ alors, $s_1+2\leq l\leq s_n$ et donc $(n;m,\overline{s},l-1)\in\Lambda$.
Si $m<s_1$, alors $(n;m,\overline{s},l-1)\in\Lambda$ par minimalité.
Dans les deux cas, $(n;m,\overline{s},l-1)\in\Lambda$.
Ainsi, d'après le $i b)$, 
\begin{equation*}
t_{n,n+1}(r,l-1,\overline{s},m)=t_{n,n+1}(r,m,\overline{s},m).
\end{equation*}
On en déduit donc que $t_{n,n+1}(r,l-1,\overline{s},m)< t_{n,2}(r,l,\overline{s},m-1)$.
D'après le $i)$ de la proposition \ref{calcul de la différente version appliquée.},
\begin{multline} \label{le multline du lemme}
t_{n,n+1}(r,l,\overline{s},m)-t_{n,2}(r,l,\overline{s},m)=  \\
p(t_{n,n+1}(r,l-1,\overline{s},m)-t_{n,2}(r,l,\overline{s},m-1))<0,
\end{multline}
la dernière inégalité venant de \eqref{le label du lemme dans le théorème fondamental}.
 Le $ii)$ de la proposition \ref{calcul de la différente version appliquée.} montre que $t_{n,n+1}(r,l,\overline{s},m)=t_{n,n+1}(r,l-1,\overline{s},m)$ (i.e.  le $i)$ de ce lemme) et le $ii)$ se déduit du \eqref{le multline du lemme} car 
\begin{equation*} 
t_{n,2}(r,l,\overline{s},m)= t_{n,n+1}(r,l,\overline{s},m)-p(t_{n,n+1}(r,l-1,\overline{s},m)-t_{n,2}(r,l,\overline{s},m-1)).
\end{equation*}

\end{proof}

\vspace{0.1 mm}

\textit{Deuxième étape} :

D'après le $i)$ de la proposition \ref{calcul de la différente version appliquée.}, 
\begin{multline} \label{equation utile une fois}
t_{n,2}(r,s_1,\overline{s},s_n)-t_{n,n+1}(r,s_1,\overline{s},s_n) = \\
p(t_{n,2}(r,s_1,\overline{s},s_n-1)-t_{n,n+1}(r,s_1-1,\overline{s},s_n)).
\end{multline}
Si $s_1<s_n$ alors, $(n;s_1,\overline{s},s_n-1)\in\Lambda$ et on déduit du $i a)$ que 
\begin{equation} \label{égalité servant pour l'inégalité}
t_{n,2}(r,s_1,\overline{s},s_n-1)=t_{n,2}(r,s_1,\overline{s},s_1-1).
\end{equation}
Ainsi, \eqref{égalité servant pour l'inégalité} est valable pour tout $s_1\leq s_n$.
 Montrons maintenant que
\begin{equation} \label{D}
t_{n,2}(r,s_1,\overline{s},s_n-1)<t_{n,n+1}(r,s_1-1,\overline{s},s_n)
\end{equation}
en distinguant le cas $s_1>1$ du cas $s_1=1$.

Si $s_1>1$ alors, $(n; s_1-1,\overline{s},s_1-1)\in\Lambda$ par minimalité. En utilisant le $iii)$,
\begin{equation} \label{E}
t_{n,n+1}(r,s_1-1,\overline{s},s_1-1)=t_{n,2}(r,s_1,\overline{s},s_1-2).
\end{equation}
Remarquons que \eqref{E} et le $iv)$ montre que les hypothèses du Lemme \ref{unicité du saut ?} appliquées à $s=s_1-1$ et à $E=\{1\}$ sont vérifiées.
Le $ii)$ du Lemme \ref{unicité du saut ?} montre que
\begin{equation} \label{AA}
t_{n,2}(r,s_1,\overline{s},s_1-2)=t_{n,2}(r,s_1,\overline{s}, s_1-1).
\end{equation}

En combinant \eqref{E} et \eqref{AA}, on en déduit que
\begin{equation} \label{c'est le * de mon brouillon}
t_{n,2}(r,s_1,\overline{s},s_1-1)=t_{n,n+1}(r,s_1-1,\overline{s},s_1-1).
\end{equation}
 La proposition  \ref{croissance des suites} appliquée à $K=L_{r,s-1,\overline{s},0}$ et à $\alpha=a_n$ prouve (car $s_1-1 <s_n$) que $t_{n,n+1}(r,s_1-1,\overline{s},s_1-1)< t_{n,n+1}(r,s_1-1,\overline{s},s_n)$.
 En utilisant \eqref{égalité servant pour l'inégalité}, \eqref{c'est le * de mon brouillon} et cette inégalité, on obtient $t_{n,2}(r,s_1,\overline{s},s_n-1)<t_{n,n+1}(r,s_1-1,\overline{s},s_n)$, qui est bien l'inégalité \eqref{D} dans le cas où $s_1>1$. 
 
 Supposons maintenant que $s_1=1$.
 D'après \eqref{unification de la rq et du cor},
\begin{equation*}
t_{n,2}(r,s_1,\overline{s},s_1-1)=t_{n,2}(r,1,\overline{s},0)=1.
\end{equation*}
De \eqref{égalité servant pour l'inégalité}, il suffit de prouver que $t_{n,n+1}(r,0,\overline{s},s_n)>1$ pour en déduire \eqref{D}.
La proposition \ref{croissance des suites} appliquée à $K=L_{r,0,\overline{s},0}$ et à $\alpha=a_n$ donne (car $s_n \geq 1$)
\begin{equation*}
t_{n,n+1}(r,0,\overline{s},s_n)\geq t_{n,n+1}(r,0,\overline{s},1).
\end{equation*}
Il suffit donc de montrer que 
\begin{equation*}
t_{n,n+1}(r,0,\overline{s},1)>1.
\end{equation*}
Comme $n\geq 3$, alors $(n-1;\overline{s},1)\in\Lambda$ par minimalité et on déduit du $ii a)$ que 
\begin{equation*}
t_{n,n+1}(r,0,\overline{s},1)=t_{n,n+1}(r,0,1,\overline{S},1).
\end{equation*}  
Du $iii)$, on obtient que $t_{n,n+1}(r,0,1,\overline{S},1)=t_{n,3}(r,0,2,\overline{S},0)$ et donc,
\begin{equation} \label{F}
t_{n,n+1}(r,0,\overline{s},1)=t_{n,3}(r,0,2,\overline{S},0).
\end{equation} 
La proposition  \ref{croissance des suites} appliquée à $K=L_{r,0,0,\overline{S},0}$ et à $\alpha=a_2$ montre que 
\begin{equation*}
t_{n,n+1}(r,0,2,\overline{S},0)>t_{n,3}(r,0,1,\overline{S},0)=1
\end{equation*}
(l'égalité découle du \eqref{unification de la rq et du cor}).
De cette inégalité et \eqref{F}, on en déduit \eqref{D}.

De \eqref{equation utile une fois} et \eqref{D}, on obtient que $t_{n,2}(r,s_1,\overline{s},s_n)<t_{n,n+1}(r,s_1,\overline{s},s_n)$.
 Le $ii)$ de la proposition \ref{calcul de la différente version appliquée.} appliqué à \eqref{equation utile une fois} montre que
\begin{equation*} 
 t_{n,2}(r,s_1,\overline{s},s_n)=t_{n,2}(r,s_1,\overline{s},s_n-1).
 \end{equation*}
En utilisant \eqref{égalité servant pour l'inégalité}, on en conclut que
\begin{equation*}  
  t_{n,2}(r,s_1,\overline{s},s_n)=t_{n,2}(r,s_1,\overline{s},s_1-1).
  \end{equation*}
On déduit ainsi du \eqref{equation utile une fois} que 
\begin{equation} \label{G} 
 t_{n,n+1}(r,s_1,\overline{s},s_n)=p t_{n,n+1}(r,s_1-1,\overline{s},s_n)+(1-p)t_{n,2}(r,s_1,\overline{s},s_n),
\end{equation}
ce qui montre $i a)$ et $ii a)$.

Montrons maintenant que $(n; s_1,\overline{s},s_n)$ vérifie $iv)$.
Tout d'abord, on déduit du $ia)$ que $t_{n,2}(r,s_1,\overline{s},s_n)=t_{n,2}(r,s_1,\overline{s},s_1-1)$.

Si $s_1=1$ alors, $t_{n,2}(r,s_1,\overline{s},s_n)=t_{n,2}(r,1,\overline{s},0)=1$ et vérifie donc $iv)$.

Si $s_1>1$ alors, par minimalité, $(n,s_1,\overline{s},s_1-1)\in\Lambda$. 
Ainsi, $t_{n,2}(r,s_1,\overline{s},s_1-1)$, et donc $t_{n,2}(r,s_1,\overline{s},s_n)$, vérifie $iv)$. 

Il ne nous reste plus qu'à montrer que $t_{n,n+1}(r,s_1,\overline{s},s_n)$ vérifie $iv)$. 
De \eqref{G}, il suffit de montrer que  $t_{n,n+1}(r,s_1-1,\overline{s},s_n)$ vérifie $iv)$.

Si $s_1>1$, alors $(n;s_1-1,\overline{s},s_n)\in\Lambda$, et donc $t_{n,n+1}(r,s_1-1,\overline{s},s_n)$ vérifie $iv)$.
Si $s_1=1$, alors $(n-1;\overline{s},s_n)\in\Lambda$, et donc $t_{n,n+1}(r,0,\overline{s},s_n)$ vérifie $iv)$.
Ainsi, $(n; s_1,\overline{s},s_n)$ vérifie $iv)$. \\

\textit{Troisième étape} : 
Supposons $s_1>1$.
Alors, d'après le $ii a)$,  
\begin{equation} \label{H} 
 t_{n,n+1}(r,s_1,\overline{s},s_n) =p t_{n,n+1}(r,s_1-1,\overline{s},s_n)+(1-p)t_{n,2}(r,s_1,\overline{s},s_n).
 \end{equation}
Par minimalité, $(n;s_1-1,\overline{s},s_n)\in\Lambda$. 
Du $iii)$, on obtient que
\begin{equation} \label{I}
t_{n,n+1}(r,s_1-1,\overline{s},s_n)=t_{n,2}(r,s_n+1,\overline{s},s_1-2).
\end{equation}
En utilisant également le $ia)$, on a $t_{n,2}(r,s_1,\overline{s},s_n)=t_{n,2}(r,s_1,\overline{s},s_1-1)$.
 De cette égalité et du \eqref{c'est le * de mon brouillon}, on en déduit que 
\begin{equation} \label{II} 
t_{n,2}(r,s_1,\overline{s},s_n)=t_{n,n+1}(r,s_1-1,\overline{s},s_1-1).
\end{equation}
De plus, le $i b)$ donne 
\begin{equation} \label{J}
t_{n,n+1}(r,s_n,\overline{s},s_1-1)=t_{n,n+1}(r,s_1-1,\overline{s},s_1-1).
\end{equation}
Ainsi, de \eqref{H}, \eqref{I}, \eqref{II} et \eqref{J}, il s'ensuit que 
\begin{multline} \label{première formule de l'égalité du iii)} 
 t_{n,n+1}(r,s_1,\overline{s},s_n) =p t_{n,2}(r,s_n+1,\overline{s},s_1-2)+(1-p)t_{n,n+1}(r,s_1-1,\overline{s},s_1-1).
 \end{multline}
Le $ii)$ du lemme dans la preuve appliqué à $m=s_1-1$ et à $l=s_n+1$ donne :
\begin{multline} \label{K}
t_{n,2}(r,s_n+1,\overline{s},s_1-1)= \\
pt_{n,2}(r,s_n+1,\overline{s},s_1-2)+(1-p)t_{n,n+1}(r,s_n+1,\overline{s},s_1-1).
\end{multline}
De plus, d'après le $i b)$,
\begin{equation} \label{L}
t_{n,n+1}(r,s_n+1,\overline{s},s_1-1)=t_{n,n+1}(r,s_1-1,\overline{s},s_1-1).
\end{equation}
 En injectant \eqref{L} dans \eqref{K}, que l'on compare ensuite à \eqref{première formule de l'égalité du iii)}, on en déduit $iii)$, ce qui conclut la troisième étape. \\

 \`A partir de maintenant, supposons $s_1=1$. 
Si $(n-1;s'_2,\dots,s_n')\leq (n-1;s_2,\dots,s_n)$ pour l'ordre lexicographique alors, d'après le $i a)$ et \eqref{unification de la rq et du cor}, on en déduit que 

\begin{equation} \label{tout vaut 1}
t_{n,2}(r,1,s_2',\dots,s_n')=t_{n,2}(r,1,s_1',\dots,s_{n-1}',0)=1.
\end{equation}
 Ainsi, d'après le $iia)$ et \eqref{tout vaut 1}, il s'ensuit que 
\begin{equation}  \label{fdsddh}
 t_{n,n+1}(r,1,\overline{s},s_n)=p t_{n,n+1}(r,0,\overline{s},s_n)+1-p.
 \end{equation}
 
\textit{Quatrième étape} : 
  Supposons que $s'=1$. 
  On souhaite montrer que   
\begin{equation} \label{objectif de la cinquième étape.}  
  t_{n,n+1}(r,1,\overline{s},s_n)=t_{n,2}(r,s_n+1,\overline{s},0).
  \end{equation}

Dans un premier temps, nous allons montrer que 
\begin{equation} \label{ce que je dois montrer}
t_{n,n+1}(r,0,\overline{s}, s_n):= t_{n,n+1}(r,0,s_2,\overline{S},s_n)=t_{n,3}(r,0,s_n+1,\overline{S},0).
\end{equation} 
  
Supposons $s_2=1$. 
Comme $(n-1;\overline{s},s_n)\in\Lambda$, on en déduit \eqref{ce que je dois montrer} en utilisant $iii)$.
Supposons $s_n=1$ et $s_2\geq 2$.
Comme $(n-1;\overline{s},1)\in\Lambda$, on en déduit, en utilisant $i a)$ que $t_{n,n+1}(r,0,s_2,\overline{S},1)=t_{n,n+1}(r,0,1,\overline{S},1)$.
En utilisant maintenant le $iii)$, il s'ensuit que $t_{n,n+1}(r,0,1,\overline{S},1)=t_{n,n+1}(r,0,2,\overline{S},0)$, i.e. \eqref{ce que je dois montrer}.

En utilisant \eqref{tout vaut 1}, \eqref{fdsddh} et \eqref{ce que je dois montrer}, on en déduit que 
\begin{equation*} 
t_{n,n+1}(r,1,\overline{s},s_n) =p t_{n,3}(r,0,s_n+1,\overline{S},0)+ 1-p = t_{n,3}(r,1,s_n+1,\overline{S},0).
\end{equation*}        
 La dernière égalité se déduit du \eqref{tout vaut 1} et du $iii)$ de la proposition \ref{Théorème fondamental} appliqué à $\sigma=id$, $l=1$ et $k=2$ (comme $n\geq 3$, alors $p\mid v_F(a_2)$).
 Le $i)$ de la proposition \ref{Théorème fondamental} appliqué à $k=n$ et à $\sigma=id$ montre que $t_{n,3}(r,1,s_n+1,\overline{S},0)=t_{n,2}(r,s_n+1,1,\overline{S},0)$.
 Ainsi,
\begin{equation} \label{version presque finale} 
 t_{n,n+1}(r,1,\overline{s},s_n)=t_{n,2}(r,s_n+1,1,\overline{S},0).
 \end{equation}

 Si $s_2=1$, alors $t_{n,2}(r,s_n+1,1,\overline{S},0)=t_{n,2}(r,s_n+1,\overline{s},0)$ et \eqref{version presque finale} montre \eqref{objectif de la cinquième étape.}.
 Si $s_n=1$ et $s_2\geq 2$ alors, $t_{n,n+1}(r,1,\overline{s},1)=t_{n,2}(r,2,1,\overline{S},0)$ d'après \eqref{version presque finale}.
 En appliquant \eqref{application de l'unicité du saut dans le cas où p divise a_n} à $s=1$, on a $t_{n,2}(r,2,1,\overline{S},0)=t_{n,2}(r,2,2,\overline{S},0)$.
 Le $ii)$ de la proposition \ref{Théorème fondamental} appliqué à $\sigma=id, l=1$ et $k=2$ montre que $t_{n,2}(r,2,2,\overline{S},0)=t_{n,2}(r,2,s_2,\overline{S},0)$.
 Ainsi, $t_{n,n+1}(r,1,\overline{s},1)=t_{n,2}(r,2,\overline{s},0)$, ce qui montre \eqref{objectif de la cinquième étape.}. \\

\textit{Cinquième étape} :
Supposons $s'>1$.
D'après le $ii)$ de la proposition \ref{Théorème fondamental} appliqué à $\sigma=id$ et à $(l,k)=(1,2)$, on a, en utilisant également \eqref{tout vaut 1},
\begin{equation} \label{pfff ...}
t_{n,3}(r,1,s',\overline{S},s_n)=p t_{n,3}(r,0,s',\overline{S},s_n)+1-p.
\end{equation}
Par minimalité, $(n-1;s',\overline{S},s_n)\in\Lambda$.
Ainsi, d'après le $1a)$,
\begin{equation} \label{M}
t_{n,3}(r,0,s',\overline{S},s_n)=t_{n,3}(r,0,s',\overline{S},s'-1)
\end{equation}
Toujours par minimalité, $(n-1, s', \overline{S}, s_n-1)\in\Lambda$. 
Donc, toujours d'après le $1a)$, 
\begin{equation} \label{N}
t_{n,3}(r,0,s',\overline{S}, s_n-1)=t_{n,3}(r,0,s',\overline{S},s'-1).
\end{equation}
De \eqref{M} et \eqref{N}, on en déduit que $t_{n,3}(r,0,s',\overline{S},s_n)=t_{n,3}(r,0,s',\overline{S},s_n-1)$.
Il en résulte, en utilisant \eqref{pfff ...}, que
\begin{equation} \label{troisième formule pour conclure le iii)}
t_{n,3}(r,1,s',\overline{S},s_n) =p t_{n,3}(r,0,s',\overline{S},s_n-1)+1-p= t_{n,3}(r,1,s',\overline{S},s_n-1),
\end{equation}
la dernière égalité découlant du \eqref{tout vaut 1} et du $ii)$ de la proposition \ref{Théorème fondamental} appliqué à $\sigma=id$ et à $(l,k)=(1,2)$.

Par minimalité, $(n-1;s'-1,\overline{S},s'-1)\in\Lambda$ et on déduit du $iii)$ que 
\begin{equation*}
t_{n,3}(r,0,s',\overline{S},s'-2)=t_{n,n+1}(r,0,s'-1,\overline{S},s'-1).
\end{equation*}
Ainsi, les conditions du Lemme \ref{unicité du saut ?} appliquées à $s=s'-1$ et à $E=\{2\}$ sont vérifiées. 
Donc,
\begin{equation*}
t_{n,3}(r,0,s',\overline{S},s'-2)=t_{n,3}(r,0,s',\overline{S},s'-1)=t_{n,n+1}(r,0,s'-1,\overline{S},s'-1).
\end{equation*}

De \eqref{M}, on a $t_{n,3}(r,0,s',\overline{S},s_n)=t_{n,n+1}(r,0,s'-1,\overline{S},s'-1)$.
En utilisant \eqref{pfff ...} pour la première égalité et le $iii)$ de la proposition \ref{Théorème fondamental} appliqué à $\sigma=id$ et à $(l,k)=(1,n)$ pour la seconde, on en déduit que
\begin{align*}
t_{n,3}(r,1,s',\overline{S},s_n) & =p t_{n,n+1}(r,0,s'-1,\overline{S},s'-1)+1-p\\
& = t_{n,n+1}(r,1,s'-1,\overline{S},s'-1).
\end{align*} 
Par minimalité, $(n;1,s'-1,\overline{S},s'-1)\in\Lambda$ et on déduit du $iii)$ que 
\begin{equation*}
t_{n,n+1}(r,1,s'-1,\overline{S},s'-1)=t_{n,2}(r,s',s'-1,\overline{S},0).
\end{equation*}
Par \eqref{application de l'unicité du saut dans le cas où p divise a_n}, $t_{n,2}(r,s',s'-1,\overline{S},0)=t_{n,2}(r,s',s',\overline{S},0)$.
Ainsi,
\begin{equation} \label{quatrième formule pour conclure iii)}
t_{n,3}(r,1,s',\overline{S},s_n)=t_{n,2}(r,s',s',\overline{S},0).
\end{equation}
Le $i)$ de la proposition \ref{calcul de la différente version appliquée.} appliqué à $l=n+1$ et à $k=3$ donne  
 \begin{multline} \label{O}
 t_{n,n+1}(r,1,s',\overline{S},s_n)-t_{n,3}(r,1,s',\overline{S},s_n) = \\ 
 p(t_{n,n+1}(r,1,s'-1,\overline{S},s_n)-t_{n,3}(r,1,s',\overline{S},s_n-1)).
 \end{multline}
Par minimalité, $(n;1,s'-1,\overline{S},s_n)\in\Lambda$ et on déduit du $iii)$ que 
\begin{equation*}
t_{n,n+1}(r,1,s'-1,\overline{S},s_n)=t_{n,2}(r,s_n+1,s'-1,\overline{S},0).
\end{equation*}
 Il s'ensuit, en utilisant \eqref{troisième formule pour conclure le iii)}, \eqref{quatrième formule pour conclure iii)} et \eqref{O}, que 
\begin{equation*} 
t_{n,n+1}(r,1,s',\overline{S},s_n)= pt_{n,2}(r,s_n+1,s'-1,\overline{S},0) + (1-p)t_{n,3}(r,s',s',\overline{S},0).
\end{equation*}
D'après le $iii)$ de la proposition \ref{Théorème fondamental} appliqué à $\sigma=(1 2)$ et à $(l,k)=(1,2)$,
\begin{multline*}
t_{n,2}(r,s_n+1,s',\overline{S},0)=p t_{n,2}(r,s_n+1,s'-1,\overline{S},0)+(1-p)t_{n,3}(r,s_n+1,s',\overline{S},0).
\end{multline*}
Le $ii)$ de la proposition \ref{Théorème fondamental} appliqué à $\sigma=(2,3)$ et à $(l,k)=(1,2)$ donne
\begin{equation*}
t_{n,3}(r,s_n+1,s',\overline{S},0)=t_{n,3}(r,s',s',\overline{S},0).
\end{equation*}
Des trois dernières égalités, on en déduit que
\begin{equation*}
t_{n,n+1}(r,1,\overline{s},s_n)=t_{n,2}(r,s_n+1,s',\overline{S},0).
\end{equation*}
Si $s'=s_2$, alors cette égalité correspond à $iii)$.
 Si $s'=s_n$ alors, d'après \eqref{application de l'unicité du saut dans le cas où p divise a_n}, 
\begin{equation*} 
  t_{n,2}(r,s_n+1,s_n,\overline{S},0)=t_{n,2}(r,s_n+1,s_n+1,\overline{S},0)
  \end{equation*}
   et, d'après le $ii)$ de la proposition \ref{Théorème fondamental} appliqué à $l=1$ et à $\sigma=id$,  
\begin{equation*}   
   t_{n,2}(r,s_n+1,s_n+1,\overline{S},0)=t_{n,2}(r,s_n+1,s_2,\overline{S},0).
   \end{equation*}
Des trois dernières égalités, on obtient que $t_{n,n+1}(r,1,\overline{s},s_n)=t_{n,2}(r,s_n+1,s_2,\overline{S},0)$, ce qui est le résultat souhaité. 
 Ainsi, $(n; s_1,\dots,s_n)\in\Lambda$, ce qui est absurde.
 Ceci prouve la proposition.
\end{proof}

Soient $n\geq 2$, $s_1,\dots,s_n\geq 1$ et $E\subset \{1,\dots,n-1\}$ un ensemble non vide tel que pour tout $i\in E$, on ait $s_i=s_n+1$.
 En utilisant le $iii)$ et le $iv)$ de la proposition \ref{Théorème fondamental 2.}, on a que $t_{n,n+1}(r,s_1,\dots,s_n+1)=t_{n,i+1}(r,s_1+\delta_{1,i},\dots,s_n+\delta_{n,i})$ et a la propriété $(P)$ pour tout $i\in E$.
 Ceci montre donc que les hypothèses nécessaires à l'application du Lemme \ref{unicité du saut ?} sont en réalité vérifiées. 
La proposition \ref{Théorème fondamental 2.} permet donc de simplifier l'énoncé du Lemme \ref{unicité du saut ?} qui devient le corollaire suivant.

\begin{cor} \label{unicité du saut}
Soient $n\geq 2$, $s_1,\dots,s_n\geq 0$ et $E\subset\{1,\dots,n-1\}$ un ensemble non vide tels que $s_i=s_n+1$ pour tout $i\in E$. Alors, $L_{r,s'_1,\dots,s_n'}/L_{r,s_1,\dots,s_n}$ possède un unique saut égal à $t_{n,n+1}(r,s_1,\dots,s_{n-1},s_n+1)$ où 
\begin{equation*}
s_i'=\begin{cases} 
s_i+1 \; \text{si} \; i\in E\cup\{n\} \\
s_i \; \text{sinon}
\end{cases} .
\end{equation*}
De plus, $t_{n,i+1}(r,s'_1,\dots,s_n')=t_{n,n+1}(r,s_1,\dots,s_{n-1},s_n+1)$ pour tout $i\in E\cup \{n\}$. 
\end{cor}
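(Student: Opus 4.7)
My plan is to prove the corollary by directly invoking Lemme \ref{unicité du saut ?}, after checking that its technical hypothesis on the collection
\[
\bigl(t_{n,k+1}(r,s_1+\delta_{1,k},\dots,s_n+\delta_{n,k})\bigr)_{k\in E\cup\{n\}}
\]
is automatically satisfied in our situation. Concretely, I must exhibit a common value $t$ such that each of these numbers equals $t$ and has property $(P)$; once that is done, the lemma delivers both the unique jump statement and the equalities $t_{n,i+1}(r,s'_1,\dots,s'_n)=t$ for $i\in E\cup\{n\}$ without further work.

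First I would produce the common value. Consider the tuple $(s_1,\dots,s_{n-1},s_n+1)$: for each $i\in E$ its $i$-th coordinate is $s_i=s_n+1$, so the admissibility hypothesis $1\le s_i\le s_n+1$ of Proposition \ref{Théorème fondamental 2.} is met. Part iii) of that proposition, applied with the index $k=i$, then identifies $t_{n,n+1}(r,s_1,\dots,s_{n-1},s_n+1)$ with $t_{n,i+1}(r,s_1+\delta_{1,i},\dots,s_n+\delta_{n,i})$ --- this is precisely the equality spelled out in the paragraph immediately preceding the corollary. Letting $i$ range over $E$ forces all these numbers to share the common value $t:=t_{n,n+1}(r,s_1,\dots,s_{n-1},s_n+1)$; for the remaining index $k=n$ no work is needed since the corresponding number equals $t$ by definition.

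Next I would verify property $(P)$ for each element of the family. This is part iv) of Proposition \ref{Théorème fondamental 2.}, again applied to the tuple $(s_1,\dots,s_{n-1},s_n+1)$ with admissible index $k=i\in E$: iv) yields property $(P)$ simultaneously for $t_{n,n+1}(r,\dots)$ and for $t_{n,i+1}(r,\dots)$. Since property $(P)$ is then propagated through the equalities supplied by iii), every member of the family inherits it.

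With both requirements of Lemme \ref{unicité du saut ?} in hand, invoking that lemma immediately yields the corollary. The serious work is entirely concentrated in Proposition \ref{Théorème fondamental 2.} and in Lemme \ref{unicité du saut ?} itself; the only subtlety in the present argument is the careful matching between the tuple produced by iii) and the target tuple $(s_1+\delta_{1,i},\dots,s_n+\delta_{n,i})$, which is exactly the content of the preceding discussion and which I would take care to spell out explicitly when writing the detailed proof.
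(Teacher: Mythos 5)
Votre démonstration est correcte et suit exactement la même démarche que le texte : on applique les points $iii)$ et $iv)$ de la proposition \ref{Théorème fondamental 2.} au $n$-uplet $(s_1,\dots,s_{n-1},s_n+1)$ avec $k=i\in E$ pour identifier tous les $t_{n,i+1}(r,s_1+\delta_{1,i},\dots,s_n+\delta_{n,i})$ à la valeur commune $t_{n,n+1}(r,s_1,\dots,s_{n-1},s_n+1)$ et leur transférer la propriété $(P)$, puis on conclut par le Lemme \ref{unicité du saut ?}. C'est précisément l'argument donné dans le paragraphe qui précède le corollaire.
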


Le corollaire suivant est une conséquence directe du $ii)$ de la proposition \ref{Théorème fondamental} et du $ib)$ de la proposition \ref{Théorème fondamental 2.}.

\begin{cor} \label{Corollaire où je transforme les nombres plus grand que s n en sn n}
Soient $n\geq 2$ et $s_1,\dots,s_n$ des entiers non nuls. 
Fixons un entier $m\in\{2,\dots,n+1\}$.
Pour tout $i$, fixons $s_i'$ tel que $\min\{s_i,s_{m-1}\}\leq s_i'\leq s_i$.
Alors,
\begin{equation*}
t_{n,m}(r,s_1,\dots,s_n)=t_{n,m}(r,s'_1,\dots,s_n').
\end{equation*}
 
\end{cor}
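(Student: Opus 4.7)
Le plan consiste à se ramener, par une récurrence immédiate sur le cardinal de $\{i : s_i'\neq s_i\}$, au cas où un seul $s_{i_0}$ (avec $i_0\neq m-1$) est modifié, avec $s_{i_0}'\in[\min\{s_{i_0},s_{m-1}\},s_{i_0}]$ ; le cas $s_{i_0}\leq s_{m-1}$ est trivial puisqu'alors $s_{i_0}'=s_{i_0}$, de sorte que l'on peut supposer $s_{i_0}>s_{m-1}$, et donc $s_{i_0}'\in[s_{m-1},s_{i_0}]$. La stratégie est alors de montrer que les valeurs $t_{n,m}$ évaluées sur $(s_1,\dots,s_{i_0},\dots,s_n)$ et sur $(s_1,\dots,s_{i_0}',\dots,s_n)$ se ramènent toutes deux à la même configuration extrémale, celle où la $i_0$-ième coordonnée est abaissée jusqu'à $s_{m-1}$.

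Dans le cas $m=n+1$, on a $m-1=n$ et $i_0\in\{1,\dots,n-1\}$. On applique le $ib)$ de la proposition \ref{Théorème fondamental 2.} avec $k=i_0$, dont le contenu effectif est le suivant : dans toute configuration dont la $i_0$-ième coordonnée est au moins égale à la $n$-ième, on peut remplacer la $i_0$-ième coordonnée par la $n$-ième sans modifier $t_{n,n+1}$. Les deux configurations $(s_1,\dots,s_{i_0},\dots,s_n)$ et $(s_1,\dots,s_{i_0}',\dots,s_n)$ vérifient cette hypothèse puisque $s_{i_0},s_{i_0}'\geq s_n=s_{m-1}$, et se réduisent donc à la même configuration extrémale, ce qui fournit l'égalité annoncée.

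Dans le cas $m\in\{2,\dots,n\}$, on a $m-1\leq n-1$, de sorte que le $iii)$ de l'hypothèse \ref{hypothèse 3} garantit $p\mid v_F(a_{m-1})$ ; la même divisibilité vaut pour $a_{i_0}$ sous réserve $i_0\neq n$ (ou $i_0=n$ et $p\mid v_F(a_n)$). On applique alors le $ii)$ de la proposition \ref{Théorème fondamental} avec $\{l,k\}=\{i_0,m-1\}$ et $\sigma$ choisi de sorte que $\sigma(l)+1=m$ : précisément, $\sigma=id$ lorsque $i_0>m-1$ (et l'hypothèse $s_{m-1}\leq s_{i_0}$ est bien vérifiée) et $\sigma=(i_0,m-1)$ lorsque $i_0<m-1$. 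L'identité correspondante se lit comme l'abaissement de la $i_0$-ième coordonnée à $s_{m-1}$, sans modification de $t_{n,m}$ ; le même argument de réduction à une configuration commune, appliqué à $s_{i_0}$ puis à $s_{i_0}'$, permet de conclure.

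La difficulté principale est l'interprétation des propositions \ref{Théorème fondamental} $ii)$ et \ref{Théorème fondamental 2.} $ib)$ : bien qu'elles soient formulées avec une hypothèse de comparaison du type $s_l\leq s_k$ (respectivement $s_k\leq s_n$), leur portée effective est qu'en présence de deux coordonnées comparables d'une configuration, la plus grande peut être abaissée à la plus petite sans affecter le saut considéré. Une fois cette relecture acquise, le corollaire résulte sans aucun calcul supplémentaire, par la double application à la configuration d'origine et à la configuration modifiée, qui se ramènent à la même configuration extrémale.
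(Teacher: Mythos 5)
Your overall strategy is exactly the paper's: reduce each coordinate exceeding $s_{m-1}$, in both the original and the modified configuration, to the common extremal value $s_{m-1}$, coordinate by coordinate, using $ii)$ of the proposition \ref{Théorème fondamental} and $ib)$ of the proposition \ref{Théorème fondamental 2.}. However, you dispatch between these two propositions according to the value of $m$, whereas the relevant criterion is the value of $v_F(a_n)$ (the paper's own proof chooses between them \emph{suivant la valeur de $v_F(a_n)$}), and this leaves two sub-cases uncovered.

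First, for $m=n+1$ you invoke $ib)$ of the proposition \ref{Théorème fondamental 2.} with no condition on $v_F(a_n)$; but that proposition is only established when $p\nmid v_F(a_n)$: its base case $n=2$ is the proposition \ref{2Théorème fondamental 2.}, which assumes $p\nmid v_F(a_2)$, and it is introduced as the analogue of the proposition \ref{Théorème fondamental} precisely for the case $k=n$, $p\nmid v_F(a_n)$. When $p\mid v_F(a_n)$ and $m=n+1$ you must instead use $ii)$ of the proposition \ref{Théorème fondamental} with $k=n$, which is legitimate exactly because $p\mid v_F(a_n)$ there. Second, for $m\leq n$ you note yourself that the divisibility hypothesis on $a_{i_0}$ fails when $i_0=n$ and $p\nmid v_F(a_n)$, but you then apply the proposition \ref{Théorème fondamental} anyway; it is simply not applicable in that sub-case. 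The missing tool is $ia)$ of the proposition \ref{Théorème fondamental 2.} applied with $k=m-1$: it gives $t_{n,m}(r,s_1,\dots,s_n)=t_{n,m}(r,s_1,\dots,s_{n-1},s_{m-1}-1)$ whenever $s_{m-1}\leq s_n$, hence the same value for $s_n$ and for any $s_n'$ with $s_{m-1}\leq s_n'\leq s_n$. Apart from this dispatching, your reduction to single-coordinate changes and your reading of the two propositions as lowering the larger of two comparable coordinates to the smaller are correct and coincide with the paper's argument.
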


\begin{proof}
Si $s_1\leq s_{m-1}$, alors 
\begin{equation*}
t_{n,m}(r,s_1,\dots,s_n)=t_{n,m}(r,s'_1,s_2,\dots,s_n)
\end{equation*}
 puisque le seul choix possible de $s'_1$ est $s_1$.
Supposons donc $s_{m-1}< s_1$.
En appliquant le $ii)$ de la proposition \ref{Théorème fondamental} à $l=1, k=m$ et à $\sigma=(1,m)$ ou le $i b)$ de la proposition \ref{Théorème fondamental 2.} appliqué à $k=1$, à $s_{m-1}\rightarrow s_1$ et à $s_1\rightarrow s_{m-1}$ suivant la valeur de $v_F(a_n)$, on en déduit que 
\begin{equation*}
t_{n,m}(r,s_1,\dots,s_n)=t_{n,m}(r,s_{m-1},s_2,\dots,s_n).
\end{equation*}
De même, en appliquant le $ii)$ de la proposition \ref{Théorème fondamental} à $l=1, k=m$ et à $\sigma=(1,m)$ ou le $i b)$ de la proposition \ref{Théorème fondamental 2.} appliqué à $k=1$, à $s_{m-1}\rightarrow s_1$ et à $s_1\rightarrow s_{m-1}$ suivant la valeur de $v_F(a_n)$, on en déduit que 
\begin{equation*}
t_{n,m}(r,s_1',\dots,s_n)=t_{n,m}(r,s_{m-1},s_2,\dots,s_n).
\end{equation*}
Il s'ensuit que $t_{n,m}(r,s_1,\dots,s_m)=t_{n,m}(s_1',s_2,\dots,s_n)$.

En répétant le même argument où l'on remplace la coordonnée $s_1$ par $s_2$ (et ainsi de suite), on obtient que (et en remarquant que $s_m=s_m'$)
\begin{equation*}
t_{n,m}(r,s_1,\dots,s_n)=t_{n,m}(r,s'_1,\dots,s_n'),
\end{equation*}
ce qui montre le corollaire.
\end{proof}

\section{Calcul des $t_{n,k}(r,s_1,\dots,s_n)$.} \label{calculs chiants}

Soient $n,r,s_1,\dots,s_n\geq 1$ et $F,a_1,\dots,a_n$ vérifiant l'hypothèse \ref{hypothèse 3}.
Dans la suite, supposons que $s_1\leq \dots \leq s_{n-1}$ si $p\nmid v_F(a_n)$ (resp. $s_1\leq \dots \leq s_n$ si $p\mid v_F(a_n)$), ce que l'on peut toujours supposer si on permute convenablement les $a_i^{1/p^{s_i}}$. 

Les résultats de la section précédente permettent de calculer tous les sauts $t_{n,k}(r,s_1,\dots,s_n)$ avec $k\in\{1,\dots,n+1\} $ (cf Remarque \ref{Remarque après le calcul des sauts dont j'ai besoin}).
Cependant, on se limitera au calcul des $t_{n,k}(r,s_1,\dots,s_n)$ qui nous seront utiles pour décrire la suite des groupes de ramification de $\Gal(L_{r,s_1,\dots,s_n}/F)$ (cf le théorème \ref{calcul des sauts, cas où la valuation est divisible par p} et le théorème \ref{calcul des sauts, cas où la valuation n'est pas divisible par p}).

La notation ci-dessous ne sera valable que pour cette section et la section \ref{calcul des sauts}.
Il n'y aura donc pas de conflit de notation avec \eqref{définition de r k} et \eqref{définition de s k}.
 Soit $k\in\{2,\dots,n+1\}$. 
Pour un entier $l$, notons 
\begin{equation} \label{definition de l'}
l(k) = \min\{l,s_{k-1}\} 
 \end{equation}
 Pour tout entier $m\geq 1$, notons également $\tau_m(l)=t_{m,2}(r,l,\dots,l,l(m))$. 

\begin{lmm} \label{lemme intermédiaire}
Pour tous $r, s_1,\dots,s_n \geq 1$ et $k\geq 3$, on a 

\begin{enumerate} [i)]
\item  $t_{n,2}(r,s_1,\dots,s_n)=\tau_n(s_1)$;
\item $t_{n,k}(r,s_1,\dots,s_n)=p^{s_1(k)}t_{n-1,k-1}(r,s_2(k),\dots,s_n(k))+(1-p)\sum_{l=1}^{s_1(k)} p^{s_1(k)-l} \tau_n(l)$.
\end{enumerate}
\end{lmm}

\begin{proof}

$i)$ 
Cela découle du corollaire \ref{Corollaire où je transforme les nombres plus grand que s n en sn n} appliqué à $k=2$.
 
$ii)$
 D'après le corollaire \ref{Corollaire où je transforme les nombres plus grand que s n en sn n}, on a $t_{n,k}(r,s_1,\dots,s_n)=t_{n,k}(r,s_1(k),\dots,s_n(k))$.

En utilisant le $iii)$ de la proposition \ref{Théorème fondamental} appliqué à $l=1$ et à $\sigma=id$ ou le $ii a)$ de la proposition \ref{Théorème fondamental 2.} suivant la valeur de $v_F(a_n)$ ainsi que le $i)$ de ce lemme, il s'ensuit que
\begin{equation*}
t_{n,k}(r,s_1(k),\dots,s_n(k))=p t_{n,k}(r,s_1(k)-1,s_2(k),\dots,s_n(k))+(1-p)\tau_n(s_1(k)).
\end{equation*}
En répétant le même argument en remplaçant $s_1(k)$ par $s_1(k)-1$ (et ainsi de suite), on déduit par récurrence que 
\begin{equation*} 
t_{n,k}(r,s_1(k),\dots,s_n(k))=p^{s_1(k)}t_{n-1,k-1}(r,s_2(k),\dots,s_n(k))+(1-p)\sum_{l=1}^{s_1(k)} p^{s_1(k)-l} \tau_n(l).
\end{equation*} 

\end{proof}
 
 Soient $m,l\geq 1$ des entiers.
Le $i)$ du théorème ci-dessous montre que le calcul explicite des $\tau_m(l)$ entraîne le calcul explicite des $t_{n,k}(r,s_1,\dots,s_n)$. 

Le $ii)$ du théorème ci-dessous montre que le calcul explicite des $t_{n,2}(r,l,\dots,l)$ avec $p\mid v_F(a_n)$ et des $t_{n,n+1}(r,l,\dots,l)$ avec $p\nmid v_F(a_n)$ entraînent le calcul explicite des $\tau_n(l)$. 

Le $iii)$ (resp. $iv)$) du théorème ci-dessous calcule explicitement les $t_{n,2}(r,l,\dots,l)$ avec $p\mid v_F(a_n)$ (resp. $t_{n,n+1}(r,l,\dots,l)$ avec $p\nmid v_F(a_n)$). 

Enfin, le $v)$ du théorème ci-dessous appliqué à $j=n$ montre que le calcul explicite des $t_{1,1}(r,s)$ et des $\tau_n(l)$ permettent le calcul explicite des $t_{n,1}(r,s_1,\dots,s_n)$.

\begin{thm} \label{thm ou je calcule TOUT}
Pour tous $r,s_1,\dots,s_n \geq 1$ et $k\geq 2$, on a

\begin{enumerate}[i)]
\item \begin{multline*}
t_{n,k}(r,s_1,\dots,s_n) =p^{\sum_{j=1}^{k-2} s_j(k)} \tau_{n-k+2}(s_{k-1}(k)) \\ 
 +(1-p)\sum_{m=0}^{k-3} p^{\sum_{l=0}^{m} s_l(k)}\sum_{l=1}^{s_{m+1}(k)} p^{s_{m+1}(k)-1-l}\tau_{n-m}(l);
\end{multline*}
\item supposons $p\nmid v_F(a_n)$. Soit $l\geq 1$. Si $s_n\geq l$ alors 
\begin{equation*}
\tau_n(l)= \begin{cases} 
t_{n,n+1}(r,l-1,\dots,l-1) & \text{si} \; l\geq 2 \\
1 \; & \text{si} \; l=1
\end{cases} .
\end{equation*}

 Si $s_n\leq l-1$ alors 
\begin{equation*}
\tau_n(l)=p^{s_n} t_{n,2}(r,l,\dots,l,0)+(1-p)\sum_{j=1}^{s_n} p^{s_n-j}t_{n,n+1}(r,j,\dots,j);
 \end{equation*}
 \item si $p\mid v_F(a_n)$ alors pour tout $s\geq 1$ et $r\geq s$, 
\begin{equation*} 
 t_{n,2}(r,s,\dots,s)= 1+\frac{2(p-1)p^n}{p^{n+1}-1}(p^{(n+1)(s-1)}-1);
 \end{equation*}
 \item si $p\nmid v_F(a_n)$ alors pour tous $s\geq 1$ et $r\geq s+1$,
\begin{equation*} 
 t_{n,n+1}(r,s,\dots,s)=1+\frac{2p^{n-1}(p-1)}{p^{n+1}-1}(p^{(n+1)s}-1);
 \end{equation*}
\item supposons $r\geq \max\{s_{n-1},s_n\}+2$ (resp. $r\geq s_n+1$) si $p\nmid v_F(a_n)$ (resp. $p\mid v_F(a_n)$). Alors, pour tout $j\in\{1,\dots,n\}$
\begin{multline*}
t_{n,1}(r,s_1,\dots,s_n) =p^{(\sum_{i=1}^{j-1} s_i)} t_{1,1}(r,s_j,\dots,s_n) \\ 
  +(1-p)\sum_{m=0}^{j-2} p^{\sum_{k=0}^m s_k} \sum_{l=1}^{s_{m+1}}p^{s_{m+1}-l}\tau_{n-m}(l).
 \end{multline*}
 \end{enumerate}
\end{thm}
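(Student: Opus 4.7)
I would attack the five items in the stated order, each reducing the problem further via the tools of the previous section.

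Part~i) follows by iterating Lemma~\ref{lemme intermédiaire}~ii) exactly $k-2$ times. Each iteration reduces the pair $(n,k)$ to $(n-1,k-1)$ while contributing one $\tau$-summand to the total; the process stops at $t_{n-k+2,2}$, which by Lemma~\ref{lemme intermédiaire}~i) equals $\tau_{n-k+2}(s_{k-1}(k))$. Tracking the accumulated products $p^{s_1(k)+\dots+s_m(k)}$ and adopting the convention $s_0(k)=1$ produces the compact formula of the theorem.

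For Part~ii), the goal is to convert $\tau_n(l)=t_{n,2}(r,l,\dots,l,l(n))$ into $t_{n,n+1}$-values. In the case $s_n\ge l$ (so $l(n)=l$), I would apply Prop.~\ref{Théorème fondamental 2.}~i\,a) with $k=1$ to replace the last coordinate by $l-1$, then combine Prop.~\ref{Théorème fondamental 2.}~iii) with $k=1$ and Cor.~\ref{Corollaire où je transforme les nombres plus grand que s n en sn n} to collapse to $t_{n,n+1}(r,l-1,\dots,l-1)$; the edge case $l=1$ is (\ref{unification de la rq et du cor}). In the case $s_n<l$, I would iterate ii\,a) of Prop.~\ref{Théorème fondamental 2.} exactly $s_n$ times to peel off successive $t_{n,n+1}$ contributions one coordinate at a time, until the last coordinate reaches $0$ and the first case applies.

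Parts~iii) and~iv) I would establish by induction on $s$, with base $n=1$ supplied by Thms.~\ref{cas où n=1 et v p a vaut 0} and~\ref{Cas n=1 et v p a vaut 1}. For the inductive step, Cor.~\ref{unicité du saut 3} (resp.\ Cor.~\ref{unicité du saut}) guarantees that $L_{r,s,\dots,s}/L_{r,s-1,\dots,s-1}$ has a single jump, and Prop.~\ref{Théorème fondamental}~iii) (resp.\ Prop.~\ref{Théorème fondamental 2.}~iii)) expresses it as an affine function of the previous value; the resulting first-order affine recurrence solves to the stated geometric closed form with coefficient $\tfrac{2(p-1)p^n}{p^{n+1}-1}$ (resp.\ $\tfrac{2p^{n-1}(p-1)}{p^{n+1}-1}$).

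Part~v) I would obtain by iterating the one-step recurrence
\begin{equation*}
t_{n,1}(r,s_1,\dots,s_n)=p^{s_1}\,t_{n-1,1}(r,s_2,\dots,s_n)+(1-p)\sum_{l=1}^{s_1}p^{s_1-l}\,\tau_n(l),
\end{equation*}
derived by applying Prop.~\ref{calcul de la différente version appliquée.} to $(l,k)=(1,2)$: the hypothesis $r\ge\max\{s_{n-1},s_n\}+2$ (resp.\ $r\ge s_n+1$) ensures the strict inequality $t_{n,1}<t_{n,2}$, which is precisely what permits ii) of that proposition to be used. Iterating $j-1$ times and terminating at the one-variable $t_{1,1}(r,s_n)$ yields the announced formula. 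The main obstacle will be securing the exact closed form in iii)/iv) by carefully propagating the single-jump hypothesis through the induction, and verifying the inequality $t_{n,1}<t_{n,2}$ needed in Part~v).
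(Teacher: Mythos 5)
Your treatment of i) and ii) follows the paper's own route (iterating Lemme \ref{lemme intermédiaire}, then the relations of la proposition \ref{Théorème fondamental 2.} together with le corollaire \ref{Corollaire où je transforme les nombres plus grand que s n en sn n} and le corollaire \ref{unicité du saut}), up to the harmless mislabelling of $ii\,b)$ as $ii\,a)$. For iii) and iv) the spirit is right, but the recurrence you invoke is not a first-order affine recurrence in $s$ alone: what Lemme \ref{lemme intermédiaire} $ii)$ with $k=n+1$ actually yields is $\tau_n(s)=p^s\tau_{n-1}(s)+(1-p)\sum_{l=1}^{s}p^{s-l}\tau_n(l)$, in which $\tau_n(s)$ occurs on both sides and which couples level $n$ to level $n-1$; one must solve for $\tau_n(s)$ and run a double induction on $(n,s)$, as the paper does. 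This is a presentational looseness rather than a fatal flaw.

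The genuine gap is in v). You claim that the hypothesis on $r$ gives $t_{n,1}<t_{n,2}$ and that this "permits $ii)$ of la proposition \ref{calcul de la différente version appliquée.} to be used". The inequality is in the wrong direction: in the stated range $t_{n,1}(r,s_1,\dots,s_n)$ is the cyclotomic jump and grows like $p^{r+\cdots}$, whereas $t_{n,2}(r,s_1,\dots,s_n)=\tau_n(s_1)$ is bounded independently of $r$, so in fact $t_{n,2}<t_{n,1}$. Worse, if $t_{n,1}<t_{n,2}$ did hold, then $ii)$ applied to $(l,k)=(1,2)$ would give $t_{n,1}(r,s_1,\dots,s_n)=t_{n,1}(r,s_1-1,s_2,\dots,s_n)$, which after iteration kills all the correction terms $(1-p)\sum p^{s_1-l}\tau_n(l)$ and contradicts the very formula you are proving. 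What is actually needed to pass from the identity of $i)$ of la proposition \ref{calcul de la différente version appliquée.} to the one-step recurrence $t_{n,1}(r,\vec{s})=p\,t_{n,1}(r,s_1-1,s_2,\dots,s_n)+(1-p)\tau_n(s_1)$ is the equality $t_{n,2}(r-1,s_1,\dots,s_n)=t_{n,2}(r,s_1,\dots,s_n)$, so that the two $t_{n,2}$-terms combine. The paper obtains this $r$-independence of $t_{n,k}$, $k\geq 2$, directly from the closed forms established in i)--iv) (this is exactly where the hypothesis $r\geq s_n+1$, resp. $r\geq\max\{s_{n-1},s_n\}+2$, enters); alternatively one could get it from $ii)$ of la proposition \ref{calcul de la différente version appliquée.} applied with the roles of $l$ and $k$ exchanged, i.e. from $t_{n,2}<t_{n,1}$ — but that inequality itself has to be justified, which again requires the explicit formulas. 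As written, your argument for v) does not go through.
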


\begin{proof}
$i)$ 
Le cas $k=2$ correspond au $i)$ du lemme \ref{lemme intermédiaire}. 
Supposons donc $k\geq 3$.
D'après le $ii)$ du Lemme \ref{lemme intermédiaire}, on a 
\begin{equation*}
t_{n,k}(r,s_1(k),\dots,s_n(k))=p^{s_1(k)}t_{n-1,k-1}(r,s_2(k),\dots,s_n(k))+(1-p)\sum_{l=1}^{s_1(k)} p^{s_1(k)-l} \tau_n(l).
\end{equation*}
 
En répétant le même argument que la preuve du $ii)$ du Lemme \ref{lemme intermédiaire} en remplaçant $s_1(k)$ par $s_2(k)$ (et ainsi de suite), on déduit par récurrence que 
\begin{multline*}
t_{n,k}(r,s_1(k),\dots,s_n(k)) = p^{\sum_{j=1}^{k-2} s_j(k)} t_{n-k+2,2}(r,s_{k-1}(k),\dots,s_n(k)) + \\
 (1-p)\sum_{m=0}^{k-3} p^{\sum_{l=0}^{m} s_l(k)}\sum_{l=1}^{s_{m+1}(k)} p^{s_{m+1}(k)-1-l}\tau_{n-m}(l).
\end{multline*}

Enfin, $t_{n-k+2,2}(r,s_{k-1}(k),\dots,s_n(k))= \tau_{n-k+2}(s_{k-1}(k))$ d'après le $i)$ du Lemme \ref{lemme intermédiaire}.
Ceci prouve $i)$ puisque $t_{n,k}(r,s_1,\dots,s_n)=t_{n,k}(r,s_1(k),\dots,s_n(k))$ d'après le corollaire \ref{Corollaire où je transforme les nombres plus grand que s n en sn n} appliqué à $m=k$. \\

$ii)$ Commençons par le cas où $s_n\geq l$.
Dans ce cas, on a $l(n+1)=l$.
 Si $l=1$, c'est clair d'après \eqref{unification de la rq et du cor}.
Supposons $l\geq 2$. 
Le $i)$ de la proposition \ref{Théorème fondamental 2.} montre que 
\begin{equation*}
\tau_n(l)=t_{n,2}(r,l,\dots,l,l)=t_{n,2}(r,l,\dots,l,l-1).
\end{equation*} 
En utilisant le $iii)$ de la proposition \ref{Théorème fondamental 2.} appliqué à $k=1$, on obtient que 
\begin{equation*}
t_{n,2}(r,l,\dots,l,l-2)=t_{n,n+1}(r,l-1,l,\dots,l,l-1).
\end{equation*} 
Ainsi, le corollaire \ref{unicité du saut} appliqué à $s=l-1$ et à $E=\{1\}$ montre que 
\begin{equation*}
t_{n,n+1}(r,l-1,l,\dots,l,l-1)=t_{n,2}(r,l,\dots,l,l-1).
\end{equation*}
De plus, le corollaire \ref{Corollaire où je transforme les nombres plus grand que s n en sn n} appliqué à $m=n+1$ donne
\begin{equation*}
t_{n,n+1}(r,l-1,l,\dots,l,l-1)=t_{n,n+1}(r,l-1,\dots,l-1).
\end{equation*}
On vient ainsi de prouver que $\tau_n(l)=t_{n,n+1}(r,l-1,\dots,l-1)$. 

Supposons $s_n\leq l-1$. 
Dans ce cas, $l(n+1)=s_n$.
En appliquant le $ii b)$ de la proposition \ref{Théorème fondamental 2.} à $k=2$,
\begin{equation*}
\tau_n(l)=p t_{n,2}(r,l,\dots,l,s_n-1)+(1-p)t_{n,n+1}(r,l,\dots,l,s_n).
\end{equation*}
Le corollaire \ref{Corollaire où je transforme les nombres plus grand que s n en sn n} appliqué à $m=n+1$, à $s_i=l$ pour $i\leq n-1$ et à $s'_i=s_n$ montre que $t_{n,n+1}(r,l,\dots,l,s_n)=t_{n,n+1}(r,s_n,\dots,s_n)$.
 Ainsi,
\begin{equation*}
\tau_n(l)=p t_{n,2}(r,l,\dots,l,s_n-1)+(1-p)t_{n,n+1}(r,s_n,\dots,s_n).
\end{equation*} 
 En répétant le même argument en remplaçant $s_n$ par $s_n-1$ (et ainsi de suite), on déduit par récurrence que 
\begin{equation*}
\tau_n(l)=p^{s_n} t_{n,2}(r,l,\dots,l,0)+(1-p)\sum_{j=1}^{s_n} p^{s_n-j}t_{n,n+1}(r,j,\dots,j).
 \end{equation*} \\

$iii)$ 
Procédons par récurrence sur $n$. 
Pour $n=1$, cela correspond au théorème \ref{cas où n=1 et v p a vaut 0}. 
 Supposons que ce soit vrai jusqu'au rang $n-1$ et montrons le au rang $n$. 
 Pour cela, procédons par récurrence sur $s$. 
 Le cas $s=1$ correspond à \eqref{unification de la rq et du cor}.
 Supposons que la formule soit vraie jusqu'au rang $s-1$ et montrons là au rang $s$. 
Le $i)$ de la proposition \ref{Théorème fondamental} appliqué à $l=1$, à $k=n$, à $\sigma=(1,n)$ et à $s_i=s$ pour tout $i$ montre que $\tau_n(s)=t_{n,n+1}(r,s,\dots,s)$.
 Le $ii)$ du Lemme \ref{lemme intermédiaire} appliqué à $k=n+1$ et à $s_i=s$ pour tout $i$ donne
\begin{equation} \label{formule de récurrence de u_n(s)}
\tau_n(s)=p^s \tau_{n-1}(s)+(1-p)\sum_{l=1}^s p^{s-l}\tau_n(l).
\end{equation}
En remarquant que $\tau_n(s)$ se trouve à gauche et à droite de cette expression, on a 
\begin{equation*}
\tau_n(s)=p^{s-1} \tau_{n-1}(s)+(1-p)\sum_{l=1}^{s-1} p^{s-1-l} \tau_n(l). 
\end{equation*} 
En utilisant l'hypothèse de récurrence sur $s$, il s'ensuit que
\begin{equation*}
\tau_n(s)  =p^{s-1} \tau_{n-1}(s)+(1-p)\sum_{l=1}^{s-1} p^{s-1-l}\left(1+\frac{2(p-1)p^{n}}{p^{n+1}-1}(p^{(n+1)(l-1)}-1)\right). 
\end{equation*}
Comme $(1-p)\sum_{l=1}^{s-1} p^{s-1-l}=1-p^{s-1}$, on obtient alors que
\begin{align*}
\tau_n(s) & = p^{s-1} \tau_{n-1}(s)+1-p^{s-1}+(1-p)\sum_{l=1}^{s-1} \frac{2(p-1)p^{n+s-1-l}}{p^{n+1}-1}(p^{(n+1)(l-1)}-1) \\
 & = p^{s-1} \tau_{n-1}(s)+1-p^{s-1}+(p^{s-1}-1)\frac{2(p-1)p^n}{p^{n+1}-1}-\frac{2(p-1)^2p^{s-2}}{p^{n+1}-1}\sum_{l=1}^{s-1} p^{nl}.
\end{align*}
Or, $\sum_{l=1}^{s-1} p^{nl}=p^n \frac{p^{n(s-1)}-1}{p^n-1}$ et par hypothèse de récurrence sur $n$, 
\begin{equation*}
p^{s-1} \tau_{n-1}(s)-p^{s-1}=\frac{2(p-1)p^{n+s-2}}{p^n-1}(p^{n(s-1)}-1).
\end{equation*}
On obtient ainsi que
\begin{align*}
\tau_n(s) & = 1+ 2p^n(p-1)\left(\frac{p^{s-2}(p^{n(s-1)}-1)}{p^n-1}+\frac{p^{s-1}-1}{p^{n+1}-1}+\frac{(1-p)p^{s-2}(p^{n(s-1)}-1)}{(p^{n+1}-1)(p^n-1)}\right) \\
& = 1+2p^n(p-1)\left(\frac{p^{s-2}(p^{n(s-1)}-1)}{p^n-1}\left(1+\frac{1-p}{p^{n+1}-1}\right)+\frac{p^{s-1}-1}{p^{n+1}-1}\right) \\
& = 1+\frac{2p^n(p-1)}{p^{n+1}-1}\left(p^{s-1}(p^{n(s-1)}-1)+p^{s-1}-1\right)
\end{align*}
et $iii)$ s'en déduit. \\

$iv)$ Notons $u_n(s)=t_{n,n+1}(r,s,\dots,s)$ avec la convention que $u_n(0)=1$ (ce qui est en accord avec la formule souhaitée). 

 Procédons par récurrence sur $n$. 
 Pour $n=1$, cela correspond au théorème \ref{Cas n=1 et v p a vaut 1}. 
 Supposons que la formule soit vraie jusqu'au rang $n-1$ et montrons la au rang $n$. 
 Pour cela, procédons par récurrence sur $s$. 
 
Le $ii)$ du Lemme \ref{lemme intermédiaire} appliqué à $k=n+1$ et à $s_i=s$ pour tout $i$ donne
\begin{equation*}
u_n(s)=p^s u_{n-1}(s)+(1-p)\sum_{l=1}^s p^{s-l}t_{n,2}(r,l,\dots,l).
\end{equation*}
Par le $ii)$, $t_{n,2}(r,l,\dots,l)=u_n(l-1)$ pour tout $l\geq 2$.
De plus, le $ia)$ de la proposition \ref{Théorème fondamental 2.} appliqué à $k=1$ montre que $\tau_n(1)=t_{n,2}(r,1,\dots,1,0)=1$ d'après \eqref{unification de la rq et du cor}.
Ainsi, l'égalité $t_{n,2}(r,l,\dots,l)=u_n(l-1)$ reste vraie si $l=1$ d'après notre convention.
Par conséquent, 
\begin{equation*}
u_n(s)=p^s u_{n-1}(s)+(1-p)\sum_{l=1}^s p^{s-l}u_n(l-1).
\end{equation*}
 Le cas $s=1$ donne 
\begin{equation} \label{relation de récurrence de u n 1} 
 u_n(1)=p u_{n-1}(1)+1-p.
 \end{equation} 
 Comme $r\geq 2$, on a $u_1(1)=t_{1,2}(r,1)=2p-1$ d'après le $iii)$ du théorème \ref{Cas n=1 et v p a vaut 1}.
 Une rapide récurrence prouve que $u_n(1)=2p^n-2p^{n-1}+1$, ce qui montre le cas $s=1$.
 
 On suppose désormais que $iv)$ soit vrai jusqu'au rang $s-1$ et montrons qu'il l'est encore au rang $s$.
 En utilisant l'hypothèse de récurrence sur $s$, 
\begin{equation*}
u_n(s) =p^s u_{n-1}(s)+(1-p)\sum_{l=1}^s p^{s-l}\left(1+\frac{2(p-1)p^{n-1}}{p^{n+1}-1}(p^{(n+1)(l-1)}-1)\right) 
\end{equation*}

Comme $(1-p)\sum_{l=1}^s p^{s-l}=1-p^s$, on en déduit que : 
\begin{align*}
u_n(s) & = p^s u_{n-1}(s)+1-p^s+(1-p)\sum_{l=1}^s \frac{2(p-1)p^{n+s-1-l}}{p^{n+1}-1}(p^{(n+1)(l-1)}-1) \\
& = p^s u_{n-1}(s)+1-p^s+(p^s-1)\frac{2(p-1)p^{n-1}}{p^{n+1}-1}-\frac{2(p-1)^2p^{s-2}}{p^{n+1}-1}\sum_{l=1}^s p^{nl}
\end{align*}

Or, $\sum_{l=1}^s p^{nl}=p^n \frac{p^{ns}-1}{p^n-1}$ et par hypothèse de récurrence sur $n$, on a que 
\begin{equation*}
p^s u_{n-1}(s)-p^s=\frac{2(p-1)p^{n+s-2}}{p^n-1}(p^{ns}-1).
\end{equation*}

On obtient ainsi que
\begin{align*}
u_n(s) & =1+2(p-1)p^{n-1}\left(\frac{p^{s-1}(p^{ns}-1)}{p^n-1}+\frac{p^s-1}{p^{n+1}-1}+\frac{p^{s-1}(1-p)(p^{ns}-1)}{(p^{n+1}-1)(p^n-1)}\right) \\
& = 1+2(p-1)p^{n-1}\left( \frac{p^{s-1}(p^{ns}-1)}{p^n-1}\left(1+\frac{1-p}{p^{n+1}-1}\right)+\frac{p^s-1}{p^{n+1}-1}\right) \\
& = 1+2(p-1)p^{n-1}\frac{p^s(p^{ns}-1)+p^s-1}{p^{n+1}-1} \\
& = 1+\frac{2(p-1)p^{n-1}}{p^{n+1}-1}(p^{(n+1)s}-1),
\end{align*}

ce qui conclut la récurrence et montre $iv)$. \\

   $v)$    
   Soit $l$ un entier.
   Commençons par le cas où $p\mid v_F(a_n)$. 
   Dans ce cas, $s_i(n)=s_i$ pour tout $i$ (on rappelle que par hypothèse, $s_1\leq \dots \leq s_n$).
   D'après le $iii)$, on a que $t_{n,2}(r,l,\dots,l)$ ne dépend pas de $r$.
    Si $r\geq s_{k-1}$, on déduit alors du $i)$ que $t_{n,k}(r,s_1,\dots,s_n)$ ne dépend pas de $r$.
   Comme $s_1\leq \dots \leq s_n$, alors $t_{n,k}(r,s_1,\dots,s_n)$ ne dépend pas de $r$ pour tout $k\in\{2,\dots,n+1\}$ si $r\geq s_n$.
Ainsi, pour tout $r\geq s_n+1$ et tout $k\in\{2,\dots,n+1\}$,
\begin{equation} \label{dsdfd}
t_{n,k}(r,s_1,\dots,s_n)=t_{n,k}(r-1,s_1,\dots,s_n).
\end{equation}
   
  Supposons maintenant $p\nmid v_F(a_n)$.
  Alors, $t_{n,n+1}(r,l-1,\dots,l-1)$ ne dépend pas de $r$ d'après $iv)$.
   Du $ii)$, on en déduit que $\tau_n(l)$ ne dépend pas de $r$.
Ainsi, si $r\geq \max\{s_{k-1}(k),s_{k-2}(k)\}+1$, il s'ensuit du $i)$ que $t_{n,k}(r,s_1,\dots,s_n)$ ne dépend pas de $r$. 
Comme $s_1\leq \dots \leq s_{n-1}$, on en conclut que $t_{n,k}(r,s_1,\dots,s_n)$ ne dépend pas de $r$ pour tout $k\geq 2$ si $r\geq \max\{s_{n-1},s_n\}+1$.
Donc, pour tout $r\geq \max\{s_{n-1},s_n\}+2$, 
\begin{equation} \label{qdggjhng}
t_{n,k}(r,s_1,\dots,s_n)=t_{n,k}(r-1,s_1,\dots,s_n).
\end{equation}
 
Supposons maintenant $r$ comme dans l'énoncé du $v)$ afin que \eqref{dsdfd} ou \eqref{qdggjhng} soit vérifié.
 D'après le $i)$ de la proposition \ref{calcul de la différente version appliquée.}, 
  \begin{equation*} 
\begin{aligned}
t_{n,1}(r,s_1,\dots,s_n) & = p (t_{n,1}(r,s_1-1,s_2,\dots,s_n)-t_{n,2}(r-1,s_1,\dots,s_n)) \\ 
& \qquad + t_{n,2}(r,s_1,\dots,s_n)\\
&= p t_{n,1}(r,s_1-1,s_2,\dots,s_n)+(1-p)\tau_n(s_1)
\end{aligned}
\end{equation*}
(on rappelle que $\tau_n(s_1)=t_{n,2}(r,s_1,\dots,s_n)$ d'après le $i)$ du Lemme \ref{lemme intermédiaire}).
En répétant le même argument en remplaçant $s_1$ par $s_1-1$ (et ainsi de suite), on déduit par récurrence que 
\begin{equation} \label{Z}
t_{n,1}(r,s_1,\dots,s_n)=p^{s_1}t_{n-1,1}(r,s_2,\dots,s_n)+(1-p)\sum_{l=1}^{s_1} p^{s_1-l} \tau_n(l).
\end{equation}
En répétant le même argument où l'on remplace $s_1$ par $s_2$ (et ainsi de suite), on déduit par récurrence que 

\begin{multline*}
t_{n,1}(r,s_1,\dots,s_n) =p^{(\sum_{i=1}^{j-1} s_i)} t_{1,1}(r,s_j,\dots,s_n) \\ 
  +(1-p)\sum_{m=0}^{j-2} p^{\sum_{k=0}^m s_k} \sum_{l=1}^{s_{m+1}}p^{s_{m+1}-l}\tau_{n-m}(l).
 \end{multline*}
 pour tout $j\geq 1$. 
 Ceci prouve le $v)$ et donc le théorème.
\end{proof}

\begin{rqu} \label{Remarque après le calcul des sauts dont j'ai besoin}
 Si $p\mid v_F(a_n)$, on sait calculer $t_{n,2}(r,s,\dots,s)$ pour $r\geq s$ d'après le théorème \ref{thm ou je calcule TOUT}. 
 En fait, il est possible de le calculer pour tout $r$. 
 En effet, le Lemme \ref{lemme intermédiaire} est valable pour tout $r$.
 Ainsi, il suffit de connaître $t_{1,2}(r,s)$ et $t_{n,2}(r,1,\dots,1)$ pour tout $r$ afin de pouvoir utiliser la double récurrence sur $n$ et $s$ présente dans la preuve du $iii)$ du théorème ci-dessus.
Le calcul de ces nombres se déduit du théorème \ref{cas où n=1 et v p a vaut 0} et de la Remarque \ref{Remarque sur le calcul de r,1,1}.
Il est donc possible d'obtenir une formule explicite de $t_{n,2}(r,s,\dots,s)$ pour $r<s$.

 Les mêmes arguments s'appliquent aussi pour le calcul de $t_{n,n+1}(r,s,\dots,s)$ dans le cas où $p\nmid v_F(a_n)$. 
 En effet, le Lemme \ref{lemme intermédiaire} est valable pour tout $r$, la valeur de $t_{1,2}(r,s)$ se déduit du théorème \ref{Cas n=1 et v p a vaut 1} et  celle de $t_{n,n+1}(r,1,\dots,1)$ (pour $r\geq 2$) se déduit du $iv)$ du théorème \ref{thm ou je calcule TOUT}.
 Pour $r=1$, on sait que $t_{1,2}(1,1)=p$ (cf le Lemme \ref{calcul de $t(1,1)$.}). 
 De \eqref{relation de récurrence de u n 1}, on en déduit $t_{n,n+1}(1,1,\dots,1)$.
  On peut ainsi appliquer la double récurrence sur $n$ et $s$ présente dans la preuve du $iv)$ afin d'en déduire $t_{n,n+1}(r,s,\dots,s)$ pour tout $r,s\geq 1$.
  
Le $ii)$ du théorème \ref{thm ou je calcule TOUT} montre alors que l'on peut calculer $\tau_n(s)$ pour tous $r,s\geq 1$. 
Enfin, le $i)$ du théorème \ref{thm ou je calcule TOUT} permet d'en déduire la valeur de $t_{n,k}(r,s_1,\dots,s_n)$ pour tous $r,s_1,\dots,s_n\geq 1$.
\end{rqu} 
 
 Pour pouvoir déterminer la suite des groupes de ramification, on aura également besoin de la formule \eqref{NONNO} ci-dessous.
 
\begin{rqu} \label{si je choisis de d'abord faire baisser le s n}
Supposons $p\nmid v_F(a_n)$. 
Plaçons nous dans les conditions du $v)$ du théorème \ref{thm ou je calcule TOUT} (on a ainsi \eqref{qdggjhng}).
Supposons en plus que $s_n\leq s_1$.
D'après le $i)$ de la proposition \ref{calcul de la différente version appliquée.}, 
  \begin{equation*} 
\begin{aligned}
t_{n,1}(r,s_1,\dots,s_n) & = p (t_{n,1}(r,s_1,\dots,s_n-1)-t_{n,n+1}(r-1,s_1,\dots,s_n)) \\ 
& \qquad + t_{n,n+1}(r,s_1,\dots,s_n)\\
&= p t_{n,1}(r,s_1,\dots,s_n-1)+(1-p)t_{n,n+1}(r,s_1,\dots,s_n)
\end{aligned}
\end{equation*}
d'après \eqref{qdggjhng}.
De plus, d'après le corollaire \ref{Corollaire où je transforme les nombres plus grand que s n en sn n} appliqué à $s_i'=s_n$ (par hypothèse, $s_n\leq s_1$ et  $s_1\leq s_i$ pour tout $i\leq n-1$), on en déduit que $t_{n,n+1}(r,s_1,\dots,s_n)=t_{n,n+1}(r,s_n,\dots,s_n)$. 
Ainsi, 
\begin{equation*}
t_{n,1}(r,s_1,\dots,s_n) = p t_{n,1}(r,s_1,\dots,s_n-1)+(1-p)t_{n,n+1}(r,s_n,\dots,s_n).
\end{equation*}
En répétant le même argument en remplaçant $s_n$ par $s_n-1$ (et ainsi de suite), on déduit par récurrence que 
\begin{equation} \label{NONNO}
t_{n,1}(r,s_1,\dots,s_n)=p^{s_n}t_{n,1}(r,s_1,\dots,s_{n-1},0)+(1-p)\sum_{l=1}^{s_n} p^{s_n-l} t_{n,n+1}(r,l,\dots,l).
\end{equation}
\end{rqu}

\section{Calcul des groupes de ramification de $\Gal(L_{r,s_1,\dots,s_n}/F)$.} \label{calcul des sauts}

On se propose de calculer explicitement la suite des groupes de ramification de $G:=\Gal(L_{r,s_1,\dots,s_n}/F)$ avec $r\geq \max\{s_1,\dots,s_n\}$ et $F,a_1,\dots,a_n$ vérifiant les conditions de l'hypothèse \ref{hypothèse 3}.
On rappelle que $G_0=G$ et $G_1=\Gal(L_{r,s_1,\dots,s_n}/F(\zeta_p))$ d'après  la proposition \ref{calculs de 0 et 1}.

Commençons par le cas où $p\mid v_F(a_n)$.
On rappelle que dans ce cas, $s_1\leq \dots \leq s_n$.
Ainsi, pour tous entiers $l$ et $k$ tels que $l\leq k$, on a $s_l(k)=s_l$ (pour la définition de $s_l(k)$, voir \eqref{definition de l'}).

\begin{lmm} \label{dernière minoration qu'il me manquait}
Pour tous entiers $s_1,\dots,s_n\geq 1$, on a  

\begin{enumerate} [i)]
\item $t_{n,1}(s_n+1,s_1,\dots,s_n) < t_{n,n+1}(s_n+1,s_1,\dots,s_{n-1},s_n+1)$;
\item $t_{n,n+1}(s_n,s_1,\dots,s_n) < t_{n,1}(s_n+1,s_1,\dots,s_n)$.
\end{enumerate}

\end{lmm}

\begin{proof}

D'après le $i)$ et le $iv)$ du théorème \ref{cas où n=1 et v p a vaut 0}, on obtient
\begin{align*}
t_{1,1}(s_n+1,s_n)- t_{1,2}(s_n+1,s_n+1) & = \frac{p-1}{p+1}(p^{2s_n}+p^{2s_n+1})-p^{2s_n+1} \\
& = -p^{2s_n}<0
 \end{align*}
et 
 \begin{align*}
t_{1,2}(s_n,s_n)-t_{1,1}(s_n+1,s_n) & = p^{2s_n-1}-\frac{p-1}{p+1}(p^{2s_n-1}+p^{2s_n}) \\
& = p^{2s_n-1}(2-p)<0.
\end{align*}

 En utilisant le $i)$ avec $k=n+1$ et le $v)$ avec $j=n$ du théorème \ref{thm ou je calcule TOUT}, on a
\begin{multline*}
t_{n,1}(s_n+1,s_1,\dots,s_n)-t_{n,n+1}(s_n+1,s_1,\dots,s_{n-1},s_n+1)  \\
= p^{\sum_{i=1}^{n-1} s_i} \big (t_{1,1}(s_n+1,s_n) - t_{1,2}(s_n+1,s_n+1)\big)<0, 
\end{multline*}
ce qui montre $i)$, et
\begin{multline*}
t_{n,n+1}(s_n,s_1,\dots,s_n)-t_{n,1}(s_n+1,s_1,\dots,s_n) \\
 = p^{\sum_{i=1}^{n-1} s_i} \big(t_{1,2}(s_n,s_n)-t_{1,1}(s_n+1,s_n)\big)<0,
\end{multline*} 
 ce qui montre $ii)$. 
 Le lemme s'ensuit.
\end{proof}

On peut maintenant prouver notre premier théorème, que l'on rappelle pour la commodité du lecteur. 
\begin{thm*} 
Soient $F$ et $a_1,\dots,a_n$ vérifiant l'hypothèse \ref{hypothèse 3}. 
 Supposons de plus que $p\mid v_F(a_n)$ et que $s_1\leq \dots \leq s_n$.
Alors, les sauts de $G$, différents de $0$, sont les 

\begin{enumerate} [i)]
\item $t_{n,1}(s,s_1,\dots,s_n)$ avec $s\in \{s_n+1,\dots,r\}$;
\item $t_{n,1}(s_n'+1,s_1',\dots,s_n')$ où $s_n'\in\{1,\dots,s_n-1\}$ et où $s_i'=\min\{s_i,s_n'\}$;
\item $t_{n,n+1}(s_n',s_1',\dots,s_n')$ où $s_n'\in\{1,\dots,s_n\}$ et où $s_i'=\min\{s_i,s_n'\}$.
\end{enumerate}

De plus, si on note $u_j$ le $(n+1)$-uplet tel que $t_j=t_{n,k}(u_j)$ désigne le $(j+1)$-ième plus petit saut de $G$ alors, 

\begin{enumerate} [i)]
\item $G_0=G$;
\item $G_{t_j}=\Gal(L_{r,s_1,\dots,s_n}/L_{u_{j-1}})$ pour $j\geq 1$.
\end{enumerate}
\end{thm*}

\begin{proof} 

Notons $K_i=L_{r^{(i)},s_1^{(i)},\dots,s_n^{(i)}}$ la suite définie par 

\begin{enumerate} [i)]
\item $K_1=L_{1,0,\dots,0}$;
\item si $s_n^{(i)}= s_n$, alors $K_{i+1}=L_{\min\{r,r^{(i)}+1\},s_1,\dots,s_n}$;
\item si $r^{(i)}=s_n^{(i)}\neq s_n$ alors $ K_{i+1}=L_{r^{(i)}+1,s_1^{(i)},\dots,s_n^{(i)}}$;
\item si $r^{(i)}>s_n^{(i)}\neq s_n$, alors $K_{i+1}=L_{r^{(i)},\min\{s_1,s_1^{(i)}+1\},\dots,\min\{s_n,s_n^{(i)}+1\}}$.
\end{enumerate}

Il est clair qu'il existe $m$ tel que $K_m=L_{r,s_1,\dots,s_n}$ et que $K_i=K_m$ pour tout $i\geq m$.
De plus, pour tout $i$, $K_i/K_1$ est galoisienne et $K_{i+1}/K_i$ admet un unique saut d'après le corollaire \ref{unicité du saut}. 
On le notera $t_i$. 
On souhaite montrer que $t_1< \dots < t_{m-1}$.

Supposons $s_n^{(i)}=s_n$ (et donc $s_k^{(i)}=s_k$ pour tout $k$ car $s_1\leq \dots \leq s_n$) avec $r^{(i)}<r-1$.
Alors,
\begin{equation*}
K_{i+1}=L_{r^{(i)}+1,s_1,\dots,s_n} \; \text{et} \; K_{i+2}=L_{r^{(i)}+2,s_1,\dots,s_n}.
 \end{equation*}
Il en résulte donc que 
\begin{equation*}
t_i=t_{n,1}(r^{(i)}+1,s_1,\dots,s_n) \; \text{et} \; t_{i+1}=t_{n,1}(r^{(i)}+2,s_1,\dots,s_n).
\end{equation*}
En appliquant la proposition  \ref{croissance des suites} à $K=K_i$ et à $\alpha=\zeta_{p^{r^{(i)}}}$, on obtient que $t_i< t_{i+1}$.

Notons $j$ le plus petit entier tel que $s_n^{(j)}=s_n$. 
Le lecteur pourra remarquer que si $i\leq \lceil j/2\rceil$ (la partie entière supérieure de $j/2$) alors, 
\begin{equation*}
r^{(2i)}=r^{(2i-1)}=r^{(2i-2)}+1=s^{(2i)}=s^{(2i-1)}+1.
\end{equation*}
Donc, $r^{(2i)}=s^{(2i)}$ et $r^{(2i-1)}=s^{(2i-1)}+1$ si $s_n^{(i)}\neq s_n$. 
D'où $r^{(i)}\in\{s_n^{(i)}, s_n^{(i)}+1\}$ pour tout $i$ tel que $s_n^{(i)}\neq s_n$.

Supposons $r^{(i)}=s_n^{(i)}\neq s_n$.
Dans ce cas, 
\begin{equation*}
K_{i+1}=L_{s_n^{(i)}+1,s_1^{(i)},\dots,s_n^{(i)}} \; \text{et} \; K_{i+2}=L_{s_n^{(i)}+1,s_1^{(i+1)},\dots,s_{n-1}^{(i+1)},s_n^{(i)}+1}
 \end{equation*}
avec $s_j^{(i+1)}=\min\{ s_j, s_j^{(i)}+1\}$.
On obtient alors 
\begin{equation*}
t_i=t_{n,1}(s_n^{(i)}+1,s_1^{(i)},\dots,s_n^{(i)}) \; \text{et} \; t_{i+1}= t_{n,n+1}(s_n^{(i)}+1,s_1^{(i+1)},\dots,s_{n-1}^{(i+1)}, s_n^{(i)}+1).
\end{equation*}
Notons $E=\{ j \; \vert \; s_j\neq s_j^{(i)}\}$. 
Par construction de $E$ et des $K_i$, on en déduit que $s_j=s_j^{(i)}=s_j^{(i+1)}$ si $j\notin E$ et que $s_j^{(i)}=s_n^{(i)}<s_j$ pour tout $j\in E$. 
Ainsi, $s_j^{(i+1)}=s_j^{(i)}+1=s_n^{(i)}+1$ pour tout $j\in E$. 
D'après le corollaire \ref{unicité du saut 3} appliqué à $s=s_n^{(i)}$ et à $i=k=n$, on en déduit que $t_{i+1}=t_{n,n+1}(s_n^{(i)}+1,s_1^{(i)},\dots,s_{n-1}^{(i)}, s_n^{(i)}+1)$.
D'après le $i)$ du Lemme \ref{dernière minoration qu'il me manquait}, on en déduit que $t_i<t_{i+1}$.

Supposons $r^{(i)}>s_n^{(i)}\neq s_n$.
Alors, $r^{(i)}=s_n^{(i)}+1$.
D'où
\begin{equation*}
K_{i+1}=L_{s_n^{(i)}+1,s_1^{(i+1)},\dots,s_{n-1}^{(i+1)},s_n^{(i)}+1}
\end{equation*}
et
\begin{equation*}
K_{i+2}=L_{s_n^{(i)}+2,s_1^{(i+1)},\dots,s_{n-1}^{(i+1)},s_n^{(i)}+1}
 \end{equation*}
avec $s_j^{(i+1)}=\min\{s_j,s_j^{(i)}+1\}$. 
On a alors 
\begin{equation*}
t_i=t_{n,n+1}(s_n^{(i)}+1,s_1^{(i+1)},\dots,s_{n-1}^{(i+1)},s_n^{(i)}+1)
\end{equation*}
et 
\begin{equation*}
t_{i+1}= t_{n,1}(s_n^{(i)}+2,s_1^{(i+1)},\dots,s_{n-1}^{(i+1)}, s_n^{(i)}+1).
\end{equation*}
D'après le $ii)$ du Lemme \ref{dernière minoration qu'il me manquait}, on en déduit que $t_i<t_{i+1}$. 

On a donc montré que $t_1<\dots <t_{m-1}$.
Le corollaire \ref{utile pour déterminer la suite des groupes de rami} montre que les sauts de $G$ qui sont non nuls sont les $t_1,\dots,t_{m-1}$ et que $G_{t_i}=\Gal(K_m/K_i)$.
Enfin, on remarque que les $t_i$ correspondent bien aux sauts énoncés dans le théorème.
  \end{proof}
    
  \begin{ex}
  \rm{On souhaite déterminer la suite des groupes de ramification de $G=\Gal(L_{4,1,2,3}/F)$ avec $p\mid v_F(a_4)$ (avec $F$ et $a_1,a_2,a_3,a_4$ vérifiant les conditions de l'hypothèse \ref{hypothèse 3}). 
  D'après le théorème ci-dessus, les sauts sont $t_0=0$ si $p\neq 2$ et    
\begin{align*}
 & t_1=t_{4,4}(1,1,1,1), \; t_2=t_{4,1}(2,1,1,1), \; t_3=t_{4,4}(2,1,2,2) \\
 & t_4=t_{4,1}(3,1,2,2),\; t_5=t_{4,4}(3,1,2,3), \; t_6=t_{4,1}(4,1,2,3).
 \end{align*}
 
  De plus, la suite des groupes de ramification de $G$ est 
\begin{align*}  
  & G_0=G, G_{t_1}=\Gal(L_{4,1,2,3}/L_{1,0,0,0}), G_{t_2}=\Gal(L_{4,1,2,3}/L_{1,1,1,1}) \\ 
  & G_{t_3}=\Gal(L_{4,1,2,3}/L_{2,1,1,1}), \; G_{t_4}=\Gal(L_{4,1,2,3}/L_{2,1,2,2}) \\
  & G_{t_5}=\Gal(L_{4,1,2,3}/L_{3,1,2,2})\; \text{et} \; G_{t_6}=\Gal(L_{4,1,2,3}/L_{3,1,2,3}).
  \end{align*}}
  
  \end{ex}
Traitons à présent le cas $p\nmid v_F(a_n)$.
On rappelle que dans ce cas, $s_1\leq \dots \leq s_{n-1}$.

\begin{lmm} \label{majorations utiles dans le cas où la valuation n'est pas divisaible pas p}
Soient $s\geq 1$ et $s_1',\dots,s_{n-1}'$ des entiers tels que $ s_i'\leq s$ pour tout $i$.
Alors, 

\begin{enumerate} [i)]
\item $t_{n,1}(s+2,s_1',\dots,s_{n-1}',s)>t_{n,n+1}(s+1,s_1',\dots,s_{n-1}',s)$ ;
\item $t_{n,n+1}(s+1,s_1',\dots,s_{n-1}',s)>t_{n,1}(s+1,s_1',\dots,s_{n-1}',s-1)$.
\end{enumerate}
\end{lmm}

\begin{proof}
$i)$ En utilisant le $v)$, le $vi)$ et le $iii)$ du théorème \ref{Cas n=1 et v p a vaut 1}, 
\begin{align*}
t_{1,1}(s+2,s)-t_{1,2}(s+1,s-1) & = p^{2s+1}-\frac{2p^{2s+1}-p+1}{p+1}-\frac{2p^{2s}+p-1}{p+1} \\
& = p^{2s+1} -2p^{2s}>0. 
\end{align*}
En appliquant le $v)$ à $j=n$ et le $i)$ du théorème \ref{thm ou je calcule TOUT} à $k=n+1$, on obtient
\begin{multline*}
t_{n,1}(s+2,s_1',\dots,s_{n-1}',s)-t_{n,n+1}(s+1,s_1',\dots,s_{n-1}',s-1) \\
= t_{1,1}(s+2,s)-t_{1,2}(s+1,s-1)>0,
\end{multline*}
car $\min\{s'_l, s\}=s'_l$ pour tout $l\leq n-1$ par hypothèse.
Ceci prouve $i)$. \\

$ii)$  En utilisant respectivement le $iii)$ et le $v)$ du théorème \ref{Cas n=1 et v p a vaut 1}, 
\begin{align*}
t_{1,2}(s+1,s)-t_{1,1}(s+1,s-1) & = \frac{2p^{2s}+p-1}{p+1}-p^{2s-1}+\frac{2p^{2s+1}-p+1}{p+1} \\
& = 2p^{2s}-p^{2s-1}>0.
\end{align*}
En appliquant le $v)$ à $j=n$ et le $i)$ du théorème \ref{thm ou je calcule TOUT} à $k=n+1$, on obtient
\begin{multline*}
t_{n,n+1}(s+1,s_1',\dots,s_{n-1}',s)-t_{n,1}(s+1,s_1',\dots,s_{n-1}',s-1) \\
= t_{1,2}(s+1,s)-t_{1,1}(s+1,s-1)>0,
\end{multline*}
car $\min\{s'_l,s\}=s'_l$ pour tout $l\leq n-1$ par hypothèse.
Ceci prouve $ii)$. 
\end{proof}

Nous sommes maintenant en mesure de prouver notre second théorème, que l'on rappelle pour la commodité du lecteur. 
On rappelle également que $s_1\leq \dots \leq s_{n-1}$.

\begin{thm*} 
Soient $F$ et $a_1,\dots,a_n$ vérifiant l'hypothèse \ref{hypothèse 3}.
Supposons de plus que $p\nmid v_F(a_n)$ et que $s_1\leq \dots\leq s_{n-1}$.
Alors, les sauts de $G$, différents de $0$, sont les 

\begin{enumerate} [i)]
\item $t_{n,n+1}(r,s_1,\dots,s_n) $ si $r=s_n$;
\item $t_{n,1}(s_n'+1,s_1',\dots,s_{n-1}', \min\{s_n'-1,s_n\})$ où $s_n'\in\{1,\dots,r\}$ et $s_i'=\min\{s_i,s_n'\}$;
\item $t_{n,n}(s_{n-1}',s_1',\dots,s_{n-1}',s_n)$ où $s_{n-1}'\in\{s_n+2,\dots,s_{n-1}\}$ et $s_i'=\min\{s_i,s_{n-1}'\}$;
\item $t_{n,n+1}(s_n'+1,s_1',\dots,s_n')$ où $s_n'\in\{1,\dots,s_n\}$ et $s_i'=\min\{s_i,s_n'+1\}$.
\end{enumerate}

De plus, si on note $u_j$ le $(n+1)$-uplet tel que $t_j=t_{n,k}(u_j)$ désigne le $(j+1)$-ième plus petit saut de $G$ alors,

\begin{enumerate} [i)]
\item $G_0=G$;
\item $G_{t_j}=\Gal(L_{r,s_1,\dots,s_n}/L_{u_{j-1}})$ pour $j\geq 1$.
\end{enumerate}
\end{thm*}

\begin{proof}
 Pour tout $i$, notons $K_i=L_{r^{(i)},s_1^{(i)},\dots,s_n^{(i)}}$ la suite définit par 

\begin{enumerate} [i)]
\item $K_1=L_{1,0,\dots,0}$ et $K_2=L_{1,1,\dots,1,0}$;
\item Si $s_l^{(i)}=s_l$ pour tout $l$ alors 
\begin{equation*}
K_{i+1}=L_{\min\{r,r^{(i)}+1\},s_1,\dots,s_n};
\end{equation*}
\item Si $s_l^{(i)}\neq s_l$ pour un certain $l$ et si $r^{(i)}=r$, alors 
\begin{equation*}
K_{i+1}=L_{r,\min\{s_1,s_1^{(i)}+1\},\dots,\min\{s_n,s_n^{(i)}+1\}}.
\end{equation*}
Notons
\begin{equation*}
S^{(i)}=\begin{cases} 
s^{(i)}_{n-1} \; \text{si} \; s^{(i)}_n<s^{(i)}_{n-1} \\
s^{(i)}_n+1 \; \text{sinon} 
\end{cases}.
\end{equation*}
\item Si $s_l^{(i)}\neq s_l$ pour un certain $l$, si $r^{(i)}>S^{(i)}$ et si $r^{(i)}\neq r$ alors 
\begin{equation*}
K_{i+1}=L_{r^{(i)}, \min\{s_1,s_1^{(i)}+1\},\dots,\min\{s_n,s_n^{(i)}+1\}};
\end{equation*}
\item Si $s_l^{(i)}\neq s_l$ pour un certain $l$ et que $r^{(i)}=S^{(i)}\neq r$, alors 
\begin{equation*}
K_{i+1}=L_{\min\{r,r^{(i)}+1\},s_1^{(i)},\dots,s_n^{(i)}}.
\end{equation*}
\end{enumerate}
Par construction, $K_i/K_1$ est galoisienne et $K_{i+1}/K_i$ admet un unique saut (cf corollaire \ref{unicité du saut ?}) pour tout $i$.
On le nommera $t_i$. 
Remarquons qu'il existe un entier $m$ tel que $K_m=L_{r,s_1,\dots,s_n}$ et que $K_i=K_m$ pour tout $i\geq m$.
Soient $i\in\{2,\dots, m-2\}$ et $l$ tels que $s_l^{(i)}\neq s_l$.
Si $s_k^{(i)}\neq s_k$ pour un entier $k\neq l$, on pourra remarquer que 
\begin{equation} \label{pour montrer je sais pas quoi}
\begin{cases}
s_k^{(i)}=s_l^{(i)} \; \text{si}\; k,l\neq n \\
s_k^{(i)}=s_n^{(i)}+1 \; \text{si}\; l= n \; \text{et} \; k\neq n
\end{cases}.
\end{equation}
Supposons que $l\leq n-1$.
 Par construction, on a alors $s_l^{(i)}\geq s_{n-1}^{(i)}$.
Or, $s_l^{(i)}<s_l$ et $s_l\leq s_{n-1}$. 
Par conséquent, $s_{n-1}>s_{n-1}^{(i)}$ et donc, $s_{n-1}^{(i)}\neq s_{n-1}$. 
On a ainsi montré que si $s_l^{(i)}\neq s_l$ pour un certain $l\leq n-1$, alors $s_{n-1}^{(i)}\neq s_{n-1}$.

Supposons que $s_n^{(i)}\neq s_n$.
Alors,
\begin{equation} \label{Y}
S^{(i)}=s_n^{(i)}+1.
\end{equation}
En effet, soit $s_{n-1}^{(i)}\neq s_{n-1}$ et dans ce cas, $s_{n-1}^{(i)}=s_n^{(i)}+1$ d'après \eqref{pour montrer je sais pas quoi} et donc $S^{(i)}=s_n^{(i)}+1$, soit $s_{n-1}^{(i)}=s_{n-1}$ et dans ce cas, $s_n\geq s_{n-1}$ et donc $S^{(i)}=s_n^{(i)}+1$ par définition de $S^{(i)}$.

Supposons maintenant que $s_{n-1}^{(i)}\neq s_{n-1}$.
 Alors, 
\begin{equation}\label{X}
S^{(i)}=s_{n-1}^{(i)}.
\end{equation}
En effet, soit $s_n^{(i)}\neq s_n$ et dans ce cas, $s_{n-1}^{(i)}=s_n^{(i)}+1$ d'après \eqref{pour montrer je sais pas quoi} et donc $S^{(i)}=s_{n-1}$, soit $s_n^{(i)}=s_n$ et dans ce cas, $s_{n-1}> s_n$ et donc $S^{(i)}=s_{n-1}^{(i)}$ par définition de $S^{(i)}$. 

Par définition, on a $\max\{s_{n-1}^{(i)}, s_n^{(i)}+1\}\geq S^{(i)}\geq \min\{s_{n-1}^{(i)}, s_n^{(i)}+1\}$. 
De plus,s'il existe $i$ et $l$ tels que $s_l^{(i)}\neq s_l$ alors, $1+\min\{s_{n-1}^{(i)}, s_n^{(i)}+1\}\geq r^{(i)} \geq \max\{s_{n-1}^{(i)}, s_n^{(i)}+1\}$ par construction. 
On en déduit alors que $r^{(i)}\in\{S^{(i)}, S^{(i)}+1\}$ pour tout entier $i$ tel que $s_{n-1}^{(i)}\neq s_{n-1}$ ou $s_n^{(i)}\neq s_n$.

On souhaite montrer que $t_1<\dots <t_{m-1}$.
Pour cela, on procédera en quatre étapes indépendantes résumées ci-dessous (avec $i\leq m-2$ afin que $K_{i+1}\neq K_m$) :

\textit{Première étape :} $t_i<t_{i+1}$ si $s_l^{(i)}=s_l$ pour tout $l$; 

\textit{Deuxième étape :} $t_i<t_{i+1}$ si $s_l^{(i)}\neq s_l$ pour un certain $l$ et si $r^{(i)}=r$; 

\textit{Troisième étape :} $t_i<t_{i+1}$ si $r^{(i)}=S^{(i)}+1\neq r$; 

\textit{Quatrième étape :} $t_i<t_{i+1}$ si $r^{(i)}=S^{(i)}\neq r$. \\

\textit{Première étape :}

Supposons que $s^{(i)}_l=s_l$ pour tout $l$. 
Alors,
\begin{equation*}
K_{i+1}=L_{r^{(i)}+1,s_1^{(i)},\dots,s_n^{(i)}}\neq K_m \; \text{et} \; K_{i+2}=L_{r^{(i)}+2,s_1^{(i)},\dots,s_n^{(i)}}.
\end{equation*}
Ainsi, 
\begin{equation*}
t_i=t_{n,1}(r^{(i)}+1,s_1,\dots,s_n) \; \text{et} \; t_{i+1}=t_{n,1}(r^{(i)}+2,s_1,\dots,s_n).
\end{equation*}
Le Lemme \ref{croissance des suites} appliqué à $K=L_{1,s_1,\dots,s_n}$ et à $\alpha=\zeta_p$ montre que $t_i<t_{i+1}$. \\

\textit{Deuxième étape :}

Commençons par le cas où $r^{(i)}=S^{(i)}=r$.
 Par hypothèse, il existe $l$ tel que $s_l^{(i)}\neq s_l$.
Supposons par l'absurde que $l\leq n-1$.
Alors, $s_{n-1}^{(i)}\neq s_{n-1}$ et donc, $S^{(i)}=s_{n-1}^{(i)}$ d'après \eqref{X}.
Ceci est absurde car 
\begin{equation*}
r=r^{(i)}=S^{(i)}<s_{n-1}\leq r.
\end{equation*}  
Ainsi, $s_{n-1}^{(i)}=s_{n-1}$ (et donc $s_l^{(i)}=s_l$ pour tout $l\leq n-1$). 
Il s'ensuit donc que $s_n^{(i)}\neq s_n$.
Dans ce cas, $S^{(i)}=s_n^{(i)}+1$ d'après \eqref{Y}.
Par conséquent, $r=s_n^{(i)}+1$.
Comme $r\geq s_n$ et que $s_n^{(i)}<s_n$, il s'ensuit que  $s_n\leq r <s_n+1$.
Ainsi, $r=s_n$.
Comme $s_n^{(i)}=r-1=s_n-1$, on en déduit que 
\begin{equation*}
K_i= L_{r,s_1,\dots,s_{n-1},s_n-1} \; \text{et} \; K_{i+1}=L_{r,s_1,\dots,s_n}=K_m,
 \end{equation*}
 ce qui est absurde.

Ceci montre donc que $r^{(i)}=S^{(i)}+1$.
Supposons que $s_{n-1}^{(i)}\neq s_{n-1}$. 
Dans ce cas, $S^{(i)}=s_{n-1}^{(i)}$ d'après \eqref{X} et donc $r^{(i)}=r=s_{n-1}^{(i)}+1$.
Comme $r\geq s_{n-1}$ et que $r=s_{n-1}^{(i)}+1<s_{n-1}+1$, il s'ensuit que $r=s_{n-1}=s_{n-1}^{(i)}+1$. 
Par conséquent, \[K_{i+1}= L_{r, s_1^{(i+1)},\dots,s_{n-2}^{(i+1)},s_{n-1}, s_n}.\]
Comme $s_{n-1}^{(i+1)}=s_{n-1}$, il en résulte que $s_l^{(i+1)}=s_l$ pour tout $l\leq n-1$. 
Par conséquent, $K_{i+1}=K_m$, ce qui est absurde. 

Ainsi, $s_{n-1}^{(i)}=s_{n-1}$ (et donc $s_l^{(i)}= s_l$ pour tout $l\leq n-1$) et $s_n^{(i)}\neq s_n$.
D'après \eqref{Y}, on obtient que $r=r^{(i)}=S^{(i)}+1=s_n^{(i)}+2$.
On en déduit que $s_n^{(i)}=r-2$. 
Comme $r\geq s_n > s_n^{(i)}$, on en conclut que $s_n\in\{r-1,r\}$. 
Si $s_n=r-1$ alors, comme $K_i=L_{r,s_1,\dots,s_{n-1},r-2}$, on en déduirait que $K_{i+1}=K_m$, ce qui est absurde. 
Donc, $s_n=r$ et on obtient que \[K_{i+1}=L_{r,s_1,\dots,s_{n-1}, s_n-1} \; \text{et} \; K_{i+2}=K_m.\]
On en déduit donc que \[t_{m-1}=t_{n,n+1}(r,s_1,\dots,s_n) \; \text{et} \; t_{m-2}=t_{n,n+1}(r,s_1,\dots,s_{n-1},s_n-1).\]
Le fait que $t_{m-2}< t_{m-1}$ découle de la proposition \ref{croissance des suites} appliquée à $K=L_{r,s_1,\dots,s_{n-1},0}$ et à $\alpha=a_n$. \\

\textit{Troisième étape :} 
Supposons que $s_n^{(i)}\neq s_n$.
Ainsi, $r^{(i)}=S^{(i)}+1=s_n^{(i)}+2$ d'après \eqref{Y}. 
On en déduit donc que
\begin{equation*}
K_{i+1}=L_{r^{(i)}, \min\{s_1,s_1^{(i)}+1\},\dots,\min\{s_{n-1},s_{n-1}^{(i)}+1\}, s_n^{(i)}+1}.
\end{equation*}
Comme, $s_n^{(i+1)}=s_n^{(i)}+1$, on a $r^{(i+1)}=r^{(i)}=S^{(i)}+1=S^{(i+1)}$. 
Ainsi,
\begin{equation*}
K_{i+2}=L_{r^{(i)}+1, \min\{s_1,s_1^{(i)}+1\},\dots,\min\{s_{n-1},s_{n-1}^{(i)}+1\}, s_n^{(i)}+1}.
\end{equation*}
On obtient alors 
\begin{equation*}
t_i=t_{n,n+1}(r^{(i)},s_1^{(i+1)},\dots,s_{n-1}^{(i+1)},s_n^{(i)}+1)
\end{equation*}
et
\begin{equation*}
t_{i+1}=t_{n,1}(r^{(i)}+1,s_1^{(i+1)},\dots,s_{n-1}^{(i+1)},s_n^{(i)}+1).
\end{equation*}

Notons $E=\{j \; \vert s_j^{(i)}\neq s_j\}$. 
Alors, $s_j^{(i+1)}=s_j^{(i)}$ si $j\notin E$ et $s_j^{(i+1)}=s_j^{(i)}+1$ si $j\in E$. 
De plus, $s_j^{(i)}+1=s_n^{(i)}+2$ si $j\in E\backslash\{n\}$ d'après \eqref{pour montrer je sais pas quoi}.
D'après le corollaire \ref{unicité du saut} appliqué à $i=n$ et où on remplace $s_n+1$ par $s_n^{(i)}+2$, on en déduit que
\[t_{n,n+1}(r^{(i)}+1,s_1^{(i+1)},\dots,s_{n-1}^{(i+1)},s_n^{(i)}+1)=t_{n,n+1}(r^{(i)}+1,s_1^{(i)},\dots,s_{n-1}^{(i)},s_n^{(i)}+1).\]
Par construction, on a que $s_k^{(i)}\leq s_n^{(i)}+1$ (en fait, c'est même égal si $s_k^{(i)}\neq s_k$ d'après \eqref{pour montrer je sais pas quoi}).
Ainsi, $t_i<t_{i+1}$ d'après le $i)$ du Lemme \ref{majorations utiles dans le cas où la valuation n'est pas divisaible pas p} appliqué à $s=s_n^{(i)}+1$ (on rappelle que $r^{(i)}=s_n^{(i)}+2$).

Supposons maintenant que $s_{n-1}^{(i)}\neq s_{n-1}$ et $s_n^{(i)}=s_n$.
Ainsi, $r^{(i)}=S^{(i)}+1=s_{n-1}^{(i)}+1$ d'après \eqref{X}.
De plus,
\begin{equation*}
K_{i+1}=L_{r^{(i)}, \min\{s_1,s_1^{(i)}+1\},\dots,\min\{s_{n-2},s_{n-2}^{(i)}+1\}, s_{n-1}^{(i)}+1,s_n}.
\end{equation*}
Comme $s_{n-1}^{(i+1)}= s_{n-1}^{(i)}+1$, on en conclut que $r^{(i+1)}=r^{(i)}=S^{(i)}+1=S^{(i+1)}$. 
Ainsi,
\begin{equation*}
K_{i+2}=L_{r^{(i)}+1, , \min\{s_1,s_1^{(i)}+1\},\dots,\min\{s_{n-2},s_{n-2}^{(i)}+1\}, s_{n-1}^{(i)}+1,s_n}.
\end{equation*}
On a alors 
\begin{equation*}
t_i=t_{n,n}(r^{(i)},s_1^{(i+2)},\dots,s_{n-2}^{(i+2)},s_{n-1}^{(i)}+1,s_n)
\end{equation*}
et  \[t_{i+1}=t_{n,1}(r^{(i)}+1,s_1^{(i+2)},\dots,s_{n-2}^{(i+2)},s_{n-1}^{(i)}+1,s_n).\]
D'après le $i)$ appliqué à $k=n$ (et donc $\min\{s_j^{(i+2)}, s_{n-1}^{(i+2)}\}=s_j^{(i+2)}$ pour tout $j\leq n-1$ car $s_j\leq s_{n-1}$ et donc que $s_j^{(i+2)}\leq s_{n-1}^{(i+2)}$ par construction) et le $v)$ appliqué à $j=n-1$ du théorème \ref{thm ou je calcule TOUT} montre que 
\begin{equation*}
t_i-t_{i+1}=p^{\sum_{j=1}^{n-2} s_j^{(i+2)}}\big( t_{2,2}(r^{(i)},s_{n-1}^{(i)}+1,s_n)-t_{2,1}(r^{(i)}+1,s_{n-1}^{(i)}+1,s_n)\big).
\end{equation*}
Comme $s_n^{(i)}=s_n$ et que $s_{n-1}^{(i)}\neq s_{n-1}$, on en déduit, par construction des $K_i$, que $s_n< s_{n-1}^{(i)}$.
Ainsi, $t_{2,2}(r^{(i)}, s_{n-1}^{(i)}+1,s_n)=\tau_2(s_{n-1}^{(i)}+1)$ (pour la définition de $\tau_2$, voir le début de la section \ref{calculs chiants}). 
En utilisant le $ii)$ du théorème \ref{thm ou je calcule TOUT} et la Remarque \ref{si je choisis de d'abord faire baisser le s n}, on en déduit que
\begin{equation*}
t_i-t_{i+1}=p^{s_n+\sum_{j=1}^{n-2}s_j^{(i+2)}}\big( t_{1,2}(r^{(i)},s_{n-1}^{(i)}+1)-t_{1,1}(r^{(i)}+1,s_{n-1}^{(i)}+1)\big)<0
\end{equation*}
d'après le $ii)$ du Lemme \ref{dernière minoration qu'il me manquait} (on rappelle que $r^{(i)}=S^{(i)}+1=s_{n-1}^{(i)}+1)$. \\

\textit{Quatrième étape :}

Commençons par supposer que $s_n^{(i)}\neq s_n$.
Alors, $r^{(i)}=S^{(i)}=s_n^{(i)}+1$ d'après \eqref{X}. 
Ainsi, 
\begin{equation*}
K_{i+1}=L_{r^{(i)}+1, s^{(i)}_1,\dots,s^{(i)}_n}.
\end{equation*}
Comme $r^{(i+1)}=r^{(i)}+1=S^{(i)}+1=S^{(i+1)}+1$, cela montre donc que
\begin{equation*}
K_{i+2}=L_{r^{(i)}+1, \min\{s_1,s_1^{(i)}+1\},\dots,\min\{s_n,s_n^{(i)}+1\}}.
\end{equation*}
On obtient alors 
\begin{equation*}
t_i=t_{n,1}(r^{(i)}+1,s^{(i)}_1,\dots,s_n^{(i)}) \; \text{et} \; t_{i+1}=t_{n,n+1}(r^{(i)}+1,s_1^{(i+2)},\dots,s_{n-1}^{(i+2)},s_n^{(i)}+1).
\end{equation*}
Notons $E=\{j \; \vert s_j^{(i)}\neq s_j\}$. 
Alors, $s_j^{(i+2)}=s_j^{(i)}$ si $j\notin E$ et $s_j^{(i+2)}=s_j^{(i)}+1$ si $j\in E$. 
De plus, $s_j^{(i)}+1=s_n^{(i)}+2$ si $j\in E$ d'après \eqref{pour montrer je sais pas quoi}.
D'après le corollaire \ref{unicité du saut} appliqué à $i=n$ et où on a remplacé $s_n+1$ par $s_n^{(i)}+2$, on en déduit que
\[t_{n,n+1}(r^{(i)}+1,s_1^{(i+2)},\dots,s_{n-1}^{(i+2)},s_n^{(i)}+1)=t_{n,n+1}(r^{(i)}+1,s_1^{(i)},\dots,s_{n-1}^{(i)},s_n^{(i)}+1).\]
Par construction, on a que $s_k^{(i)}\leq s_n^{(i)}+1$ (en fait, c'est même égal si $s_k^{(i)}\neq s_k$ d'après \eqref{pour montrer je sais pas quoi}).
Ainsi, $t_i<t_{i+1}$ d'après le $ii)$ du Lemme \ref{majorations utiles dans le cas où la valuation n'est pas divisaible pas p} appliqué à $s=s_n^{(i)}+1$ (on rappelle que $r^{(i)}=s_n^{(i)}+1$). 

Supposons maintenant que $s_n^{(i)}=s_n$ et que $s_{n-1}^{(i)}\neq s_{n-1}$.
On a donc
\begin{equation*}
K_{i+1}=L_{r^{(i)}+1, s^{(i)}_1,\dots,s^{(i)}_n}.
\end{equation*}
Comme $r^{(i+1)}=r^{(i)}+1=S^{(i)}+1=S^{(i+1)}+1=s_{n-1}^{(i)}+1$ d'après \eqref{X}, cela montre que
\begin{equation*}
K_{i+2}=L_{r^{(i)}+1, \min\{s_1,s_1^{(i)}+1\},\dots,\min\{s_{n-2},s_{n-2}^{(i)}+1\}, s_{n-1}^{(i)}+1,s_n}.
\end{equation*}
On a alors 
\begin{equation*}
t_i=t_{n,1}(r^{(i)}+1,s^{(i)}_1,\dots,s_{n-1}^{(i)},s_n)
\end{equation*}
et
\begin{equation*}
t_{i+1}=t_{n,n}(r^{(i)}+1,s_1^{(i+2)},\dots,s_{n-2}^{(i+2)},s_{n-1}^{(i)}+1,s_n).
\end{equation*}

Notons $E=\{j\; \vert \; s_j^{(i)}\neq s_j\}$. 
Ainsi, $s_j^{(i+2)}=s_j^{(i)}$ si $j\notin E$ et $s_j^{(i+2)}=s_j^{(i)}+1$ si $j\in E$. 
D'après le corollaire \ref{unicité du saut 3} appliqué à $s=s_{n-1}^{(i)}$ et à $k=i=n-1$, on a \[ t_{n,n}(r^{(i)},s_1^{(i+2)},\dots,s_{n-2}^{(i+2)},s_{n-1}^{(i)}+1,s_n)= t_{n,n}(r^{(i)}+1,s^{(i)}_1,\dots,s_{n-2}^{(i)},s_{n-1}^{(i)}+1,s_n).\]
D'après le $i)$ appliqué à $k=n$ (et donc $\min\{s_j^{(i)}, s_{n-1}^{(i)}\}=s_j^{(i)}$ pour tout $j\leq n-1$ car $s_j\leq s_{n-1}$ et donc que $s_j^{(i)}\leq s_{n-1}^{(i)}$ par construction) et le $v)$ appliqué à $j=n-1$ du théorème \ref{thm ou je calcule TOUT}, on en déduit que
\begin{equation*}
t_i-t_{i+1}=p^{\sum_{j=1}^{n-2} s_j^{(i)}}\big( t_{2,1}(r^{(i)}+1,s_{n-1}^{(i)},s_n)-t_{2,2}(r^{(i)}+1,s_{n-1}^{(i)}+1,s_n)).
\end{equation*}
Comme $s_n< s_{n-1}^{(i)}$, on en déduit que $t_{2,2}(r^{(i)}, s_{n-1}^{(i)}+1,s_n)=\tau_2(s_{n-1}^{(i)}+1)$. 
De plus, en utilisant le $ii)$ du théorème \ref{thm ou je calcule TOUT} et la Remarque \ref{si je choisis de d'abord faire baisser le s n}, 
\begin{equation*}
t_i-t_{i+1}=p^{s_n+\sum_{j=1}^{n-2} s_j^{(i)}}\big( t_{2,1}(r^{(i)}+1,s_{n-1}^{(i)})-t_{2,2}(r^{(i)}+1,s_{n-1}^{(i)}+1)\big)<0
\end{equation*}
d'après le $i)$ du Lemme \ref{dernière minoration qu'il me manquait} (on rappelle que $r^{(i)}=S^{(i)}=s_{n-1}^{(i)}$). \\

On a ainsi montré que $t_1<\dots<t_{m-1}$.
Comme $K_m=L_{r,s_1,\dots,s_n}$, le corollaire \ref{utile pour déterminer la suite des groupes de rami} montre que les sauts de $G$ non nuls sont les $t_1,\dots,t_{m-1}$ et que $G_{t_i}=\Gal(K_m/K_i)$.
On conclut en remarquant que les $t_1,\dots,t_{m-1}$ correspondent bien aux sauts énoncés dans le théorème.
\end{proof}

\begin{ex}
Les sauts de $G=\Gal(L_{4,1,2,3}/F)$ qui sont différents de $0$ sont :
\begin{align*}
& t_1=t_{3,4}(1,1,1,0); \; t_2=t_{3,1}(2,1,1,0) \\
& t_3=t_{3,4}(2,1,2,1); \; t_4=t_{3,1}(3,1,2,1); \; t_5=t_{3,4}(3,1,2,2) \\
& t_6= t_{3,1}(4,1,2,2); \; t_7=t_{3,4}(4,1,2,3).
\end{align*}
De plus, la suite des groupes de ramification est 
\begin{align*}
& G_{t_1}=\Gal(L_{4,1,2,3}/L_{1,0,0,0});\;  G_{t_2}=\Gal(L_{4,1,2,3}/L_{1,1,1,0}) \\
& G_{t_3}=\Gal(L_{4,1,2,3}/L_{2,1,1,0}); \; G_{t_4}=\Gal(L_{4,1,2,3}/L_{2,1,2,1}) \\
& G_{t_5}=\Gal(L_{4,1,2,3}/L_{3,1,2,1}); \; G_{t_6}=\Gal(L_{4,1,2,3}/L_{3,1,2,2}) \\
& G_{t_7}=\Gal(L_{4,1,2,3}/L_{4,1,2,2}).
\end{align*}
\end{ex}

\section{Une généralisation du théorème \ref{calcul des sauts, cas où la valuation est divisible par p} et du théorème \ref{calcul des sauts, cas où la valuation n'est pas divisible par p}.} \label{Une généralisation des Théorème}

  Soit $L/F$ une extension galoisienne, finie et radicale.
On souhaite calculer la suite des groupes de ramification de $L/F$ dans le cas $p\neq 2$.
Si $p=2$, on les calculera aussi, à condition de faire une hypothèse supplémentaire. 
Notons $m$ l'exposant de $G:=\Gal(L/F)$.
Si $m$ est premier à $p$, la suite des groupes de ramification est facile à calculer puisque $G_1=\{1\}$. 
On suppose donc que $m$ soit divisible par $p:=p_0$.
Dans la section précédente, on les a calculés dans le cas où $m$ est une puissance de $p$. 
Supposons donc que $m$ ne soit pas non plus une puissance de $p$ et notons $m=p^r\prod_{j=1}^l p_j^{r_j}$ la décomposition de $m$ en nombres premiers.

 Pour tout $j\in \{0,\dots,l\}$, il existe deux $n_j$-uplet $(a_{i,j})_{i=1}^{n_j}\in \left(\mathcal{O}_F\backslash \mathcal{O}_F^{p_j}\right)^{n_j}$ et $(t_{i,j})_{i=1}^{n_j}\in(\N^*)^{n_j}$ tels que $L$ soit égal au compositum de tous les $F\left(\zeta_{p_j^{r_i}}, a_{i,j}^{1/p_j^{t_{i,j}}}\right)$ avec $j\in \{0,\dots,l\}$ et $i\in\{1,\dots,n_j\}$.

Afin d'alléger les notations, on notera $a_i:=a_{i,0}$ et $t_{i,0}=s_i$ pour tout $i\geq 0$. 
Comme pour le cas où $m$ est une puissance de $p$, on supposera, uniquement si $p=2$, que 

\begin{equation} \label{condition supplémentaire pour la section 9}
[F(\zeta_{p^r},a_1^{1/p^r},\dots,a_n^{1/p^r}) : F(\zeta_{p^r})]=p^{nr}.
\end{equation}

Par les mêmes arguments que la section \ref{section avec tous les calculs chiants}, on peut de nouveau se réduire au cas où $F$ et les $a_1,\dots,a_n$ vérifient les conditions de l'hypothèse \ref{hypothèse 3}.

Soient $j\in\{1,\dots,l\}$ et $i\in\{1,\dots,n_j\}$. 
Par la théorie de Kummer, les sous-corps de $F\left(\zeta_{p_j}^{r_j}, a_{i,j}^{1/p_j^{t_{i,j}}}\right)$ contenant $F(\zeta_{p_j}^{r_j})$ sont les $F\left(\zeta_{p_j}^{r_j},a_{i,j}^{1/p_j^t}\right)$ où $t\in\{0,\dots,t_{i,j}\}$. 
 Notons $f_{i,j}$ l'unique entier tel que $F\left(\zeta_{p_j}^{r_j},a_{i,j}^{1/p_j^{f_{i,j}}}\right)/F(\zeta_{p_j}^{r_j})$ soit l'extension maximale non-ramifiée de $F\left(\zeta_{p_j}^{r_j}, a_{i,j}^{1/p_j^{t_{i,j}}}\right)/F(\zeta_{p_j}^{r_j})$. 
 Notons également $F^{nr}$ l'extension maximale non-ramifiée de $F$ incluse dans $L$, $s_{i,j}=t_{i,j}-f_{i,j}$ et $\alpha_{i,j}=a_{i,j}^{1/p_j^{f_{i,j}}}$.

Comme $\zeta_{p_j^{r_j}}\in F^{nr}$, on en déduit que $F(\zeta_{p_j^{r_j}}, \alpha_{i,j})\subset F^{nr}$. 
De plus, on a $\alpha_{i,j}^{1/p_j}\notin F^{nr}$. 
Comme le compositum de deux extensions non-ramifiées est non-ramifiée, on en déduit que $L$ est le compositum de $F^{nr}(\zeta_{p^r},a_1^{1/p^{s_1}},\dots,a_n^{1/p^{s_n}})$ et de tous les $F^{nr}\left(\alpha_{i,j}^{1/{p_j^{s_{i,j}}}}\right)$ avec $j\in \{1,\dots,l\}$ et $i\in\{1,\dots,n_j\}$.

Supposons, sans perte de généralité, que $s_{i,j}>0$. 
Notons  $N=n+\sum_{j=1}^l n_j$. 
Les définitions suivantes sont les analogues naturels de la Définition \ref{defn de L r s} et de la Définition \ref{définition  du saut}.

\begin{defn}
Soit $\overline{s}=(s_{1,j}, \dots, s_{n_j,j})_{j=1,\dots,l}$ un $N$-uplet et $r,s_1,\dots,s_n$ des entiers positifs. 

\begin{enumerate} [i)]
\item notons $L_{r,s_1,\dots,s_n,\overline{s}}$ le corps \[ F^{nr}\left(\zeta_{p^r},a_1^{1/p^{s_1}},\dots,a_n^{1/p^{s_n}},\alpha_1^{1/p^{s_{1,1}}},\dots,\alpha_{n_1,1}^{1/p^{s_{n_1,1}}},\dots, \alpha_{1,l}^{1/p^{s_{1,l}}},\dots,\alpha_{n_l,l}^{1/p^{s_{n_l,l}}}\right);\]

\item soit $j\leq N+1$. 
On note \[\overline{s}(j)=(s_{1,1}-\delta_{n+2,j}, \dots,s_{n_1,1}-\delta_{n+n_1+1,j},s_{1,2}-\delta_{n+n_1+2,j},\dots,s_{n_l,l}-\delta_{N+1,j})\]
où $\delta_{i,j}$ désigne le symbole de Kronecker;

\item soit $k\leq N+1$.
On note $t_{N,k}(r,s_1,\dots,s_n,\overline{s})$ l'unique saut de l'extension $L_{r,s_1,\dots,s_n,\overline{s}}/L_{r(k),s_1(k)\dots,s_n(k),\overline{s}(k)}$ où $ r(k)$ et $s_i(k)$ sont définis comme dans \eqref{définition de r k} et \eqref{définition de s k}.  

\end{enumerate}
\end{defn}

On a alors le résultat suivant :

\begin{thm} \label{théorème qui se ramène aux groupes de ramification d'un premier}
 On a $t_{N,k}(r,s_1,\dots,s_n,s_{i,j})=0$ si $k\geq n+2$ ou si $k=1$ et $r=1$. Dans le cas contraire, 
\begin{equation*}
t_{N,k}(r,s_1,\dots,s_n,\overline{s}) = t_{N,k}(r,s_1,\dots,s_n,0,\dots,0)  \prod_{j=1}^l \prod_{i=1}^{n_j} p_j^{s_{i,j}}.
\end{equation*} 

\end{thm}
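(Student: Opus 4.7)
L'approche consiste à ramener le calcul de $t_{N,k}(r,s_1,\dots,s_n,\overline{s})$ au cas purement $p$-ique $t_{n,k}(r,s_1,\dots,s_n) = t_{N,k}(r,s_1,\dots,s_n,0,\dots,0)$ déjà traité, en utilisant la multiplicativité de la différente le long de deux tours parallèles, après avoir traité les cas modérément ramifiés.

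Pour les cas modérément ramifiés, on remarque que si $k\geq n+2$, l'extension $L_{r,s_1,\dots,s_n,\overline{s}}/L_{r,s_1,\dots,s_n,\overline{s}(k)}$ s'obtient en adjoignant une racine $p_j$-ième pour un certain $j\geq 1$, et est donc de degré $p_j$ premier à $p$. Elle est modérément ramifiée, de saut unique $0$. De même, si $k=1$ et $r=1$, l'extension $L_{1,\dots}/L_{0,\dots}$ est cyclotomique de degré $p-1$, encore modérément ramifiée de saut nul.

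Dans le cas principal ($k\in\{1,\dots,n+1\}$ avec $r\geq 2$ si $k=1$), posons
\[ L':=L_{r,s_1,\dots,s_n,\overline{s}},\; L:=L_{r(k),s_1(k),\dots,s_n(k),\overline{s}},\; M_1:=L_{r,s_1,\dots,s_n,0,\dots,0},\; M_2:=L_{r(k),s_1(k),\dots,s_n(k),0,\dots,0}. \]
Par théorie de Kummer, la partie sauvage $M_1/M_2$ et la partie modérée $T/F^{nr}$, où $T:=F^{nr}(\alpha_{1,1}^{1/p_1^{s_{1,1}}},\dots,\alpha_{n_l,l}^{1/p_l^{s_{n_l,l}}})$, sont linéairement disjointes sur $F^{nr}$ (l'une étant un pro-$p$-extension, l'autre une extension d'ordre premier à $p$). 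Ainsi $L=M_2 T$, $L'=M_1 T$, les extensions $L/M_2$ et $L'/M_1$ sont totalement modérément ramifiées de degré $d:=\prod_{j,i} p_j^{s_{i,j}}$, et $[L':L]=[M_1:M_2]=p$. Comme extension totalement sauvagement ramifiée de degré $p$, $L'/L$ admet un unique saut $t'$, tandis que $M_1/M_2$ admet pour unique saut $t=t_{n,k}(r,s_1,\dots,s_n) = t_{N,k}(r,s_1,\dots,s_n,0,\dots,0)$.

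Il reste à calculer $v_{L'}(\mathcal{D}_{L'/M_2})$ de deux manières, par la formule de la tour pour la différente couplée à \eqref{calcul de la différente 2} :
\begin{align*}
v_{L'}(\mathcal{D}_{L'/M_2}) & = v_{L'}(\mathcal{D}_{L'/L}) + p\cdot v_L(\mathcal{D}_{L/M_2}) = (p-1)(t'+1) + p(d-1), \\
v_{L'}(\mathcal{D}_{L'/M_2}) & = v_{L'}(\mathcal{D}_{L'/M_1}) + d\cdot v_{M_1}(\mathcal{D}_{M_1/M_2}) = (d-1) + d(p-1)(t+1).
\end{align*}
En égalant les deux expressions et en divisant par $p-1$, on obtient $t'+1 = d(t+1) - (d-1) = dt+1$, soit $t' = dt$, qui est précisément la formule annoncée. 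La principale difficulté réside dans la vérification rigoureuse de la disjonction linéaire et du caractère totalement modérément ramifié de $T/F^{nr}$ ; ces propriétés découlent de la construction des $\alpha_{i,j}$ (choisis minimaux pour que toute racine $p_j$-ième produise une extension ramifiée) et du choix de $F^{nr}$ comme sous-extension non ramifiée maximale de $L/F$, le cas $p=2$ requérant en outre l'hypothèse \eqref{condition supplémentaire pour la section 9}.
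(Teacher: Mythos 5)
Votre démonstration est correcte et repose sur le même outil que celle de l'article, à savoir le calcul de $v_{L'}(\mathcal{D}_{L'/M_2})$ de deux manières le long des deux côtés d'un carré d'extensions, combiné à la formule $v_L(\mathcal{D}_{L/K})=(d-1)(t+1)$ pour une extension galoisienne totalement ramifiée à saut unique. La différence est dans le découpage : l'article applique le $i)$ de la proposition \ref{calcul de la différente.} avec $d_1=p_1$ et $d_2=p$, c'est-à-dire qu'il retire une racine $p_j$-ième à la fois (ce qui donne $t'_2=p_1t_2$ puisque $t_1=t'_1=0$), puis conclut par une récurrence sur toutes les coordonnées de $\overline{s}$ ; vous traitez d'un seul coup toute la partie modérée $T/F^{nr}$ de degré $d$, ce qui revient à redémontrer directement le cas où $d_1=d$ est modéré et $d_2=p$ et vous dispense de la récurrence — c'est un peu plus direct. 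Les deux arguments nécessitent les mêmes vérifications, laissées implicites dans l'article : que $[T:F^{nr}]=d$ (indépendance kummerienne des $\alpha_{i,j}$ et disjonction linéaire avec la partie sauvage, assurée par la coprimalité des degrés) et que toute sous-extension de $L/F^{nr}$ est totalement ramifiée — ce dernier point découle du fait que $F^{nr}$ est la sous-extension non ramifiée maximale de $L/F$, d'où $[L:F^{nr}]=e(L\vert F)$ ; c'est aussi ce qui garantit, dans les cas dégénérés $k\geq n+2$ ou $(k,r)=(1,1)$, que le saut des extensions modérées vaut bien $0$ et non $-1$, point que vous affirmez et que l'article justifie de façon équivalente en observant que $\#G_1$ est une puissance de $p$.
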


\begin{proof}
 Si $r=k=1$, alors $L_{1,s_1,\dots,s_n,\overline{s}}/L_{0,s_1,\dots,s_n,\overline{s}}$ est de degré $p-1$.
 Si $k\geq n+2$ alors, l'extension dont le saut est, par définition, $t_{N,k}(r,s_1,\dots,s_n,\overline{s})$ est de degré $p_m$ pour un certain $m\in \{1,\dots,l\}$.
  La première assertion s'ensuit car $\# G_1$ est une puissance de $p$.
  
   Supposons désormais que $k\leq n+1$ et que $r\neq 1$ si $k=1$.
Notons 
\begin{equation*} 
K_1=L_{r(k),s_1(k),\dots,s_n(k),\overline{s}}, K_2=L_{r,s_1,\dots,s_n,\overline{s}(n+2)} \; \text{et} \; K=K_1\cap K_2.
\end{equation*}   
   
En appliquant la proposition \ref{calcul de la différente.} avec $K, K_1$ et $K_2$ comme ci-dessus, on a, avec les notations de la proposition \ref{calcul de la différente.},
\begin{equation*} 
 \begin{aligned}
 & d_1=p_1, \; d_2=p, \\
 & t_1=0, \; t_2=t_{N,k}(r,s_1,\dots,s_n,\overline{s}(n+2)) \\
 & t'_1=0 \; \text{et} \; t'_2=t_{N,k}(r,s_1,\dots,s_n,\overline{s}).
 \end{aligned}
 \end{equation*}
Ainsi, le $i)$ de la proposition \ref{calcul de la différente.} montre que
\begin{equation*}
t_{N,k}(r,s_1,\dots,s_n,\overline{s})=p_1t_{N,k}(r,s_1,\dots,s_n,\overline{s}(n+2)).
\end{equation*}

 En répétant le même argument en remplaçant $s_{1,1}$ par $s_{1,1}-1$ (et ainsi de suite), on déduit par récurrence que 
\begin{equation*}
t_{N,k}(r,s_1,\dots,s_n,\overline{s}) =p_1^{s_{1,1}} t_{N,k}(r,s_1,\dots,s_n,0,\overline{s'}).
\end{equation*}
En répétant le même argument en remplaçant $s_{1,1}$ par $s_{1,2}$ (et ainsi de suite), on déduit par récurrence que 
\begin{equation*}
t_{N,k}(r,s_1,\dots,s_n,\overline{s}) =  t_{N,k}(r,s_1,\dots,s_n,0,\dots,0)  \prod_{j=1}^l \prod_{i=1}^{l_j} p_j^{s_{i,j}},
\end{equation*}
ce qui montre le théorème.

\end{proof}

Ainsi, pour calculer explicitement $t_{N,k}(r,s_1,\dots,s_n,\overline{s})$ avec $k\leq n+1$, il suffit de calculer explicitement $t_{N,k}(r,s_1,\dots,s_n,0,\dots,0)$.
Or, si on montre que $F^{nr}$ et $a_1,\dots,a_n$ vérifient les conditions de l'hypothèse \ref{hypothèse 3} alors, on pourra ainsi utiliser le théorème \ref{thm ou je calcule TOUT} pour en déduire la valeur de $t_{N,k}(r,s_1,\dots,s_n,0,\dots,0)$.
Montrons que c'est le cas.

Il est clair que les conditions $iii)$ et $iv)$ sont vérifiées. 
Comme $i)$ est une conséquence directe du $ii)$, il reste plus qu'à montrer $ii)$. 
Remarquons que si $p=2$, cela se déduit de notre unique restriction \eqref{condition supplémentaire pour la section 9}. 
Supposons donc que $p\neq 2$. 
D'après le Fait \ref{fait}, il suffit de montrer que pour tout $i\geq 2$, on a $[a_i]_{F^{nr}}\notin \langle [a_1]_{F^{nr}},\dots,[a_{i-1}]_{F^{nr}}\rangle$.
  Supposons par l'absurde que ce ne soit pas le cas. 
  Alors, il existe des entiers $x_1,\dots,x_{i-1}$  et $c\in F^{nr}$ tels que $a_i= c^p\prod_{l=1}^{i-1} a_l^{x_l}$.
   Mais alors, $a_i\left(\prod_{l=1}^{i-1} a_l^{x_l}\right)^{-1}\in F\cap (F^{nr})^p$. 
   Ainsi, $\left(a_i\prod_{l=1}^{i-1} a_l^{-x_l}\right)^{1/p}\in F^{nr}\cap F(\zeta_{p^r}, a_1^{1/p^{s_1}},\dots,a_n^{1/p^{s_n}})$. 
Comme $F^{nr}/F$ est non ramifiée et que $F(\zeta_{p^r}, a_1^{1/p^{s_1}},\dots,a_n^{1/p^{s_n}})/F$ est totalement ramifiée, on en conclut que $\left(a_i\prod_{l=1}^{i-1} a_l^{-x_l}\right)^{1/p}\in F$ et donc que $[a_i]_F\in \langle [a_1]_F,\dots,[a_{i-1}]_F\rangle$, ce qui contredit le $ii)$ de l'hypothèse \ref{hypothèse 3} (appliqué cette fois-ci à $F$).
Ainsi, on peut appliquer tous les résultats des sections précédentes à $F^{nr}$ au lieu de $F$. 
 
 Rappelons que $\overline{s}=(s_{1,j}, \dots, s_{n_j,j})_{j=1,\dots,l}$.
Notons $D=\prod_{j=1}^l \prod_{i=1}^{n_j} p^{s_{i,j}}$ et, si $k\leq n+1$, $t_{N,k}(r,s_1,\dots,s_n)$ le saut $t_{N,k}(r,s_1,\dots,s_n,0,\dots,0)$. 
Comme $D$ est une constante ne dépendant pas des $s_1,\dots,s_n$, on déduit du théorème \ref{théorème qui se ramène aux groupes de ramification d'un premier}, en recopiant la preuve du théorème \ref{calcul des sauts, cas où la valuation est divisible par p} et du théorème \ref{calcul des sauts, cas où la valuation n'est pas divisible par p} présentes dans la section \ref{calcul des sauts}, que 

\begin{thm}
Soient $F$ et $a_1,\dots,a_n$ vérifiant l'hypothèse \ref{hypothèse 3}. 
 Supposons de plus que $p\mid v_F(a_n)$ et que $s_1\leq \dots \leq s_n$.
Alors, les sauts de $G$, différents de $0$, sont les 

\begin{enumerate} [i)]
\item $t_{n,1}(s,s_1,\dots,s_n)D$ avec $s\in \{s_n+1,\dots,r\}$;
\item $t_{n,1}(s_n'+1,s_1',\dots,s_n')D$ où $s_n'\in\{1,\dots,s_n-1\}$ et où $s_i'=\min\{s_i,s_n'\}$;
\item $t_{n,n+1}(s_n',s_1',\dots,s_n')D$ où $s_n'\in\{1,\dots,s_n\}$ et où $s_i'=\min\{s_i,s_n'\}$.
\end{enumerate}

De plus, si on note $u_j$ le $(n+1)$-uplet tel que $t_j=t_{n,k}(u_j,\overline{s})$ désigne le $(j+1)$-ième plus petit saut de $G$ alors, 

\begin{enumerate} [i)]
\item $G_0=G$;
\item $G_{t_j}=\Gal(L_{r,s_1,\dots,s_n}/L_{u_{j-1},\overline{s}})$ pour $j\geq 1$.
\end{enumerate}

\end{thm}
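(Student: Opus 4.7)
L'idée principale est de transposer la preuve du théorème \ref{calcul des sauts, cas où la valuation est divisible par p} en exploitant le théorème \ref{théorème qui se ramène aux groupes de ramification d'un premier}, qui montre que pour $k\leq n+1$, le saut $t_{N,k}(r,s_1,\dots,s_n,\overline{s})$ vaut $D\cdot t_{n,k}(r,s_1,\dots,s_n)$ où $D=\prod_{j=1}^l\prod_{i=1}^{n_j}p_j^{s_{i,j}}$. Toute la difficulté est donc de vérifier que la structure de la tour utilisée dans la preuve originale se conserve en présence des étages étrangers $\overline{s}$.

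Concrètement, je construirais la tour $K_i:=L_{r^{(i)},s_1^{(i)},\dots,s_n^{(i)},\overline{s}}$ où la suite $(r^{(i)},s_1^{(i)},\dots,s_n^{(i)})$ est exactement celle définie dans la preuve du théorème \ref{calcul des sauts, cas où la valuation est divisible par p}, en gardant $\overline{s}$ constant tout au long. Chaque extension $K_{i+1}/K_i$ s'identifie alors à $L_{r,s_1,\dots,s_n,\overline{s}}/L_{r(k),s_1(k),\dots,s_n(k),\overline{s}}$ pour certains paramètres et un certain $k\leq n+1$, donc est soit de degré $p$ (étapes atomiques), soit de degré $p^{|E|}$ pour les étapes où l'on incrémente simultanément plusieurs exposants comme dans le corollaire \ref{unicité du saut 3}.

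Pour les étapes atomiques, l'unicité du saut est automatique, et sa valeur est donnée directement par le théorème \ref{théorème qui se ramène aux groupes de ramification d'un premier}. Pour les étapes combinées, il faut un analogue du corollaire \ref{unicité du saut 3} adapté à $\overline{s}$ : via le lemme \ref{sous-corps d'une extension kumérienne.}, les sous-extensions de degré $p$ de $K_{i+1}/K_i$ se paramétrisent comme dans le cas pur, et les sauts correspondants s'obtiennent à partir du cas pur par multiplication par $D$ grâce au théorème \ref{théorème qui se ramène aux groupes de ramification d'un premier}. Comme ces sauts sont tous égaux à un même nombre $t_{n,k+1}(r,s_1',\dots,s_n')$ dans le cas pur, ils restent tous égaux à $D\cdot t_{n,k+1}(r,s_1',\dots,s_n')$ dans le cas général, et le lemme \ref{Unicité du saut, version théorique} fournit alors l'unicité du saut de $K_{i+1}/K_i$.

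Enfin, la croissance stricte des sauts de la tour pure, déjà établie dans la preuve du théorème \ref{calcul des sauts, cas où la valuation est divisible par p}, se préserve par multiplication par la constante $D>0$, et le corollaire \ref{utile pour déterminer la suite des groupes de rami} livre alors la suite des groupes de ramification souhaitée. Le principal obstacle réside dans la vérification soigneuse que chaque ingrédient combinatoire de la preuve originale admet bien une version généralisée, mais la \textit{linéarité} fournie par le théorème \ref{théorème qui se ramène aux groupes de ramification d'un premier} rend ce transfert essentiellement formel ; c'est pourquoi les auteurs annoncent juste avant l'énoncé qu'il suffit de "recopier" la preuve du cas $p$-primaire.
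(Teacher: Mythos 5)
Votre proposition suit essentiellement la même démarche que l'article : celui-ci invoque également le théorème \ref{théorème qui se ramène aux groupes de ramification d'un premier} pour ramener chaque saut à $D$ fois le saut correspondant du cas $p$-primaire, puis « recopie » la preuve du théorème \ref{calcul des sauts, cas où la valuation est divisible par p}, l'ordre strict des sauts étant préservé par multiplication par la constante $D>0$ et le corollaire \ref{utile pour déterminer la suite des groupes de rami} concluant. Vos remarques supplémentaires sur la vérification de l'analogue du corollaire \ref{unicité du saut 3} pour la tour étendue explicitent des détails que l'article laisse implicites, mais ne constituent pas une route différente.
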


\begin{thm} 
Soient $F$ et $a_1,\dots,a_n$ vérifiant l'hypothèse \ref{hypothèse 3}.
Supposons de plus que $p\nmid v_F(a_n)$ et que $s_1\leq \dots\leq s_{n-1}$.
Alors, les sauts de $G$, différents de $0$, sont les 

\begin{enumerate} [i)]
\item $t_{n,n+1}(r,s_1,\dots,s_n)D$ si $r=s_n$;
\item $t_{n,1}(s_n'+1,s_1',\dots,s_{n-1}', \min\{s_n'-1,s_n\})D$ où $s_n'\in\{1,\dots,r\}$ et $s_i'=\min\{s_i,s_n'\}$;
\item $t_{n,n}(s_{n-1}',s_1',\dots,s_{n-1}',s_n)D$ où $s_{n-1}'\in\{s_n+2,\dots,s_{n-1}\}$ et $s_i'=\min\{s_i,s_{n-1}'\}$;
\item $t_{n,n+1}(s_n'+1,s_1',\dots,s_n')D$ où $s_n'\in\{1,\dots,s_n\}$ et $s_i'=\min\{s_i,s_n'+1\}$.
\end{enumerate}

De plus, si on note $u_j$ le $(n+1)$-uplet tel que $t_j=t_{n,k}(u_j,\overline{s})$ désigne le $(j+1)$-ième plus petit saut de $G$ alors,

\begin{enumerate} [i)]
\item $G_0=G$;
\item $G_{t_j}=\Gal(L_{r,s_1,\dots,s_n}/L_{u_{j-1},\overline{s}})$ pour $j\geq 1$.
\end{enumerate}
\end{thm}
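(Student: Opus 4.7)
Le plan est de transporter la preuve du théorème \ref{calcul des sauts, cas où la valuation n'est pas divisible par p} à la situation présente, en exploitant le théorème \ref{théorème qui se ramène aux groupes de ramification d'un premier}. Ce dernier affirme que, pour $k\leq n+1$ (et en excluant le cas $k=1$, $r=1$), on a
\[
t_{N,k}(r,s_1,\dots,s_n,\overline{s}) = D\, t_{N,k}(r,s_1,\dots,s_n,0,\dots,0),
\]
et le texte précédant l'énoncé justifie que $F^{nr}$ et $a_1,\dots,a_n$ vérifient l'hypothèse \ref{hypothèse 3}, si bien que l'on peut assimiler $t_{N,k}(r,s_1,\dots,s_n,0,\dots,0)$ au nombre $t_{n,k}(r,s_1,\dots,s_n)$ calculé dans les sections précédentes (relativement au corps de base $F^{nr}$). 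La multiplication uniforme par la constante $D>0$ est une opération croissante ; toutes les comparaisons de sauts établies dans la section \ref{calcul des sauts} sont donc préservées.

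Premièrement, je construirais la même suite $(K_i)_{i\geq 1}$ que dans la preuve du théorème \ref{calcul des sauts, cas où la valuation n'est pas divisible par p}, mais à partir de $L_{r^{(i)},s_1^{(i)},\dots,s_n^{(i)},\overline{s}}$ au lieu de $L_{r^{(i)},s_1^{(i)},\dots,s_n^{(i)}}$. Plus précisément, $K_1=L_{1,0,\dots,0,\overline{s}}$, $K_2=L_{1,1,\dots,1,0,\overline{s}}$, et le reste de la définition est inchangé (distinction suivant que tous les $s_l^{(i)}=s_l$ ou non, et selon la position de $r^{(i)}$ par rapport à $S^{(i)}$). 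Par construction, il existe un entier $m$ tel que $K_m=L_{r,s_1,\dots,s_n,\overline{s}}$ et $K_i=K_m$ pour $i\geq m$, et chaque extension $K_{i+1}/K_i$ admet un unique saut $t_i$ par le corollaire \ref{unicité du saut} appliqué à $F^{nr}$.

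Deuxièmement, je montrerais la stricte croissance $t_1<\dots<t_{m-1}$. Pour chaque $i\in\{1,\dots,m-2\}$, on a $t_i=D\,\tau_i$ où $\tau_i$ est le saut correspondant dans la tour de la preuve du théorème \ref{calcul des sauts, cas où la valuation n'est pas divisible par p} (grâce au théorème \ref{théorème qui se ramène aux groupes de ramification d'un premier} : les sauts apparaissant sont de la forme $t_{N,k}(\cdot,\overline{s})$ avec $k\in\{1,n,n+1\}$, donc toujours $\leq n+1$). Les quatre étapes de la preuve du théorème \ref{calcul des sauts, cas où la valuation n'est pas divisible par p} (s'appuyant sur les lemmes \ref{dernière minoration qu'il me manquait}, \ref{majorations utiles dans le cas où la valuation n'est pas divisaible pas p}, et la proposition \ref{croissance des suites}) fournissent $\tau_1<\dots<\tau_{m-1}$, d'où le résultat voulu par multiplication par $D>0$.

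Troisièmement, j'appliquerais le corollaire \ref{utile pour déterminer la suite des groupes de rami} à la tour $K_1\subset\dots\subset K_m$ : les sauts non nuls de $G=\Gal(L_{r,s_1,\dots,s_n,\overline{s}}/F)$ sont alors exactement $t_1,\dots,t_{m-1}$ et $G_{t_i}=\Gal(K_m/K_i)$. Il reste à identifier chaque $t_i$ avec une des formes $i)$ à $iv)$ de l'énoncé : cette identification, entièrement combinatoire, recopie celle de la preuve du théorème \ref{calcul des sauts, cas où la valuation n'est pas divisible par p} avec l'unique modification qu'un facteur $D$ apparaît dans chaque expression. Le point techniquement le plus délicat est de vérifier que tous les outils utilisés (corollaires \ref{unicité du saut}, \ref{unicité du saut 3}, propositions \ref{Théorème fondamental}, \ref{Théorème fondamental 2.}, lemmes \ref{dernière minoration qu'il me manquait}, \ref{majorations utiles dans le cas où la valuation n'est pas divisaible pas p}) s'appliquent correctement au-dessus de $F^{nr}$ ; mais le remplacement $F\rightsquigarrow F^{nr}$ est licite puisque $F^{nr}/\Q_p$ reste non-ramifiée et $L_{r,s_1,\dots,s_n,\overline{s}}/F^{nr}$ totalement ramifiée, de sorte que l'hypothèse \ref{hypothèse 3} demeure vérifiée comme le vérifie le paragraphe précédent l'énoncé.
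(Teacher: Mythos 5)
Votre proposition suit exactement la démarche du texte : l'article lui-même ne fait que renvoyer au théorème \ref{théorème qui se ramène aux groupes de ramification d'un premier} et à la constance de $D$ pour « recopier » la preuve du théorème \ref{calcul des sauts, cas où la valuation n'est pas divisible par p}, ce que vous explicitez fidèlement (même tour $(K_i)$ augmentée de $\overline{s}$, sauts multipliés par $D>0$ donc ordre strict préservé, puis corollaire \ref{utile pour déterminer la suite des groupes de rami}). C'est correct et conforme à l'approche de l'article.
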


\bibliographystyle{plain}

\end{document}